\definecolor{darkgreen}{rgb}{0.0, 0.5, 0.0}
\newcommand{\cmark}{\ding{51}}
\newcommand{\xmark}{\ding{55}}
\newcommand{\N}{\mathbb N}
\newcommand{\Z}{\mathbb Z}
\newcommand{\R}{\mathbb R}
\newcommand{\E}{\mathbb E}
\renewcommand{\P}{\mathbb P}
\newcommand{\cau}{\mathcal{C}}
\newcommand{\hcau}{\mathcal{H}}
\newcommand{\bcau}{\mathcal{B}}
\newcommand{\sli}{\mathcal{S}}
\newcommand{\uu}{\mathcal{U}}
\newcommand{\expl}{\mathcal{E}}
\newcommand{\eff}{\mathrm{eff}}
\newcommand{\back}{\mathbf{B}}
\newcommand{\f}{\mathcal{F}}
\newcommand{\bdd}{\mathrm{bdd}}
\newcommand{\str}{\mathrm{str}}
\newcommand{\m}{\mathcal{M}}
\newcommand{\bdy}{\widehat{\partial} \mathbf{T}}
\newcommand{\bd}{\widehat{\partial}}
\theoremstyle{definition}
\newtheorem{thm}{Theorem}
\newtheorem{defn}{Definition}
\newtheorem{rem}[defn]{Remark}
\newtheorem{prop}[defn]{Proposition}
\newtheorem{lem}[defn]{Lemma}
\tikzstyle{every node}=[circle, draw, fill=black!50, inner sep=0pt, minimum width=4pt]
\tikzstyle{small}=[circle, draw, fill=black!50, inner sep=0pt, minimum width=3pt]
\tikzstyle{white}=[circle, draw, fill=black!0, inner sep=0pt, minimum width=4pt]
\tikzstyle{bigwhite}=[circle, draw, fill=black!0, inner sep=0pt, minimum width=10pt]
\tikzstyle{dual}=[circle, draw=blue, fill=black!0, inner sep=0pt, minimum width=4pt]
\tikzstyle{fat}=[circle, draw, fill=red!50, inner sep=0pt, minimum width=8pt]
\tikzstyle{fat_bis}=[circle, draw, fill=blue!50, inner sep=0pt, minimum width=8pt]
\tikzstyle{fat_ter}=[circle, draw, fill=green!50, inner sep=0pt, minimum width=8pt]
\tikzstyle{rouge}=[circle, draw, fill=red, inner sep=0pt, minimum width=7pt]
\tikzstyle{bleu}=[circle, draw, fill=blue, inner sep=0pt, minimum width=7pt]
\tikzstyle{moyenrouge}=[circle, draw, fill=red, inner sep=0pt, minimum width=6pt]
\tikzstyle{petitrouge}=[circle, draw, fill=red, inner sep=0pt, minimum width=4pt]
\tikzstyle{petitbleu}=[circle, draw, fill=blue, inner sep=0pt, minimum width=4pt]
\tikzstyle{texte}=[draw=none, fill=none]
\title{\bf{Supercritical causal maps : geodesics and simple random walk}}
\author{Thomas \bsc{Budzinski} \footnote{ENS Paris and Université Paris-Saclay, \url{thomas.budzinski@ens.fr}}}
\begin{document}

\maketitle

\begin{abstract}
We study the random planar maps obtained from supercritical Galton--Watson trees by adding the horizontal connections between successive vertices at each level. These are the hyperbolic analog of the maps studied by Curien, Hutchcroft and Nachmias in \cite{CHN17}, and a natural model of random hyperbolic geometry. We first establish metric hyperbolicity properties of these maps: we show that they admit bi-infinite geodesics and satisfy a weak version of Gromov-hyperbolicity. We also study the simple random walk on these maps: we identify their Poisson boundary and, in the case where the underlying tree has no leaf, we prove that the random walk has positive speed. Some of the methods used here are robust, and allow us to obtain more general results about planar maps containing a supercritical Galton--Watson tree.
\end{abstract}

\begin{figure}[!h]
\begin{center}
\includegraphics[scale=0.151]{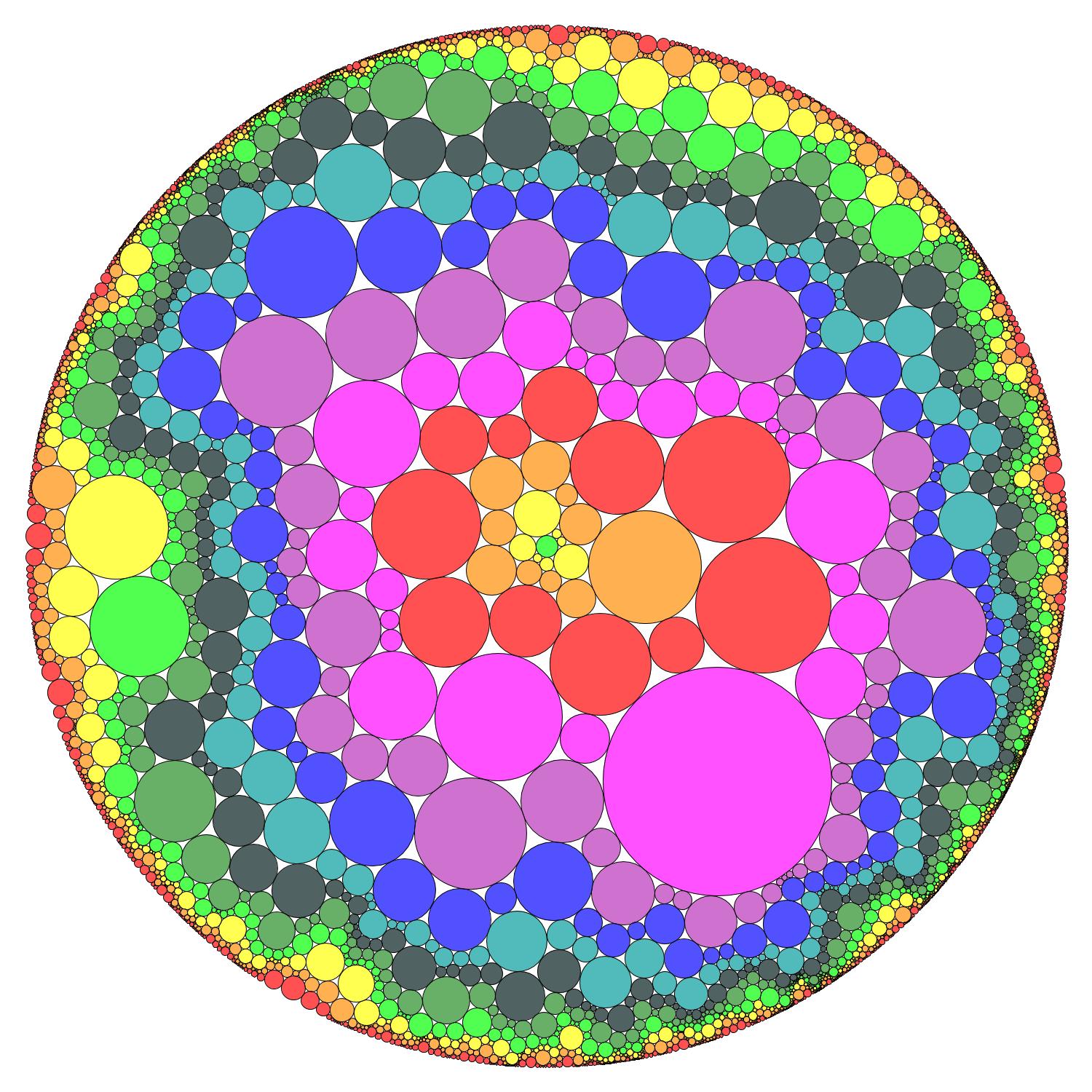}
\end{center}
\caption{The circle packing of a causal triangulation constructed from a Galton--Watson tree with geometric offspring distribution of mean $3/2$. This was made with the help of the software CirclePack by Ken Stephenson.}
\end{figure}

\section*{Introduction}

\paragraph{Causal maps and random hyperbolic geometry.}
\emph{Causal triangulations} were introduced by theoretical physicists Ambj\o rn and Loll \cite{AL98}, and have been the object of a lot of numerical investigations. However, their rigorous study is quite recent \cite{DJW10, CHN17}. They are a discrete model of Lorentzian quantum gravity with one time and one space dimension where, in contrast with uniform random planar maps, time and space play asymmetric roles.

Here is the definition of the model. For any (finite or infinite) plane tree $t$, we denote by $\cau(t)$ the planar map obtained from $t$ by adding at each level the horizontal connections between consecutive vertices, as on Figure \ref{fig_def_causal} (this includes an horizontal connection between the leftmost and rigtmost vertices at each level). Our goal here is to study the graph $\cau(T)$, where $T$ is a supercritical Galton--Watson tree conditioned to survive.

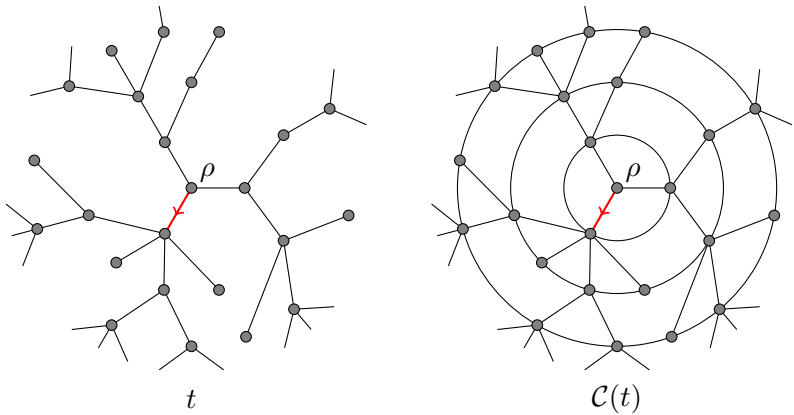
\begin{figure}[h]
\begin{center}
\begin{tikzpicture}[scale=0.7]

\draw(0,0)--(0:1);
\draw(0,0)--(120:1);
\draw[thick, red, ->](0,0)--(240:0.6);
\draw[thick, red](240:0.5)--(240:1);
\draw(0:1)--(30:2);
\draw(0:1)--(330:2);
\draw(120:1)--(90:2);
\draw(120:1)--(120:2);
\draw(240:1)--(195:2);
\draw(240:1)--(225:2);
\draw(240:1)--(255:2);
\draw(240:1)--(285:2);
\draw(30:2)--(30:3);
\draw(330:2)--(350:3);
\draw(330:2)--(310:3);
\draw(330:2)--(290:3);
\draw(90:2)--(80:3);
\draw(120:2)--(100:3);
\draw(120:2)--(120:3);
\draw(120:2)--(140:3);
\draw(195:2)--(170:3);
\draw(195:2)--(195:3);
\draw(255:2)--(240:3);
\draw(255:2)--(270:3);
\draw(310:3)--(300:3.5);
\draw(310:3)--(310:3.5);
\draw(310:3)--(320:3.5);
\draw(30:3)--(20:3.5);
\draw(30:3)--(40:3.5);
\draw(100:3)--(100:3.5);
\draw(140:3)--(130:3.5);
\draw(140:3)--(150:3.5);
\draw(195:3)--(185:3.5);
\draw(195:3)--(195:3.5);
\draw(195:3)--(205:3.5);
\draw(240:3)--(230:3.5);
\draw(240:3)--(240:3.5);
\draw(240:3)--(250:3.5);
\draw(270:3)--(260:3.5);
\draw(270:3)--(280:3.5);

\draw(0,0) node{};
\draw(0:1) node{};
\draw(120:1) node{};
\draw(240:1) node{};
\draw(330:2) node{};
\draw(30:2) node{};
\draw(90:2) node{};
\draw(120:2) node{};
\draw(195:2) node{};
\draw(225:2) node{};
\draw(255:2) node{};
\draw(285:2) node{};
\draw(290:3) node{};
\draw(310:3) node{};
\draw(350:3) node{};
\draw(30:3) node{};
\draw(80:3) node{};
\draw(100:3) node{};
\draw(120:3) node{};
\draw(140:3) node{};
\draw(170:3) node{};
\draw(195:3) node{};
\draw(240:3) node{};
\draw(270:3) node{};

\draw(0,-4) node[texte]{$t$};
\draw(0.3,0.3) node[texte]{$\rho$};

\begin{scope}[shift={(8,0)}]
\draw(0,0)--(0:1);
\draw(0,0)--(120:1);
\draw[thick, red, ->](0,0)--(240:0.6);
\draw[thick, red](240:0.5)--(240:1);
\draw(0:1)--(30:2);
\draw(0:1)--(330:2);
\draw(120:1)--(90:2);
\draw(120:1)--(120:2);
\draw(240:1)--(195:2);
\draw(240:1)--(225:2);
\draw(240:1)--(255:2);
\draw(240:1)--(285:2);
\draw(30:2)--(30:3);
\draw(330:2)--(350:3);
\draw(330:2)--(310:3);
\draw(330:2)--(290:3);
\draw(90:2)--(80:3);
\draw(120:2)--(100:3);
\draw(120:2)--(120:3);
\draw(120:2)--(140:3);
\draw(195:2)--(170:3);
\draw(195:2)--(195:3);
\draw(255:2)--(240:3);
\draw(255:2)--(270:3);
\draw(310:3)--(300:3.5);
\draw(310:3)--(310:3.5);
\draw(310:3)--(320:3.5);
\draw(30:3)--(20:3.5);
\draw(30:3)--(40:3.5);
\draw(100:3)--(100:3.5);
\draw(140:3)--(130:3.5);
\draw(140:3)--(150:3.5);
\draw(195:3)--(185:3.5);
\draw(195:3)--(195:3.5);
\draw(195:3)--(205:3.5);
\draw(240:3)--(230:3.5);
\draw(240:3)--(240:3.5);
\draw(240:3)--(250:3.5);
\draw(270:3)--(260:3.5);
\draw(270:3)--(280:3.5);

\draw(0,0) circle (1cm);
\draw(0,0) circle (2cm);
\draw(0,0) circle (3cm);

\draw(0,0) node{};
\draw(0:1) node{};
\draw(120:1) node{};
\draw(240:1) node{};
\draw(330:2) node{};
\draw(30:2) node{};
\draw(90:2) node{};
\draw(120:2) node{};
\draw(195:2) node{};
\draw(225:2) node{};
\draw(255:2) node{};
\draw(285:2) node{};
\draw(290:3) node{};
\draw(310:3) node{};
\draw(350:3) node{};
\draw(30:3) node{};
\draw(80:3) node{};
\draw(100:3) node{};
\draw(120:3) node{};
\draw(140:3) node{};
\draw(170:3) node{};
\draw(195:3) node{};
\draw(240:3) node{};
\draw(270:3) node{};

\draw(0,-4) node[texte]{$\cau(t)$};
\draw(0.3,0.3) node[texte]{$\rho$};
\end{scope}

\end{tikzpicture}
\end{center}
\caption{An infinite plane tree $t$ and the associated causal map $\mathcal{C}(t)$. The edge in red joins the root vertex to its leftmost child.}\label{fig_def_causal}
\end{figure}

This defines a new model of random "hyperbolic" graph. Several other such models have been investigated so far, such as supercritical Galton--Watson trees \cite{LPP95}, Poisson--Voronoi tesselations of the hyperbolic plane \cite{BPP14}, or the Planar Stochastic Hyperbolic Infinite Triangulations (PSHIT) of \cite{CurPSHIT}. Many notions appearing in the study of these models are adapted from the study of Cayley graphs of nonamenable groups, and an important idea is to find more general versions of the useful properties of these Cayley graphs. Let us mention two such tools.

\begin{itemize}
\item[$\bullet$]
For example, the three aforementioned models are all \emph{stationary}, which means their distribution is invariant under rerooting along the simple random walk\footnote{This is not exactly true for Galton--Watson trees, but it is true for the closely related \emph{augmented Galton--Watson trees.}}. This property generalizes the transitivity of Cayley graphs, and is a key tool to prove positive speed for the simple random walk on supercritical Galton--Watson trees \cite{LPP95, Aid11} or on the PSHIT \cite{CurPSHIT}. More generally, in the context of stationary random graphs, general relations are known between the exponential growth rate, the speed of the random walk, and its asymptotic entropy, which is itself related to the Poisson boundary and the Liouville property. See \cite[Proposition 3.6]{BCstationary}, which adapts classical results about Cayley graphs \cite{KV83}. On the other hand, supercritical causal maps are not stationary, and it seems hard to find a stationary environment for the simple random walk\footnote{See for example \cite{LW18} for the particular case where the tree is the complete binary tree: the existence and uniqueness of a stationary environment are proved, but it is very difficult to say anything explicit about the distribution of this environment.}. In absence of stationarity, we will be forced to use other properties of our graphs such as the independence properties given by the structure of Galton--Watson trees.
\item[$\bullet$]
Another important property in the study of random hyperbolic graphs is anchored expansion, which is a weaker version of nonamenability, and may be thought of as a natural generalization of nonamenability to random graphs. It is known to imply positive speed and heat kernel decay bounds of the form $\exp(-n^{1/3})$ for bounded-degree graphs \cite{Vir00}. This property also played an important role in the study of non-bounded-degree graphs such as Poisson-Voronoi tesselations of the hyperbolic plane \cite{BPP14}, and the half-planar versions of the PSHIT \cite{ANR14}. However, we have not been able to establish this property for causal maps, and need once again to use other methods.
\end{itemize}

\paragraph{Supercritical causal maps.}
In all that follows, we fix an offspring distribution $\mu$ with $\sum_{i=0}^{\infty} i \mu(i)>1$. Note that we do not require the mean number of children to be finite. We denote by $T$ a Galton--Watson tree with offspring distribution $\mu$ conditioned to survive.
The goal of this work is to study the maps $\cau(T)$. We will study both large-scale metric properties of $\cau(T)$, and the simple random walk on this map. All the results that we will prove show that $\cau(T)$ has a hyperbolic flavour, which is also true for the tree $T$.

\paragraph{Metric hyperbolicity properties.}
The first goal of this work is to establish two metric hyperbolicity properties of $\cau(T)$. We recall that a graph $G$ is hyperbolic in the sense of Gromov if there is a constant $k \geq 0$ such that all the triangles are $k$-thin in the following sense. Let $x$, $y$ and $z$ be three vertices of $G$ and $\gamma_{xy}$, $\gamma_{yz}$, $\gamma_{zx}$ be geodesics from $x$ to $y$, from $y$ to $z$ and from $z$ to $x$. Then for any vertex $v$ on $\gamma_{xy}$, the graph distance between $v$ and $\gamma_{yz} \cup \gamma_{zx}$ is at most $k$. However, such a strong, uniform statement usually cannot hold for random graphs. For example, if $\mu(1)>0$, then $\cau(T)$ contains arbitrarily large portions of the square lattice, which is not hyperbolic. Therefore, we suggest a weaker, "anchored" definition\footnote{The most natural definition would be to require that any geodesic triangle surrounding the root is $k$-thin, but this is still too strong (consider the triangle formed by root vertex and two vertices $x,y$ in a large portion of square lattice).}.

\begin{defn}
Let $M$ be a rooted planar map. We say that $M$ is \emph{weakly anchored hyperbolic} if there is a constant $k \geq 0$ such that the following holds. Let $x$, $y$ and $z$ be three vertices of $M$ and $\gamma_{xy}$ (resp. $\gamma_{yz}$, $\gamma_{zx}$) be a geodesic from $x$ to $y$ (resp. $y$ to $z$, $z$ to $x$). Assume the triangle formed by $\gamma_{xy}$, $\gamma_{yz}$ and $\gamma_{zx}$ surrounds the root vertex $\rho$. Then
\[ d_M \left( \rho, \gamma_{xy} \cup \gamma_{yz} \cup \gamma_{zx} \right) \leq k.\]
\end{defn}

\begin{thm}[Metric hyperbolicity of $\cau(T)$]\label{thm_1_metric}
Let $T$ be a supercritical Galton--Watson tree conditioned to survive, and let $\cau(T)$ be the associated causal map.
\begin{enumerate}
\item
The map $\cau(T)$ is a.s.~weakly anchored hyperbolic.
\item
The map $\cau(T)$ a.s.~admits bi-infinite geodesics, i.e. paths $\left( \gamma(i) \right)_{i \in \Z}$ such that for any $i$ and $j$, the graph distance between $\gamma(i)$ and $\gamma(j)$ is exactly $|i-j|$.
\end{enumerate}
\end{thm}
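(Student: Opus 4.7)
My plan is to establish both claims by exploiting the ``wedge'' structure of $\cau(T)$ inherited from the planar embedding of $T$. In $\cau(T)$ the graph distance from $\rho$ to any vertex equals its level in $T$, so the subtrees $T_v$ organize the map radially; the guiding principle will be that a geodesic between two vertices lying in distinct level-$1$ subtrees of $\rho$ either stays close to $\rho$ or performs a horizontal crossing whose cost grows without bound with the altitude at which it is performed.

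\textbf{Part 1.} Let $v_1, \ldots, v_{Z_1}$ denote the children of $\rho$ in cyclic order. I first check that if $x, y \in T_v$ for some vertex $v$, a geodesic between them can be chosen inside the wedge $T_v$: leaving and re-entering $T_v$ requires two horizontal boundary crossings and the corresponding excursion can always be shortened by a path staying inside $T_v$. Consequently, if $\gamma_{xy}, \gamma_{yz}, \gamma_{zx}$ form a triangle surrounding $\rho$, the three vertices cannot all sit in a common level-$1$ subtree, and some pair, say $(x, y)$, lies in two distinct level-$1$ subtrees $v_i \neq v_j$. I then bound the minimum level of $\gamma_{xy}$: a crossing at level $h$ saves $2h$ vertical edges but requires horizontally at least $\sum_{k \in (i, j)} Z_h^{v_k}$ edges across the intermediate subtrees, plus the horizontal distances from the ancestors $a_x(h), a_y(h)$ to the boundary of their respective subtrees. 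Since $Z_n \to \infty$ almost surely on the survival event (a classical fact about any supercritical Galton--Watson tree, whether of finite or infinite mean), these horizontal costs eventually dominate $2h$, forcing the optimal crossing level below some random constant $C$. A short case analysis on the three pairs then shows that $x, y, z$ cannot be simultaneously positioned so that every pair is in the degenerate ``boundary-ray'' configuration (the ``left''/``right'' labels around the cyclic order become inconsistent), so at least one of the three geodesics really does pass within distance $C$ of $\rho$.

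\textbf{Part 2.} For the existence of bi-infinite geodesics I run a diagonal compactness argument seeded by Part 1. Conditional on survival, $T$ almost surely contains two infinite rays $r^+, r^-$ starting at $\rho$ and avoiding the degenerate configuration above: for example in two non-adjacent level-$1$ subtrees of $\rho$ when $Z_1 \geq 3$, and otherwise as two ``interior'' rays of the two surviving level-$1$ subtrees. Let $\gamma_n$ be a geodesic from $r^+(n)$ to $r^-(n)$ in $\cau(T)$. The estimate from Part 1 applied to this pair forces $\gamma_n$ to meet the finite ball $B_{\cau(T)}(\rho, C)$; by the pigeonhole principle some fixed vertex $v_\star$ of this ball lies on $\gamma_n$ for infinitely many $n$, and a standard diagonal extraction produces a subsequence along which $\gamma_n$ stabilizes on every ball around $v_\star$. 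The limit $\gamma : \Z \to V(\cau(T))$ is bi-infinite because $d_{\cau(T)}(\rho, r^\pm(n)) = n \to \infty$, and it is a geodesic because any two of its vertices eventually lie together on some $\gamma_n$, which is itself a geodesic.

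\textbf{Main obstacle.} I expect the most delicate point to be the uniform horizontal-cost estimate: in certain ``boundary-ray'' configurations a horizontal crossing really can be performed at constant cost up to arbitrarily high altitude, as one sees explicitly in the binary tree. These obstructions are genuine, so the naive statement ``a geodesic between vertices of different level-$1$ subtrees of $\rho$ passes near $\rho$'' is false; what rescues the argument is the combinatorial observation that the three pairs of a geodesic triangle surrounding $\rho$ cannot all sit in such configurations simultaneously. Turning this picture into a quantitative estimate that is uniform in the positions of the ancestors $a_x(h), a_y(h)$ -- and in a form robust enough to feed into the compactness argument of Part 2 despite the lack of stationarity of $\cau(T)$ -- is where I expect the bulk of the technical work to lie.
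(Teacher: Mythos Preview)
Your proposal shares the paper's core insight --- horizontal crossings at level $h$ cost more than the $2h$ vertical saving --- but the execution has real gaps.

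In Part 1, the claim that an excursion of a geodesic outside the wedge $T_v$ ``can always be shortened by a path staying inside $T_v$'' is false when the excursion exits through one boundary ray of $T_v$ and re-enters through the other: the wrap-around horizontal edge in $\cau(T)$ can make the outside route strictly shorter (take $Z_1=1$ for the extreme case). So ``the three vertices cannot all sit in a common level-$1$ subtree'' does not follow; all you get is that some given geodesic leaves its wedge. More seriously, your horizontal-cost lower bound at level $h$ is not valid: the intermediate population $\sum_{k\in(i,j)} Z_h^{v_k}$ vanishes whenever $v_i,v_j$ are cyclically adjacent (unavoidable if $Z_1\le 3$), and the term ``horizontal distance from $a_x(h)$ to the boundary'' is irrelevant because the geodesic at its minimum level need not be anywhere near the ancestor $a_x(h)$. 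This is exactly the boundary-ray obstruction you flag, and the ``short case analysis'' meant to exclude all three pairs from being degenerate simultaneously is not carried out --- it is where all the content lies.

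The paper sidesteps the case analysis. It isolates the estimate as a statement about \emph{slices}: in $\sli[v]$ (no wrap-around edge), any geodesic from the left boundary ray to the right boundary ray must cross every backbone subtree at its minimum height $h_0$, yielding $Z_{h_0}^{\back}\le 2h_0$, hence $h_0$ bounded. For the triangle statement the paper then fixes four vertices $a_1,\dots,a_4$ of $\back(T)$, none an ancestor of another; by pigeonhole one slice $\sli[a_i]$ contains none of $x,y,z$, so one of the three given geodesics either meets the short segment from $\rho$ to $a_i$ or crosses $\sli[a_i]$ from one boundary to the other, and the slice estimate applies directly. The ``four slices'' device is precisely what replaces your cyclic case analysis.

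Your Part 2 compactness extraction is sound and close in spirit to the paper's, but it rests on the same missing estimate: you need every geodesic from $r^+(n)$ to $r^-(n)$ to meet a fixed ball. The paper gets this again from the slice proposition (two slices $\sli[a_1],\sli[a_2]$ suffice: any such geodesic crosses one of them or hits a segment near $\rho$). Your choice of rays in ``non-adjacent level-$1$ subtrees'' does not secure this --- for $Z_1=3$ every pair of level-$1$ subtrees is cyclically adjacent --- so the construction needs to be reformulated in terms of slices.
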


These two results are very robust and hold in a much more general setting that includes the PSHIT. In particular, the second point for the PSHIT answers a question of Benjamini and Tessera \cite{BT17}. More general results are discussed in the end of this introduction.

\paragraph{Poisson boundary.}
The second goal of this work is to study the simple random walk on $\cau(T)$ and to identify its Poisson boundary. First note that $\cau(T)$ contains as a subgraph the supercritical Galton--Watson tree $T$, which is transient, so $\cau(T)$ is transient as well.
We recall the general definition of the Poisson boundary. Let $G$ be an infinite, locally finite graph,  and let $G \cup \partial G$ be a compactification of $G$, i.e. a compact metric space in which $G$ is dense. Let also $(X_n)$ be the simple random walk on $G$ started from $\rho$. We say that $\partial G$ is a \emph{realization of the Poisson boundary} of $G$ if the following two properties hold:
\begin{itemize}
\item
$(X_n)$ converges a.s.~to a point $X_{\infty} \in \partial G$,
\item
every bounded harmonic function $h$ on $G$ can be written in the form
\[h(x)=\mathbb{E}_x \left[ g \left( X_{\infty} \right) \right],\]
where $g$ is a bounded measurable function from $\partial G$ to $\R$.
\end{itemize}

We denote by $\partial T$ the space of infinite rays of $T$. If $\gamma, \gamma' \in \partial T$, we write $\gamma \sim \gamma'$ if $\gamma=\gamma'$ or if $\gamma$ and $\gamma'$ are two "consecutive" rays in the sense that there is no ray between them. Then $\sim$ is a.s.~an equivalence relation for which countably many equivalence classes have cardinal $2$ and all the others have cardinal $1$. We write $\widehat{\partial} T=\partial T / \sim$. There is a natural way to equip $\cau(T) \cup \widehat{\partial} T$ with a topology that makes it a compact space. We refer to Section \ref{causal_subsec_construct_poisson} for the construction of this topology, but we mention right now that $\widehat{\partial} T$ is homeomorphic to the circle, whereas $\partial T$ is homeomorphic to a Cantor set. The space $\cau(T) \cup \widehat{\partial} T$ can be seen as a compactification of the infinite graph $\cau(T)$. We show that this is a realization of its Poisson boundary.

\begin{thm}[Poisson boundary of $\cau(T)$]\label{thm_2_Poisson}
Almost surely:
\begin{enumerate}
\item
the limit $\lim (X_n)=X_{\infty}$ exists and its distribution has full support and no atoms in $\widehat{\partial} T$,
\item
$\widehat{\partial} T$ is a realization of the Poisson boundary of $\cau(T)$.
\end{enumerate}
\end{thm}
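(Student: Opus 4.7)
The plan is to combine three ingredients: transience of $\cau(T)$ (inherited from $T$), the branching and spatial Markov structure of $T$, and the metric results of Theorem \ref{thm_1_metric}. The three assertions---convergence of the walk to a point in $\widehat{\partial}T$, full support and absence of atoms of the limit law, and identification of the Poisson boundary---are treated in three successive steps.

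For the convergence $X_n \to X_\infty$, I would track the walk through its tree ancestors. Since $\cau(T)$ is transient (the subtree $T$ is already transient), for each level $L$ there is an a.s.\ finite time $\tau_L$ beyond which $X_n$ stays at level $>L$; for $n>\tau_L$ the ancestor $A_L(n)$ of $X_n$ at level $L$ is well defined and changes only when the walk crosses a horizontal edge joining two distinct level-$L$ subtrees. The goal is to show that $A_L(n)$ converges to some random $v_L^\infty$, so that the family $(v_L^\infty)_{L\ge 0}$ assembles into a ray (or pair of consecutive rays) defining $X_\infty\in\widehat{\partial}T$. The main input is that the interface between two consecutive level-$L$ subtrees $T_a, T_b$ consists of a single horizontal edge per deeper level; using the supercriticality of the further subtrees rooted inside $T_a$ and $T_b$ together with a Borel--Cantelli-type estimate, one should argue that once the walk has penetrated sufficiently deeply inside $T_a$, the probability of ever crossing back to $T_b$ through a deeper interface edge decays summably, ruling out infinite oscillation. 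This step relies crucially on the conditional independence of disjoint subtrees in the conditioned Galton--Watson tree.

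For full support, from any vertex the walk has positive probability of reaching any prescribed subtree (by a finite-path argument plus a transience bound), and iterating this along levels ensures that every open arc of $\widehat{\partial}T$ receives positive $\P$-mass. For absence of atoms, fix a ray $\xi$; at each level $L$ the walk has uniformly positive conditional probability of exiting the thin ``tube'' around $\xi$ (thanks to the supercritical branching next to the ray), and a conditional Borel--Cantelli across levels yields $\P(X_\infty=\xi)=0$ for every fixed $\xi$, which is sufficient since $\widehat{\partial}T$ carries no fixed countable set of exceptional points beyond $\{\xi\}$.

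For the identification of the Poisson boundary I would aim to apply (an adaptation of) Kaimanovich's strip criterion. Any bounded harmonic function $h$ produces a bounded martingale $h(X_n)$ converging a.s.\ to some $h_\infty$, and it suffices to show that $h_\infty$ is $\sigma(X_\infty)$-measurable. For almost every pair $(\xi^-,\xi^+)\in\widehat{\partial}T^{\,2}$ of boundary points arising as limits of the forward and time-reversed walks, I would take $S(\xi^-,\xi^+)$ to be a bounded neighborhood of a bi-infinite geodesic joining $\xi^-$ to $\xi^+$, whose existence is guaranteed by Theorem \ref{thm_1_metric}(2); weak anchored hyperbolicity (Theorem \ref{thm_1_metric}(1)) together with tree-theoretic bounds on the width of such tubes should control the growth of $|S(\xi^-,\xi^+)\cap B(\rho,n)|$. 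The main obstacle, and the step I expect to be the most delicate, is the absence of stationarity emphasized in the introduction: the Kaimanovich--Vershik machinery is formulated for stationary random walks, which is not available here. To bypass this I would exploit the spatial Markov property of $T$ to replace shift-invariance by a regeneration argument along a sequence of levels $L_k\to\infty$ at which the walk has committed to a shrinking sub-arc of $\widehat{\partial}T$: at each such level the continuation can be analyzed as a walk on a sub-causal-map with the same distributional structure, which lets one localize the entropy/strip estimates and compensate for the lack of a global invariant measure.
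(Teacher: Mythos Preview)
Your outline has the right overall architecture, but there are two genuine gaps, one in each of the hard steps.

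\textbf{Convergence of $X_n$.} Your Borel--Cantelli argument hinges on the claim that once the walk is ``sufficiently deep'' in a subtree $T_a$, the probability of crossing the interface back to $T_b$ decays summably. You have not said why this is true, and it is precisely the crux of the matter. The paper isolates this as a separate result (Proposition~\ref{slice_strong_transience}): in the causal slice $\sli$, there almost surely exists a vertex from which the walk has positive probability of never touching $\partial\sli$. This is proved by nontrivial effective-resistance estimates---a nonbacktracking spine, a family of disjoint cutsets $A_k$, and a planar duality argument reducing to transience of the dual slice $\sli^*$. Once this is in hand, the paper does \emph{not} run a Borel--Cantelli: instead it extracts from the harmonic function $f(v)=P_{\sli,v}(\tau_{\partial\sli}<\infty)$ an infinite path $(w_k)$ in the slice with $f(w_k)\to 0$, and observes that each horizontal crossing of the slice must cross this path, so the probability of crossing again after hitting $(w_k)_{k\ge k_0}$ is at most $\sup_{k\ge k_0}f(w_k)\to 0$. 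Your proposal is missing both the key input (why the slice is ``transient away from its boundary'') and the mechanism for converting it into finitely-many-crossings.

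\textbf{Identification of the Poisson boundary.} Here your route is much harder than necessary, and you correctly identify the obstacle you would face: the strip criterion is designed for walks with a stationary environment, and the regeneration-along-levels idea you sketch is vague and would require controlling entropy growth in a non-stationary setting. The paper bypasses all of this: once point~1 is established (convergence with full support and no atoms), point~2 follows immediately from a result of Hutchcroft and Peres~\cite{HP15} for planar graphs. No strip estimates, no entropy, no use of Theorem~\ref{thm_1_metric} is needed for this step. Your proposed use of bi-infinite geodesics and weak anchored hyperbolicity to build strips is an interesting idea, but it is both unnecessary and, as you note yourself, blocked by the lack of stationarity.

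Your treatments of full support and non-atomicity are broadly aligned with the paper's, though the paper's non-atomicity argument is phrased as $X_\infty\ne Y_\infty$ a.s.\ for two independent walks, via a uniform lower bound $\P(A_h\mid\f_h)\ge\delta$ on a concrete separation event (Lemma~\ref{lem_separation_XY}); this again leans on Proposition~\ref{slice_strong_transience}.
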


Note that, by a result of Hutchcroft and Peres \cite{HP15}, the second point will follow from the first one.

\paragraph{Positive speed.}
A natural and strong property shared by many models of hyperbolic graphs is the positive speed of the simple random walk. See for example \cite{LPP95} for supercritical Galton--Watson trees, and \cite{CurPSHIT, ANR14} for the PSHIT or their half-planar analogs. The third goal of this work is to prove that the simple random walk on $\cau(T)$ has a.s.~positive speed. Unfortunately, we have only been able to prove it in the case where $\mu(0)=0$, i.e. when the tree $T$ has no leaf. We recall that $(X_n)$ is the simple random walk on $\cau(T)$, and denote by $d_{\cau(T)}$ the graph distance on $\cau(T)$.
\begin{thm}[Positive speed on $\cau(T)$]\label{thm_3_positive}
If $\mu(0)=0$ and $\mu(1)<1$, then there is $v_{\mu}>0$ such that
\[\frac{d_{\mathcal{C}(T)} (\rho, X_n)}{n} \xrightarrow[n \to +\infty]{\mathrm{a.s.}} v_{\mu}.\]
\end{thm}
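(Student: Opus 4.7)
The plan is to first reduce Theorem~\ref{thm_3_positive} to a statement about the height process of the walk, by means of an elementary but crucial geometric identity, and then to prove positive drift of the height process via a regeneration argument along the ray to $X_\infty$ that exploits the branching independence of $T$.

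\textbf{Step 1 (Distance equals height).} For every vertex $x$ of $\cau(T)$ one has
\[
d_{\cau(T)}(\rho, x) = h(x),
\]
where $h(x)$ denotes the level of $x$ in $T$. Indeed, the tree ancestor path from $\rho$ to $x$ realizes the upper bound, and conversely every edge of $\cau(T)$ changes the height by at most $1$ in absolute value, so any path from height $0$ to height $h(x)$ has length at least $h(x)$. Hence Theorem~\ref{thm_3_positive} is equivalent to the statement $h(X_n)/n \to v_\mu > 0$ almost surely, i.e.\ to positive linear drift of the height process, which already gives the trivial upper bound $v_\mu \leq 1$.

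\textbf{Step 2 (Regeneration structure).} By Theorem~\ref{thm_2_Poisson}, $X_n \to X_\infty \in \widehat{\partial} T$ a.s., so $h(X_n) \to \infty$. Call an integer $k$ a \emph{regeneration height} if the walk visits level $k$ and never returns to any strictly smaller level afterwards; let $k_0 < k_1 < \cdots$ enumerate these heights and $\tau_j$ be the corresponding first visit time, so that $v_j^\star := X_{\tau_j}$ sits at level $k_j$. The set of regeneration heights is a.s.~infinite because $h(X_n) \to \infty$. After time $\tau_j$ the walk lives inside the slab $\m_{k_j} \subset \cau(T)$ of vertices of height $\geq k_j$, which is a horizontal cycle of length $Z_{k_j}$ with $Z_{k_j}$ independent copies of $T$ hanging off it; here the assumption $\mu(0)=0$ is used to ensure that each of these subtrees is a.s.\ infinite and distributed exactly as $T$. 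Combined with the strong Markov property, this branching independence shows that the increments $(\tau_{j+1}-\tau_j,\, k_{j+1}-k_j)$ form an i.i.d.\ sequence once one conditions on the width trajectory $(Z_{k_j})$, which is itself ergodic by the standard theory of Galton--Watson trees.

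\textbf{Step 3 (Finite expectation and conclusion).} The main obstacle is to prove $\E[\tau_1-\tau_0] < \infty$: since horizontal edges exist, the walk could a priori perform arbitrarily long excursions at a fixed level before climbing to the next regeneration. Here one uses $\mu(1) < 1$ to exclude arbitrarily long degenerate one-child segments of $T$ (which would otherwise produce $\Z \times \N$-like strips in $\cau(T)$ on which the walk is horizontally recurrent). More precisely, $\mu(1) < 1$ guarantees a positive density of vertices with $c(v) \geq 2$ children, where the instantaneous height drift is $(c(v)-1)/(c(v)+3) \geq 1/5$. Combined with an electrical-network argument in the slab $\m_{k_0}$ (which contains a supercritical Galton--Watson tree as a subgraph, hence carries a unit flow of finite energy from $v_0^\star$ to infinity), this produces exponential tails on the return time to level $k_0$ and therefore the integrability of $\tau_1 - \tau_0$. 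An easier variant yields $\E[k_1-k_0] < \infty$. The strong law of large numbers then gives
\[
\frac{k_j}{\tau_j} \xrightarrow[j\to\infty]{\mathrm{a.s.}} \frac{\E[k_1-k_0]}{\E[\tau_1-\tau_0]} =: v_\mu \in (0,\infty),
\]
and a standard sandwich (for $\tau_{j(n)} \leq n < \tau_{j(n)+1}$ one has $k_{j(n)} \leq h(X_n) < k_{j(n)+1}$) transfers this convergence to $h(X_n)/n \to v_\mu$ a.s., which combined with Step~1 closes the proof. The hardest point is the integrability of the regeneration epoch, precisely because the absence of a stationary environment for the walk on $\cau(T)$ (as emphasized in the introduction) rules out the direct use of the methods of \cite{LPP95, CurPSHIT}.
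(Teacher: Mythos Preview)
Your Step 1 is fine and is implicit in the paper. The two substantive steps, however, both contain genuine gaps.

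In Step 2, the regeneration increments on $\cau(T)$ are \emph{not} i.i.d. The slab above level $k_j$ is a cycle of $Z_{k_j}$ copies of $T$, and the law of the next increment depends on $Z_{k_j}$; moreover the sequence $(Z_{k_j})_j$ is not the Galton--Watson width process but its trace on regeneration heights, which are themselves walk-dependent, so invoking ``standard theory of Galton--Watson trees'' for ergodicity is circular. This is exactly the obstacle the paper circumvents by introducing the half-plane model $\hcau$ built from an infinite i.i.d.\ forest: there every level is a bi-infinite row with the same law, the regeneration structure is cleanly i.i.d.\ (Lemma~\ref{independence_regeneration}), and the transfer back to $\cau(T)$ goes through Theorem~\ref{thm_2_Poisson} (Lemma~\ref{hcau_to_cau}). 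Your attempt to do the regeneration directly on $\cau(T)$ does not provide a substitute for this.

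In Step 3, the integrability $\E[\tau_1-\tau_0]<\infty$ is the heart of the theorem, and ``an electrical-network argument \dots\ produces exponential tails on the return time'' is not justified: a finite-energy flow gives transience, not tail bounds, and the paper explicitly notes (in the introduction) that anchored expansion, which would yield such bounds, is not established for $\cau(T)$. The paper instead invests all of Section~\ref{causal_subsec_exploration} in a bespoke exploration of $\hcau$ along the walk, designed so that every newly discovered vertex is ``$k$-free'' on one side; this yields the quantitative Lemma~\ref{lem_bad_points} on $k$-bad points, from which a martingale/Azuma argument gives the quasi-positive speed estimates of Proposition~\ref{prop_quasi_positive}, and only then is $\E[\tau^1]<\infty$ deduced (Proposition~\ref{regeneration_expectation}). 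Your sketch skips precisely the part of the argument that controls long sojourns in square-lattice-like regions where $c(v)=1$ and the drift vanishes.
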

However, we expect to still have positive speed if $\mu(0)=0$. As mentioned above, this result is not obvious because of the lack of stationarity (for stationary graphs, the results of \cite{BCstationary} show that positive speed is equivalent to being non-Liouville under some mild assumptions).

\paragraph{The critical case.}
We note that similar properties have been studied in the critical case in \cite{CHN17}. The results of \cite{CHN17} show that the geometric properties of causal maps are closer to those of uniform random maps than to those of the trees from which they were built. This contrasts sharply with the supercritical case, where the properties of the causal map are very close to those of the associated tree. More precisely, in the finite variance case, the distance between vertices at some fixed height $r$ is $o(r)$, but $r^{1-o(1)}$. Moreover, the exponents describing the behaviour of the simple random walk are the same as for the square lattice, and different from the exponents we would obtain in a tree.

\paragraph{Robustness of the results and applications to other models.}
Another motivation to study causal maps is that many other models of random planar maps can be obtained by adding connections (and, in some cases, vertices) to a random tree. For example, the UIPT \cite{AS03} or its hyperbolic variants the PSHIT \cite{CurPSHIT} can be constructed from a reverse Galton--Watson tree or forest via the Krikun decomposition \cite{Kri04,  CLGmodif, B18}. Among all the maps that can be obtained from a tree $t$ in such a way that the branches of the tree remain geodesics, the causal map is the one with the "closest" connections, which makes it a useful toy model. The causal map may even provide general bounds for any map obtained from a fixed tree (we will see such applications in this paper, see also \cite{C18+} for applications to uniform planar maps via the Krikun decomposition).

Here, the causal maps $\cau(T)$ fit in a more general framework. We define a \emph{strip} as an infinite, one-ended planar map $s$ with exactly one infinite face, such that the infinite face has a simple boundary $\partial s$, and equipped with a root vertex on the boundary on the infinite face. If $t$ is an infinite tree with no leaf and $(s_i)_{i \in \N}$ is a sequence of strips, let $\m \left( t, (s_i) \right)$ be the map obtained by filling the (infinite) faces of $t$ with the strips $s_i$ (see Section \ref{causal_sec_setting} for a more careful construction). Some of our results can be generalized to random maps of the form $\m \left( \mathbf{T}, (s_i) \right)$, where $\mathbf{T}$ is a supercritical Galton--Watson tree with no leaf, and the $s_i$ are strips (which may depend on $\mathbf{T})$.

By the backbone decomposition for supercritical Galton--Watson trees (that we recall in Section \ref{causal_sec_setting}), the maps of the form $\cau(T)$, where $T$ is a supercritical Galton--Watson tree with leaves, are a particular case of this construction. The results of \cite{B18} prove that the PSHIT $\mathbb{T}_{\lambda}$ can also be obtained by this construction: the tree $\mathbf{T}$ is then the tree of infinite leftmost geodesics of $\mathbb{T}_{\lambda}$ and has geometric offspring distribution. 

We will show that Theorem \ref{thm_1_metric} is very robust and applies to this general context, see Theorem \ref{thm_1_bis}. A particular case of interest are the PSHIT. In particular, point 2 of Theorem \ref{thm_1_metric} for the PSHIT answers a question of Benjamini and Tessera \cite{BT17}.

As for causal maps, any map of the form $\m \left( \mathbf{T}, (s_i) \right)$ contains the transient graph $\mathbf{T}$, so it is transient itself.
Most of our proof of Theorem \ref{thm_2_Poisson} can also be adapted to the general setting where the strips $s_i$ are i.i.d. and independent of $\mathbf{T}$. However, Theorem \ref{thm_2_Poisson} cannot be true if the strips $s_i$ are too large (for example if themselves have a non-trivial Poisson boundary). On the other hand, we can still show that the Poisson boundary is non-trivial. See Theorem \ref{thm_2_bis} for a precise statement, and Figure \ref{summary_conjectures} for a summary of the results proved in this paper and the results left to prove.

As we will see later, Theorem \ref{thm_2_bis} is not strictly speaking more general than Theorem \ref{thm_2_Poisson}, since the strips used to construct $\cau(T)$ from the backbone of $T$ are not completely independent.
On the other hand, once again, the PSHIT satisfy these assumptions (up to a root transformation, since the strip containing the root has a slightly different distribution). However, it was already known that the PSHIT are non-Liouville (see \cite{CurPSHIT}, or \cite{AHNR15} for another identification of the Poisson boundary via circle packings). We also prove in \cite{B18}, by a specific argument based on the peeling process, that $\widehat{\partial} \mathbf{T}$ is indeed a realization of the Poisson boundary in the case of the PSHIT.

\begin{figure}
\begin{center}
\begin{tabular}{|l|c|c|c|}
\hline
& non-Liouville & $\widehat{\partial} \mathbf{T}$ is the Poisson boundary & positive speed\\
\hline
$T$ & \color{darkgreen}{\cmark} & \color{darkgreen}{\cmark} & \color{darkgreen}{\cmark} \\
\hline
$\cau(T)$ (if $\mu(0)=0$) & \color{darkgreen}{\cmark} & \color{darkgreen}{\cmark} & \color{darkgreen}{\cmark} \\
\hline
$\cau(T)$ (if $\mu(0)>0$) & \color{darkgreen}{\cmark} & \color{darkgreen}{\cmark} & \color{blue}{\textbf{?}} \\
\hline
PSHIT & \color{darkgreen}{\cmark} & \color{darkgreen}{\cmark} & \color{darkgreen}{\cmark} \\
\hline
$\underset{\mbox{bounded-degree}}{\mbox{$\m$ with $(S_i)$ i.i.d., recurrent,}}$ & \color{darkgreen}{\cmark} & \color{darkgreen}{\cmark} & \color{red}{\xmark} \\
\hline
$\m$ with $(S_i)$ i.i.d., recurrent & \color{darkgreen}{\cmark} & \color{blue}{\textbf{?}} & \color{red}{\xmark} \\
\hline
$\m$ with $(S_i)$ i.i.d. & \color{darkgreen}{\cmark} & \color{red}{\xmark} & \color{red}{\xmark} \\
\hline
general $\m$ & \color{red}{\xmark} & \color{red}{\xmark} & \color{red}{\xmark} \\
\hline
\end{tabular}
\end{center}
\caption{The symbol {\color{darkgreen}{\cmark}} means that the property is proved in an earlier work or in this one. The symbol {\color{blue}{\textbf{?}}} indicates properties that we believe to be true but did not prove in this paper, and the symbol {\color{red}{\xmark}} means the property is false in general. See Section \ref{causal_sec_counter} for a quick description of some counterexamples.}\label{summary_conjectures}
\end{figure}

\paragraph{Structure of the paper.}
The paper is structured as follows. In Section \ref{causal_sec_setting}, we fix some definitions and notations that will be used in all the rest of this work, and recall the backbone decomposition of supercritical Galton--Watson trees. In Section \ref{causal_sec_metric}, we investigate metric properties and establish Theorem \ref{thm_1_bis}, of which Theorem \ref{thm_1_metric} is a particular case. Section \ref{causal_sec_poisson} is devoted to the study of the Poisson boundary and to the proof of Theorems \ref{thm_2_Poisson} and \ref{thm_2_bis}. In Section \ref{causal_sec_speed}, we prove Theorem \ref{thm_3_positive} about positive speed. Finally, in Section \ref{causal_sec_counter}, we discuss some counterexamples related to Figure \ref{summary_conjectures}, and state a few conjectures.

\paragraph{Acknowledgments:} I thank Nicolas Curien for his comments on earlier versions of this work, Arvind Singh for explanations about renewal theory, and Itai Benjamini for providing the reference \cite{Ben14}. I am grateful to the anonymous referee for pointing out that the third item of Theorem \ref{thm_2_bis} could also be proved. I acknowledge the support of ANR Liouville (ANR-15-CE40-0013), ANR GRAAL (ANR-14-CE25-0014) and ERC GeoBrown (740943).

\tableofcontents

\section{General framework and the backbone decomposition}\label{causal_sec_setting}

The goal of this first section is to give definitions and notations, and to make a few useful remarks that will be needed in all the paper. All our constructions will be based on infinite, locally finite plane trees. We insist that the plane tree structure is important to define the associated causal map. We will use normal letters to denote general infinite trees, and bold letters like $\mathbf{t}$ for trees with no leaf. All the trees will be rooted at a vertex $\rho$. If $v$ is a vertex of a tree $t$, we denote by $h(v)$ its distance to the root, which we will sometimes call its \emph{height}. 
A \emph{ray} in a tree $t$ is an infinite sequence $\left( \gamma(i) \right)_{i \geq 0}$ of vertices such that $\gamma(0)=\rho$, and $\gamma(i+1)$ is a child of $\gamma(i)$ for every $i \geq 0$.
If $t$ is an infinite tree, the \emph{backbone} of $t$ is the union of its rays, i.e. the set of the vertices of $t$ that have infinitely many descendants. We will denote it by $\back(t)$, and we note that $\back(t)$ is always an infinite tree with no leaf.

We recall that if $t$ is an infinite plane tree, then $\cau(t)$ is the map obtained from $t$ by adding horizontal edges at every height between consecutive vertices.
We also define the \emph{causal slice} $\sli(t)$ associated to $t$, which will be used a lot in all that follows. Let $\gamma_{\ell}$ (resp. $\gamma_r$) be the leftmost (resp. rightmost) infinite ray of $\back(t)$. Then $\sli(t)$ is the map obtained from $t$ by deleting all the vertices on the left of $\gamma_{\ell}$ and on the right of $\gamma_r$, and by adding the same horizontal edges as for $\cau(t)$ between the remaining vertices, except the edge between $\gamma_{\ell}$ and $\gamma_r$ at each level (cf. Figure \ref{fig_def_slice}). The union of $\gamma_{\ell}$ and $\gamma_r$ is the \emph{boundary} of $\sli$, and is written $\partial \sli$.

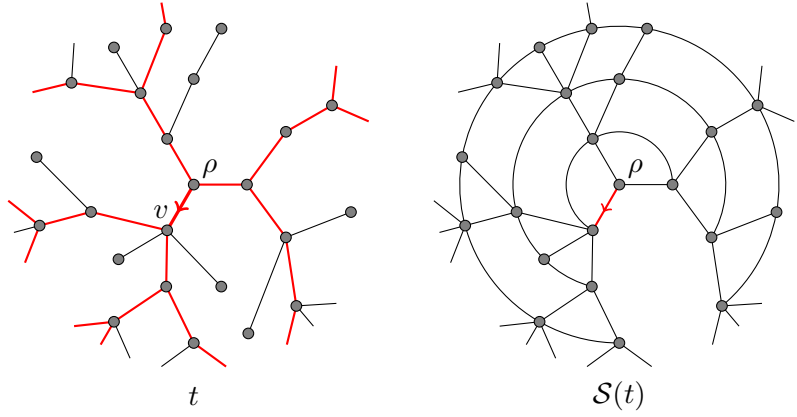
\begin{figure}
\begin{center}
\begin{tikzpicture}[scale=0.7]

\draw[red, thick](0,0)--(0:1);
\draw[red, thick](0,0)--(120:1);
\draw[very thick, red, ->](0,0)--(240:0.6);
\draw[very thick, red](240:0.5)--(240:1);
\draw[red, thick](0:1)--(30:2);
\draw[red, thick](0:1)--(330:2);
\draw(120:1)--(90:2);
\draw[red, thick](120:1)--(120:2);
\draw[red, thick](240:1)--(195:2);
\draw(240:1)--(225:2);
\draw[red, thick](240:1)--(255:2);
\draw(240:1)--(285:2);
\draw[red, thick](30:2)--(30:3);
\draw(330:2)--(350:3);
\draw[red, thick](330:2)--(310:3);
\draw(330:2)--(290:3);
\draw(90:2)--(80:3);
\draw[red, thick](120:2)--(100:3);
\draw(120:2)--(120:3);
\draw[red, thick](120:2)--(140:3);
\draw(195:2)--(170:3);
\draw[red, thick](195:2)--(195:3);
\draw[red, thick](255:2)--(240:3);
\draw[red, thick](255:2)--(270:3);
\draw[red, thick](310:3)--(300:3.5);
\draw(310:3)--(310:3.5);
\draw(310:3)--(320:3.5);
\draw[red, thick](30:3)--(20:3.5);
\draw[red, thick](30:3)--(40:3.5);
\draw[red, thick](100:3)--(100:3.5);
\draw(140:3)--(130:3.5);
\draw[red, thick](140:3)--(150:3.5);
\draw[red, thick](195:3)--(185:3.5);
\draw(195:3)--(195:3.5);
\draw[red, thick](195:3)--(205:3.5);
\draw[red, thick](240:3)--(230:3.5);
\draw[red, thick](240:3)--(240:3.5);
\draw(240:3)--(250:3.5);
\draw(270:3)--(260:3.5);
\draw[red, thick](270:3)--(280:3.5);

\draw(0,0) node{};
\draw(0:1) node{};
\draw(120:1) node{};
\draw(240:1) node{};
\draw(330:2) node{};
\draw(30:2) node{};
\draw(90:2) node{};
\draw(120:2) node{};
\draw(195:2) node{};
\draw(225:2) node{};
\draw(255:2) node{};
\draw(285:2) node{};
\draw(290:3) node{};
\draw(310:3) node{};
\draw(350:3) node{};
\draw(30:3) node{};
\draw(80:3) node{};
\draw(100:3) node{};
\draw(120:3) node{};
\draw(140:3) node{};
\draw(170:3) node{};
\draw(195:3) node{};
\draw(240:3) node{};
\draw(270:3) node{};

\draw(0,-4) node[texte]{$t$};
\draw(0.3,0.3) node[texte]{$\rho$};
\draw(220:0.8) node[texte]{$v$};

\begin{scope}[shift={(8,0)}]
\draw(0,0)--(0:1);
\draw(0,0)--(120:1);
\draw[thick, red, ->](0,0)--(240:0.6);
\draw[thick, red](240:0.5)--(240:1);
\draw(0:1)--(30:2);
\draw(0:1)--(330:2);
\draw(120:1)--(90:2);
\draw(120:1)--(120:2);
\draw(240:1)--(195:2);
\draw(240:1)--(225:2);
\draw(240:1)--(255:2);
\draw(330:2)--(310:3);
\draw(330:2)--(350:3);
\draw(30:2)--(30:3);
\draw(90:2)--(80:3);
\draw(120:2)--(100:3);
\draw(120:2)--(120:3);
\draw(120:2)--(140:3);
\draw(195:2)--(170:3);
\draw(195:2)--(195:3);
\draw(255:2)--(240:3);
\draw(255:2)--(270:3);
\draw(310:3)--(300:3.5);
\draw(310:3)--(310:3.5);
\draw(310:3)--(320:3.5);
\draw(30:3)--(20:3.5);
\draw(30:3)--(40:3.5);
\draw(100:3)--(100:3.5);
\draw(140:3)--(130:3.5);
\draw(140:3)--(150:3.5);
\draw(195:3)--(185:3.5);
\draw(195:3)--(195:3.5);
\draw(195:3)--(205:3.5);
\draw(240:3)--(230:3.5);
\draw(240:3)--(240:3.5);
\draw(240:3)--(250:3.5);
\draw(270:3)--(260:3.5);
\draw(270:3)--(280:3.5);

\draw(0:1) arc (0:240:1);
\draw(-30:2) arc (-30:255:2);
\draw(-50:3) arc (-50:270:3);

\draw(0,0) node{};
\draw(0:1) node{};
\draw(120:1) node{};
\draw(240:1) node{};
\draw(330:2) node{};
\draw(30:2) node{};
\draw(90:2) node{};
\draw(120:2) node{};
\draw(195:2) node{};
\draw(225:2) node{};
\draw(255:2) node{};
\draw(310:3) node{};
\draw(350:3) node{};
\draw(30:3) node{};
\draw(80:3) node{};
\draw(100:3) node{};
\draw(120:3) node{};
\draw(140:3) node{};
\draw(170:3) node{};
\draw(195:3) node{};
\draw(240:3) node{};
\draw(270:3) node{};

\draw(0,-4) node[texte]{$\sli(t)$};
\draw(0.3,0.3) node[texte]{$\rho$};
\end{scope}

\end{tikzpicture}
\end{center}
\caption{The same infinite tree $t$ as on Figure \ref{fig_def_causal} and the associated causal slice $\sli(t)$. Note that two vertices have been deleted. On the left part, the backbone of $t$ is in red. We have $c(v)=4$, $c_{\mathbf{B}}(v)=2$ and $A_v=\{2,4\}$.}\label{fig_def_slice}
\end{figure}

In all this work, $\mu$ will denote a supercritical offspring distribution, i.e. a distribution satisfying $\sum_{i \geq 0} i \mu(i)>1$, and $T$ will be a Galton--Watson tree with offspring distribution $\mu$ conditioned to survive. For every $n \geq 0$, we will denote by $Z_n$ the number of vertices of $T$ at height $n$. We will write $\cau$ for $\cau(T)$ and $\sli$ for $\sli(T)$, unless stated otherwise.

If $v$ is a vertex of $\back(T)$, we will denote by $T[v]$ the tree of descendants of $v$ in $T$, and by $\sli[v]$ the causal slice associated to $T[v]$. An important consequence of the backbone decomposition stated below is that for each $v \in T$, conditionally on $v  \in \mathbf{B}(T)$, the slice $\sli[v]$ has the same distribution as $\sli$. Moreover, these slices are independent for base points that are not ancestors of each other.

If $G$ is a graph rooted at a vertex $\rho$, we will denote by $d_G$ its graph distance and by $B_r(G)$ (resp. $\partial B_r(G)$) the set of vertices of $G$ at distance at most $r$ (resp. exactly $r$) from $\rho$. Note that the vertices of $B_r(T)$ and of $B_r(\cau)$ are the same.
For any vertex $v$ of $T$, we also denote by $c_T(v)$ (or by $c(v)$ when there is no ambiguity) the number of children of $v$ in $T$. Note that the degree of $v$ in $\cau$ is equal to $c_T(v)+3$ if $v \ne \rho$, and to $c_T(v)$ if $v=\rho$.
For every graph $G$ and every vertex $v$ of $G$, we will denote by $P_{G,v}$ the distribution of the simple random walk on $G$ started from $v$.

We now recall the backbone decomposition for supercritical Galton--Watson trees conditioned to survive, as it appears e.g. in \cite{L92}. Let $f$ be the generating function of $\mu$, i.e. $f(x)=\sum_{i \geq 0} \mu(i) x^i$. Let also $q$ be the extinction probability of a Galton--Watson tree with offspring distribution $\mu$, i.e. the smallest fixed point of $f$ in $[0,1]$. We define $\boldsymbol{f}$ and $\widetilde{f}$ by
\begin{equation}\label{several_offspring}
\boldsymbol{f}(s)=\frac{f(q+(1-q)s)-q}{1-q} \quad \mbox{ and } \quad \widetilde{f}(s)=\frac{f(qs)}{q}
\end{equation}
for $s \in [0,1]$. Then $\boldsymbol{f}$ is the generating function of a supercritical offspring distribution $\boldsymbol{\mu}$ with $\boldsymbol{\mu}(0)=0$, and $\widetilde{f}$ is the generating function of a subcritical offspring distribution $\widetilde{\mu}$. For every vertex $x$ of $\back(T)$, we denote by $A_x$ the set of indices $1 \leq i \leq c(x)$ such that the $i$-th child of $x$ is in the backbone, and we write $c_{\back}(x)=|A_x|$. Finally, let $\back'(T)$ be the set of vertices $y$ of $T$ such that the parent of $y$ is in $\back(T)$, but $y$ is not. The following result characterizes entirely the distribution of $T$.

\begin{thm}\label{backbone_decomposition}
\begin{enumerate}
\item
The tree $\back(T)$ is a Galton--Watson tree with offspring distribution $\boldsymbol{\mu}$.
\item
Conditionally on $\back(T)$, the variables $c(x)-c_{\back}(x)$ are independent, with distribution characterized by
\[ \E \left[ s^{c(x)-c_{\back}(x)} \big| \back(T) \right]=\frac{f^{(c_{\back}(x))}(qs)}{f^{(c_{\back}(x))}(q)}\]
for every $s \in [0,1]$, where $f^{(k)}$ stands for the $k$-th derivative of $f$.
\item
Conditionally on $\back(T)$ and the variables $c(x)$ for $x \in \back(T)$, the sets $A_x$ for $x \in \back(T)$ are independent and, for every $x$, the set $A_x$ is uniformly distributed among all the subsets of $\{1,2, \dots, c(x)\}$ with $c_{\back}(x)$ elements.
\item
Conditionally on everything above, the trees $T[y]$ for $y \in \back'(T)$ are independent Galton--Watson trees with offspring distribution $\widetilde{\mu}$.
\end{enumerate}
\end{thm}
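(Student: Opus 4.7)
The strategy is to realize $T$ as an unconditioned $\mu$-Galton--Watson tree $\widetilde{T}$ conditioned on the event $\{\widetilde{T}\text{ is infinite}\}$, which has probability $1-q$. The key observation is that for any vertex $x$, membership of $x$ in $\back(\widetilde{T})$ depends only on the subtree $\widetilde{T}[x]$, so by the branching property, given $c(x)=k$, each of the $k$ children is independently in $\back(\widetilde{T})$ with probability $1-q$, and the subtree of a non-backbone child has the law of a $\mu$-Galton--Watson tree conditioned on extinction, independently of everything else.

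With this reduction, all four parts become essentially Bayes computations at a single backbone vertex. For any $x \in \back(\widetilde{T})$ one has
\[ \P\bigl(c(x)=k,\, c_{\back}(x)=j \,\big|\, x \in \back(\widetilde{T})\bigr) = \frac{\mu(k) \binom{k}{j}(1-q)^j q^{k-j}}{1-q} \]
for $j \geq 1$. Summing over $k$ and using the identity $\sum_{k \geq j} \mu(k) \binom{k}{j} q^{k-j} = f^{(j)}(q)/j!$ yields $\P(c_{\back}(x)=j) = (1-q)^{j-1} f^{(j)}(q)/j!$, which one recognizes as the coefficient of $s^j$ in the Taylor expansion of $\boldsymbol{f}(s) = (f(q+(1-q)s) - q)/(1-q)$, proving 1. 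Dividing the joint law by this marginal and performing the analogous expansion at $qs$ instead of $q$ produces the generating function identity of 2. Point 3 follows from the exchangeability of siblings in $\widetilde{T}$: conditional on $(c(x),c_{\back}(x))=(k,j)$, the positions of the surviving children form a uniformly random $j$-subset of $\{1,\ldots,k\}$. For 4, the same Bayes approach shows that the root offspring in a $\mu$-Galton--Watson tree conditioned on extinction has generating function $\sum_k \mu(k) q^{k-1} s^k = f(qs)/q = \widetilde{f}(s)$, and the tree is then $\widetilde{\mu}$-Galton--Watson by induction on the height.

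To globalize, I reveal $\back(T)$ level by level, using that for any $x \in \back(T)$ the subtree $T[x]$ is itself an independent $\mu$-Galton--Watson tree conditioned to survive, so the single-vertex analysis applies identically at every backbone vertex. The asserted conditional independences across backbone vertices then follow because in $\widetilde{T}$ the offspring numbers at distinct vertices are independent, and the events $\{y \in \back(\widetilde{T})\}$ as well as the subtrees $\widetilde{T}[y]$ for distinct children $y$ of a given $x$ depend on disjoint pieces of $\widetilde{T}$. The main obstacle is not any single computation --- each generating function identity is routine --- but rather the bookkeeping required to keep all the conditionings straight so that the asserted conditional laws and independences become manifest simultaneously.
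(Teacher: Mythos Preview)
The paper does not actually prove this theorem: it is quoted as a known result and attributed to \cite{L92}. Your argument is the standard one and is correct. The single-vertex computations all check out --- the identification of $\boldsymbol{\mu}(j)=(1-q)^{j-1}f^{(j)}(q)/j!$ via the Taylor expansion of $\boldsymbol{f}$ at $q$, the generating-function identity for $c(x)-c_{\back}(x)$ given $c_{\back}(x)=j$, the uniform-subset claim from exchangeability, and the $\widetilde{\mu}$ law for extinction-conditioned trees. The globalization by revealing $\back(T)$ level by level is only sketched, but the sketch is sound: by the branching property of the unconditioned tree $\widetilde{T}$, the subtrees hanging from the vertices at any given height are conditionally independent given the structure below, and backbone membership of a child depends only on its own subtree, so the single-vertex analysis propagates and the asserted conditional independences across $x\in\back(T)$ follow.
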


In particular, this decomposition implies that, for every $h \geq 1$, conditionally on $B_h(T)$ and on the set $\partial B_h(T) \cap \back(T)$, the trees $T[x]$ for $x \in \partial B_h(T) \cap \back(T)$ are i.i.d. copies of $T$. Therefore, the slices $\sli[x]$ for $x \in \partial B_h(T) \cup \back(T)$ are i.i.d. copies of $\sli$. This "self-similarity" property of $\sli$ will be used a lot later on.

We end this section by adapting these notions to the more general setting of strips glued in the faces of a tree with no leaf. We recall that a \emph{strip} is an infinite, one-ended planar map with an infinite, simple boundary, such that all the faces except the outer face have finite degree. A strip is also rooted at a root vertex on its boundary. Let $\mathbf{t}$ be an infinite plane tree with no leaf. We draw $\mathbf{t}$ in the plane in such a way that its edges do not intersect (except at a common endpoint), and every compact subset of the plane intersects finitely many vertices and edges. Then $\mathbf{t}$ separates the plane into a countable family $(f_i)_{i \geq 0}$ of faces, where $f_0$ is the face delimited by the leftmost and the rightmost rays of $\mathbf{t}$, and the other faces are enumerated in a deterministic fashion. For every index $i \geq 0$, we denote by $\rho_i$ the lowest vertex of $\mathbf{t}$ adjacent to $f_i$, and by $h_i$ its height. Note that this vertex is always unique. On the other hand, for every vertex $v$ of $\mathbf{t}$, there are exactly $c_{\mathbf{t}}(v)-1$ faces $f_i$ such that $\rho_i=v$.

Let $(s_i)_{i \geq 0}$ be a family of random strips. We denote by $\m \left( \mathbf{t}, (s_i)_{i \geq 0} \right)$ the infinite planar map obtained by gluing $s_i$ in the face $f_i$ for every $i \geq 0$, in such a way that the root vertex of $s_i$ coincides with $\rho_i$ for every $i$. We also denote by $\sli \left( \mathbf{t}, (s_i)_{i \geq 0} \right)$ the map obtained by gluing $s_i$ in the face $f_i$ for every $i > 0$ (this is a map with an infinite boundary analog to the slice $\sli$).
If $v$ is a vertex of $\mathbf{t}$, we also define the "slice of descendants" of $v$ as the map enclosed between the leftmost and the rightmost rays of $\mathbf{t}$ started from $v$. We denote it by $\sli \left( \mathbf{t}, (s_i) \right) [v]$.

We note that causal maps are a particular case of this construction. This is trivial for supercritical Galton--Watson trees with no leaf. Thanks to Theorem \ref{backbone_decomposition}, this can be extended to the case $\mu(0)>0$, with $\back(T)$ playing the role of $\mathbf{t}$. This time, however, the strips are random, but they are not independent. Indeed, if $v$ is a vertex of $\back(T)$ and $w$ one of its children in $\back(T)$, the children of $v$ on the left of $w$ and the children on the right of $w$ belong to different strips. However, by points 2 and 3 of Theorem \ref{backbone_decomposition}, the numbers of children on the left and on the right are not independent, except in some very particular cases (for example if $\mu$ is geometric).

In what follows, we will study maps of the form $\m \left( \mathbf{T}, (s_i)_{i \geq 0} \right)$, where $\mathbf{T}$ is a supercritical Galton--Watson tree with no leaf.  We notice right now that if the strips $s_i$ are random and i.i.d., then the slice $\sli \left( \mathbf{T}, (s_i) \right)$ has the same self-similarity property as the causal slices of the form $\sli(T)$. Let $h>0$. We condition on $B_h(\mathbf{T})$ and on all the strips $s_i$ such that $h_i \leq h-1$. Then the trees $\mathbf{T}[v]$ for $v \in \partial B_h(\mathbf{T})$ are independent copies of $\mathbf{T}$, so the slices $\sli \left( \mathbf{T}, (s_i) \right)[v]$ for $v \in \partial B_h(\mathbf{T})$ are i.i.d. copies of $\sli \left( \mathbf{T}, (s_i) \right)$. This will be useful in Section \ref{causal_sec_poisson}.

\section{Metric hyperbolicity properties}\label{causal_sec_metric}

The goal of this section is to prove the following result, of which Theorem \ref{thm_1_metric} is a particular case. Note that we make no assumption about the strips $s_i$ below. In particular, they may be deterministic or random, and may depend on the tree $\mathbf{T}$.

\begin{thm}\label{thm_1_bis}
Let $\mathbf{T}$ be a supercritical Galton--Watson tree with no leaf, and let $(s_i)$ be a sequence of strips. Then:
\begin{enumerate}
\item
the map $\m \left( \mathbf{T}, (s_i) \right)$ is a.s.~weakly anchored hyperbolic,
\item
the map $\m \left( \mathbf{T}, (s_i) \right)$ a.s.~admits bi-infinite geodesics.
\end{enumerate}
\end{thm}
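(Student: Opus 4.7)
The plan is to prove both items through a single key distance estimate that exploits the tree $\mathbf{T}$ inside $\mathcal{M}$. By the backbone decomposition and supercriticality of $\mathbf{T}$, I would find almost surely a vertex $\rho' \in \mathbf{T}$ at some finite random height and two infinite rays $\xi_1, \xi_2$ of $\mathbf{T}$ emanating from $\rho'$ through distinct children, chosen so that the two half-planes cut out by the bi-infinite path $\xi_1 \cup \xi_2$ in the planar embedding each contain exponentially many tree vertices at every height. For instance, if some ancestor of $\rho$ has three or more infinite children in $\mathbf{T}$ we may take $\xi_1, \xi_2$ through non-outermost such children; in the binary case we can instead take $\rho'$ to be a child of $\rho$ and $\xi_1, \xi_2$ to be the leftmost and rightmost rays of $\mathbf{T}[\rho']$. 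The key estimate to establish is
\[
d_{\mathcal{M}}(\xi_1(n), \xi_2(n)) \geq 2(n - h(\rho')) - C \quad \text{a.s.},
\]
for some random constant $C$. The proof uses planarity: a short path from $\xi_1(n)$ to $\xi_2(n)$ in $\mathcal{M}$, concatenated with the tree path back through $\rho'$, forms a closed curve enclosing tree vertices on one side of $\xi_1 \cup \xi_2$; the exponential growth of $\mathbf{T}$ on both sides, combined with the observation that a shortcut through a given strip $s_i$ only connects vertices on the boundary of one face of $\mathbf{T}$, forces such a path to descend nearly to $\rho'$.

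Combined with the upper bound $d_{\mathcal{M}}(\xi_1(n), \xi_2(n)) \leq 2(n - h(\rho'))$ given by the tree path, the key estimate sandwiches the length of any geodesic $\gamma_n$ between $\xi_1(n)$ and $\xi_2(n)$. Applying the same argument to pairs $(\xi_i(n), \rho')$ gives lower bounds on $d_{\mathcal{M}}(\xi_i(n), \rho')$ matching the trivial upper bound $n - h(\rho')$; the triangle inequality then forces $\gamma_n$ to contain a vertex at bounded $\mathcal{M}$-distance from $\rho'$. Since the corresponding ball is finite, a standard diagonal extraction from $(\gamma_n)$ yields a bi-infinite sequence $(\gamma(i))_{i \in \mathbb{Z}}$ satisfying $d_{\mathcal{M}}(\gamma(i), \gamma(j)) = |i - j|$, which proves item~2.

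For item~1 (weak anchored hyperbolicity), I would argue using a generalization of the key estimate applicable to any pair of tree vertices that is angularly separated in the planar embedding. Given a geodesic triangle surrounding $\rho$, planarity forces its three vertices $x, y, z$ to be spread around $\rho$, so at least one pair, say $(x, y)$, is separated by a pair of tree rays playing the role of $(\xi_1, \xi_2)$. The corresponding distance lower bound $d_{\mathcal{M}}(x, y) \geq d_{\mathcal{M}}(x, \rho) + d_{\mathcal{M}}(\rho, y) - C'$ combines with the triangle inequality on $\gamma_{xy}$ to force that geodesic to pass within bounded distance $k = O(C')$ of $\rho$, giving the required uniform constant.

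The hardest step is without doubt the key distance estimate: the strips $s_i$ may be arbitrary and even random with dependence on $\mathbf{T}$, so we have no a priori control on the shortcuts they provide. The core of the argument lies in the planarity constraint on how shortcuts compose across different strips (each shortcut lives in a single face of $\mathbf{T}$, whose boundary is made of two adjacent tree rays), combined with the exponential branching of $\mathbf{T}$ which ensures enough separating rays between $\xi_1$ and $\xi_2$. Turning this intuition into a clean combinatorial or topological estimate, uniform over the realization of the strips, is the main technical challenge.
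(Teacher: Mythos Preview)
Your key estimate
\[
d_{\mathcal{M}}(\xi_1(n), \xi_2(n)) \geq 2(n - h(\rho')) - C
\]
is false when the strips are arbitrary. Take the strip $s$ whose left boundary is $\xi_1$ and add inside it an edge from $\xi_1(1)$ to $\xi_1(k)$ for every $k\geq 2$ (a planar fan). Then $d_{\mathcal{M}}(\xi_1(n),\rho')\leq d_{\mathcal{M}}(\xi_1(n),\xi_1(1))+d_{\mathcal{M}}(\xi_1(1),\rho')=O(1)$, so by the triangle inequality $d_{\mathcal{M}}(\xi_1(n),\xi_2(n))\leq O(1)+n$, not $2n-C$. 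The paper notes this obstacle explicitly: in the general setting ``the branches of the tree are no longer geodesics in $\sli$'', so the height along a tree ray gives no lower bound on $\mathcal{M}$-distance. Your argument only goes through in the pure causal case, where tree branches \emph{are} geodesics.

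There is a second gap that would remain even if the estimate held. From $d_{\mathcal{M}}(\xi_1(n),\xi_2(n))\geq d_{\mathcal{M}}(\xi_1(n),\rho')+d_{\mathcal{M}}(\rho',\xi_2(n))-C$ you cannot conclude by the triangle inequality alone that the geodesic $\gamma_n$ passes within $O(C)$ of $\rho'$: that implication is precisely what fails outside Gromov-hyperbolic spaces (think of $x=(-n,0)$, $y=(n,0)$, $z=(0,\sqrt{n})$ in $\mathbb{R}^2$), and hyperbolicity is what you are trying to prove.

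The paper's approach avoids both problems by proving a statement about geodesics directly rather than about distances. Proposition~\ref{geodesics_contains_bdd_point} says that any geodesic in the slice $\sli$ from $\gamma_\ell$ to $\gamma_r$ contains a vertex within a random $K$ of the apex. The proof tracks, for a geodesic $\gamma$, the last time it touches $\mathbf{T}$ at height $\leq i$, and compares its horizontal progress to a pair of ``escaping sequences'' in $\mathbf{T}$; the point is that a single step of $\gamma$ can cross at most one strip, so $\gamma$ crosses at most $|\gamma|\leq 2i+1$ of the slices $\sli[v]$ with $v\in\partial B_i(\mathbf{T})$, while supercriticality makes the number of such slices grow exponentially (Lemma~\ref{LfarfromR}). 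From Proposition~\ref{geodesics_contains_bdd_point}, item~1 follows by choosing four slices around $\rho$ and forcing one side of any surrounding triangle to cross one of them. For item~2, since the tree rays $\gamma_\ell,\gamma_r$ need not be geodesics, the paper first extracts genuine geodesic rays $\widetilde{\gamma}_\ell,\widetilde{\gamma}_r$ from $\rho$, proves the Busemann-type quantity $a_{i,j}=i+j-d_{\mathcal{M}}(\widetilde{\gamma}_\ell(i),\widetilde{\gamma}_r(j))$ is bounded (Lemma~\ref{aij_bounded}, a consequence of Proposition~\ref{geodesics_contains_bdd_point}), and then splices the two rays at a maximizer of $a_{i,j}$.
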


In all this section, we will only deal with the general case of $\m \left( \mathbf{T}, (s_i) \right)$ where $\mathbf{T}$ is a supercritical Galton--Watson tree with no leaf and the $s_i$ are strips. We will write $\m$ for $\m \left( \mathbf{T}, (s_i) \right)$ and $\sli$ for $\sli \left( \mathbf{T}, (s_i) \right)$. Our main tool will be the forthcoming Proposition \ref{geodesics_contains_bdd_point}, which roughly shows that $\sli$ is hard to cross horizontally at large heights.

\subsection{A hyperbolicity result about slices}

We call $\gamma_{\ell}$ and $\gamma_r$ the left and right boundaries of $\sli$, and $\rho$ its root (note that $\gamma_{\ell}$ and $\gamma_r$ may have an initial segment in common near $\rho$). Both points of Theorem \ref{thm_1_bis} will be consequences of the following hyperbolicity result about $\sli$.

\begin{prop}\label{geodesics_contains_bdd_point}
There is a (random) $K \geq 0$ such that any geodesic in $\sli$ from a point on $\gamma_{\ell}$ to a point on $\gamma_{r}$ contains a point at distance at most $K$ from $\rho$.
\end{prop}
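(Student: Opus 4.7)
The plan is to bracket $d_\sli(u,v)$ between an easy ``through-$\rho$'' upper bound and a planar lower bound coming from a sub-slice decomposition of $\sli$, and to exploit the exponential growth of the backbone $\mathbf{T}$ to force any geodesic from $\gamma_\ell$ to $\gamma_r$ to dip down to a uniformly bounded height.

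Write $Z_h := |\partial B_h(\mathbf{T})|$ for the width of the backbone at height $h$. Since $\mathbf{T}$ is supercritical with no leaf, $Z_h$ grows exponentially almost surely, so $Z_h - 2h - 1 \to +\infty$. Let $H_0 := \max\{h \geq 0 : Z_h \leq 2h + 1\}$, which is then a.s.\ finite, and my claim is that the proposition holds with $K := H_0$.

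For the upper bound, the concatenation of the $\gamma_\ell$-segment from $u$ down to $\rho$ with the $\gamma_r$-segment from $\rho$ up to $v$ has length $h(u) + h(v)$, whence $d_\sli(u,v) \leq h(u) + h(v)$. For the lower bound, I would show that any path $\gamma$ in $\sli$ from $u$ to $v$ with smallest tree-height $h_{\min}$ along it satisfies
\[
|\gamma| \;\geq\; (h(u) - h_{\min}) + (h(v) - h_{\min}) + (Z_{h_{\min}} - 1).
\]
The first two terms come from vertical bookkeeping: each edge of $\sli$ changes the height by at most one, so dropping from $u$ to a tree vertex at height $h_{\min}$ and climbing back up to $v$ takes at least that many edges. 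The third term is a planar crossing count: the $Z_{h_{\min}}$ tree vertices $v_1,\dots,v_{Z_{h_{\min}}}$ at height $h_{\min}$ induce a partition of the region of $\sli$ above height $h_{\min}$ into sub-slices $\sli[v_1],\dots,\sli[v_{Z_{h_{\min}}}]$, and by planarity consecutive vertices of $\gamma$ lie in the same sub-slice or in adjacent ones, so since $u \in \sli[v_1]$ and $v \in \sli[v_{Z_{h_{\min}}}]$, the path $\gamma$ must use at least $Z_{h_{\min}} - 1$ ``bridge'' edges crossing between adjacent sub-slices.

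If a geodesic $\gamma$ from $u$ to $v$ avoided $B_{H_0}(\rho)$, every vertex on $\gamma$ would be at graph distance $> H_0$ from $\rho$, and since $d_\sli(\rho,\cdot) \leq h(\cdot)$ on tree vertices this forces $h_{\min} > H_0$; then $Z_{h_{\min}} > 2h_{\min} + 1$ by the very definition of $H_0$, and the lower bound gives $|\gamma| > h(u)+h(v) \geq d_\sli(u,v) = |\gamma|$, a contradiction. The case $\min(h(u),h(v)) \leq H_0$ is immediate, since $u$ or $v$ is then itself already within $H_0$ of $\rho$. The main obstacle I expect is justifying both parts of the lower bound in the general strip setting of Theorem~\ref{thm_1_bis}: the vertical bookkeeping relies on ``edges of $\sli$ change the height by at most one'' and the bridge count relies on ``edges of $\sli$ change the sub-slice index by at most one''. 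Both facts reduce to the planarity of $\sli$ together with the observation that each strip $s_i$ is confined to a single face of $\mathbf{T}$, but making these observations rigorous when no assumption is placed on the internal geometry of the strips is where the care will be needed.
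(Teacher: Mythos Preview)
Your argument is exactly the paper's ``very short proof'' for the special case of a causal slice $\sli(T)$, and in that setting it is correct: each edge is either horizontal or vertical, so the vertical and sub-slice-crossing contributions to $|\gamma|$ are disjoint and may be added. The paper states this special case in essentially your words.

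However, the Proposition is stated (and needed) in the general setting $\sli=\sli(\mathbf{T},(s_i))$ with arbitrary strips, and there your lower bound collapses for two reasons, both flagged explicitly by the paper. First, the claim ``edges of $\sli$ change the height by at most one'' is simply false: an edge interior to a strip may join two boundary vertices of $\mathbf{T}$ whose heights differ by an arbitrary amount, and for interior strip vertices height is not even defined. Planarity does not help here --- nothing in the construction bounds how far an internal edge of a strip can reach along its boundary. Second, even if one grants a height-Lipschitz property, a single edge inside a strip can \emph{simultaneously} change the tree-height and cross from one sub-slice to the next, so the vertical and horizontal counts are not additive and the inequality $|\gamma|\ge (h(u)-h_{\min})+(h(v)-h_{\min})+(Z_{h_{\min}}-1)$ fails.

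The paper's general proof is genuinely different: it introduces a combinatorial notion of a vertex being ``$u$-far from $\gamma_r$'' via escaping sequences $(y_i),(z_i)$ that advance by $u_i$ vertices at each height, shows (Lemma~\ref{LfarfromR}) by comparison with a perturbed Galton--Watson process that $\gamma_\ell(k)$ is $u$-far from $\gamma_r$ for any subexponential $u$ and all large $k$, and then runs an inductive tracking argument along the geodesic using $\widetilde u_i=2i+1$. The point of this machinery is precisely to replace the failed per-edge height bookkeeping by a per-height-level crossing bound that survives arbitrary strip geometry: at each height the geodesic can traverse at most $2i+1$ sub-slices (because it is a geodesic of length $\le i+j$), which is still subexponential. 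Your proposal does not contain this idea, so as written it proves the Proposition only in the causal special case.
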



We first give a very short proof of this proposition in the particular case of a causal slice of the form $\sli(T)$. Let $i,j>0$ and let $\gamma$ be a geodesic in $\sli(T)$ from $\gamma_{\ell}(i)$ to $\gamma_r(j)$. Let $v_0$ be the lowest point of $\gamma$, and let $h_0$ be the height of $v_0$. By the structure of $\sli(T)$, each step of $\gamma$ is either horizontal or vertical. Since the height varies by at most $1$ at each vertical step, we need at least $(i-h_0)+(j-h_0)$ vertical steps. Moreover, for every $h \geq 0$, let $Z_h^{\back}$ be the number of vertices of $\back(T)$ at height $h$. Then $\gamma$ needs to cross all the trees $T[x]$ for $x \in \back(T)$ at height $h_0$, so $\gamma$ contains at least $Z_{h_0}^{\back}$ horizontal steps. On the other hand, $\gamma$ is a geodesic so it is shorter than the "obvious" path following $\gamma_{\ell}$ from $\gamma_{\ell}(i)$ to $\rho$ and then $\gamma_r$ until $\gamma_r(j)$. Therefore, we have
\[i+j-2h_0+Z^{\back}_{h_0} \geq |\gamma| \geq i+j,\]
so $Z^{\back}_{h_0} \leq 2h_0$. However, $Z^{\back}$ has a.s.~exponential growth, so this inequality only holds for finitely many values of $h_0$, so $h_0$ is bounded independently of $i$ and $j$.

To generalize this proof, there are two obstacles: first, the branches of the tree are no longer geodesics in $\sli$ in the general case, so a single step may change the height by more than one. Second, an edge of $\sli$ can play the role both of a vertical and a horizontal step if it crosses a strip and joins two vertices of $\mathbf{T}$ at different heights. However, we can still cross at most one strip per step in this way.

In order to prove the general Proposition \ref{geodesics_contains_bdd_point}, we first state a lemma showing roughly that if a path in $\mathbf{T}$ with nondecreasing height stays at height $h$ during a time subexponential in $h$, it cannot cross $\sli$ from left to right.

More precisely, we fix a sequence of positive integers $(u_i)_{i \geq 0}$ and a height $k \geq 0$. Let $x \in \partial B_k(\mathbf{T})$. We define by induction two sequences $(y_i)_{i \geq k}$ and $(z_i)_{i \geq k}$ of vertices of $\mathbf{T}$ with $y_i, z_i \in \partial B_i (\mathbf{T})$ as follows (see Figure \ref{example_yz} for an example):
\begin{itemize}
\item[(i)]
$y_k=x$,
\item[(ii)]
for every $i \geq k$, if there are at least $u_i$ vertices on the right of $y_i$ on $\partial B_i(\mathbf{T})$ ($y_i$ excluded), then $z_i$ is the $u_i$-th such vertex,
\item[(iii)]
if there are less than $u_i$ vertices of $\partial B_i(\mathbf{T})$ on the right of $y_i$, the sequences $(y_i)$ and $(z_i)$ are killed at time $i$,
\item[(iv)]
for every $i \geq k$, if $z_i \notin \gamma_r$, the vertex $y_{i+1}$ is the rightmost child of $z_i$ in $\mathbf{T}$. If $z_i \in \gamma_r$, both sequences are killed.
\end{itemize}
We call $(y_i)_{i \geq k}$ and $(z_i)_{i \geq k}$ the \emph{sequences escaping from $x$ on the right}. We say that $x$ is \emph{$u$-far from $\gamma_r$} if the sequences $(y_i)$ and $(z_i)$ survive, that is, $y_i$ and $z_i$ are well-defined and do not hit $\gamma_r$ for all $i \geq k$.

Note that being $u$-far from $\gamma_r$ is a monotonic property: if we shift the point $x$ to the left, then the points $y_i$ and $z_i$ are also shifted to the left. Hence, if
a point $x \in \partial B_k(\mathbf{T})$ is $u$-far from $\gamma_r$ and $x' \in \partial B_k(\mathbf{T})$ lies on the left of $x$, then $x'$ is also $u$-far from $\gamma_r$. We can similarly define the sequences escaping on the left, and a vertex $u$-far from $\gamma_{\ell}$.

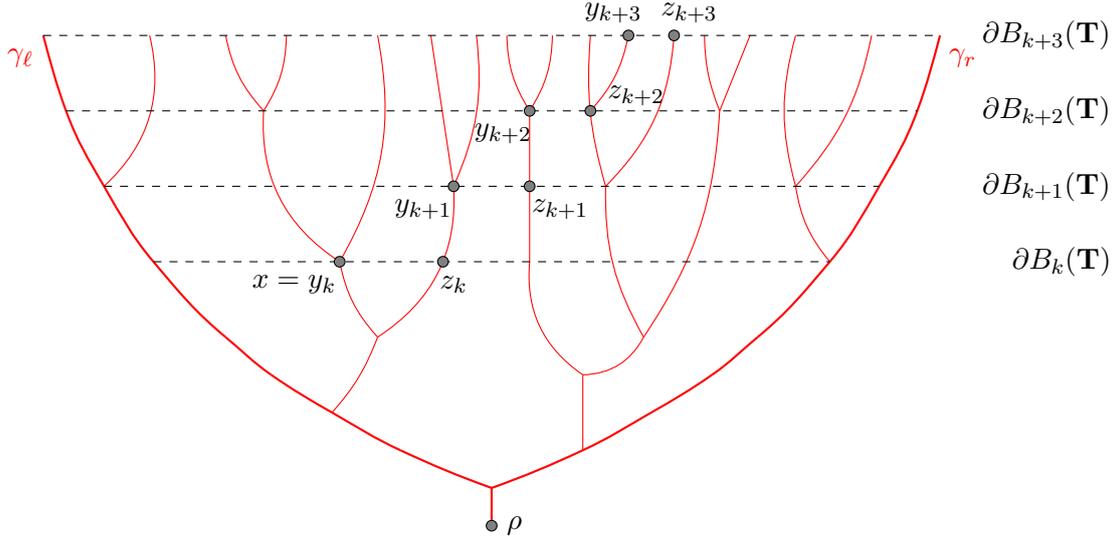
\begin{figure}
\begin{center}
\begin{tikzpicture}
\draw[thick, red] (0,-0.5)--(0,0);
\draw[thick, red] (0,0) to[out=20,in=205] (1.2,0.5);
\draw[thick, red] (1.2,0.5) to[out=25, in=210] (2.1,1);
\draw[thick, red] (2.1,1) to[out=30, in=215] (2.9,1.5);
\draw[thick, red] (2.9,1.5) to[out=35, in=220] (3.5,2);
\draw[thick, red] (3.5,2) to[out=40, in=230] (4.45,3);
\draw[thick, red] (4.45,3) to[out=50, in=240] (5.1,4);
\draw[thick, red] (5.1,4) to[out=60, in=250] (5.6,5);
\draw[thick, red] (5.6,5) to[out=70, in=255] (5.9,6);
\draw[red] (6.2,5.7) node[texte]{$\gamma_r$};

\draw[thick, red] (0,0) to[out=160,in=335] (-1.2,0.5);
\draw[thick, red] (-1.2,0.5) to[out=155, in=330] (-2.1,1);
\draw[thick, red] (-2.1,1) to[out=150, in=325] (-2.9,1.5);
\draw[thick, red] (-2.9,1.5) to[out=145, in=320] (-3.5,2);
\draw[thick, red] (-3.5,2) to[out=140, in=310] (-4.45,3);
\draw[thick, red] (-4.45,3) to[out=130, in=300] (-5.1,4);
\draw[thick, red] (-5.1,4) to[out=120, in=290] (-5.6,5);
\draw[thick, red] (-5.6,5) to[out=110, in=285] (-5.9,6);
\draw[red] (-6.2,5.7) node[texte]{$\gamma_{\ell}$};

\draw[red] (1.2,0.5) -- (1.2,1.5);
\draw[red] (-2.1,1) to[bend right=10] (-1.5,2);
\draw[red] (1.2,1.5) to[bend right] (2,2);
\draw[red] (1.2,1.5) to[bend left] (0.5,3);
\draw[red] (-1.5,2) to[bend left=15] (-2,3);
\draw[red] (-1.5,2) to[bend right] (-0.5,4);
\draw[red] (2,2) to[bend right=15] (3,5);
\draw[red] (2,2) to[bend left=15] (1.5,4);
\draw[red] (4.45,3) to[bend left=15] (4,4);
\draw[red] (4,4) to[bend left=15] (4,6);
\draw[red] (4,4) to[bend right=15] (5,6);
\draw[red] (-2,3) to[bend left] (-3,5);
\draw[red] (-2,3) to[bend right=20] (-1.5,6);
\draw[red] (-5.1,4) to[bend right] (-4.5,6);
\draw[red] (-3,5) to[bend left=15] (-3.5,6);
\draw[red] (-3,5) to[bend right=15] (-2.7,6);
\draw[red] (0.5,3) -- (0.5,5);
\draw[red] (0.5,5) to[bend left=15] (0.2,6);
\draw[red] (0.5,5) to[bend right=15] (0.8,6);
\draw[red] (1.3,5) to[bend right=15] (1.8,6);
\draw[red] (3,5) to[bend left=0] (3.4,6);
\draw[red] (3,5) to[bend left=10] (2.8,6);
\draw[red] (1.5,4) to[bend left=10] (1.3,6);
\draw[red] (1.5,4) to[bend right=20] (2.4,6);
\draw[red] (-0.5,4) to[bend left=0] (-0.8,6);
\draw[red] (-0.5,4) to[bend right=15] (-0.2,6);

\draw[dashed] (-4.45,3)--(4.45,3);
\draw[dashed] (-5.1,4)--(5.1,4);
\draw[dashed] (-5.6,5)--(5.6,5);
\draw[dashed] (-5.9,6)--(5.9,6);

\draw(7.5,3) node[texte]{$\partial B_k(\mathbf{T})$};
\draw(7.3,4) node[texte]{$\partial B_{k+1}(\mathbf{T})$};
\draw(7.3,5) node[texte]{$\partial B_{k+2}(\mathbf{T})$};
\draw(7.3,6) node[texte]{$\partial B_{k+3}(\mathbf{T})$};

\draw(0,-0.5) node{};
\draw(0.3,-0.5) node[texte]{$\rho$};
\draw(-2,3) node{};
\draw(-2.6,2.7) node[texte]{$x=y_k$};
\draw(-0.64,3) node{};
\draw(-0.5,2.7) node[texte]{$z_k$};
\draw(-0.5,4) node{};
\draw(-0.9,3.7) node[texte]{$y_{k+1}$};
\draw(0.5,4) node{};
\draw(0.9,3.7) node[texte]{$z_{k+1}$};
\draw(0.5,5) node{};
\draw(0.15,4.7) node[texte]{$y_{k+2}$};
\draw(1.3,5) node{};
\draw(1.9,5.2) node[texte]{$z_{k+2}$};
\draw(1.8,6) node{};
\draw(1.6,6.3) node[texte]{$y_{k+3}$};
\draw(2.4,6) node{};
\draw(2.6,6.3) node[texte]{$z_{k+3}$};

\end{tikzpicture}
\end{center}
\caption{The sequences $(y_i)_{i \geq k}$ and $(z_i)_{i \geq k}$. Here we have taken $u_i=1$ for every $i$. The tree $\mathbf{T}$ is in red.} \label{example_yz}
\end{figure}

\begin{lem} \label{LfarfromR}
Assume $u$ is subexponential, i.e. $u_i=o(c^i)$ for every $c>1$. Then there is a (random) $K$ such that for any $k \geq K$, the vertex $\gamma_{\ell}(k)$ is $u$-far from $\gamma_r$ and the vertex $\gamma_r(k)$ is $u$-far from $\gamma_{\ell}$.
\end{lem}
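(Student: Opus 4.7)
I will focus on the sequence $(y_i, z_i)_{i \geq k}$ escaping from $\gamma_{\ell}(k)$ to the right; the case of $\gamma_r(k)$ is entirely symmetric. Let $M_i$ denote the number of vertices of $\partial B_i(\mathbf{T})$ lying (in planar order) strictly between $y_i$ and $\gamma_r(i)$. Survival of the escape sequence at step $i+1$ amounts to the condition $M_i \geq u_i$, and since $\gamma_{\ell}(k) \neq \gamma_r(k)$ for $k$ large a.s., the initial value is $M_k = |\partial B_k(\mathbf{T})| - 2$.

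The dynamics gives a simple recursive bound. Because $y_{i+1}$ and $\gamma_r(i+1)$ are the rightmost children of $z_i$ and $\gamma_r(i)$ respectively, the vertices counted by $M_{i+1}$ are exactly the children of the vertices of $V_i := \{v \in \partial B_i(\mathbf{T}) : z_i < v < \gamma_r(i)\}$ together with the $c_{\mathbf{T}}(\gamma_r(i)) - 1$ non-rightmost children of $\gamma_r(i)$. Since $\mathbf{T}$ has no leaf this yields
\[ M_{i+1} \;\geq\; \sum_{v \in V_i} c_{\mathbf{T}}(v), \qquad |V_i| = M_i - u_i. \]
By the branching property of $\mathbf{T}$, conditionally on $B_i(\mathbf{T})$ the variables $(c_{\mathbf{T}}(v))_{v \in V_i}$ are i.i.d.~samples from the offspring distribution $\boldsymbol{\mu}$ of $\mathbf{T}$, whose mean $\boldsymbol{m}$ is strictly greater than $1$ (possibly infinite). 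Fix any $c \in (1, \boldsymbol{m})$. The weak law of large numbers gives $M_{i+1} \geq c(M_i - u_i)$ with probability tending to $1$ as $|V_i| \to \infty$. Iterating on the event that this bound holds at every step, and writing $\widetilde{M}_i := M_i - u_i$, one gets
\[ \widetilde{M}_i \;\geq\; c^{i-k} \Bigl( \widetilde{M}_k - \sum_{j > k} c^{k-j} u_j \Bigr), \qquad i \geq k. \]
Subexponentiality of $u$ ensures that $\sum_{j} c^{-j} u_j < \infty$, and classical supercritical Galton--Watson theory yields $|\partial B_k(\mathbf{T})|^{1/k} \to \boldsymbol{m}$ a.s.~on survival (even without moment assumptions on $\boldsymbol{\mu}$), so $\widetilde{M}_k / c^k \to \infty$. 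Hence the bracket is positive for every $k$ beyond some random threshold, and on the concentration event the escape sequence never dies.

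It remains to verify that the concentration event does hold at every step simultaneously, a.s.~for $k$ large. On the success event up to step $i$ one has $|V_i| \gtrsim c^{i-k} \widetilde{M}_k$, so the failure probability at step $i+1$ decays at least polynomially in $|V_i|$ (e.g.~by Chebyshev after truncating the offspring at a large constant, the non-leaf assumption $c_{\mathbf{T}}(v) \geq 1$ ensuring that one does not lose too much by this truncation). Summing these failure probabilities over $i \geq k$ gives a bound vanishing as $k \to \infty$, and a Borel--Cantelli argument applied jointly to both directions of escape and to a discrete sequence of starting heights $k_n \to \infty$ produces the random $K$ claimed. The main obstacle is quantifying the concentration in the regime where $\boldsymbol{\mu}$ has no exponential moment or infinite mean, which is handled by the truncation just mentioned.
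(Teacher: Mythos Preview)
Your core strategy matches the paper's: track the number of vertices of $\partial B_i(\mathbf{T})$ lying to the right of $z_i$ (the paper's $Z_i^k$ equals your $M_i - u_i + 1$), and show that this killed branching-type process survives with probability tending to $1$ as the starting population grows. Your implementation via iterated concentration bounds with truncation is a legitimate alternative to the paper's device, which instead couples with a supercritical Galton--Watson process $Z^*$ in which each newborn is independently deleted with a small probability $\delta$ (chosen so that the post-deletion mean still exceeds $1$); on the event that the random number of deletions $N_i$ dominates $u_i$ at every step --- which happens with probability tending to $1$ by an elementary large-deviation argument once the process is started from $\lfloor c^k \rfloor$ individuals --- this thinned process sits below $Z^k_i$. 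The paper's coupling is slicker and sidesteps the conditional bookkeeping in your last paragraph, but your route works too; the truncation you invoke does give Chebyshev control uniform in the tail of $\boldsymbol{\mu}$.

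There is, however, a genuine gap in your final step. You have shown that $\P(\text{escape from }\gamma_{\ell}(k)\text{ fails}) \to 0$, but Borel--Cantelli along a subsequence $(k_n)$ only yields that escape succeeds from $\gamma_{\ell}(k_n)$ for all large $n$, not from $\gamma_{\ell}(k)$ for all large $k$, and you have not established summability of the failure probabilities over all $k$. The missing ingredient is a one-line monotonicity observation that the paper makes explicit: if the escape sequence $(y_i)_{i \geq k}$ from $\gamma_{\ell}(k)$ survives, then $(y_i)_{i \geq k+1}$ is exactly the escape sequence started from $y_{k+1}$, and since $\gamma_{\ell}(k+1)$ lies on the left of $y_{k+1}$, the planar monotonicity of the construction forces escape from $\gamma_{\ell}(k+1)$ to survive as well. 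With this, it suffices to exhibit a single $k$ for which escape succeeds, and your estimate that the failure probability tends to $0$ is then exactly what is needed --- no Borel--Cantelli required.
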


\begin{proof}
The idea is to reduce the proof to the study of a supercritical Galton--Watson process where $u_i$ individuals are killed at generation $i$. It is enough to show that $\gamma_{\ell}(k)$ is $u$-far from $\gamma_r$ for $k$ large enough. Note that if $\gamma_{\ell}(k)$ is $u$-far from $\gamma_r$ and $(y_i)_{i \geq k}$ is its sequence escaping on the right, then $(y_i)_{i \geq k+1}$ is the sequence escaping on the right of $y_{k+1}$, so $y_{k+1}$ is also $u$-far from $\gamma_r$ and, by monotonicity, so is $\gamma_{\ell}(k+1)$. Therefore, it is enough to show that there is $k \geq 0$ such that $\gamma_{\ell}(k)$ is $u$-far from $\gamma_r$.
 
Let $k \geq 0$ and $x=\gamma_{\ell}(k)$. Let $(y_i)_{i \geq k}$ and $(z_i)_{i \geq k}$ be the sequences escaping from $x$ on the right. We also denote by $Z_i^k$ the number of vertices of $\partial B_i(\mathbf{T})$ lying (strictly) on the right of $z_i$. We first remark that the evolution of the process $Z^k$ can be described explicitly. We have $Z_k^k=Z_k-1$. Moreover, we recall that $\mu$ is the offspring distribution of $\mathbf{T}$. Conditionally on $(Z_k^k, Z_{k+1}^k, \dots, Z_i^k)$, the variable $Z_{i+1}^k$ has the same distribution as
\[ \left( \left( \sum_{j=1}^{Z_i^k} X_{i,j} \right)-u_{i+1} \right)^+ ,\]
where the $X_{i,j}$ are i.i.d. with distribution $\mu$. Moreover, the process $Z^k$ is killed when it hits $0$. To prove our lemma, it is enough to show that $\P \left( \forall i \geq k, Z_i^k > 0 \right)$ goes to $1$ as $k$ goes to $+\infty$. Since the process $Z$ describing the number of individuals at each generation is a supercritical Galton--Watson process, there is a constant $c>1$ such that
\[\P \left( \partial B_k(\mathbf{T})>c^k \right) \xrightarrow[k \to +\infty]{} 1.\]
Therefore, by a monotonicity argument, it is enough to prove that
\[ \P \left( \forall i \geq k, Z_i^k>0 \big| Z_k^k=\lfloor c^k \rfloor  \right) \xrightarrow[k \to +\infty]{} 1.\]
To prove this, we show that $Z^k$ dominates a supercritical Galton--Watson process. Let $\delta>0$ be such that $(1-\delta) \sum_{i \geq 0} i \mu(i)>1$. Let also $Z^*$ be the Markov chain defined by
\[ Z^*_k= \lfloor c^k \rfloor \quad \mbox{and} \quad Z^*_{i+1}= \left( \sum_{j=1}^{Z^*_i} X_{i,j} \right) -N_{i+1},\]
where the $X_{i,j}$ are i.i.d. with distribution $\mu$ and, conditionally on $(X_{i,j})_{i,j \geq 0}$ and $(N_j)_{1 \leq j \leq i}$, the variable $N_{i+1}$ has binomial distribution with parameters $\delta$ and $\sum_{j=1}^{Z^*_i} X_{i,j}$.
In other words, $Z^*$ is a Galton--Watson process in which at every generation, right after reproduction, every individual is killed with probability $\delta$. By our choice of $\delta$, the process $Z^*$ is a supercritical Galton--Watson process and, on survival, grows exponentially.
By an easy large deviation argument, we have
\[ \P \left( \forall i \geq k, N_i \geq u_i | Z^*_k= \lfloor c^k \rfloor \right) \xrightarrow[k \to +\infty]{}1.\]
If this occurs, then $Z^*_i \leq Z_i^k$ for every $i \geq k$ (by an easy induction on $i$), so
\[\P \left( \forall i \geq k, Z_i^k \geq Z^*_i >0  | Z_k^k= \lfloor c^k \rfloor \right) \xrightarrow[k \to +\infty]{}1\]
and the lemma follows.
\end{proof}

The proof of Proposition \ref{geodesics_contains_bdd_point} given Lemma \ref{LfarfromR} only relies on deterministic considerations.

\begin{proof}[Proof of Proposition \ref{geodesics_contains_bdd_point}]
We note that for any $a \in \mathbf{T}$, the slice $\sli[a]$ is of the form $\sli \left( \mathbf{T}[a], (s'_i)\right)$ where $\mathbf{T}[a]$ is a supercritical Galton--Watson tree with no leaf, so we can apply Lemma \ref{LfarfromR} to $\sli[a]$.

Now let $a,a'$ be two vertices of $\mathbf{T}$, neither of which is an ancestor of the other, as on Figure \ref{figure_farfromLorR}. Then $\sli[a]$ and $\sli[a']$ are disjoint. Without loss of generality, we may assume that $\sli[a]$ lies on the right of $\sli[a']$. Let $K$ (resp. $K'$) be given by the conclusion of Lemma \ref{LfarfromR} for $\sli[a]$ (resp. $\sli[a']$) and $u_i=2 \left( i+ \max ( d(\rho,a), d(\rho,a')) \right)+1$. We take $K''=\max \left( d(\rho,a)+K+1, d(\rho,a')+K'+1 \right)$. We consider a geodesic $\gamma$ from a vertex $\gamma_{\ell}(m)$ to a vertex $\gamma_r(n)$ in $\sli$. Let
$k$ be the minimal height of $\gamma \cap \mathbf{T}$. We assume $k>K''$, and we will get a contradiction.

Let $b$ be the leftmost vertex of $\sli[a] \cap \mathbf{T}$ that lies at height $k$, and let $b'$ be the rightmost vertex of $\sli[a'] \cap \mathbf{T}$ that lies at height $k$ (cf. Figure \ref{figure_farfromLorR}). By Lemma \ref{LfarfromR} and our choice of $K''$, the point $b$ is $\widetilde{u}$-far from $\gamma_r$ in $\sli$ and $b'$ is $\widetilde{u}$-far from $\gamma_{\ell}$ in $\sli$, where $\widetilde{u}_i=2i+1$ (the change of the sequence $u$ to the sequence $\widetilde{u}$ is due to the fact that the distances to the root are not the same in $\sli[a]$ and $\sli$, so the sequence needs to be shifted). But any vertex of $\partial B_k(\mathbf{T})$ lies either on the left of $b$ or on the right of $b'$, so it is either $\widetilde{u}$-far from $\gamma_{\ell}$ or from $\gamma_r$ (see Figure \ref{figure_farfromLorR}). In particular, let $x$ be the first point of $\gamma$ lying on $\mathbf{T}$ at height $k$. We may assume that $x$ is $\widetilde{u}$-far from $\gamma_r$, the other case can be treated in the same way.

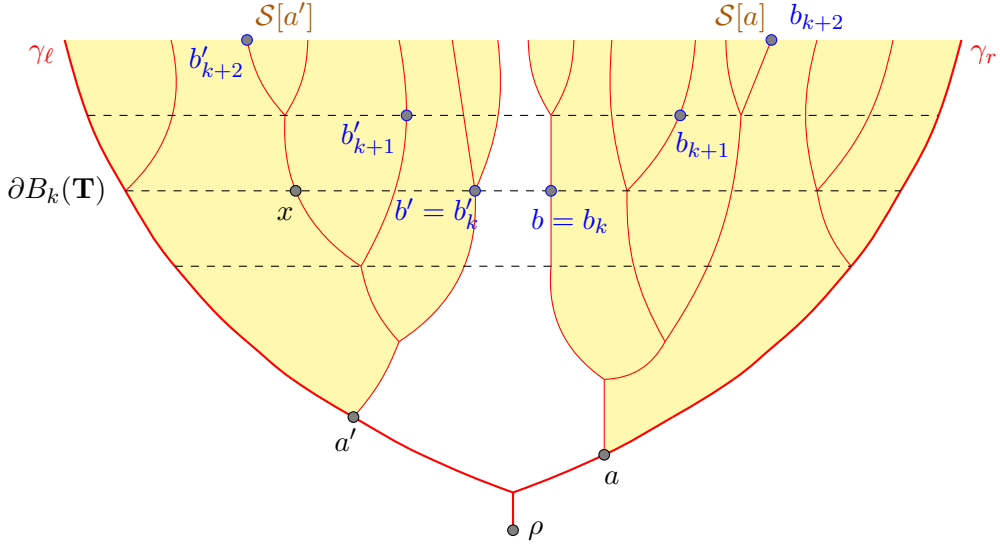
\begin{figure}
\begin{center}
\begin{tikzpicture}
\fill[yellow!40] (-5.9,6) to[in=110, out=285] (-5.6,5) to[in=120, out=290] (-5.1,4) to[in=130, out=300] (-4.45,3) to[in=140, out=310] (-3.5,2) to[in=145, out=320] (-2.9,1.5) to[in=150, out=325] (-2.1,1) to[bend right=10] (-1.5,2) to[bend right] (-0.5,4) to[bend right=15] (-0.2,6);
\fill[yellow!40] (5.9,6) to[in=70, out=255] (5.6,5) to[in=60, out=250] (5.1,4) to[in=50, out=240] (4.45,3) to[in=40, out=230] (3.5,2) to[in=35, out=220] (2.9,1.5) to[in=30, out=215] (2.1,1) to[in=25, out=210] (1.2,0.5) -- (1.2,1.5) to[bend left] (0.5,3) -- (0.5,5)to[bend left=15] (0.2,6);
\draw[color={rgb:red,100;green,50;blue,0}] (-3,6.3) node[texte]{$\sli[a']$};
\draw[color={rgb:red,100;green,50;blue,0}] (3,6.3) node[texte]{$\sli[a]$};

\draw[thick, red] (0,0) to[out=20,in=205] (1.2,0.5);
\draw[thick, red] (1.2,0.5) to[out=25, in=210] (2.1,1);
\draw[thick, red] (2.1,1) to[out=30, in=215] (2.9,1.5);
\draw[thick, red] (2.9,1.5) to[out=35, in=220] (3.5,2);
\draw[thick, red] (3.5,2) to[out=40, in=230] (4.45,3);
\draw[thick, red] (4.45,3) to[out=50, in=240] (5.1,4);
\draw[thick, red] (5.1,4) to[out=60, in=250] (5.6,5);
\draw[thick, red] (5.6,5) to[out=70, in=255] (5.9,6);
\draw[red] (6.2,5.8) node[texte]{$\gamma_{r}$};

\draw[thick, red] (0,-0.5)--(0,0);
\draw[thick, red] (0,0) to[out=160,in=335] (-1.2,0.5);
\draw[thick, red] (-1.2,0.5) to[out=155, in=330] (-2.1,1);
\draw[thick, red] (-2.1,1) to[out=150, in=325] (-2.9,1.5);
\draw[thick, red] (-2.9,1.5) to[out=145, in=320] (-3.5,2);
\draw[thick, red] (-3.5,2) to[out=140, in=310] (-4.45,3);
\draw[thick, red] (-4.45,3) to[out=130, in=300] (-5.1,4);
\draw[thick, red] (-5.1,4) to[out=120, in=290] (-5.6,5);
\draw[thick, red] (-5.6,5) to[out=110, in=285] (-5.9,6);
\draw[red] (-6.2,5.8) node[texte]{$\gamma_{\ell}$};

\draw[red] (1.2,0.5) -- (1.2,1.5);
\draw[red] (-2.1,1) to[bend right=10] (-1.5,2);
\draw[red] (1.2,1.5) to[bend right] (2,2);
\draw[red] (1.2,1.5) to[bend left] (0.5,3);
\draw[red] (-1.5,2) to[bend left=15] (-2,3);
\draw[red] (-1.5,2) to[bend right] (-0.5,4);
\draw[red] (2,2) to[bend right=15] (3,5);
\draw[red] (2,2) to[bend left=15] (1.5,4);
\draw[red] (4.45,3) to[bend left=15] (4,4);
\draw[red] (4,4) to[bend left=15] (4,6);
\draw[red] (4,4) to[bend right=15] (5,6);
\draw[red] (-2,3) to[bend left] (-3,5);
\draw[red] (-2,3) to[bend right=20] (-1.5,6);
\draw[red] (-5.1,4) to[bend right] (-4.5,6);
\draw[red] (-3,5) to[bend left=15] (-3.5,6);
\draw[red] (-3,5) to[bend right=15] (-2.7,6);
\draw[red] (0.5,3) -- (0.5,5);
\draw[red] (0.5,5) to[bend left=15] (0.2,6);
\draw[red] (0.5,5) to[bend right=15] (0.8,6);
\draw[red] (3,5) to[bend left=0] (3.4,6);
\draw[red] (3,5) to[bend left=10] (2.8,6);
\draw[red] (1.5,4) to[bend left=10] (1.3,6);
\draw[red] (1.5,4) to[bend right=20] (2.4,6);
\draw[red] (-0.5,4) to[bend left=0] (-0.8,6);
\draw[red] (-0.5,4) to[bend right=15] (-0.2,6);

\draw[dashed] (-4.45,3)--(4.45,3);
\draw[dashed] (-5.1,4)--(5.1,4);
\draw[dashed] (-5.6,5)--(5.6,5);

\draw(-6,4)node[texte]{$\partial B_k(\mathbf{T})$};

\draw(-2.1,1) node{};
\draw(-2.2,0.7) node[texte]{$a'$};
\draw(1.2,0.5) node{};
\draw(1.3,0.2) node[texte]{$a$};
\draw(0,-0.5) node{};
\draw(0.3,-0.5) node[texte]{$\rho$};
\draw(-2.86,4) node{};
\draw(-3,3.7) node[texte]{$x$};
\draw[blue](-0.5,4) node{};
\draw[blue](-1,3.7) node[texte]{$b'=b'_k$};
\draw[blue](-1.4,5) node{};
\draw[blue](-1.9,4.7) node[texte]{$b'_{k+1}$};
\draw[blue](-3.5,6) node{};
\draw[blue](-3.9,5.7) node[texte]{$b'_{k+2}$};
\draw[blue](0.5,4) node{};
\draw[blue](0.75,3.6) node[texte]{$b=b_k$};
\draw[blue](2.2,5) node{};
\draw[blue](2.5,4.6) node[texte]{$b_{k+1}$};
\draw[blue](3.4,6) node{};
\draw[blue](4,6.3) node[texte]{$b_{k+2}$};
\end{tikzpicture}
\end{center}
\caption{Proof of Proposition \ref{geodesics_contains_bdd_point}: the point $b$ is $u$-far from $\gamma_r$ and $b'$ is $u$-far from $\gamma_{\ell}$, so any point on $\partial B_k(\mathbf{T})$ is $u$-far from either $\gamma_r$ or $\gamma_{\ell}$. Here we have taken $u_i=1$.} \label{figure_farfromLorR}
\end{figure}

Recall that $n$ is the height of the endpoint of $\gamma$. For every $k \leq i \leq n$, let
\[j_i=\max \left\{ j \in [\![0,|\gamma| ]\!] \big| \gamma(j) \in \mathbf{T} \mbox{ and } h \left( \gamma(j) \right) \leq i \right\},\]
and let $x_i=\gamma(j_i)$. Note that we have $h(x_i) \leq i$, but since the height can increase by more than $1$ in one step, the inequality may be strict.

Let also $(y_i)_{i \geq k}$ and $(z_i)_{i \geq k}$ be the sequences escaping from $x$ on the right in $\sli$ (for $\widetilde{u}_i=2i+1$). By our assumption that $x$ is $\widetilde{u}$-far from $\gamma_r$, these sequences are well-defined and do not hit $\gamma_r$. Moreover, for every $k \leq i \leq n$, the vertices $x_i$ and $z_{h(x_i)}$ are both in $\mathbf{T}$ and at the same height $h(x_i)$.
We claim that for every $i \geq k$, the vertex $x_i$ lies strictly on the left of the vertex $z_{h(x_i)}$. This is enough to prove the proposition, since then $x_n$ cannot lie on $\gamma_r$.

We show this claim by induction on $i$, and we start with the case $i=k$. Let $j_*$ be the index such that $x=\gamma(j_*)$. The vertices $x=\gamma(j_*)$ and $x_k=\gamma(j_k)$ both lie on $\partial B_k(\mathbf{T})$, so the distance in $\sli$ between them is at most $2k$. Hence, since $\gamma$ is a geodesic, we have $|j_k-j_*| \leq 2k$. Now, we consider the slices $\sli[v]$ for $v \in \partial B_k( \mathbf{T})$. These slices are disjoint and, by definition of $k$, the path $\left( \gamma(j) \right)_{j_* \leq j \leq j_k}$ does not cross $\mathbf{T}$ below height $k$, so it cannot intersect more than $2k<\widetilde{u}_k$ of these slices, which implies that $x_k$ lies on the left on $z_k$.

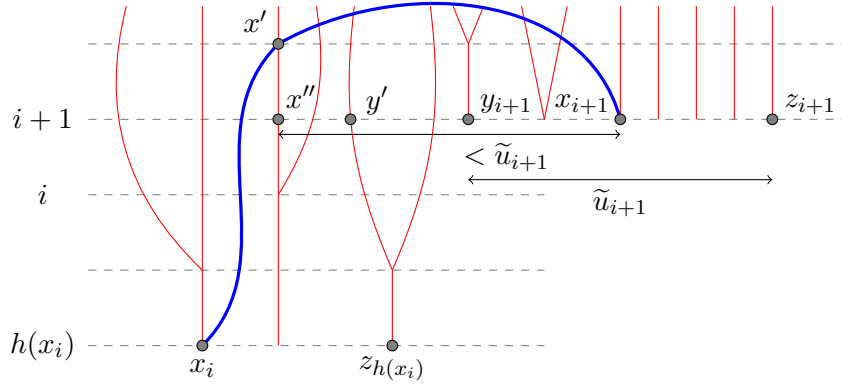
\begin{figure}
\begin{center}
\begin{tikzpicture}
\draw[dashed, color=black!50] (0,0)--(6,0);
\draw[dashed, color=black!50] (0,1)--(6,1);
\draw[dashed, color=black!50] (0,2)--(6,2);
\draw[dashed, color=black!50] (0,3)--(10,3);
\draw[dashed, color=black!50] (0,4)--(10,4);

\draw[red] (1.5,0)--(1.5,4.5);
\draw[red] (1.5,1) to[bend left] (0.5,4.5);
\draw[red] (2.5,0)--(2.5,4.5);
\draw[red] (2.5,2) to[bend right=20] (3,4.5);
\draw[red] (4,0)--(4,1);
\draw[red] (4,1) to[bend left=15] (3.5,4.5);
\draw[red] (4,1) to[bend right=15] (4.5,4.5);
\draw[red] (5,3)--(5,4);
\draw[red] (5,4)--(4.8,4.5);
\draw[red] (5,4)--(5.2,4.5);
\draw[red] (6,3)--(5.7,4.5);
\draw[red] (6,3)--(6.3,4.5);
\draw[red] (7,3)--(7,4.5);
\draw[red] (7.5,3)--(7.5,4.5);
\draw[red] (8,3)--(8,4.5);
\draw[red] (8.5,3)--(8.5,4.5);
\draw[red] (9,3)--(9,4.5);
\draw[blue, very thick] (1.5,0) to[out=45,in=225] (2.5,4);
\draw[blue, very thick] (2.5,4) to[out=30,in=105] (7,3);

\draw(1.5,0) node{};
\draw(1.5,-0.3) node[texte]{$x_i$};
\draw(4,0) node{};
\draw(4,-0.3) node[texte]{$z_{h(x_i)}$};
\draw(2.5,4) node{};
\draw(2.2,4.3) node[texte]{$x'$};
\draw(2.5,3) node{};
\draw(2.8,3.3) node[texte]{$x''$};
\draw(3.45,3) node{};
\draw(3.8,3.25) node[texte]{$y'$};
\draw(5,3) node{};
\draw(5.5,3.2) node[texte]{$y_{i+1}$};
\draw(7,3) node{};
\draw(6.5,3.2) node[texte]{$x_{i+1}$};
\draw(9,3) node{};
\draw(9.5,3.2) node[texte]{$z_{i+1}$};

\draw[<->] (2.5,2.8)--(7,2.8);
\draw(5.5,2.5)node[texte]{$< \widetilde{u}_{i+1}$};
\draw[<->] (5,2.2)--(9,2.2);
\draw(7,1.9)node[texte]{$\widetilde{u}_{i+1}$};

\draw(-0.6,0) node[texte]{$h(x_i)$};
\draw(-0.6,2) node[texte]{$i$};
\draw(-0.6,3) node[texte]{$i+1$};
\end{tikzpicture}
\end{center}
\caption{The induction step in the proof of Proposition \ref{geodesics_contains_bdd_point}. The path $\gamma$ between $x_i$ and $x_{i+1}$ is in blue. Branches of $\mathbf{T}$ are in red. The dashed lines are not edges of $\sli$, but indicate the height. We see that $x''$ is on the left of $y'$ which is on the left of $y_{i+1}$, so $x_{i+1}$ is on the left of $z_{i+1}$.}\label{fig_end_proof_geodesics}
\end{figure}
We now move on to the induction step. We assume $x_i$ lies strictly on the left of $z_{h(x_i)}$. We recall that $h(x_{i+1}) \leq i+1$, and split the proof in two cases.
\begin{itemize}
\item
If $h(x_{i+1})<i+1$, then $x_{i+1}$ is the last point of $\gamma$ at height at most $i+1$, so it is also the last point of $\gamma$ at height at most $i$, so $x_{i+1}=x_i$. In particular, it is strictly on the left of $z_{h(x_{i+1})}=z_{h(x_i)}$. 
\item
If $h(x_{i+1})=i+1$, we need to introduce some more notation that is summed up on Figure \ref{fig_end_proof_geodesics}. We denote by $x'$ the first point of $\gamma$ after $x_i$ that belongs to $\mathbf{T}$ (note that $h(x') \geq i+1$ by definition of $x_i$), and by $x''$ the ancestor of $x'$ at height $i+1$. Let also $y'$ be the leftmost descendant of $z_{h(x_i)}$ at height $i+1$. Note that by construction of the sequences $(y_i)$ and $(z_i)$, the vertex $y'$ is on the left of $y_{i+1}$. We know that between $x_i$ and $x'$, the path $\gamma$ does not cross $\mathbf{T}$, so $x_i$ and $x'$ must be adjacent to the same strip. Since $x_i$ is strictly on the left of $z_{h(x_i)}$, it implies that $x'$ lies on the left of any descendant of $z_{h(x_i)}$, so $x''$ is on the left of $y'$, and therefore on the left of $y_{i+1}$.
Moreover, by the definition of $x_i$, all the vertices of $\gamma$ between $x_i$ and $x_{i+1}$ that belong to $\mathbf{T}$ have height at least $i+1$. By the same argument as before, the length of the part of $\gamma$ between $x_i$ and $x_{i+1}$ is at most $h(x_i)+h(x_{i+1}) \leq 2(i+1)<\widetilde{u}_{i+1}$. Hence, this part cannot cross $\widetilde{u}_{i+1}$ of the slices $\sli[v]$ with $v \in \partial B_{i+1}(\mathbf{T})$, so the distance between $x''$ and $x_{i+1}$ along $\partial B_{i+1}(\mathbf{T})$ is less than $\widetilde{u}_{i+1}$. Since $x''$ is on the left of $y_{i+1}$ and $z_{i+1}$ is at distance $\widetilde{u}_{i+1}$ on the right of $y_{i+1}$, it follows (see again Figure \ref{fig_end_proof_geodesics}) that $x_{i+1}$ is strictly on the left of $z_{i+1}$, which concludes the induction and the proof of the proposition.
\end{itemize}
\end{proof}

\subsection{Weak anchored hyperbolicity and bi-infinite geodesics}

We can now deduce Theorem \ref{thm_1_bis} from Proposition \ref{geodesics_contains_bdd_point}.

\begin{proof}[Proof of point 1 of Theorem \ref{thm_1_bis}]
Let $(a_i)_{1 \leq i \leq 4}$ be four points of $\mathbf{T}$, neither of which is an ancestor of another. The slices $\sli[a_i]$ are disjoint and satisfy the assumptions of Proposition \ref{geodesics_contains_bdd_point}. For every $1 \leq i \leq 4$, let $K_i$ be given by Proposition \ref{geodesics_contains_bdd_point} for $\sli[a_i]$. Now consider three vertices $x$, $y$, $z$ of $\m$ and three geodesics $\gamma_{xy}$ (resp. $\gamma_{yz}$, $\gamma_{zx}$) from $x$ to $y$ (resp. $y$ to $z$, $z$ to $x$) that surround $\rho$. There is an index $1 \leq i \leq 4$ such that $\sli[a_i]$ contains none of the points $x$, $y$ and $z$. Assume it is $\sli[a_1]$. Since the triangle formed by $\gamma_{xy}$, $\gamma_{yz}$ and $\gamma_{zx}$ surrounds $\rho$, one of these three geodesics must either intersect the path in $\mathbf{T}$ from $\rho$ to $a_1$, or cross the slice $\sli[a_1]$, as on Figure \ref{fig_proof_weak_gromov}. We assume this geodesic is $\gamma_{xy}$. In the first case, $\gamma_{xy}$ contains a point at distance at most $h(a_1)$ from $\rho$. In the second case, assume $\gamma_{xy}$ crosses $\sli[a_1]$ from left to right. Let $\gamma_{\ell}$ and $\gamma_r$ be respectively the left and right boundaries of $\sli[a_1]$. Let $v$ be the last point of $\gamma_{xy}$ that lies on $\gamma_{\ell}$ and let $w$ be the first point of $\gamma_{xy}$ after $v$ that lies on $\gamma_r$. Then the portion of $\gamma_{xy}$ between $v$ and $w$ is a geodesic in $\m$ so it is also a geodesic in $\sli[a_1]$ that crosses $\sli[a_1]$. Hence, it contains a point $z$ such that $d(z,a_1) \leq K_1$. This concludes the proof by taking
\[K=\max_{1 \leq i \leq 4} \left( K_i+h(a_i) \right).\]
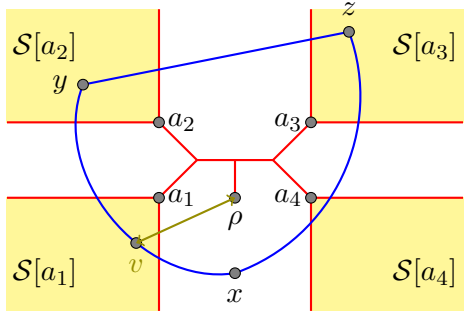
\begin{figure}
\begin{center}
\begin{tikzpicture}
\fill[yellow!50](-1,0)--(-3,0)--(-3,-1.5)--(-1,-1.5)--(-1,0);
\fill[yellow!50](-1,1)--(-3,1)--(-3,2.5)--(-1,2.5)--(-1,1);
\fill[yellow!50](1,0)--(3,0)--(3,-1.5)--(1,-1.5)--(1,0);
\fill[yellow!50](1,1)--(3,1)--(3,2.5)--(1,2.5)--(1,1);

\draw[thick, red](0,0)--(0,0.5);
\draw[thick, red](-0.5,0.5)--(0.5,0.5);
\draw[thick, red](-0.5,0.5)--(-1,0);
\draw[thick, red](-0.5,0.5)--(-1,1);
\draw[thick, red](0.5,0.5)--(1,0);
\draw[thick, red](0.5,0.5)--(1,1);

\draw[thick, red](-1,0)--(-3,0);
\draw[thick, red](-1,0)--(-1,-1.5);
\draw[thick, red](-1,1)--(-3,1);
\draw[thick, red](-1,1)--(-1,2.5);
\draw[thick, red](1,0)--(3,0);
\draw[thick, red](1,0)--(1,-1.5);
\draw[thick, red](1,1)--(3,1);
\draw[thick, red](1,1)--(1,2.5);

\draw(0,0)node{};
\draw(-1,0)node{};
\draw(-1,1)node{};
\draw(1,0)node{};
\draw(1,1)node{};
\draw(0,-0.3)node[texte]{$\rho$};
\draw(-0.7,0)node[texte]{$a_1$};
\draw(-0.7,1)node[texte]{$a_2$};
\draw(0.7,1)node[texte]{$a_3$};
\draw(0.7,0)node[texte]{$a_4$};

\draw(-2.5,-1)node[texte]{$\sli[a_1]$};
\draw(-2.5,2)node[texte]{$\sli[a_2]$};
\draw(2.5,2)node[texte]{$\sli[a_3]$};
\draw(2.5,-1)node[texte]{$\sli[a_4]$};

\draw[thick, blue](0,-1) to[bend left=60] (-2,1.5);
\draw[thick, blue](-2,1.5)--(1.5,2.2);
\draw[thick, blue](1.5,2.2) to[bend left=45] (0,-1);

\draw(0,-1)node{};
\draw(-2,1.5)node{};
\draw(1.5,2.2)node{};

\draw(0,-1.3)node[texte]{$x$};
\draw(-2.3,1.5)node[texte]{$y$};
\draw(1.5,2.5)node[texte]{$z$};

\draw(-1.3,-0.6)node{};
\draw[olive](-1.3,-0.9)node[texte]{$v$};
\draw[olive, thick, <->](-1.3,-0.6)--(0,0);
\end{tikzpicture}
\end{center}
\caption{Illustration of the proof of point 1 of Theorem \ref{thm_1_bis}. Here $\sli[a_1]$ contains none of the vertices $x$, $y$ and $z$, so it is crossed by the geodesic from $x$ to $y$, and contains a point $v$ at bounded distance from $\rho$.}\label{fig_proof_weak_gromov}
\end{figure}
\end{proof}

\begin{proof}[Proof of point 2 of Theorem \ref{thm_1_bis}]
Let $a_1,a_2 \in \mathbf{T}$, neither of which is an ancestor of the other, so that $\sli[a_1]$ and $\sli[a_2]$ are disjoint. Let $\gamma_{\ell}$ and $\gamma_r$ be the left and right boundaries of $\sli[a_1]$. The idea of our construction is the following: we first "approximate" the paths $\gamma_{\ell}$ and $\gamma_r$ by two infinite geodesics $\widetilde{\gamma}_{\ell}$ and $\widetilde{\gamma}_r$, and we then try to connect $\widetilde{\gamma}_{\ell}$ to $\widetilde{\gamma}_r$ in the shortest possible way. Note that the first step is needed because $\gamma_r$ and $\gamma_{\ell}$ may not be geodesics in the general case. Before making this construction explicit, we need to reinforce slightly Proposition \ref{geodesics_contains_bdd_point}.

By Proposition \ref{geodesics_contains_bdd_point}, we know that any geodesic $\gamma$ in $\sli[a_1]$ between a point of $\gamma_{\ell}$ and a point of $\gamma_r$ contains a vertex at bounded distance from $\rho$. We claim that this is also the case if we consider geodesics in $\m$ instead of $\sli[a_1]$. Indeed, let $K_1$ (resp. $K_2$ ) be given by Proposition \ref{geodesics_contains_bdd_point} for $\sli[a_1]$ (resp. $\sli[a_2]$).
Let $i,j \geq 0$ and let $\gamma$ be a geodesic from $\gamma_{\ell}(i)$ to $\gamma_r(j)$ in $\m$. We are in one of the three following cases (cf. Figure \ref{3_cases_gamma}) :
\begin{enumerate}
\item[(i)]
$\gamma$ intersects the path in $\mathbf{T}$ from $\rho$ to $a_1$ or from $\rho$ to $a_2$,
\item[(ii)]
$\gamma$ crosses $\sli[a_1]$,
\item[(iii)]
$\gamma$ crosses $\sli[a_2]$.
\end{enumerate}
In all three cases, $\gamma$ contains a point at distance at most $K$ from $\rho$, where
\begin{equation}\label{K_geodesic_in_m}
K=\max\left( h(a_1)+K_1, h(a_2)+K_2 \right).
\end{equation}

\begin{figure}
\begin{center}
\begin{tikzpicture}
\fill[yellow!50] (2,2) to[bend right] (4,-1)--(4,3) to[bend right] (2,2);
\fill[yellow!50] (-1,2) to[bend right] (-3,3)--(-3,0) to[bend right] (-1,2);

\draw[red, thick] (0,0)--(0,1);
\draw[red, thick] (0,1)--(2,2);
\draw[red, thick] (0,1)--(-1,2);
\draw[red, thick] (2,2)to[bend right](4,-1);
\draw[red, thick] (2,2)to[bend left](4,3);
\draw[red, thick] (-1,2)to[bend left](-3,0);
\draw[red, thick] (-1,2)to[bend right](-3,3);
\draw[blue, thick, dashed] (3.5, 2.98)--(3,-0.32);
\draw[blue, thick, dashed] (3.5, 2.98) to[out=180,in=90] (0.5,1.5);
\draw[blue, thick, dashed] (0.5,1.5) to[out=270,in=180] (3,-0.32);
\draw[blue, thick, dashed] (3.5, 2.98) to[out=150,in=90] (-2,1);
\draw[blue, thick, dashed] (-2,1) to[out=270,in=210] (3,-0.32);

\draw (0,0)node{};
\draw (1,1.5)node{};
\draw (-0.5,1.5)node{};
\draw (3.5,2.98)node{};
\draw (3,-0.32)node{};

\draw[red] (0,-0.3) node[texte]{$\rho$};
\draw[red] (1,1.2) node[texte]{$a_1$};
\draw[red] (-0.75,1.25) node[texte]{$a_2$};
\draw (4.8,1) node[texte]{$\sli[a_1]$};
\draw (-4,1.5) node[texte]{$\sli[a_2]$};
\draw[red] (3.7,3.3) node[texte]{$\gamma_{\ell}(i)$};
\draw[red] (3,-0.8) node[texte]{$\gamma_r(j)$};
\draw[blue] (4.2,2) node[texte]{Case (ii)};
\draw[blue] (0,2.3) node[texte]{Case (i)};
\draw[blue] (-1,3.5) node[texte]{Case (iii)};
\end{tikzpicture}
\end{center}
\caption{Reinforcement of Proposition \ref{geodesics_contains_bdd_point}. In blue, the geodesic $\gamma$. It must intersect a geodesic from $\rho$ to $a_1$ or $a_2$, or cross $\sli[a_1]$ or $\sli[a_2]$.}\label{3_cases_gamma}
\end{figure}
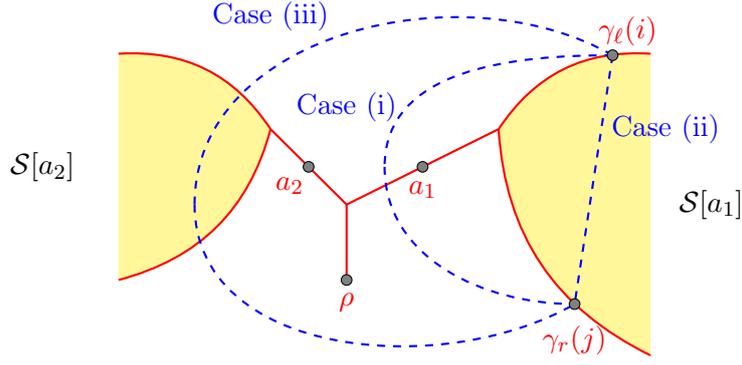

We can now build our infinite geodesics $\widetilde{\gamma}_{\ell}$ and $\widetilde{\gamma}_r$. For every $n$, let $\gamma_{\ell}^n$ be a geodesic from $\rho$ to $\gamma_{\ell}(n)$. By an easy compactness argument, there is an infinite geodesic $\widetilde{\gamma}_{\ell}$ such that, for every $i \geq 0$, there are infinitely many $n$ such that $\widetilde{\gamma}_{\ell}(i)=\gamma_{\ell}^n(i)$. We build an infinite geodesic $\widetilde{\gamma}_r$ from $\gamma_r$ in a similar way. 

For every $i,j \geq 0$, we define
\[a_{i,j}=i+j-d_{\m} \left( \widetilde{\gamma}_{\ell}(i), \widetilde{\gamma}_r(j) \right).\]
This quantity measures "by how much" the concatenation of $\widetilde{\gamma}_{\ell}$ and $\widetilde{\gamma}_r$ between $\widetilde{\gamma}_{\ell}(i)$ and $\widetilde{\gamma}_r(j)$ is not a geodesic. We note that for any $i,j \geq 0$, we have
\[d_{\m} \left( \widetilde{\gamma}_{\ell}(i+1), \widetilde{\gamma}_r(j) \right) \leq d_{\m} \left( \widetilde{\gamma}_{\ell}(i+1), \widetilde{\gamma}_{\ell}(i) \right)+d_{\m} \left( \widetilde{\gamma}_{\ell}(i), \widetilde{\gamma}_r(j) \right)=1+d_{\m} \left( \widetilde{\gamma}_{\ell}(i), \widetilde{\gamma}_r(j) \right),\]
so $a_{i+1,j} \geq a_{i,j}$, so $a_{i,j}$ is nondecreasing in $i$. Similarly, it is nondecreasing in $j$. We claim the following.

\begin{lem}\label{aij_bounded}
Almost surely, $(a_{i,j})_{i,j \geq 0}$ is bounded.
\end{lem}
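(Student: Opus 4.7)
The plan is to show that $a_{i,j} \leq 2K$ for all $i,j \geq 0$, where $K$ is the (random) constant from the reinforcement of Proposition \ref{geodesics_contains_bdd_point} given by \eqref{K_geodesic_in_m}. Since this bound is independent of $i$ and $j$, the lemma will follow immediately. The strategy is to embed a geodesic realizing $d_{\m}(\widetilde{\gamma}_{\ell}(i), \widetilde{\gamma}_r(j))$ into a path between genuine boundary points $\gamma_{\ell}(n), \gamma_r(m)$ of $\sli[a_1]$, and then exploit the reinforced statement, which applies to geodesics between such boundary points.

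Fix $i,j \geq 0$. By construction of $\widetilde{\gamma}_{\ell}$ (resp.\ $\widetilde{\gamma}_r$), I can choose (arbitrarily large) indices $n$ and $m$ such that $\widetilde{\gamma}_{\ell}(i) = \gamma_{\ell}^n(i)$ and $\widetilde{\gamma}_r(j) = \gamma_r^m(j)$. Let $\sigma$ be a geodesic in $\m$ from $\widetilde{\gamma}_{\ell}(i)$ to $\widetilde{\gamma}_r(j)$, so $|\sigma| = i + j - a_{i,j}$. Form a path $\tau$ from $\gamma_{\ell}(n)$ to $\gamma_r(m)$ by concatenating: the reverse of $\gamma_{\ell}^n$ restricted between $\gamma_{\ell}(n)$ and $\widetilde{\gamma}_{\ell}(i)$, then $\sigma$, then $\gamma_r^m$ restricted between $\widetilde{\gamma}_r(j)$ and $\gamma_r(m)$. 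The first and third pieces are sub-geodesics of $\gamma_{\ell}^n$ and $\gamma_r^m$ (since $\widetilde{\gamma}_{\ell}(i)$ and $\widetilde{\gamma}_r(j)$ occupy positions $i$ and $j$ on these geodesics), so their lengths are $|\gamma_{\ell}^n| - i$ and $|\gamma_r^m| - j$. Consequently,
\[|\tau| \;=\; (|\gamma_{\ell}^n| - i) + (i + j - a_{i,j}) + (|\gamma_r^m| - j) \;=\; |\gamma_{\ell}^n| + |\gamma_r^m| - a_{i,j}.\]

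Now apply the reinforcement of Proposition \ref{geodesics_contains_bdd_point} to any geodesic in $\m$ from $\gamma_{\ell}(n)$ to $\gamma_r(m)$: it contains a vertex $v$ with $d_{\m}(v, \rho) \leq K$, where $K$ is given by \eqref{K_geodesic_in_m}. Since $d_{\m}(\rho, \gamma_{\ell}(n)) = |\gamma_{\ell}^n|$ and $d_{\m}(\rho, \gamma_r(m)) = |\gamma_r^m|$, the triangle inequality yields
\[ d_{\m}(\gamma_{\ell}(n), \gamma_r(m)) \;\geq\; d_{\m}(\gamma_{\ell}(n), v) + d_{\m}(v, \gamma_r(m)) \;\geq\; |\gamma_{\ell}^n| + |\gamma_r^m| - 2K. \]
Combining with $|\tau| \geq d_{\m}(\gamma_{\ell}(n), \gamma_r(m))$ gives $a_{i,j} \leq 2K$, as desired.

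There is essentially no serious obstacle: the only conceptual point is to observe that because $\widetilde{\gamma}_{\ell}(i)$ coincides with $\gamma_{\ell}^n(i)$ for infinitely many $n$ (and similarly on the right), one can splice $\sigma$ into a longer path $\tau$ between boundary points of $\sli[a_1]$ without any length overhead beyond $a_{i,j}$. The reinforced Proposition then forces $\tau$ to be "long enough", which caps $a_{i,j}$ uniformly.
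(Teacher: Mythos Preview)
Your proof is correct and follows essentially the same approach as the paper: build a path from $\gamma_{\ell}(n)$ to $\gamma_r(m)$ through $\widetilde{\gamma}_{\ell}(i)$ and $\widetilde{\gamma}_r(j)$ to get an upper bound on $d_{\m}(\gamma_{\ell}(n),\gamma_r(m))$, then use the reinforced Proposition~\ref{geodesics_contains_bdd_point} to get a lower bound on the same distance, yielding $a_{i,j}\le 2K$. One small wording slip: the first inequality in your final display is actually an equality (because $v$ lies on the geodesic), with the triangle inequality only justifying the second step; but this does not affect the argument.
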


\begin{proof}[Proof of Lemma \ref{aij_bounded}]
Let $i, j\geq 0$. By the definition of $\widetilde{\gamma}_{\ell}$ and $\widetilde{\gamma}_r$, there are two indices $m$ and $n$ such that $\widetilde{\gamma}_{\ell}(i)$ lies on a geodesic from $\rho$ to $\gamma_{\ell}(m)$ and $\widetilde{\gamma}_r(j)$ lies on a geodesic from $\rho$ to $\gamma_{r}(n)$. Therefore, we have
\begin{eqnarray*}
d_{\m} \left( \gamma_{\ell}(m), \gamma_r(n) \right) & \leq & d_{\m} \left( \gamma_{\ell}(m), \widetilde{\gamma}_{\ell}(i) \right) + d_{\m} \left( \widetilde{\gamma}_{\ell}(i), \widetilde{\gamma}_r(j) \right) + d_{\m} \left( \widetilde{\gamma}_r(j), \gamma_r(n) \right) \\
&=& d_{\m} \left( \rho, \gamma_{\ell}(m) \right) -i + d_{\m} \left( \widetilde{\gamma}_{\ell}(i), \widetilde{\gamma}_r(j) \right) + d_{\m} \left( \rho, \gamma_r(n) \right) -j\\
&=& d_{\m} \left( \rho, \gamma_{\ell}(m) \right) + d_{\m} \left( \rho, \gamma_r(n) \right) -a_{i,j}.
\end{eqnarray*}
On the other hand, we know that for any $m,n \geq 0$, any geodesic from $\gamma_{\ell}(m)$ to $\gamma_r(n)$ contains a vertex $v_0$ at distance at most $K$ from $\rho$, where $K$ is given by \eqref{K_geodesic_in_m}. Therefore, we have
\begin{eqnarray*}
d_{\m} \left( \gamma_{\ell}(m), \gamma_r(n) \right) &=& d_{\m} \left( \gamma_{\ell}(m), v_0 \right) + d_{\m} \left( v_0, \gamma_r(n) \right)\\
&\geq & d_{\m} \left( \gamma_{\ell}(m), \rho \right) + d_{\m} \left( \gamma_r(n), \rho \right)- 2 d_{\m}(\rho,v_0)\\
& \geq & d_{\m} \left( \gamma_{\ell}(m), \rho \right) + d_{\m} \left( \gamma_r(n), \rho \right)-2K.
\end{eqnarray*}
By combining the last two equations, we obtain $a_{i,j} \leq 2K$ for every $i,j \geq 0$.
\end{proof}

The construction of a bi-infinite geodesic is now easy. Let $i_0, j_0$ be two indices such that $a_{i_0,j_0}=\sup \{a_{i,j} | i,j \geq 0\}$, and let $d=d_{\m} \left( \widetilde{\gamma}_{\ell}(i_0), \widetilde{\gamma}_r(j_0) \right)$. Let also $\widehat{\gamma}$ be a geodesic from $\widetilde{\gamma}_{\ell}(i_0)$ to $\widetilde{\gamma}_r(j_0)$ in $\m$. We define a bi-infinite path $\gamma$ as follows :
\[
\gamma(i)=\begin{cases}
\widetilde{\gamma}_{\ell}(i_0-i) & \mbox{if  $i \leq 0$}, \\
\widehat{\gamma}(i) &\mbox{if $0 \leq i \leq d$},\\
\widetilde{\gamma}_r(i-d+j_0) & \mbox{if $i \geq d$}.
\end{cases}
\]
We finally check that this is indeed a bi-infinite geodesic. Let $i \geq i_0$ and $j \geq j_0$. We have $a_{i,j}=a_{i_0,j_0}$, so
\[d_{\m} \left( \widetilde{\gamma}_{\ell}(i), \widetilde{\gamma}_r(j) \right)=d+(i-i_0)+(j-j_0),\]
so $d \left( \gamma(i_0-i), \gamma(d+j-j_0) \right)=(d+j-j_0)-(i_0-i)$. Therefore, we have $d \left( \gamma(i'), \gamma(j') \right)=j'-i'$ for $i' \leq 0$ small enough and $j' \geq 0$ large enough, so $\gamma$ is a bi-infinite geodesic.
\end{proof}

\section{Poisson boundary}\label{causal_sec_poisson}

\subsection{General setting}\label{causal_subsec_construct_poisson}

The goal of this subsection is to build a compactification of maps that, as we will later prove, is under some assumptions a realization of their Poisson boundary. We will perform this construction directly in the general framework $\m=\m \left(  \mathbf{T}, (s_i) \right)$. This construction is exactly the same as the construction performed for the PSHIT in Section 3.1 of \cite{B18}.

We recall that $\partial \mathbf{T}$ is the set of infinite rays from $\rho$ in $\mathbf{T}$. If $\gamma, \gamma' \in \partial \mathbf{T}$, we write $\gamma \sim \gamma'$ if $\gamma=\gamma'$ or if $\gamma$ and $\gamma'$ are "consecutive" in the sense that there is no ray between them (in particular, if $\gamma_{\ell}$ and $\gamma_r$ are the leftmost and rightmost rays of $\mathbf{T}$, then $\gamma_{\ell} \sim \gamma_r$). It is equivalent to saying that $\gamma$ and $\gamma'$ are the left and right boundaries of some strip $s_i$ in the map $\m$. Note that a.s., every ray of $\mathbf{T}$ contains infinitely many branching points, so no ray is equivalent to two distinct other rays. It follows that $\sim$ is a.s.~an equivalence relation for which countably many equivalence classes have cardinal $2$, and all the others have cardinal $1$. We write $\bdy=\partial \mathbf{T} / \sim$ and we denote by $\gamma \to \widehat{\gamma}$ the canonical projection from $\partial \mathbf{T}$ to $\bdy$. Finally, for every strip $s_i$, the left and right boundaries of $s_i$ correspond to the same point of $\bdy$, that we denote by $\widehat{\gamma}_i$.

Our goal is now to define a topology on $\m \cup \bdy$. It should be possible to define it by an explicit distance, but such a distance would be tedious to write down, so we prefer to give an "abstract" construction. Let $s_i$ and $s_j$ be two distinct strips of $\m$, and fix $h>0$ such that both $s_i$ and $s_j$ both intersect $B_h(\mathbf{T})$.
Then $\m \backslash \left( B_h (\m) \cup s_i \cup s_j \right)$ has two infinite connected components, that we denote by $\left( s_i, s_j \right)$ and $\left( s_j, s_i \right)$ (the vertices on the boundaries of $s_i$ and $s_j$ do not belong to $\left( s_i, s_j \right)$ and $\left( s_j, s_i \right)$). We also write 
\[ \bd \left( s_i, s_j \right) =\{ \widehat{\gamma} | \mbox{ $\gamma$ is a ray of $\mathbf{T}$ such that $\gamma(k) \in \left( s_i, s_j \right)$ for $k$ large enough} \}.\]
We define $\bd \left( s_j, s_i \right)$ similarly. Note that $\bd \left( s_i, s_j \right)$ and $\bd \left( s_j, s_i \right)$ are disjoint subsets of $\bdy$, and their union is $\bdy \backslash \{ \widehat{\gamma}_i, \widehat{\gamma}_j \}$.

We can now equip the set $\m \cup \bdy$ with the topology generated by the following open sets:
\begin{itemize}
\item
the singletons $\{v\}$, where $v$ is a vertex of $\m$,
\item
the sets $\left( s_i, s_j \right) \cup \bd \left( s_i, s_j \right)$, where $s_i$ and $s_j$ are two distinct strips of $\m$. 
\end{itemize}

This topology is separated (if $\widehat{\gamma}_1 \ne \widehat{\gamma}_2$, then there are two strips separating $\gamma_1$ and $\gamma_2$) and has a countable basis, so it is induced by a distance. Moreover, any open set of our basis intersects $\m$, so $\m$ is dense in $\m \cup \bdy$. Finally, we state an intuitive result about the topology of $\m \cup \bdy$. Its proof in the particular case of the PSHIT can be found in \cite{B18}, and adapts without any change to the general case.

\begin{lem} \label{compactness}
The space $\m \cup \bdy$ is compact, and $\bdy$ is homeomorphic to the unit circle.
\end{lem}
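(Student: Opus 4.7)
The plan is to first identify $\bdy$ with the unit circle, and then deduce compactness of $\m \cup \bdy$ from this via a sequential argument.

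For the identification with $S^1$, the planar embedding of $\mathbf{T}$ cyclically orders each finite set $\partial B_h(\mathbf{T})$, inducing a cyclic order on $\partial \mathbf{T}$: given three distinct rays, their values at any sufficiently large height $h$ are cyclically ordered in a stable way. Since $\sim$-equivalent rays are always consecutive in this order, it descends to $\bdy$. The countable set of type-$(b)$ classes $\{\widehat{\gamma}_i\}_{i \geq 0}$ is dense in $\bdy$ for this order, because $\mathbf{T}$ being supercritical without leaves forces $\mu(k)>0$ for some $k \geq 2$, so every ray of $\mathbf{T}$ has branching points arbitrarily far on both sides, producing strips arbitrarily close to it. Enumerating the strips, one inductively places the $\widehat{\gamma}_i$ as a cyclically ordered dense subset of $S^1$ and extends by continuity (via Cauchy completion) to an order-preserving bijection $\bar{\phi} : \bdy \to S^1$. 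The basic open sets $\bd(s_i, s_j)$ of $\bdy$ are exactly the cyclic arcs with endpoints $\widehat{\gamma}_i, \widehat{\gamma}_j$, which map under $\bar{\phi}$ to open arcs of $S^1$ with endpoints in the dense image, forming a basis of the topology of $S^1$. Hence $\bar{\phi}$ is a homeomorphism, and $\bdy$ is compact.

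For the compactness of $\m \cup \bdy$, I would use sequential compactness (the space is metrizable, having a countable basis). Given a sequence $(x_n)$, either infinitely many $x_n$ lie in $\bdy$ (extract using compactness of $\bdy$), or infinitely many $x_n \in \m$ lie in some ball $B_r(\m)$ (which is finite by local finiteness of $\m$, and since singletons are open, a constant subsequence exists), or $x_n \in \m$ with $d_{\m}(\rho, x_n) \to \infty$. In the last case, associate to each $x_n$ a ray $\gamma_n \in \partial \mathbf{T}$: extend $x_n$ to a ray if $x_n \in \mathbf{T}$, otherwise take a boundary ray of the strip containing $x_n$. Since $\partial \mathbf{T}$ is a projective limit of finite sets, hence compact, extract a subsequence $\gamma_{n_k} \to \gamma$. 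For any basic neighborhood $U = (s_j, s_l) \cup \bd(s_j, s_l)$ of $\widehat{\gamma}$, the convergence $\gamma_{n_k} \to \gamma$ forces $\gamma_{n_k}(h) \in (s_j, s_l)$ for all large $h$ and $k$; combined with $d_{\m}(\rho, x_{n_k}) \to \infty$ and the fact that the strip containing $x_{n_k}$ is eventually distinct from $s_j$ and $s_l$ (since $\widehat{\gamma} \neq \widehat{\gamma}_j, \widehat{\gamma}_l$), this forces $x_{n_k} \in (s_j, s_l) \subseteq U$ eventually, so $x_{n_k} \to \widehat{\gamma}$.

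The main technical hurdle is the construction and verification of $\bar{\phi}$: matching the subspace topology on $\bdy$ inherited from $\m \cup \bdy$ with the order topology transported from $S^1$ requires a careful identification of the basic open sets $\bd(s_i, s_j)$ with open arcs of $S^1$ whose endpoints lie in a dense countable set, and relies crucially on the density of the type-$(b)$ classes in $\bdy$.
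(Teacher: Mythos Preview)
The paper does not give a proof of this lemma: it states that the proof in the particular case of the PSHIT can be found in \cite{B18} and adapts without change to the general case. Your approach---identifying $\bdy$ with $S^1$ via the induced cyclic order and the density of the strip-classes $\widehat{\gamma}_i$, then proving sequential compactness by associating to each escaping vertex a ray of $\mathbf{T}$ and extracting via compactness of $\partial \mathbf{T}$---is the natural one and is presumably close to what \cite{B18} does.

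One place where your argument should be tightened: in the last case you need to justify that $x_{n_k}$ eventually lies in the correct infinite component $(s_j,s_l)$, not just that $\gamma_{n_k}$ does. When $x_{n_k}\in\mathbf{T}$, you should argue that the tree-height of $x_{n_k}$ tends to infinity (there are only finitely many tree vertices of bounded height, so $d_{\m}(\rho,x_{n_k})\to\infty$ forces this), and then the tree path from $\gamma_{n_k}(H)$ to $x_{n_k}$ stays in $(s_j,s_l)$ once $H$ is chosen above the maximal tree-height occurring in $B_h(\m)$. When $x_{n_k}$ lies in a strip $s_{i_{n_k}}\ne s_j,s_l$, you should observe that $s_{i_{n_k}}\setminus B_h(\m)$ has a single infinite component (the strip being one-ended), this component is contained in $(s_j,s_l)$ once $\gamma_{n_k}$ agrees with $\gamma$ far enough, and $x_{n_k}$ lies in it because the finite components of $s_{i_{n_k}}\setminus B_h(\m)$ are contained in a fixed $\m$-ball. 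These are routine but not entirely automatic, and your parenthetical ``since $\widehat{\gamma}\ne\widehat{\gamma}_j,\widehat{\gamma}_l$'' does not by itself supply them.
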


\subsection{Transience away from the boundary in causal slices}

The goal of this section is to prove Proposition \ref{slice_strong_transience}, which is the main tool in the proof of Theorem \ref{thm_2_Poisson}. We recall that $\sli$ is the causal slice associated to a supercritical Galton--Watson tree $T$, and $\partial \sli$ is the \emph{boundary} of $\sli$, i.e. the set of vertices of $\sli$ that are either the leftmost or the rightmost vertex of their generation. We also write $\tau_{\partial \sli} = \min \{ n \geq 0 | X_n \in \partial \sli \}$, where $(X_n)$ is the simple random walk on $\sli$.

\begin{prop}\label{slice_strong_transience}
Almost surely, there is a vertex $x \in \sli$ such that
\[ P_{\sli, x} \left( \tau_{\partial \sli} =+\infty \right)>0. \]
\end{prop}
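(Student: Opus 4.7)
The idea is to reduce the problem to the well-known transience of supercritical Galton--Watson trees via an electrical-network / flow argument. Concretely, I will look for an interior vertex $v \in \back(T)$ whose backbone subtree $\back(T)[v]$ is disjoint from $\partial\sli = \gamma_\ell \cup \gamma_r$, and use transience of that subtree to build a finite-energy flow from $v$ to infinity in $\sli \setminus \partial \sli$.

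\textbf{Step 1: existence of an interior backbone vertex.} I will show that almost surely there exists $v \in \back(T) \setminus (\gamma_\ell \cup \gamma_r)$. By Theorem \ref{backbone_decomposition}, $\back(T)$ is a Galton--Watson tree with offspring distribution $\boldsymbol{\mu}$ satisfying $\boldsymbol{\mu}(0)=0$ and $\boldsymbol{\mu}(1)=f'(q)<1$, so it branches infinitely often. Picking either a middle child at a branching vertex of $\back(T)$ with at least three backbone children, or applying the same reasoning recursively inside $\back(T)[c_L]$ when the first branching has exactly two children, produces such a $v$.

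\textbf{Step 2: the subtree $\back(T)[v]$ avoids $\partial\sli$.} This is the crucial geometric observation. Since $\gamma_\ell$ is a single ray from $\rho$, the path from $\rho$ to any vertex on $\gamma_\ell$ is contained in $\gamma_\ell$. Hence if some strict descendant of $v$ lay on $\gamma_\ell$, then $v$ itself would be on $\gamma_\ell$, contradicting the choice of $v$. The same applies to $\gamma_r$. Therefore $\back(T)[v] \cap \partial \sli = \emptyset$.

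\textbf{Step 3: transience of $\back(T)[v]$ and the flow.} By the Markov property for Galton--Watson trees combined with Theorem \ref{backbone_decomposition}, conditionally on $v \in \back(T)$ the subtree $\back(T)[v] = \back(T[v])$ is a supercritical Galton--Watson tree with offspring distribution $\boldsymbol{\mu}$, having mean $f'(1) > 1$. Such a tree is a.s.~transient (for instance by Nash--Williams, since by Kesten--Stigum the generation $n$ has at least $c\, m^n$ vertices for some $m>1$, $c>0$), so it admits a unit flow $\theta$ from $v$ to $\infty$ with finite energy. Viewed as a flow in $\sli$, the flow $\theta$ is supported on edges of $\back(T)[v]$, which are edges of $\sli$ disjoint from $\partial \sli$. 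Hence no current enters $\partial\sli$, and $\theta$ is a finite-energy flow from $v$ to $\infty$ inside $\sli \setminus \partial \sli$.

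\textbf{Step 4: conclusion via electrical networks.} By the standard criterion (contract $\partial \sli$ to a single vertex $\partial^\ast$; then $\{\tau_{\partial \sli} = \infty\} = \{\tau_{\partial^\ast} = \infty\}$, which has positive probability iff the component of $v$ in $\sli\setminus \partial\sli$ is transient), the existence of $\theta$ implies $P_{\sli,v}(\tau_{\partial\sli} = +\infty) > 0$, proving the proposition. The main (and essentially only) nontrivial point is Step 2, but it follows cleanly from the fact that $\gamma_\ell$ and $\gamma_r$ are single rays emanating from $\rho$.
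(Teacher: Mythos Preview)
Your Steps 1--3 are fine, but Step 4 contains a genuine error: the criterion you invoke is false. It is \emph{not} true that $P_v^G(\tau_A=+\infty)>0$ if and only if the component of $v$ in $G\setminus A$ is transient. For a clean counterexample, take $C$ to be the rooted binary tree and let $A=\{a_0,a_1,\dots\}$, adding one edge from each vertex of $C$ at depth $n$ to $a_n$. Then $G\setminus A=C$ is transient, yet from any $w\in C$ the walk on $G$ jumps to $A$ with probability at least $1/4$ at every step, so $P_v^G(\tau_A=+\infty)=0$. In other words, a finite-energy flow from $v$ to infinity that avoids $A$ tells you only that $R_{\mathrm{eff}}^G(v\leftrightarrow\infty)<\infty$; it says nothing about $R_{\mathrm{eff}}^G(v\leftrightarrow A)$, and the correct inequality (used in the paper as \eqref{resistance-hitting}) is $P_v(\tau_A<\infty)\le R_{\mathrm{eff}}(v\leftrightarrow\infty)/R_{\mathrm{eff}}(v\leftrightarrow A)$, which requires controlling \emph{both} quantities.

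This is not a technicality: the whole difficulty of the proposition lies in the piece you are missing. In $\sli$, every vertex of your subtree $\back(T)[v]$ has horizontal edges leading toward $\partial\sli$, and one must show that this leakage does not force the walk to $\partial\sli$ almost surely. The paper handles this by taking $x_n$ along a nonbacktracking walk on $\back(T)$ (your Step 3 essentially gives the bound on $R_{\mathrm{eff}}(x_n\leftrightarrow\infty)$) and then proving $R_{\mathrm{eff}}^{\sli}(x_n\leftrightarrow\partial\sli)\to\infty$ via a planar \emph{duality} argument: disjoint cutsets $A_k$ separating $x_n$ from $\partial\sli$ are built, and the resistance across each is bounded below by the reciprocal of a dual resistance, which is finite because the dual slice $\sli^*$ is transient (Lemma \ref{transience_dual_slice}). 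None of this is present in your argument, and without it the proof does not go through.
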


Note that if such a vertex $x$ exists, then we have $P_{\sli, v} (X_n \notin \partial \sli \mbox{ for $n$ large enough})>0$ for every vertex $v \in \sli$. 
The proof of Proposition \ref{slice_strong_transience} is based on estimates of effective resistances. We will use the following inequality, that holds for every graph and every vertex $x$:
\begin{equation}\label{resistance-hitting}
P_{\sli,x} \left( \tau_{\partial \sli}<+\infty \right) \leq \frac{R_{\eff}^{\sli} (x \leftrightarrow \{ \partial \sli, \infty \})}{R_{\eff}^{\sli} (x \leftrightarrow \partial \sli)} \leq \frac{R_{\eff}^{\sli} (x \leftrightarrow  \infty )}{R_{\eff}^{\sli} (x \leftrightarrow \partial \sli)}.
\end{equation}
For example, this is a particular case of Exercise 2.36 of \cite{LP10}. We will find a sequence $(x_n)$ of vertices satisfying the following two properties:
\begin{enumerate}
\item
we have $R_{\eff}^{\sli} (x_n \leftrightarrow \partial \sli) \to +\infty$ a.s.~when $n \to +\infty$,
\item
for every $n \geq 0$, the resistance $R_{\eff}^{\sli} (x_n \leftrightarrow \infty)$ is stochastically dominated by $R_{\eff}^{\sli} (\rho \leftrightarrow \infty)$. In particular, a.s., $\left( R_{\eff}^{\sli} (x_n \leftrightarrow \infty) \right)$ has a bounded subsequence.
\end{enumerate}
By \eqref{resistance-hitting}, this will guarantee that
\[ P_{\sli, x_n} \left( \tau_{\partial \sli}<+\infty \right) \xrightarrow[n \to +\infty]{} 0\]
along some subsequence, which is enough to prove Proposition \ref{slice_strong_transience}.

We choose for the sequence $(x_n)$ the nonbacktracking random walk on the backbone of $T$. More precisely, we take $x_0=\rho$ and, for every $n \geq 0$, conditionally on $\sli$ and $x_0, \dots, x_n$, the vertex $x_{n+1}$ is chosen uniformly among the children of $x_n$ in $\back(T)$. We can give a "spinal decomposition" of $\back(T)$ along $(x_n)$. We recall that $\boldsymbol{\mu}$ is the offspring distribution of $\back(T)$, cf. \eqref{several_offspring}. For every $n \geq 0$, let $L_n$ (resp. $R_n$) be the number of children of $x_n$ in $\back(T)$ on the left (resp. on the right) of $x_{n+1}$. A vertex $v$ of $\back(T)$ will be called a \emph{spine brother} if the parent of $v$ is equal to $x_n$ for some $n$ but $v \ne x_{n+1}$. Then the pairs $(L_n, R_n)$ are i.i.d. with distribution $\nu$ given by
\begin{equation}\label{equation_ell_r}
\P \left( L_n=\ell, R_n=r \right) = \nu \left( \{ (\ell,r) \} \right)= \frac{1}{r+\ell+1} \boldsymbol{\mu}(r+\ell+1).
\end{equation}
Moreover, conditionally on $(L_n)$ and $(R_n)$, the backbones of the trees of descendants of the spine brothers are i.i.d. Galton--Watson trees with offspring distribution $\boldsymbol{\mu}$. The distribution of $T$ conditionally on this backbone is then given by Theorem \ref{backbone_decomposition}. In particular, for every $n \geq 0$, the tree of descendants of $x_n$ has the same distribution as $T$, so $\sli[x_n]$ has the same distribution as $\sli$.

Therefore, for every $n$, we have
\[R_{\eff}^{\sli} (x_n \leftrightarrow \infty) \leq R_{\eff}^{\sli [x_n]} (x_n \leftrightarrow \infty ),\]
where $R_{\eff}^{\sli [x_n]} (x_n \leftrightarrow \infty )$ has the same distribution as $R_{\eff}^{\sli} (\rho \leftrightarrow \infty )$. This proves the second property that we wanted $(x_n)$ to satisfy.
Hence, it only remains to prove that $R_{\eff}^{\sli} (x_n \leftrightarrow \partial \sli)$ almost surely goes to $+\infty$, which is the goal of the next lemma.

\begin{lem}\label{lem_separating_sets}
There are disjoint vertex sets $(A_k)_{k \geq 0}$, satisfying the following properties:
\begin{enumerate}
\item[(i)]
for any $k \geq 1$, the set $A_k$ separates $\partial \sli$ from all the sets $A_i$ with $i > k$, and from $x_n$ for $n$ large enough,
\item[(ii)]
the parts of $\sli$ lying between $A_{2k}$ and $A_{2k+1}$ for $k \geq 0$ are i.i.d.,
\item[(iii)]
we have $R_{\eff}^{\sli} (A_{2k} \leftrightarrow A_{2k+1})>0$ a.s.~for every $k \geq 0$.
\end{enumerate}
\end{lem}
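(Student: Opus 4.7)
The plan is to construct the sets $A_k$ using the renewal structure of the non-backtracking spine $(x_n)$, together with the spinal decomposition of $\back(T)$ given by Theorem \ref{backbone_decomposition}, placing them as nested cutsets around the "cone" of descendants of the spine.

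I would first select a renewal sequence of heights $\tau_1 < \tau_2 < \cdots$ along the spine. Using the i.i.d.\ structure of the pairs $(L_n, R_n)$ from \eqref{equation_ell_r}, I take $\tau_k$ to be the $k$-th time $n$ at which a good branching event occurs: for concreteness, $L_n \geq 1$ and $R_n \geq 1$, so that $x_n$ has at least one backbone child on each side of $x_{n+1}$ (the degenerate case where $\boldsymbol{\mu}$ is supported on $\{1,2\}$ is handled by pairing consecutive good times, one on each side). Let $y_\ell^k$ and $y_r^k$ be the leftmost and rightmost backbone children of $x_{\tau_k}$. By Theorem \ref{backbone_decomposition}, the slices $\sli[y_\ell^k]$ and $\sli[y_r^k]$ are independent copies of $\sli$ and independent of the history of the spine below height $\tau_k + 1$, so by the renewal structure the pairs $\big(\sli[y_\ell^k], \sli[y_r^k]\big)$ are i.i.d.\ in $k$.

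Next I would define $A_{2k}$ and $A_{2k+1}$ as two nested "collars" sitting inside the two side-slices. Concretely, for a fixed deterministic constant $H \geq 2$, $A_{2k}$ would consist of a finite horizontal cut inside both side-slices at relative depth $1$, supplemented by enough of their boundary rays to form a genuine cutset, and $A_{2k+1}$ the analogous outer layer at relative depth $H$. Property (ii) is then immediate: the part of $\sli$ lying between $A_{2k}$ and $A_{2k+1}$ is a bounded deterministic functional of the pair $\big(\sli[y_\ell^k], \sli[y_r^k]\big)$, so these pieces are i.i.d.\ in $k$. Property (iii) follows because $A_{2k}$ and $A_{2k+1}$ are disjoint and the subgraph sandwiched between them is finite (hence has strictly positive effective resistance between the two sides). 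Property (i) is proved by a planar argument: since $y_\ell^k$ is the leftmost and $y_r^k$ the rightmost backbone child of $x_{\tau_k}$, any path from a vertex in the cone $\sli[x_{\tau_k + 1}]$---which contains $x_n$ for $n$ larger than $\tau_k$ and all the $A_i$ with $i > k$---to $\partial \sli$ must exit this cone through one of the two side-slices, and therefore cross both collars on that side.

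The main obstacle is verifying (i) rigorously. In $\sli$ the horizontal edges run at every level, so naive level-set cutsets fail to separate: a path from deep in the cone to $\gamma_\ell$ could in principle descend below $\tau_k$, cross over horizontally, and then ascend along $\gamma_\ell$, bypassing any purely horizontal barrier placed at heights above $\tau_k$. The collars must therefore include enough of the boundary rays of the side-slices to cut such low-level detours, and the two-sided nature of the construction---using both $\sli[y_\ell^k]$ and $\sli[y_r^k]$ simultaneously---is essential to prevent a path from going around either shield via the other side. Once the collars are designed correctly, applying the series law to the sets constructed here will give the key resistance divergence $R_{\eff}^{\sli}(x_n \leftrightarrow \partial \sli) \to \infty$ needed for Proposition \ref{slice_strong_transience}.
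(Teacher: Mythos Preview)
Your overall framework is close to the paper's: define the $A_k$ as nested bi-infinite ``collars'' along the spine, using the i.i.d.\ structure of the pairs $(L_n,R_n)$. The paper does essentially this, taking $A_k$ to be the union of the leftmost backbone ray from $x_{h_k}$, a segment of the spine, and the rightmost backbone ray from $x_{h'_k}$, where $h_k,h'_k$ are successive times with $L_n>0$, resp.\ $R_n>0$. Properties (i) and (ii) are then indeed routine.

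The genuine gap is your argument for (iii). You write that ``the subgraph sandwiched between them is finite (hence has strictly positive effective resistance between the two sides)''. This cannot hold. Any vertex set separating $\partial\sli$ from $x_n$ for all large $n$ must contain, at every sufficiently large height $h$, at least one vertex on each side of the spine, since the horizontal path at height $h$ connects $x_h$ directly to $\gamma_\ell(h)$ and to $\gamma_r(h)$. Hence both $A_{2k}$ and $A_{2k+1}$ are infinite, and since they are disjoint the region between them is infinite as well (in the paper's construction it contains full copies of $\sli$). The effective resistance between two infinite vertex sets in an infinite graph may perfectly well be zero: think of two adjacent infinite rays, which are joined by infinitely many unit resistors in parallel. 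So disjointness plus ``nothing in between at most heights'' does not give $R_{\eff}>0$.

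This is precisely where the real work of the lemma lies. The paper proves (iii) by a planar duality argument: for any unit dual flow $\theta^*$ crossing the region $\uu$ from its left end to its right end, one has $R_{\eff}^{\uu}(A_b\leftrightarrow A_t)\geq \mathcal{E}(\theta^*)^{-1}$. Constructing such a $\theta^*$ with finite energy reduces to showing that the dual slice $\sli^*$ is transient (Lemma~\ref{transience_dual_slice}), which is done by embedding a bounded-degree supercritical Galton--Watson tree in $\sli^*$. Your proposal omits this step entirely; without it, (iii) is unproved and the series-resistance lower bound on $R_{\eff}^{\sli}(x_n\leftrightarrow\partial\sli)$ collapses.
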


\begin{proof}[Proof of Proposition \ref{slice_strong_transience} given Lemma \ref{lem_separating_sets}]
Fix $k$ and choose $n$ large enough, so that $x_n$ is separated from $\partial \sli$ by $A_0, A_1, \dots, A_{2k-1}$. Since $A_0, \dots, A_{2k-1}$ are disjoint cutsets separating $x_n$ from $\partial \sli$, we have
\begin{eqnarray*}
R_{\eff}^{\sli} (x_n \leftrightarrow \partial \sli) & \geq & R_{\eff}^{\sli} (\partial \sli \leftrightarrow A_0)  + R_{\eff}^{\sli} (x_n \leftrightarrow A_{2k-1}) + \sum_{i=0}^{2k-2} R_{\eff}^{\sli} (A_i \leftrightarrow A_{i+1})\\
& \geq & \sum_{i=0}^{k-1} R_{\eff}^{\sli} (A_{2i} \leftrightarrow A_{2i+1}).
\end{eqnarray*}
Since the variables $R_{\eff}^{\sli} (A_{2i} \leftrightarrow A_{2i+1})$ for $0 \leq i \leq k-1$ are i.i.d. and a.s.~positive, this goes to $+\infty$ as $k \to +\infty$, so we have $R_{\eff}^{\sli} (x_n \leftrightarrow A_0) \to +\infty$ a.s.~when $n \to +\infty$, which ends the proof.
\end{proof}

\begin{rem}
The law of large numbers even shows that there is a constant $c>0$ such that, for $n$ large enough, we have $R_{\eff}^{\sli} (x_n \leftrightarrow \partial \sli) \geq cn$. This is quite similar to the resistance estimates proved in \cite{Ben14} in the case where $T$ is the complete binary tree, and it might be interesting to apply our results to the study of the Gaussian free field on causal maps. Unfortunately, our estimates only hold in "typical" directions, and not uniformly for all the vertices. We also note that the idea to "cut" $\sli$ along the sets $A_k$ was inspired by the proof of Lemma 1 of \cite{Ben14}.
\end{rem}

We now build the subsets $A_k$. We define by induction the heights $h_k$ and $h'_k$ for $k \geq 0$ by
\begin{eqnarray*}
h_0 &=& \min \{n \geq 0 | L_n>0\},\\
h'_{k} &=& \min \{ n>h_k | R_n>0 \},\\
h_{k+1} &=& \min \{n>h'_k | L_n>0\}.
\end{eqnarray*}
Note that the pairs $(L_n,R_n)$ are i.i.d. with $\P (L_n>0)>0$ and $\P (R_n>0)>0$, so $h_k$ and $h'_k$ are a.s.~well-defined for every $k$.
We define $A_k$ as the union of the leftmost ray of $\back(T)$ from $x_{h_k}$, the rightmost ray of $\back(T)$ from $x_{h'_k}$ and the vertices $x_i$ with $h_k \leq i \leq h'_k$ (see Figure \ref{figure_separating_sets}).

\begin{figure}
\begin{center}
\begin{tikzpicture}

\draw[red, very thick](0,0)--(0,6.5);
\draw[blue, very thick](-0.04,1)--(-0.04,3);
\draw[blue, very thick](-0.04,5)--(-0.04,6);
\draw[blue, very thick](0,1)--(-2,2);
\draw(0,1)--(2,2);
\draw[blue, very thick](-2,2)--(-3,3);
\draw(-2,2)--(-2,3);
\draw (0,2)--(-1,3);
\draw(0,2)--(1,3);
\draw(2,2)--(2,3);
\draw(2,2)--(3,3);
\draw[blue, very thick](-3,3)--(-4,4);
\draw(-2,3)--(-3,4);
\draw(-2,3)--(-2,4);
\draw(-1,3)--(-1.3,4);
\draw(-1,3)--(-0.7,4);
\draw[blue, very thick] (0,3)--(1.5,4);
\draw(3,3)--(3.5,4);
\draw[blue, very thick](-4,4)--(-4,5);
\draw(-2,4)--(-3,5);
\draw(-1.3,4)--(-2,5);
\draw(0,4)--(1,5);
\draw(1.5,4)--(1.5,5);
\draw[blue, very thick] (1.5,4)--(2,5);
\draw(3.5,4)--(3,5);
\draw(3.5,4)--(4,5);
\draw[blue, very thick](-4,5)--(-4.5,6);
\draw(-4,5)--(-4,6);
\draw(-3,5)--(-3,6);
\draw(-2,5)--(-2.5,6);
\draw (-2,5)--(-2,6);
\draw(-2,5)--(-1.5,6);
\draw(0,5)--(-1,6);
\draw[blue, very thick] (0,5)--(-0.5,6);
\draw(1,5)--(1,6);
\draw(1.5,5)--(1.5,6);
\draw[blue, very thick] (2,5)--(2,6);
\draw(3,5)--(3,6);
\draw(3,5)--(4,6);
\draw(4,5)--(5,6);
\draw[blue, very thick](-4.5,6)--(-4.5,6.5);
\draw(-4,6)--(-4,6.5);
\draw(-3,6)--(-3.2,6.5);
\draw(-3,6)--(-2.8,6.5);
\draw(-2,6)--(-2.2,6.5);
\draw(-2,6)--(-1.8,6.5);
\draw(-1.5,6)--(-1.3,6.5);
\draw[blue, very thick] (-0.5,6)--(-0.5,6.5);
\draw[blue, very thick] (0,6)--(0.4,6.5);
\draw(1,6)--(1,6.5);
\draw(1.5,6)--(1.3,6.5);
\draw(1.5,6)--(1.7,6.5);
\draw[blue, very thick] (2,6)--(2.2,6.5);
\draw(3,6)--(2.8,6.5);
\draw(3,6)--(3.2,6.5);
\draw(5,6)--(5,6.5);

\draw(-2,2)--(2,2);
\draw(-3,3)--(3,3);
\draw(-4,4)--(3.5,4);
\draw(-4,5)--(4,5);
\draw(-4.5,6)--(5,6);

\draw(-2,2) node[petitbleu]{};
\draw(2,2) node{};
\draw(-3,3) node[petitbleu]{};
\draw(-2,3) node{};
\draw(-1,3) node{};
\draw(1,3) node{};
\draw(2,3) node{};
\draw(3,3) node{};
\draw(-4,4) node[petitbleu]{};
\draw(-3,4) node{};
\draw(-2,4) node{};
\draw(-1.3,4) node{};
\draw(-0.7,4) node{};
\draw(1.5,4) node[petitbleu]{};
\draw(3.5,4) node{};
\draw(-4,5) node[petitbleu]{};
\draw(-3,5) node{};
\draw(-2,5) node{};
\draw(1,5) node{};
\draw(1.5,5) node{};
\draw(2,5) node[petitbleu]{};
\draw(3,5) node{};
\draw(4,5) node{};
\draw(-4.5,6) node[petitbleu]{};
\draw(-4,6) node{};
\draw(-3,6) node{};
\draw(-2.5,6) node{};
\draw(-2,6) node{};
\draw(-1.5,6) node{};
\draw(-1,6) node{};
\draw(-0.5,6) node[petitbleu]{};
\draw(1,6) node{};
\draw(1.5,6) node{};
\draw(2,6) node[petitbleu]{};
\draw(3,6) node{};
\draw(4,6) node{};
\draw(5,6) node{};
\draw(0,0) node[petitrouge]{};
\draw(0,1) node[rouge]{};
\draw(0,1) node[petitbleu]{};
\draw(0,2) node[rouge]{};
\draw(0,2) node[petitbleu]{};
\draw(0,3) node[rouge]{};
\draw(0,3) node[petitbleu]{};
\draw(0,4) node[petitrouge]{};
\draw(0,5) node[rouge]{};
\draw(0,5) node[petitbleu]{};
\draw(0,6) node[rouge]{};
\draw(0,6) node[petitbleu]{};

\draw(0.3,0) node[texte]{$\rho$};
\draw[red](0.1,6.8) node[texte]{$(x_n)$};
\draw[blue](-4.5,6.8) node[texte]{$A_0$};
\draw[blue](-0.6,6.8) node[texte]{$A_1$};
\draw[blue](0.4,0.9) node[texte]{$x_{h_0}$};
\draw[blue](-0.3,3.3) node[texte]{$x_{h'_0}$};
\draw[blue](-0.3,4.7) node[texte]{$x_{h_1}$};
\draw[blue](0.4,5.7) node[texte]{$x_{h'_1}$};

\end{tikzpicture}
\end{center}
\caption{The slice $\sli$ with the sequences of vertices $(x_n)$ (in red), and the separating sets $A_k$ (in blue). Here we have $h_0=1$, $h'_0=3$, $h_1=5$ and $h'_1=6$.}\label{figure_separating_sets}
\end{figure}
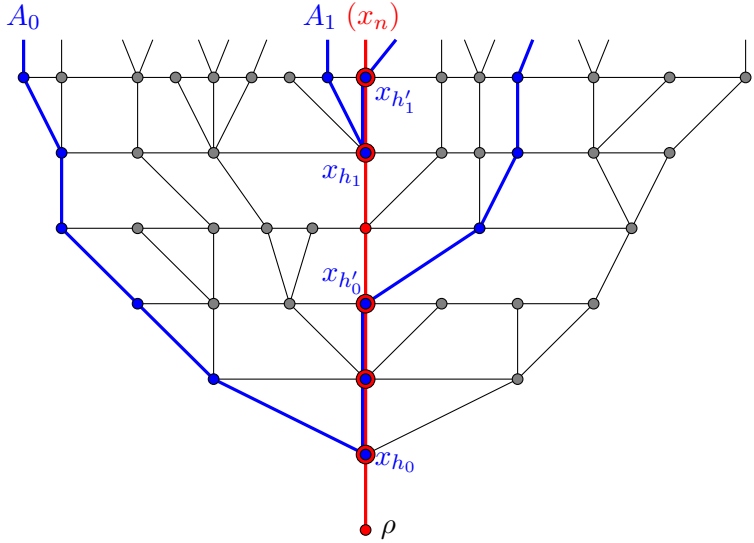

It is easy to see that the sets $A_k$ are disjoint and that $A_k$ separates $\partial \sli$ from $A_i$ for every $i>k$, and from $x_n$ for $n$ large enough, so they satisfy property (i) of Lemma \ref{lem_separating_sets}.

For every $k \geq 0$, let $\uu_k$ be the sub-map of $\sli$ whose vertices are the vertices between $A_k$ and $A_{k+1}$ (the vertices of $A_k$ and $A_{k+1}$ are included), rooted at $x_{h_k}$. By using \eqref{equation_ell_r} and the backbone decomposition, it is quite straightforward to prove that the maps $\uu_{2k}$ for $k \geq 0$ are i.i.d.. We do not give a precise description of the distribution of $\uu_k$, the only property of $\uu_k$ that we will need later is that it contains a copy of $T$ on each side of the spine.


\begin{rem}
It is still true that the $\uu_k$ for $k \geq 0$ are identically distributed. However, $\uu_k$ and $\uu_{k+1}$ are not independent. Indeed, for $h_{k+1} \leq n<h'_{k+1}$, the variables $L_n$ and $R_n$ are not independent, and $L_n$ affects $\uu_{k+1}$, whereas $R_n$ affects $\uu_k$. This is why we restrict ourselves to even values of $k$.
\end{rem}

It only remains to prove property (iii) in Lemma \ref{lem_separating_sets}. Let $\uu$ be distributed as the $\uu_k$, and let $A_b$ (resp. $A_t$) be its bottom (resp. top) boundary, playing the same role as $A_k$ (resp. $A_{k+1}$).
The proof that $R_{\eff}^{\uu} \left( A_b \leftrightarrow A_t \right)>0$ relies on a duality argument. We first recall a classical result about duality of resistances in planar maps. Let $M$ be a finite planar map drawn in the plane, and let $a$ and $z$ be two vertices adjacent to its outer face. We draw two infinite half-lines from $a$ and $z$ that split the outer face in two faces $a^*$ and $z^*$. Now let $M^*$ be the dual planar map whose vertices are $a^*$, $z^*$ and the internal faces of $M$. Then we have
\begin{equation}\label{duality_resistances_finite}
R_{\eff}^{M} (a \leftrightarrow z) = \left( R_{\eff}^{M^*} (a^* \leftrightarrow z^*) \right)^{-1}.
\end{equation}
In our case, the infinite graph $\uu$ has two ends $\infty_{\ell}$ (on the left of the spine) and $\infty_r$ (on the right). Informally, we would like to write
\begin{equation}\label{duality_resistances_infty}
R_{\eff}^{\uu} (A_b \leftrightarrow A_t) = \left( R_{\eff}^{\uu^*} (\infty^*_{\ell} \leftrightarrow \infty^*_r) \right)^{-1},
\end{equation}
which would reduce the problem to the proof of $R_{\eff}^{\uu^*} (\infty^*_{\ell} \leftrightarrow \infty^*_r)<+\infty$. Our first job will be to state and prove (a proper version of) \eqref{duality_resistances_infty}.

More precisely, we denote by $\uu_{\ell}$ (resp. $\uu_r$) the part of $\uu$ lying on the left (resp. on the right) of the spine. We also define $\uu^*$ as the dual map of $\uu$ in the following sense: the vertices of $\uu^*$ are the finite faces of $\uu$, and for every edge of $\uu$ that does not link two vertices of $A_b$ or two vertices of $A_t$, we draw an edge $e^*$ between the two faces adjacent to $e$. Let $\theta^*$ be a flow on $\uu^*$ with no source. We assume that $\theta^*$ is unitary in the sense that the mass of $\theta^*$ crossing the spine from left to right is equal to $1$. We recall that the \emph{energy} $\mathcal{E}(\theta^*)$ of $\theta^*$ is the sum over all edges $e^*$ of $\uu^*$ of $\theta^*(e^*)^2$. For every $n \geq 0$, let $A_b(n)$ (resp. $A_t(n)$) be the set of vertices of $A_b$ (resp. $A_t$) at height at most $n$. We consider the map $\uu(n)$ obtained by cutting $\uu$ above height $n$. The restriction of $\theta$ to the dual of this map becomes a unitary flow crossing $\uu(n)^*$ from left to right. Therefore, the dual resistance from left to right in $\uu(n)^*$ is at most $\mathcal{E}(\theta^*)$ so, by \eqref{duality_resistances_finite}, we obtain
\[R_{\eff}^{\uu} (A_b(n) \leftrightarrow A_t(n)) \geq \mathcal{E}(\theta^*)^{-1}\]
and, by letting $n \to +\infty$, we get $R_{\eff}^{\uu} (A_b \leftrightarrow A_t) \geq \mathcal{E}(\theta^*)^{-1}$.
In particular, if there is such a flow $\theta^*$ with finite energy, then $R_{\eff}^{\uu} (A_b \leftrightarrow A_t)>0$ and Lemma \ref{lem_separating_sets} is proved.

We now define $\uu_{\ell}$ (resp. $\uu_r$) as the part of $\uu$ lying on the left (resp. on the right) of the spine. Let $f_{\ell}$ and $f_r$ be two faces of $\uu$ lying respectively on the left and on the right of the same edge of the spine. A simple way to construct a unitary flow $\theta^*$ with no sources is to concatenate a flow $\theta^*_{\ell}$ from infinity to $f_{\ell}$ in $\uu_{\ell}^*$, a flow of mass $1$ in the dual edge from $f_{\ell}$ to $f_r$ and a flow $\theta_r^*$ from $f_r$ to infinity in $\uu_r^*$. For this flow to have finite energy, we need $\theta^*_{\ell}$ and $\theta_r^*$ to have finite energy, so we need both $\uu_{\ell}^*$ and $\uu_r^*$ to be transient.

We now define $\sli^*$ as the dual of the slice $\sli$ (as above, the vertices of $\sli^*$ are the inner faces of $\sli$). We note that, for every vertex $v_0 \in \uu_{\ell} \cap \back(T)$ that does not belong to the spine, the tree of descendants of $v_0$ has the same distribution as $T$ and is entirely contained in $\uu_{\ell}$. Therefore, $\uu_{\ell}$ contains a copy of $\sli$, so $\uu_ {\ell}^*$ contains a copy of $\sli^*$, and the same is true for $\uu_r^*$. Hence, we have reduced the proof of Proposition \ref{slice_strong_transience} to the next result.

\begin{lem}\label{transience_dual_slice}
The dual slice $\sli^*$ is a.s.~transient.
\end{lem}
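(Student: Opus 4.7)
My plan is to prove transience of $\sli^*$ via Thomson's principle, by exhibiting a unit flow of finite energy from a chosen dual vertex to infinity. The driving input is the exponential growth of the supercritical tree $T$.

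First, I organize $\sli^*$ by \emph{levels}: declare an inner face of $\sli$ to be a level-$h$ vertex of $\sli^*$ if it lies between the horizontal primal edges at heights $h$ and $h+1$. Dual edges split naturally into \emph{vertical} ones, dual to horizontal primal edges and joining level $h$ to level $h+1$, and \emph{horizontal} ones, dual to tree edges of $\sli$ and staying inside a single level. Letting $V_h$ be the number of vertices of $T$ at height $h$ strictly between $\gamma_\ell$ and $\gamma_r$, both the size of level $h$ and the number of vertical edges between levels $h$ and $h+1$ are comparable to $V_{h+1}$. Since $T$ is a supercritical Galton--Watson tree conditioned to survive (or equivalently, by Theorem \ref{backbone_decomposition}, its backbone is supercritical), a standard argument --- Kesten--Stigum in the finite-mean case, comparison with a binary subtree extracted from $\mathbf{B}(T)$ in the infinite-mean case --- yields a random $m>1$ with $V_h\ge m^h$ for all large $h$ a.s., and in particular $\sum_h V_h^{-1}<+\infty$.

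Next I build $\theta^*$. Fix a starting face $f_0$ at level $0$. The flow sends one unit upward across every level, distributed among the vertical edges in proportion to a natural weight on the subtree of $T$ hanging below each edge (for example, using the backbone decomposition, in proportion to the number of backbone descendants at some large fixed height, or to the limit variable of the subtree's Galton--Watson process). This guarantees that every vertical edge between levels $h$ and $h+1$ carries $O(V_{h+1}^{-1})$ units. Kirchhoff's rule at each dual vertex is then restored level by level by a horizontal flow equal to the cumulative left-to-right discrepancy between the in-flow from below and the out-flow above; with the weight-proportional choice, these cumulative discrepancies are also $O(V_h^{-1})$ on each horizontal edge. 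Summing gives
\[ \mathcal{E}(\theta^*) \;\lesssim\; \sum_h V_{h+1}\cdot V_{h+1}^{-2} \;+\; \sum_h V_h\cdot V_h^{-2} \;\lesssim\; \sum_h V_h^{-1} \;<\; +\infty \quad \text{a.s.},\]
and Thomson's principle concludes.

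The main obstacle I foresee is the horizontal rebalancing step: a naive uniform split of the upward flow would force horizontal edges near branching points of $T$ to carry $\Theta(1)$ units, destroying the energy bound. Selecting the upward weights in accordance with the subtree-below structure --- which, by the backbone decomposition, is at every vertical edge again a supercritical Galton--Watson tree --- is what keeps the cumulative discrepancies small enough to close the estimate.
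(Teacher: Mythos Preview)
Your Thomson-principle strategy is sound in spirit, but there is a genuine gap in the energy estimate, and it is precisely the obstacle that the paper's argument is designed to avoid.

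With any of the weights you propose (martingale limit, or descendant counts at a large height), the flow through the vertical edge above the face between $w$ and $w'$ is essentially $p(w)$, the normalized limit mass of the subtree below $w$. This is \emph{not} $O(V_{h+1}^{-1})$ uniformly: $p(w)=W(w)/(m^{h+1}W)$ with $W(w)$ an unbounded random variable, so your pointwise claim on vertical edges already fails. More importantly, look at the horizontal rebalancing below a single vertex $v$ at height $h$ with $k=c(v)$ children. All $k-1$ faces directly under $v$ are triangles receiving nothing from level $h-1$; the cumulative discrepancy you must push horizontally therefore climbs from $0$ up to essentially $p(v)$ across these $k-1$ edges, and the contribution to the energy is
\[
\sum_{j=1}^{k}\Big(\sum_{j'<j}p(c_{j'})\Big)^{2}\;\asymp\;\frac{k^{3}}{V_{h+1}^{2}}.
\]
Summing over $v$ at height $h$ gives $\sum_{v}c(v)^{3}/V_{h+1}^{2}$, and summability over $h$ needs (roughly) $\E[c^{3}]<\infty$. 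The paper explicitly allows $\mu$ with infinite mean, and even in the finite-mean case imposes no moment hypothesis; for, say, $\mu(k)\asymp k^{-1-\alpha}$ with $\alpha\in(1,3/2)$ your energy diverges. The ``subtree-proportional'' choice does not cure this: it controls \emph{which} face receives the mass, but cannot avoid the $k$ horizontal hops needed to circumvent a degree-$k$ vertex in the dual.

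The paper's proof isolates exactly this obstruction and removes it up front: fix $c_{\max}$ with $\sum_{i\le c_{\max}} i\,\mu(i)>1$, delete from $T$ every vertex with more than $c_{\max}$ children, and find some $v_0$ whose pruned descendant tree $T[v_0]_{\bdd}$ survives (it is a supercritical Galton--Watson tree). Because degrees in $T[v_0]_{\bdd}$ are now bounded, one can draw a tree $T^*$ inside $\sli^*$ whose branches run along the left of each branch of $T[v_0]_{\bdd}$ and go around branching points using at most $c_{\max}$ dual edges; $T^*$ is then quasi-isometric to $T[v_0]_{\bdd}$, hence transient, hence $\sli^*$ is transient. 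Your argument becomes correct once you route the flow entirely inside such a bounded-degree skeleton --- but at that point it has become the paper's proof.
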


\begin{proof}
We will show that we can embed a transient tree in $\sli^*$. The idea will be to follow the branches of the tree $T$ in the dual, to obtain a tree $T^*$ that is similar to $T$. However, vertices of high degree become obstacles: if a vertex $v$ of $T$ has degree $d$ in $T$, we need $d$ dual edges to "move around" $v$ in $\sli^*$. Therefore, it becomes difficult to control the ratio between resistances in $T^*$ and in $T$. To circumvent this problem, we will use the fact that $T$ contains a supercritical Galton--Watson tree with bounded degrees.

More precisely, we fix a constant $c_{\max}$ large enough to have
\[ \sum_{i=0}^{c_{\max}} i \mu(i)>1. \]
If $t$ is a (finite or infinite) tree, for every vertex $v$ with more than $c_{\max}$ children, we remove all the edges between $v$ and its children, and we call $t_{\bdd}$ the connected component of the root. If $T'$ is a Galton--Watson tree with distribution $\mu$, then $T'_{\bdd}$ is a Galton--Watson tree with offspring distribution $\mu_{\bdd}$ given by
\[ \mu_{\bdd} (i)=\begin{cases}
0 & \mbox{if $i>c_{\max}$,}\\
\mu(i) & \mbox{if $0 <i \leq c_{\max}$,}\\
\mu(0) + \sum_{j > c_{\max}} \mu (j) & \mbox{if $i=0$.}
\end{cases} \]
In particular, we have $\sum_i i \mu_{\bdd}(i) = \sum_{i=0}^{c_{\max}} i \mu(i)>1$, so $T'_{\bdd}$ is supercritical, and it survives with positive probability. But $T$ is a Galton--Watson tree conditioned to survive, so it contains infinitely many i.i.d. copies of $T$ (take for examples the trees of descendants of the children of the right boundary). Therefore, there is a.s.~a vertex $v_0 \in T$ that is not on the left boundary of $\sli$, such that $T[v_0]_{\bdd}$ is a Galton--Watson tree and survives. In particular, it is transient and, for every $v \in T[v_0]_{\bdd}$, the number of children of $v$ in $T$ is bounded by $c_{\max}$.


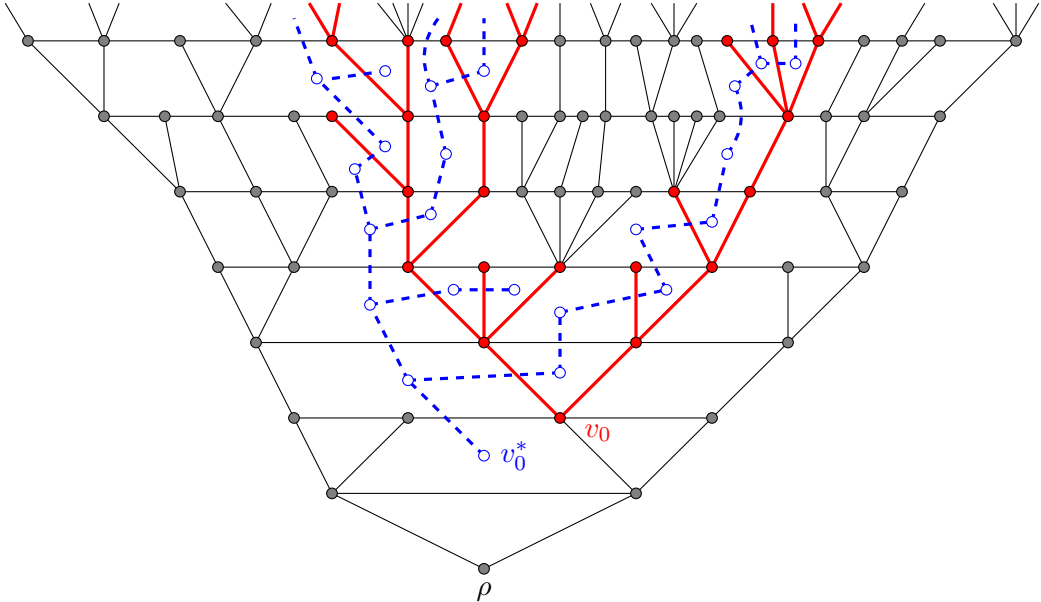
\begin{figure}
\begin{center}
\begin{tikzpicture}
\draw(-2,1)--(2,1);
\draw(-2.5,2)--(3,2);
\draw(-3,3)--(4,3);
\draw(-3.5,4)--(5,4);
\draw(-4,5)--(5.5,5);
\draw(-5,6)--(6,6);
\draw(-6,7)--(7,7);

\draw(0,0)--(-2,1);
\draw(0,0)--(2,1);
\draw(-2,1)--(-2.5,2);
\draw(-2,1)--(-1,2);
\draw(2,1)--(1,2);
\draw(2,1)--(3,2);
\draw(-2.5,2)--(-3,3);
\draw[very thick, red] (1,2)--(0,3);
\draw[very thick, red](1,2)--(2,3);
\draw(3,2)--(4,3);
\draw(-3,3)--(-3.5,4);
\draw(-3,3)--(-2.5,4);
\draw[very thick, red](0,3)--(-1,4);
\draw[very thick, red](0,3)--(0,4);
\draw[very thick, red](0,3)--(1,4);
\draw[very thick, red](2,3)--(2,4);
\draw[very thick, red](2,3)--(3,4);
\draw(4,3)--(4,4);
\draw(4,3)--(5,4);
\draw(-3.5,4)--(-4,5);
\draw(-2.5,4)--(-3,5);
\draw(-2.5,4)--(-2,5);
\draw[very thick, red](-1,4)--(-1,5);
\draw[very thick, red](-1,4)--(0,5);
\draw(1,4)--(0.5,5);
\draw(1,4)--(1,5);
\draw(1,4)--(1.5,5);
\draw(1,4)--(2,5);
\draw[very thick, red](3,4)--(2.5,5);
\draw[very thick, red](3,4)--(3.5,5);
\draw(5,4)--(4.5,5);
\draw(5,4)--(5.5,5);
\draw(-4,5)--(-5,6);
\draw(-4,5)--(-4.2,6);
\draw(-3,5)--(-3.5,6);
\draw(-2,5)--(-2.5,6);
\draw[very thick, red](-1,5)--(-2,6);
\draw[very thick, red](-1,5)--(-1,6);
\draw[very thick, red](0,5)--(0,6);
\draw(0.5,5)--(0.5,6);
\draw(0.5,5)--(1,6);
\draw(1,5)--(1.3,6);
\draw(1.5,5)--(1.6,6);
\draw(2.5,5)--(2.2,6);
\draw(2.5,5)--(2.5,6);
\draw(2.5,5)--(2.8,6);
\draw(2.5,5)--(3.1,6);
\draw[very thick, red](3.5,5)--(4,6);
\draw(4.5,5)--(4.5,6);
\draw(4.5,5)--(5,6);
\draw(5.5,5)--(6,6);

\draw(-5,6)--(-6,7);
\draw(-5,6)--(-5,7);
\draw(-3.5,6)--(-4,7);
\draw(-3.5,6)--(-3,7);
\draw[very thick, red](-1,6)--(-2,7);
\draw[very thick, red](-1,6)--(-1,7);
\draw[very thick, red](0,6)--(-0.5,7);
\draw[very thick, red](0,6)--(0.5,7);
\draw(1,6)--(1,7);
\draw(1.6,6)--(1.6,7);
\draw(2.2,6)--(2,7);
\draw(2.2,6)--(2.5,7);
\draw(3.1,6)--(2.8,7);
\draw[very thick, red](4,6)--(3.2,7);
\draw[very thick, red](4,6)--(3.8,7);
\draw[very thick, red](4,6)--(4.4,7);
\draw(4.5,6)--(5,7);
\draw(5,6)--(5.5,7);
\draw(5,6)--(6,7);
\draw(6,6)--(7,7);
\draw(-6,7)--(-6.3,7.5);
\draw(-5,7)--(-5.2,7.5);
\draw(-5,7)--(-4.8,7.5);
\draw(-3,7)--(-3.4,7.5);
\draw(-3,7)--(-2.8,7.5);
\draw[very thick, red](-2,7)--(-2.3,7.5);
\draw[very thick, red](-2,7)--(-1.9,7.5);
\draw(-1,7)--(-1.4,7.5);
\draw(-1,7)--(-1.2,7.5);
\draw(-1,7)--(-1,7.5);
\draw(-1,7)--(-0.8,7.5);
\draw[very thick, red](-0.5,7)--(-0.3,7.5);
\draw[very thick, red](0.5,7)--(0.3,7.5);
\draw[very thick, red](0.5,7)--(0.7,7.5);
\draw(1,7)--(1,7.5);
\draw(1.6,7)--(1.4,7.5);
\draw(1.6,7)--(1.8,7.5);
\draw(2.5,7)--(2.2,7.5);
\draw(2.5,7)--(2.5,7.5);
\draw(2.5,7)--(2.8,7.5);
\draw[very thick, red](3.8,7)--(3.8,7.5);
\draw[very thick, red](4.4,7)--(4.2,7.5);
\draw[very thick, red](4.4,7)--(4.7,7.5);
\draw(5.5,7)--(5.8,7.5);
\draw(7,7)--(6.7,7.5);
\draw(7,7)--(7,7.5);
\draw(7,7)--(7.3,7.5);

\draw (0,0)node{};
\draw (-2,1)node{};
\draw (2,1)node{};
\draw (-2.5,2)node{};
\draw (-1,2)node{};
\draw (1,2)node[petitrouge]{};
\draw (3,2)node{};
\draw (-3,3)node{};
\draw (0,3)node[petitrouge]{};
\draw (2,3)node[petitrouge]{};
\draw (4,3)node{};
\draw (-3.5,4)node{};
\draw (-2.5,4)node{};
\draw (-1,4)node[petitrouge]{};
\draw (0,4)node[petitrouge]{};
\draw (1,4)node[petitrouge]{};
\draw (2,4)node[petitrouge]{};
\draw (3,4)node[petitrouge]{};
\draw (4,4)node{};
\draw (5,4)node{};
\draw (-4,5)node{};
\draw (-3,5)node{};
\draw (-2,5)node{};
\draw (-1,5)node[petitrouge]{};
\draw (0,5)node[petitrouge]{};
\draw (0.5,5)node{};
\draw (1,5)node{};
\draw (1.5,5)node{};
\draw (2,5)node{};
\draw (2.5,5)node[petitrouge]{};
\draw (3.5,5)node[petitrouge]{};
\draw (4.5,5)node{};
\draw (5.5,5)node{};
\draw(-5,6)node{};
\draw(-4.2,6)node{};
\draw(-3.5,6)node{};
\draw(-2.5,6)node{};
\draw(-2,6)node[petitrouge]{};
\draw(-1,6)node[petitrouge]{};
\draw(0,6)node[petitrouge]{};
\draw(0.5,6)node{};
\draw(1,6)node{};
\draw(1.3,6)node{};
\draw(1.6,6)node{};
\draw(2.2,6)node{};
\draw(2.5,6)node{};
\draw(2.8,6)node{};
\draw(3.1,6)node{};
\draw(4,6)node[petitrouge]{};
\draw(4.5,6)node{};
\draw(5,6)node{};
\draw(6,6)node{};
\draw(-6,7)node{};
\draw(-5,7)node{};
\draw(-4,7)node{};
\draw(-3,7)node{};
\draw(-2,7)node[petitrouge]{};
\draw(-1,7)node[petitrouge]{};
\draw(-0.5,7)node[petitrouge]{};
\draw(0.5,7)node[petitrouge]{};
\draw(1,7)node{};
\draw(1.6,7)node{};
\draw(2,7)node{};
\draw(2.5,7)node{};
\draw(2.8,7)node{};
\draw(3.2,7)node[petitrouge]{};
\draw(3.8,7)node[petitrouge]{};
\draw(4.4,7)node[petitrouge]{};
\draw(5,7)node{};
\draw(5.5,7)node{};
\draw(6,7)node{};
\draw(7,7)node{};

\draw[very thick, dashed, blue](0,1.5)--(-1,2.5);
\draw[very thick, dashed, blue](-1,2.5)--(1,2.6);
\draw[very thick, dashed, blue](-1,2.5)--(-1.5,3.5);
\draw[very thick, dashed, blue](-1.5,3.5)--(-0.4,3.7);
\draw[very thick, dashed, blue](-0.4,3.7)--(0.4,3.7);
\draw[very thick, dashed, blue](1,2.6)--(1,3.4);
\draw[very thick, dashed, blue](1,3.4)--(2.4,3.7);
\draw[very thick, dashed, blue](-1.5,3.5)--(-1.5,4.5);
\draw[very thick, dashed, blue](-1.5,4.5)--(-0.7,4.7);
\draw[very thick, dashed, blue](2.4,3.7)--(2,4.5);
\draw[very thick, dashed, blue](2,4.5)--(3,4.6);
\draw[very thick, dashed, blue](-1.5,4.5)--(-1.7,5.3);
\draw[very thick, dashed, blue](-1.7,5.3)--(-1.3,5.6);
\draw[very thick, dashed, blue](-0.7,4.7)--(-0.5,5.5);
\draw[very thick, dashed, blue](3,4.6)--(3.2,5.5);
\draw[very thick, dashed, blue](-1.3,5.6)--(-2.2,6.5);
\draw[very thick, dashed, blue](-2,6.5)--(-1.3,6.6);
\draw[very thick, dashed, blue](-0.5,5.5)--(-0.7,6.4);
\draw[very thick, dashed, blue](-0.7,6.4)--(0,6.6);
\draw[very thick, dashed, blue](3.2,5.5) to[bend right] (3.3,6.4);
\draw[very thick, dashed, blue](3.3,6.4)--(3.65,6.7);
\draw[very thick, dashed, blue](3.65,6.7)--(4.1,6.7);
\draw[very thick, dashed, blue](-2.2,6.5)--(-2.5,7.3);
\draw[very thick, dashed, blue](-0.7,6.4)to[bend left](-0.6,7.3);
\draw[very thick, dashed, blue](0,6.6)--(0,7.3);
\draw[very thick, dashed, blue](3.65,6.7)--(3.5,7.3);
\draw[very thick, dashed, blue](4.1,6.7)--(4.1,7.3);

\draw (0,1.5) node[dual]{};
\draw (-1,2.5) node[dual]{};
\draw (1,2.6) node[dual]{};
\draw (-1.5,3.5) node[dual]{};
\draw (-0.4,3.7) node[dual]{};
\draw (0.4,3.7) node[dual]{};
\draw (1,3.4) node[dual]{};
\draw (2.4,3.7) node[dual]{};
\draw (-1.5,4.5) node[dual]{};
\draw (-0.7,4.7) node[dual]{};
\draw (2,4.5) node[dual]{};
\draw (3,4.6) node[dual]{};
\draw (-1.7,5.3) node[dual]{};
\draw (-1.3,5.6) node[dual]{};
\draw (-0.5,5.5) node[dual]{};
\draw (3.2,5.5) node[dual]{};
\draw (-2.2,6.5) node[dual]{};
\draw (-1.3,6.6) node[dual]{};
\draw (-0.7,6.4) node[dual]{};
\draw (0,6.6) node[dual]{};
\draw (3.3,6.4) node[dual]{};
\draw (3.65,6.7) node[dual]{};
\draw (4.1,6.7) node[dual]{};

\draw (0,-0.3) node[texte]{$\rho$};
\draw[red] (1.5,1.8) node[texte]{$v_0$};
\draw[blue] (0.4,1.5) node[texte]{$v_0^*$};
\end{tikzpicture}
\end{center}
\caption{The construction of the dual tree $T^*$ (in blue) from the tree $T[v_0]_{\bdd}$ (in red). Here, we have taken $c_{\max}=3$. The vertices of $T^*$ are the faces adjacent to the vertices of $T[v_0]_{\bdd}$ at their bottom-left corners.}\label{figure_dual_tree}
\end{figure}

From here, can can build a tree $T^*$ in $\sli^*$ whose branches follow the branches of $T[v_0]_{\bdd}$ on their left, and which circumvents branching points of $T[v_0]_{\bdd}$ by the top. See Figure \ref{figure_dual_tree} for the construction of this tree. The tree $T^*$ is then a subgraph of $\sli^*$. Therefore, it is enough to prove that $T^*$ is transient.  But since the vertex degrees in $T[v_0]_{\bdd}$ are bounded, it is easy to see that $T[v_0]_{\bdd}$ and $T^*$ are quasi-isometric, so $T^*$ is also transient (by e.g. Section 2.4.4 of \cite{DS84}).
\end{proof}

\subsection{Consequences on the Poisson boundary}\label{causal_subsec_poisson}

We recall that $(X_n)$ is the simple random walk on $\cau$ started from $\rho$. By a result of Hutchcroft and Peres (Theorem 1.3 of \cite{HP15}), the first point of Theorem \ref{thm_2_Poisson} implies the second.

\begin{proof}[Proof of Theorem \ref{thm_2_Poisson}]
We first show the almost sure convergence of $(X_n)$. By compactness (Lemma \ref{compactness}), it is enough to prove that $(X_n)$ a.s.~has a unique subsequential limit in $\widehat{\partial} T$. Note that if $\widehat{\gamma}_1 \ne \widehat{\gamma}_2$ are two distinct points of $\widehat{\partial} T$, then there are two vertices $v,v' \in \back(T)$ such that the slices $\sli[v]$ and $\sli[v']$ separate $\gamma_1$ from $\gamma_2$. Therefore, if $\gamma_1$ and $\gamma_2$ are subsequential limits of $(X_n)$, by transience of $\cau(T)$, the walk $(X_n)$ crosses infinitely many times either $\sli[v]$ or $\sli[v']$ horizontally. Therefore, it is enough to prove that for every $v_0 \in \back(T)$, the walk $(X_n)$ cannot cross infinitely many times $\sli[v_0]$ horizontally.

For every $v \in \sli[v_0]$, let $f(v)=P_{\sli[v_0], v} \left( \tau_{\partial \sli[v_0]} < +\infty \right)$. The function $f$ is harmonic on $\sli[v_0] \backslash \partial \sli[v_0]$.
Moreover, by Proposition \ref{slice_strong_transience}, there is a vertex $v_1 \in \sli[v_0]$ such that $f(v_1)<1$. Let $(Y_n)$ be a simple random walk started from $v_1$ and killed when it hits $\partial \sli[v_0]$. Then $f(Y_n)$ is a martingale and, by the martingale convergence theorem, it converges a.s.~to $\mathbbm{1}_{\tau_{\partial \sli[v_0]}<+\infty}$. In particular, it has limit zero with positive probability, so there is an infinite path $(w_k)$ going to infinity in $\sli[v_0]$, such that $f(w_k) \to 0$.

We fix $k_0>0$. Everytime the walk $(X_n)$ crosses $\sli[v_0]$ horizontally at a large enough height, it must cross the path $(w_k)_{k \geq 0}$. Since $\cau$ is transient, if $X$ crosses $\sli[v_0]$ infinitely many times, it must cross $(w_k)_{k \geq k_0}$, and then hit $\partial \sli[v_0]$. If this happens, let $K$ be such that $w_K$ is the first of the points $(w_k)_{k \geq k_0}$ to be hit by $X$ (if none of these points is hit, we take $K=+\infty$). We have
\[\P \left( \mbox{$X$ hits $(w_k)_{k \geq k_0}$ and then $\partial \sli[v_0]$} \right) = \E \left[ \mathbbm{1}_{K<+\infty} f(w_K) \right] \leq \E \left[ \sup_{k \geq k_0} f(w_k) \right].\]
Since $f(w_k) \to 0$, by dominated convergence, this goes to $0$, which proves that $X$ cannot cross $\sli[v_0]$ infinitely many times. This implies the almost sure convergence of $X$ to a point $X_{\infty}$ of $\widehat{\partial} T$.

The proof that $X_{\infty}$ has full support is quite easy. Let $v_0 \in \back(T)$. Then $(X_n)$ has a positive probability to visit the slice $\sli[v_0]$ and, by Proposition \ref{slice_strong_transience}, it a.s.~has a positive probability to stay there ever after. But if $X_n \in \sli[v_0]$ for $n$ large enough, then $X_{\infty}$ must correspond to a ray of descendants of $v_0$, so the distribution of $X_{\infty}$ gives a positive mass to rays that are descendants of $v_0$. This is almost surely true for any $v_0 \in \back(T)$, so the distribution of $X_{\infty}$ has a.s.~full support.

Finally, to prove the almost sure nonatomicity, it is enough to prove that if $X$ and $Y$ are two independent simple random walks on $\cau$, then $X_{\infty} \ne Y_{\infty}$ almost surely. The idea of the proof is that everytime $X$ and $Y$ reach a new height for the first time, by Proposition \ref{slice_strong_transience}, they have a positive probability to get "swallowed" in two different slices of the form $\sli[x]$ and $\sli[y]$, so this will almost surely happen at some height.

More precisely, \textbf{in this proof and in this proof only}, until the end of Section \ref{causal_subsec_poisson}, we assume that $T$ is a \textbf{non-conditioned} Galton-Watson tree with offspring distribution $\mu$. We recall that $Z_h$ is the number of vertices of $T$ at height $h$. For every $h \geq 0$, let
\[\tau^X_h=\min \{n \geq 0 | h(X_n)=h \} \hspace{5mm} \mbox{ and } \hspace{5mm} \tau^Y_h=\min \{ n \geq 0 | h(Y_n)=h\}.\]
Note that if $T$ survives, then $\tau^X_h, \tau^Y_h <+\infty$ for every $h$. Let also $\f_h$ be the $\sigma$-algebra generated by $B_h  \left( \cau(T) \right)$, $(X_n)_{0 \leq n \leq \tau_h^X}$ and $(Y_n)_{0 \leq n \leq \tau_h^Y}$, and let $\f_{\infty}$ be the $\sigma$-algebra generated by $\bigcup_{h \geq 0} \f_h$. We note right now that $(\f_h)_{h \geq 0}$ is nondecreasing and that $\f_{\infty}$ is the $\sigma$-algebra generated by $(\cau, X, Y)$. Finally, for every $h>0$, let $A_h$ be the event

$\Big\{$There are four distinct vertices $(x_i)_{1 \leq i \leq 4}$ of $T$ at height $h$ such that:
\begin{itemize}
\item
the vertices $x_1$, $x_2$, $x_3$ and $x_4$ lie in this cyclic order,
\item
the trees $T[x_i]$ all survive,
\item
for every $n \geq \tau_h^X$, we have $X_n \in T[x_1]$ 
\item
for every $n \geq \tau_h^Y+2$, we have $Y_n \in T[x_3]. \Big\}$.
\end{itemize}

\begin{lem}\label{lem_separation_XY}
There is a constant $\delta>0$ such that for every $h$, if $Z_h \geq 4$, then
\[ \P \left( A_h | \f_h \right) \geq \delta. \]
\end{lem}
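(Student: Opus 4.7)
The plan is to produce, for every $\f_h$ with $Z_h\geq 4$, an event of conditional probability at least $\delta(\mu)>0$ on which $A_h$ holds. Since $X_{\tau_h^X}$ lies at height $h$ and must belong to $T[x_1]$, the choice $x_1:=u$, with $u:=X_{\tau_h^X}$, is forced. I would set $x_3:=v:=Y_{\tau_h^Y}$ when $u\neq v$; in the coincidence case $u=v$ I would use the two extra time steps to push $Y$ to a height-$\geq h$ vertex outside $T[u]$, and take $x_3$ to be its height-$h$ ancestor. Once $x_1\neq x_3$ are fixed at height $h$, the hypothesis $Z_h\geq 4$ allows me to pick two further height-$h$ vertices $x_2,x_4$ interleaving with $x_1,x_3$ in the cyclic order at height $h$.

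The core observation I would rely on is that, for every $w\in\partial B_h(T)$, the event ``$X_n\in T[w]$ for all $n\geq\tau_h^X$, given $X_{\tau_h^X}=w$'' is intrinsic to $T[w]$: at every vertex $v\in T[w]$ the $\cau$-degree equals $c_T(v)+3$ (for $v\neq\rho$), which is determined by $T[w]$ alone, and the only way $X$ can leave $T[w]$ is through a horizontal edge of $\cau$ at some vertex of $\partial\sli[w]$, each crossed with probability $1/(c_T(v)+3)$ per visit. Hence the walk in $\cau$ restricted to the event of staying in $T[w]$ is a killed Markov chain whose transitions depend only on $T[w]$. Combining Proposition~\ref{slice_strong_transience} with the fact that the walks in $\cau$ and in $\sli[w]$ coincide on $\sli[w]\setminus\partial\sli[w]$, this chain has positive survival probability on the event $\{T[w]\text{ survives}\}$. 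As $T[w]$ has the law of an unconditioned $GW(\mu)$ tree independent of $\f_h$ (because in this proof $T$ is unconditioned), taking expectation yields a universal constant $\pi=\pi(\mu)>0$ such that $\P\!\left(X\text{ stays in }T[w]\,\big|\,\f_h,\,X_{\tau_h^X}=w\right)\geq\pi$; the analogous bound holds for $Y$ with $x_3$ in place of $w$.

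For the coincidence case $u=v$ I would use the two-step event $\{Y_{\tau_h^Y+1}=u_L,\;Y_{\tau_h^Y+2}\in\{u_{LL}\}\cup\{\text{children of }u_L\}\}$, where $u_L,u_{LL}$ denote left horizontal neighbors of $u,u_L$. Its conditional probability given $\f_h$ and local degrees is at least $\frac{1}{c_T(u)+3}\cdot\frac{c_T(u_L)+1}{c_T(u_L)+3}$. Averaging over the $\mu$-distributed $c_T(u)$ and $c_T(u_L)$, which are independent of $\f_h$, gives a universal lower bound $\alpha=\alpha(\mu)>0$. On this event $Y_{\tau_h^Y+2}$ is at height $\geq h$ and outside $T[u]$; its height-$h$ ancestor then furnishes a valid $x_3\neq u$.

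To conclude I would combine these ingredients using the conditional independence, given $\f_h$, of the two walks $X,Y$ and of the distinct subtrees $T[x_i]$ at height $h$: survival of $T[x_2]$ and $T[x_4]$ has conditional probability at least $(1-q)^2$, while survival of $T[x_1]$ and $T[x_3]$ is automatic once the walks stay in them (as $\cau$ is transient, so the walks cannot remain in a finite subtree). This gives $\P(A_h\mid\f_h)\geq\alpha\,\pi^2\,(1-q)^2>0$, a $\mu$-dependent constant. The hard step is the positivity of $\pi$: the crucial point is that the ``stays in $T[w]$'' event is intrinsic to $T[w]$ (so its conditional probability does not depend on the ambient $\cau$ and in particular not on $\f_h$), and that its positivity is obtained from Proposition~\ref{slice_strong_transience} via the coupling between the walks in $\cau$ and in $\sli[w]$.
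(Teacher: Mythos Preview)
Your overall strategy is the same as the paper's: observe that the event ``$X$ stays in $T[w]$ from time $\tau_h^X$'' is a measurable function of $T[w]$ alone, use that $T[w]$ is a fresh $\mu$-Galton--Watson tree independent of $\f_h$, and extract positivity from Proposition~\ref{slice_strong_transience}. That core is fine. There is, however, a genuine gap in your case analysis.

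The sentence ``the hypothesis $Z_h\geq 4$ allows me to pick two further height-$h$ vertices $x_2,x_4$ interleaving with $x_1,x_3$ in the cyclic order'' is false when $x_1$ and $x_3$ are horizontal neighbours: one of the two arcs between them on the height-$h$ cycle is then empty, so no distinct $x_2,x_4$ can realise the cyclic order $x_1,x_2,x_3,x_4$. This situation is not exceptional in your scheme. It arises in your ``$u\neq v$'' case whenever $X_{\tau_h^X}$ and $Y_{\tau_h^Y}$ happen to be neighbours, and it also arises in your coincidence case on the sub-event ``$Y_{\tau_h^Y+2}$ is a child of $u_L$'', since your height-$h$ ancestor rule then gives $x_3=u_L$, adjacent to $x_1=u$. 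The paper avoids this by a three-way split ($u=v$; $u,v$ neighbours; $u,v$ non-neighbours) and, in each case, prescribes explicit horizontal moves for $Y$ (two, one, zero steps respectively) so that $x_3$ is always at horizontal distance at least $2$ from $x_1$; then $x_2$ can be the vertex strictly between them. Your argument is easy to repair along the same lines (e.g.\ in the coincidence case keep only the sub-event $Y_{\tau_h^Y+2}=u_{LL}$, and add a separate ``neighbours'' case using one step of $Y$), but as written it is incomplete.

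A smaller point: your final bound $\alpha\,\pi^2(1-q)^2$ treats ``$X$ stays in $T[u]$'' and ``$Y_{\tau_h^Y+1}=u_L$'' as conditionally independent given $\f_h$, but both depend on $c(u)$, which is not $\f_h$-measurable. The correct factorisation is first over the two walks (independent given $\cau$) and then over the fresh, mutually independent subtrees at height $h$; one obtains a product such as
\[
E\!\left[\frac{1}{c(u)+3}\,P\big(X\text{ stays in }T[u]\ \big|\ T[u]\big)\right]\cdot E\!\left[\frac{1}{c(u_L)+3}\right]\cdot \pi\cdot(1-q)^2,
\]
each factor a positive constant depending only on $\mu$. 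This does not affect positivity but should be stated correctly.
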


Once this lemma is known, the end of the proof is quite easy: let $A=\bigcup_{h \geq 0} A_h$. If $T$ survives, then $Z_h \geq 4$ for $h$ large enough, so
\[ \delta \leq \P (A_h | \f_h ) \leq \P (A | \f_h ) \xrightarrow[h \to +\infty]{a.s.} \P (A | \f_{\infty} ) =\mathbbm{1}_A, \]
by the martingale convergence theorem and the fact that $(\cau, X, Y)$ is $\f_{\infty}$-measurable. Therefore, almost surely, if $T$ survives, there is an $h$ such that $A_h$ occurs. But if it does, the slices $\sli[x_2]$ and $\sli[x_4]$ separate $X$ and $Y$ eventually, so they separate $X_{\infty}$ from $Y_{\infty}$, so $X_{\infty} \ne Y_{\infty}$, which ends the proof of Theorem \ref{thm_2_Poisson}.
\end{proof}

\begin{rem}\label{rem_poisson_general_setting}
It is easy to show by using Proposition \ref{slice_strong_transience} that $\P \left( A_h | \f_h \right)>0$ a.s.. However, this is not sufficient to prove Lemma \ref{lem_separation_XY}. Indeed, this is the one point in our proof of Theorem \ref{thm_2_Poisson} at which our argument fails to hold in a more general setting.
More precisely, in the setting of a tree $\mathbf{T}$ filled with i.i.d. strips, the lower degree of $X_{\tau_h^X}$ (i.e. the number of edges joining this vertex to a lower vertex) is not constant but depends on $\f_h$, and we might imagine that it goes to $+\infty$ as $h \to +\infty$. In this case, we might have $\P (A_h | \f_h) \to 0$ (with high probability, $X$ goes back down right after $\tau_h^X$). This problem does not occur in $\cau(T)$, where the lower degree is always equal to $1$, but it explains why the proof of Lemma \ref{lem_separation_XY} needs to be treated with some care.
\end{rem}

\begin{proof}[Proof of Lemma \ref{lem_separation_XY}]
The proof will be split into three cases: the case where $X_{\tau^X_h}=Y_{\tau^Y_h}$, the case where $X_{\tau^X_h}$ and $Y_{\tau^Y_h}$ are distinct but neighbours, and the case where they are not neighbours. We treat carefully the first one, which is slightly more complicated than the others.

In the first case, we write $x_1=X_{\tau^X_h}=Y_{\tau^Y_h}$. We also denote by $x_2$ and $x_3$ the two vertices at height $h$ on the right of $x_1$, and by $x_4$ the left neighbour of $x_1$. Let also $A'_h$ be the following event:
\begin{center}
$\Big\{$The trees $T[x_i]$ for $1 \leq i \leq 4$ survive. Moreover, we have $X_n \in T[x_1]$ for every $n \geq \tau_h^X$ and $Y_{\tau_h^Y+1}=x_2$, $Y_{\tau_h^Y+2}=x_3$ and $Y_n \in T[x_3]$ for every  $n \geq \tau_h^Y+2. \Big\}$.
\end{center}
If $A'_h$ occurs, then so does $A_h$. Moreover, we claim that the probability for $A'_h$ to occur is independent of $h$ and $\f_h$. The reason why this is true is that for every vertex $v$ in one of these trees (say $T[x_1]$), the number of neighbours of $v$ in $\cau$ that are not in $\sli[x_1]$ is fixed: there are $3$ such neighbours if $v=x_1$, there is $1$ such neighbour if $v \ne x_1$ is on the boundary of $\sli[x_1]$ and $0$ if it is not. Therefore, the probability given $\cau$ that $X$ stays in $\sli[x_1]$ after time $\tau_h^X$ only depends on $T[x_1]$. Similarly, the probability for $Y$ to perform the right first two steps after time $\tau_h^Y$ only depends on the numbers of children of $x_1$ and $x_2$, and the probability to stay in $T[x_3]$ ever after only depends on $T[x_3]$. All this is independent of $h$ and $\f_h$, which proves our claim. If we write $\delta_1=\P (A'_h | \f_h)$, we have $\delta_1>0$ by Proposition \ref{slice_strong_transience} and $\P \left( A_h | \f_h \right) \geq \delta_1$ for every $h$ in this first case.

The other two cases can be treated similarly with minor adaptations in the choices of the vertices $x_i$, and the first two steps of $Y$ after $\tau_h^Y$. While we needed to control exactly the first two steps of $Y$ in the first case, we only need one step for the second case and zero step for the third one. The other two cases yield two constants $\delta_2$ and $\delta_3$, which proves the lemma by taking $\delta=\min (\delta_1, \delta_2, \delta_3)$.

%

\end{proof}

\subsection{Robustness of Proposition \ref{slice_strong_transience}}

The goal of this subsection is to explain why Proposition \ref{slice_strong_transience} still holds in a quite general setting and to deduce the following result. We recall that a graph $G$ is \emph{Liouville} if its Poisson boundary is trivial, i.e. if every bounded harmonic function on $G$ is constant.

\begin{thm}\label{thm_2_bis}
Let $\mathbf{T}$ be a supercritical Galton--Watson tree with offspring distribution $\mu$ such that $\mu(0)=0$, and let $(S_i)_{i \geq 0}$ be an i.i.d. sequence of random strips. We assume that $(S_i)$ is also independent from $\mathbf{T}$.
\begin{enumerate}
\item
Proposition \ref{slice_strong_transience} holds if we replace $\sli$ by $\sli \left( \mathbf{T}, (S_i)_{i \geq 0} \right)$.
\item
The map $\m \left( \mathbf{T}, (S_i) \right)$ is a.s.~non-Liouville.
\item
If furthermore the strips $(S_i)$ are a.s.~recurrent and bounded-degree, then $\widehat{\partial} \mathbf{T}$ is the Poisson boundary of $\m \left( \mathbf{T}, (S_i) \right)$.
\end{enumerate}
\end{thm}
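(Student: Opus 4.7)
The plan is to treat the three points in turn, building on the tools developed in Sections~\ref{causal_subsec_construct_poisson} and~\ref{causal_subsec_poisson}. For point 1, the strategy is to replay the proof of Proposition~\ref{slice_strong_transience} with only cosmetic changes. Because $\mu(0)=0$ we have $\back(\mathbf{T})=\mathbf{T}$, so the spinal decomposition along the non-backtracking walk on $\mathbf{T}$ has the same structure as in Section~\ref{causal_subsec_poisson}, and the sub-maps $\uu_{2k}$ between consecutive separating sets $A_k$ are i.i.d.\ as a direct consequence of the assumption that $(S_i)$ is i.i.d.\ and independent of $\mathbf{T}$. The resistance/duality argument then reduces to showing that the dual slice is transient; the construction of the tree $T^*$ in Lemma~\ref{transience_dual_slice} only needs the existence, for each edge of a bounded-degree subtree $\mathbf{T}_{\bdd}\subset \mathbf{T}$, of a dual edge joining the two adjacent faces of $\mathbf{T}$, which remains true once each face of $\mathbf{T}$ is filled with an arbitrary strip.

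For point 2, Proposition~\ref{slice_strong_transience} in this generality is exactly what is needed to transplant the first half of the proof of Theorem~\ref{thm_2_Poisson}. For any $v_0\in\mathbf{T}$, one builds the harmonic function $f(v)=P_{\sli[v_0],v}(\tau_{\partial\sli[v_0]}<+\infty)$ and the infinite path $(w_k)$ inside $\sli[v_0]$ along which $f(w_k)\to 0$, and deduces from the martingale convergence theorem that $(X_n)$ crosses $\sli[v_0]$ only finitely many times. Combined with transience of $\m(\mathbf{T},(S_i))$, this gives the almost sure convergence $X_n\to X_\infty\in\widehat{\partial}\mathbf{T}$, and the same trapping argument in each slice shows that the distribution of $X_\infty$ has full support. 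In particular $X_\infty$ is not deterministic, so $\m(\mathbf{T},(S_i))$ carries non-constant bounded harmonic functions.

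For point 3, the almost sure convergence from point 2 reduces the question, via Theorem~1.3 of~\cite{HP15}, to showing that the distribution of $X_\infty$ has no atoms, and this is proved as in Theorem~\ref{thm_2_Poisson} by establishing an analogue of Lemma~\ref{lem_separation_XY}. The obstruction described in Remark~\ref{rem_poisson_general_setting} is that the lower degree of $X_{\tau_h^X}$ is a priori unbounded; under the bounded-degree hypothesis on $(S_i)$ it is uniformly bounded, so that $X$ has a uniformly positive probability of taking any prescribed first step above $\tau_h^X$, and similarly for $Y$. The recurrence of the strips is then used to route $Y$ inside a chosen sibling subtree: after $Y$ enters a strip adjacent to $Y_{\tau_h^Y}$, the walk inside that strip is essentially a walk on a recurrent bounded-degree planar graph with two boundary rays, so with probability bounded away from zero it exits the strip for the first time at the desired side before leaving a large ball. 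Running through the three cases of the original lemma with this modification yields a constant $\delta>0$ with $\P(A_h\mid\f_h)\geq\delta$, hence the no-atoms statement and the identification of the Poisson boundary.

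The main obstacle is clearly point 3: the original proof of Lemma~\ref{lem_separation_XY} separates $X$ and $Y$ by one or two prescribed edges of $\cau$ whose traversal probabilities are explicit local quantities, whereas here one must steer $Y$ through an entire strip. Quantifying a uniform lower bound, independent of $h$ and $\f_h$, for the probability of such a controlled crossing is the one place where both the bounded-degree and the recurrence assumptions on $(S_i)$ are genuinely used.
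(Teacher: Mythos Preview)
Your treatment of point~2 is essentially the paper's, and your outline of point~3 is close in spirit to the paper's (though the paper's argument is slightly cleaner: it redefines $\tau_h^X$ as the first time $X$ hits a vertex of $\mathbf{T}$ of height $\geq h$, uses recurrence only to guarantee finiteness of these times, and then uses the degree bound to get a single prescribed step of $X$ and $Y$ from a tree vertex into a child in $\mathbf{T}$, rather than routing through an entire strip; your strip-crossing claim ``exits on the desired side with probability bounded away from zero'' is not obviously uniform in $h$ and $\f_h$).

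The genuine gap is in point~1, in the transience of the dual slice. You assert that the construction of $T^*$ in Lemma~\ref{transience_dual_slice} ``only needs the existence, for each edge of a bounded-degree subtree $\mathbf{T}_{\bdd}\subset\mathbf{T}$, of a dual edge joining the two adjacent faces of $\mathbf{T}$''. This is where the argument breaks. In the causal case, consecutive edges of a branch of $T$ are adjacent to faces that are themselves adjacent in $\sli^*$, so $T^*$ is quasi-isometric to $T[v_0]_{\bdd}$. Once arbitrary strips are glued in, an edge $e$ of $\mathbf{T}$ is adjacent to two specific faces of $\m$, one in each neighbouring strip, but passing from the face dual to $e$ to the face dual to the next edge $e'$ along the branch requires a dual path \emph{inside the strip}, whose length is not controlled by anything. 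So the putative $T^*$ exists as a subgraph of $\sli^*$, but there is no reason it should be quasi-isometric to $\mathbf{T}_{\bdd}$, and transience does not follow.

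The paper handles this with a genuinely different construction: it puts a genealogy on the \emph{strips} (each strip has a left and a right child, the adjacent strips rooted at the first branching points on its two boundary rays), notes this gives a complete binary tree $T_{\str}$, calls a strip \emph{good} if the dual distances from the faces at its root to the edges $e_i^{\ell},e_i^r$ leading to its children are at most some fixed $\ell_{\max}$, and observes that goodness events are i.i.d.\ across $T_{\str}$ with probability tending to $1$ as $\ell_{\max}\to\infty$. Choosing $\ell_{\max}$ so that the resulting site percolation on $T_{\str}$ is supercritical, an infinite good cluster $T'_{\str}$ carries a dual subgraph $\sli_{\bdd}^*$ quasi-isometric to $T'_{\str}$, hence transient. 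This percolation-on-strips step is the missing idea in your outline.
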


Note that the assumption that the $S_i$ are recurrent is necessary. For example, if some strips $S_i$ have a non-trivial Poisson boundary, then the Poisson boundary of $\m \left( \mathbf{T}, (S_i) \right)$ is larger than $\widehat{\partial} \mathbf{T}$. See Section \ref{causal_sec_counter} for a more developed discussion.

\begin{proof}[Proof of the first point]
Most of the proof works exactly along the same lines as the proof of Proposition \ref{slice_strong_transience}, with $\mathbf{T}$ playing the same role as the backbone tree. In particular, we choose for $(x_n)$ a nonbacktracking random walk on $\mathbf{T}$, and the sets $A_k$ are built in the same way as in the original proof, but from $\mathbf{T}$ instead of $\back(T)$. The proof of Proposition \ref{slice_strong_transience} from Lemma \ref{lem_separating_sets} is very similar, as well as points (i) and (ii) of Lemma \ref{lem_separating_sets}. The only difference is that the proof of point (ii) of Lemma \ref{lem_separating_sets} is easier in our new framework, because of the independence of the strips, and we do not need anymore to restrict ourselves to even values of $k$.

Exactly as in the first proof, by using the "self-similarity" property of $\sli \left( \mathbf{T}, (S_i)_{i \geq 0} \right)$, the proof of point (iii) of Lemma \ref{lem_separating_sets} can be reduced to the proof that the dual map of $\sli \left( \mathbf{T}, (S_i)_{i \geq 0} \right)$ is transient (it is also important that the sets $A_k$ do not touch each other, which is why we have required that the boundaries of the strips are simple). The adaptation of the proof of Lemma \ref{transience_dual_slice} (transience of the dual slice), however, is not obvious.

More precisely, let $\sli^*$ be the graph whose vertices are the finite faces of $\sli \left( \mathbf{T}, (S_i)_{i \geq 0} \right)$ and where, for every edge $e$ of $\sli \left( \mathbf{T}, (S_i)_{i \geq 0} \right)$ that is adjacent to two finite faces, we draw an edge $e^*$ between these two faces. We note right now that, since all the strips have only finite faces, the graph $\sli^*$ is a connected graph, and we need to prove that it is transient.
The idea of the proof is the following: we will build a genealogy on the set of strips, which contains a complete binary tree. As in the proof of Lemma \ref{transience_dual_slice}, we will then kill the strips whose root is "too far" from its children, in order to preserve some quasi-isometry.

We first build a genealogy on the set of strips. We recall that the \emph{root} of a strip $S_i$ is the lowest vertex of its boundary, and is denoted by $\rho_i$. The \emph{height} of $S_i$ is the height of $\rho_i$. We call two strips \emph{adjacent} if their respective boundaries share at least one edge. If $S_i$ is a strip, we consider the first vertex on the left boundary of $S_i$ (apart from $\rho_i$) that is also a branching point of $\mathbf{T}$. This vertex is also the root of some strips, exactly one of which is adjacent to $S_i$. We call this strip the \emph{left child} of $S_i$ (cf. Figure \ref{fig_genealogy_strips}). We can similarly define its \emph{right child}. Note that almost surely, every branch of $\mathbf{T}$ branches eventually, so these childs always exist. We now fix a strip $S_1$. We claim that the restriction of this genealogy to the set of descendants of $S_1$ is encoded by a complete binary tree, which we denote by $T_{\str}$. Indeed, all the descendants of the left child of a strip $S$ lie on the left of $S$, whereas all the descendants of its right child lie on its right. Therefore, it is not possible to obtain the same strip by two different genealogical lines from $S_1$ (see Figure \ref{fig_genealogy_strips}).

\begin{figure}
\begin{center}
\begin{tikzpicture}
\draw[red, very thick] (0,0)--(-3,1);
\draw[red, very thick] (0,0)--(3,1);
\draw[red, very thick] (-3,1)--(-3,2);
\draw[red, very thick] (3,1)--(1,2);
\draw[red, very thick] (3,1)--(3,2);
\draw[red, very thick] (3,1)--(5,2);
\draw[red, very thick] (-3,2)--(-4,3);
\draw[red, very thick] (-3,2)--(-2,3);
\draw[red, very thick] (1,2)--(0,3);
\draw[red, very thick] (1,2)--(1.5,3);
\draw[red, very thick] (3,2)--(2.5,3);
\draw[red, very thick] (3,2)--(3.5,3);
\draw[red, very thick] (5,2)--(5.5,3);
\draw[red, very thick] (-4,3)--(-5,4);
\draw[red, very thick] (-4,3)--(-3.5,4);
\draw[red, very thick] (-2,3)--(-2,4);
\draw[red, very thick] (0,3)--(-0.5,4);
\draw[red, very thick] (0,3)--(0.5,4);
\draw[red, very thick] (1.5,3)--(1.5,4);
\draw[red, very thick] (2.5,3)--(2,4);
\draw[red, very thick] (3.5,3)--(3,4);
\draw[red, very thick] (3.5,3)--(4.5,4);
\draw[red, very thick] (5.5,3)--(5,4);
\draw[red, very thick] (5.5,3)--(6,4);
\draw[red, very thick] (-5,4)--(-6,5);
\draw[red, very thick] (-5,4)--(-5,5);
\draw[red, very thick] (-3.5,4)--(-4,5);
\draw[red, very thick] (-2,4)--(-3,5);
\draw[red, very thick] (-2,4)--(-2,5);
\draw[red, very thick] (-0.5,4)--(-1.5,5);
\draw[red, very thick] (-0.5,4)--(-0.5,5);
\draw[red, very thick] (0.5,4)--(0.5,5);
\draw[red, very thick] (1.5,4)--(1.5,5);
\draw[red, very thick] (2,4)--(2,5);
\draw[red, very thick] (2,4)--(3,5);
\draw[red, very thick] (3,4)--(3.5,5);
\draw[red, very thick] (4.5,4)--(4.5,5);
\draw[red, very thick] (5,4)--(5,5);
\draw[red, very thick] (5,4)--(6,5);
\draw[red, very thick] (6,4)--(7,5);
\draw[red, very thick] (-6,5)--(-7,5.7);
\draw[red, very thick] (-5,5)--(-6,5.7);
\draw[red, very thick] (-5,5)--(-5,5.7);
\draw[red, very thick] (-4,5)--(-4.5,5.7);
\draw[red, very thick] (-4,5)--(-4,5.7);
\draw[red, very thick] (-3,5)--(-3.5,5.7);
\draw[red, very thick] (-2,5)--(-2.5,5.7);
\draw[red, very thick] (-1.5,5)--(-2,5.7);
\draw[red, very thick] (-0.5,5)--(-1,5.7);
\draw[red, very thick] (0.5,5)--(0,5.7);
\draw[red, very thick] (1.5,5)--(0.5,5.7);
\draw[red, very thick] (1.5,5)--(1.5,5.7);
\draw[red, very thick] (2,5)--(2,5.7);
\draw[red, very thick] (3,5)--(2.5,5.7);
\draw[red, very thick] (3,5)--(3,5.7);
\draw[red, very thick] (3.5,5)--(3.5,5.7);
\draw[red, very thick] (4.5,5)--(4,5.7);
\draw[red, very thick] (4.5,5)--(5,5.7);
\draw[red, very thick] (5,5)--(5.5,5.7);
\draw[red, very thick] (6,5)--(6.5,5.7);
\draw[red, very thick] (7,5)--(7,5.7);
\draw[red, very thick] (7,5)--(8,5.7);

\draw(0,0)node[petitrouge]{};
\draw(-3,1)node[petitrouge]{};
\draw(3,1)node[petitrouge]{};
\draw(-3,2)node[petitrouge]{};
\draw(1,2)node[petitrouge]{};
\draw(3,2)node[petitrouge]{};
\draw(5,2)node[petitrouge]{};
\draw(-4,3)node[petitrouge]{};
\draw(-2,3)node[petitrouge]{};
\draw(0,3)node[petitrouge]{};
\draw(1.5,3)node[petitrouge]{};
\draw(2.5,3)node[petitrouge]{};
\draw(3.5,3)node[petitrouge]{};
\draw(5.5,3)node[petitrouge]{};
\draw(-5,4)node[petitrouge]{};
\draw(-3.5,4)node[petitrouge]{};
\draw(-2,4)node[petitrouge]{};
\draw(-0.5,4)node[petitrouge]{};
\draw(0.5,4)node[petitrouge]{};
\draw(1.5,4)node[petitrouge]{};
\draw(2,4)node[petitrouge]{};
\draw(3,4)node[petitrouge]{};
\draw(4.5,4)node[petitrouge]{};
\draw(5,4)node[petitrouge]{};
\draw(6,4)node[petitrouge]{};
\draw(-6,5)node[petitrouge]{};
\draw(-5,5)node[petitrouge]{};
\draw(-4,5)node[petitrouge]{};
\draw(-3,5)node[petitrouge]{};
\draw(-2,5)node[petitrouge]{};
\draw(-1.5,5)node[petitrouge]{};
\draw(-0.5,5)node[petitrouge]{};
\draw(0.5,5)node[petitrouge]{};
\draw(1.5,5)node[petitrouge]{};
\draw(2,5)node[petitrouge]{};
\draw(3,5)node[petitrouge]{};
\draw(3.5,5)node[petitrouge]{};
\draw(4.5,5)node[petitrouge]{};
\draw(5,5)node[petitrouge]{};
\draw(6,5)node[petitrouge]{};
\draw(7,5)node[petitrouge]{};

\draw[dashed, very thick, blue](0,1)--(2,2);
\draw[dashed, very thick, blue](0,1)--(-3,3);
\draw[dashed, very thick, blue](-3,3)--(-4.2,4);
\draw[dashed, very thick, blue](-3,3)--(-2.5,5);
\draw[dashed, very thick, blue](-4.2,4)--(-5.5,5);
\draw[dashed, very thick, blue](2,2)--(0.8,3);
\draw[dashed, very thick, blue](2,2)--(3,3);
\draw[dashed, very thick, blue](0.8,3)--(0,4);
\draw[dashed, very thick, blue](0.8,3)--(1,5.7);
\draw[dashed, very thick, blue](0,4)--(-1,5);
\draw[dashed, very thick, blue](3,3)--(3.5,4);
\draw[dashed, very thick, blue](3,3)--(2.5,5);
\draw[dashed, very thick, blue](-5.5,5)--(-5.5,5.7);
\draw[dashed, very thick, blue](2.5,5)--(2.75,5.7);
\draw[dashed, very thick, blue](3.5,4)--(4.5,5.7);
\draw[dashed, very thick, blue](3.5,4)to[bend right=15](3.8,5.7);
\draw[dashed, very thick, blue](-4.2,4)to[bend left=15](-4.2,5.7);
\draw[dashed, very thick, blue](0,4)to[bend left=15](-0.2,5.7);
\draw[dashed, very thick, blue](-5.5,5)--(-6.5,5.7);
\draw[dashed, very thick, blue](-2.5,5)--(-3,5.7);
\draw[dashed, very thick, blue](-2.5,5)--(-2.7,5.7);
\draw[dashed, very thick, blue](-1,5)--(-1.2,5.7);
\draw[dashed, very thick, blue](-1,5)--(-1.6,5.7);
\draw[dashed, very thick, blue](2.5,5)--(2.2,5.7);

\draw(0,1)node[dual]{};
\draw(2,2)node[dual]{};
\draw(-3,3)node[dual]{};
\draw(-4.2,4)node[dual]{};
\draw(-2.5,5)node[dual]{};
\draw(-5.5,5)node[dual]{};
\draw(0.8,3)node[dual]{};
\draw(3,3)node[dual]{};
\draw(0,4)node[dual]{};
\draw(1,5.7)node[dual]{};
\draw(-1,5)node[dual]{};
\draw(3.5,4)node[dual]{};
\draw(2.5,5)node[dual]{};
\draw(-5.5,5.7)node[dual]{};
\draw(-4.2,5.7)node[dual]{};
\draw(2.75,5.7)node[dual]{};
\draw(4.5,5.7)node[dual]{};

\draw[red](0,-0.3)node[texte]{$\rho$};
\draw[blue](0,1.5)node[texte]{$S_1$};
\draw[blue](2.6,3.5)node[texte]{$S_2$};
\end{tikzpicture}
\end{center}
\caption{The tree $\mathbf{T}$ (in red), and the tree $T_{\str}$ of descendants of the strip $S_1$ (in blue). Note that the strip $S_2$ has one parent on each side, but the right parent is not a descendant of $S_1$.}\label{fig_genealogy_strips}
\end{figure}
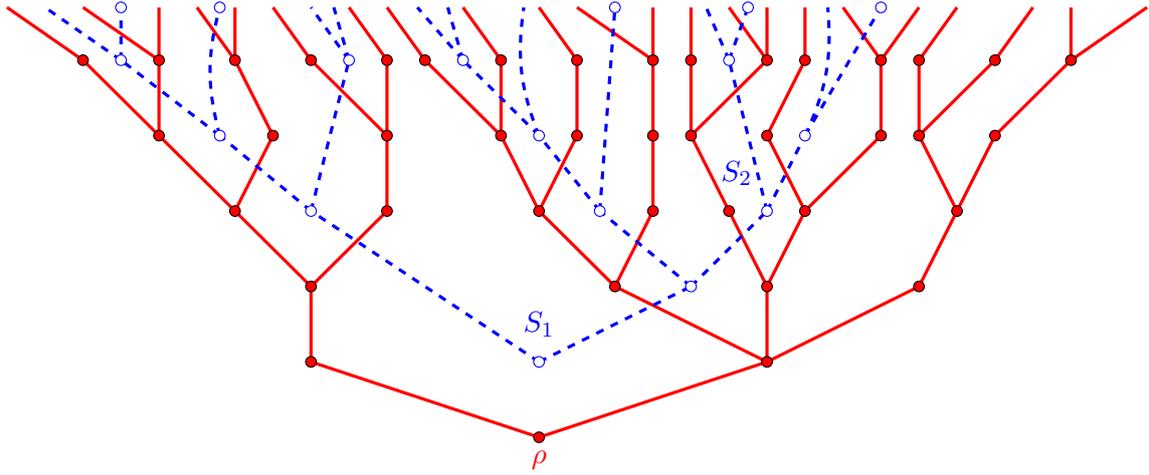

We now kill some of the strips. We fix a constant $\ell_{\max}>0$. For every strip $S_i$, let $e_i^{\ell}$ (resp. $e_i^r$) be the first edge on the left (resp. right) boundary of $S_i$ that is also adjacent to its left (resp. right) child. We call a strip $S_i$ \emph{good} if, for every face $f$ of $S_i$ that is adjacent to $\rho_i$, the dual of $S_i$ contains a path of length at most $\ell_{\max}$ from $f$ to $e_i^{\ell}$, and similarly for $e_i^r$.

Note that the fact that $S_i$ is good or not only depends on the internal geometry of $S_i$, and on the numbers of children of the vertices on the part of $\partial S_i$ lying between $e_i^{\ell}$ and $e_i^r$. These parts for different values of $i \in T_{\str}$ are disjoint. Hence, since the strips are i.i.d. and $\mathbf{T}$ is a Galton--Watson tree, the events
\[ \left\{ \mbox{$S_i$ is good} \right\}\]
for $i \in T_{\str}$ are independent, and have the same probability. Therefore, removing from $T_{\str}$ all the strips that are not good is equivalent to performing a Bernoulli site percolation on the complete binary tree $T_{\str}$. Moreover, the probability for a strip to be good goes to $1$ as $\ell_{\max}$ goes to $+\infty$, so we can find $\ell_{\max}$ such that this percolation is supercritical. We fix such an $\ell_{\max}$ until the end of the proof.

Let $T'_{\str}$ be an infinite connected component of $T_{\str}$ containing only good strips. Then $T'_{\str}$ is a supercritical Galton--Watson tree and survives, so it is transient. We can now define a submap $\sli_{\bdd}^*$ of $\sli^*$. For every strip $S_i \in T'_{\str}$, let $p_i^{\ell}$ (resp. $p_i^r$) be a dual path of length at most $\ell_{\max}$ joining the face of $S_i$ that is adjacent to its parent to the face adjacent to $e_i^{\ell}$ (resp. $e_i^r$). Then the edges of  $\sli_{\bdd}^*$ are the edges of these paths for all $i$ such that $S_i \in T'_{\str}$, as well as the dual edges of the edges $e_i^{\ell}$ and $e_i^r$ (cf. Figure \ref{fig_construction_sbdd_general}). The vertices of $\sli_{\bdd}^*$ are simply the vertices adjacent to these edges. Since the lengths of the paths $p_i^{\ell}$ and $p_i^r$ are bounded by $\ell_{\max}$, it is easy to see that $\sli_{\bdd}^*$ is quasi-isometric to the tree $T'_{\str}$, so it is transient. This implies that $\sli^*$ is transient as well, which concludes the proof of the first point.

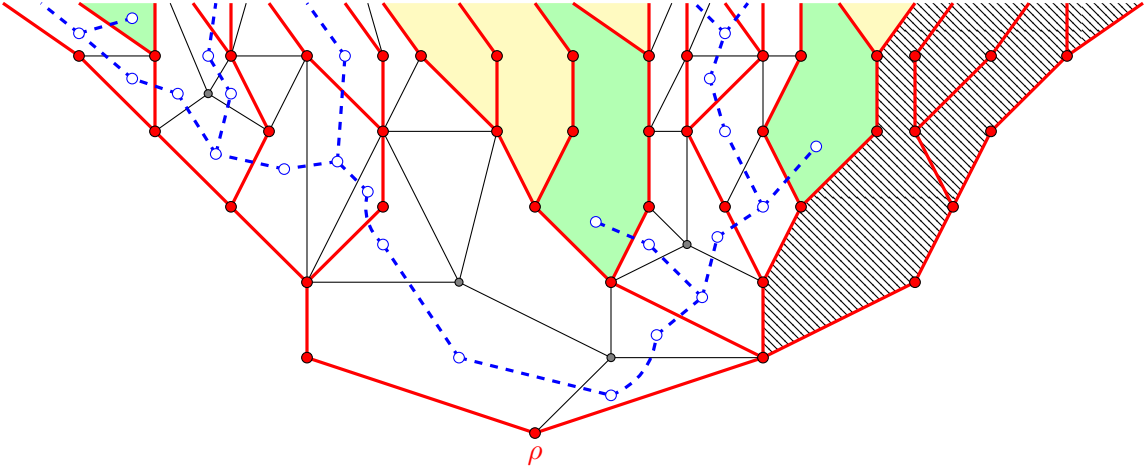
\begin{figure}
\begin{center}
\begin{tikzpicture}
\fill[yellow!30] (-2,5.7)--(-1.5,5)--(-0.5,4)--(0,3)--(1.5,3)--(1.5,5.7);
\fill[yellow!30] (4,5.7)--(4.5,5)--(5,5.7);
\fill[green!30] (0,5.7)--(0.5,5)--(0.5,4)--(0,3)--(1,2)--(1.5,3)--(1.5,4)--(1.5,5)--(0.5,5.7);
\fill[green!30] (3.5,5.7)--(3.5,5)--(3,4)--(3.5,3)--(4.5,4)--(4.5,5)--(4,5.7);
\fill[green!30] (-6,5.7)--(-5,5)--(-5,5.7);
\fill[pattern=north west lines] (5,5.7)--(4.5,5)--(4.5,4)--(3.5,3)--(3,2)--(3,1)--(5,2)--(6,4)--(7,5)--(8,5.7);

\draw[red, very thick] (0,0)--(-3,1);
\draw[red, very thick] (0,0)--(3,1);
\draw[red, very thick] (-3,1)--(-3,2);
\draw[red, very thick] (3,1)--(1,2);
\draw[red, very thick] (3,1)--(3,2);
\draw[red, very thick] (3,1)--(5,2);
\draw[red, very thick] (-3,2)--(-4,3);
\draw[red, very thick] (-3,2)--(-2,3);
\draw[red, very thick] (1,2)--(0,3);
\draw[red, very thick] (1,2)--(1.5,3);
\draw[red, very thick] (3,2)--(2.5,3);
\draw[red, very thick] (3,2)--(3.5,3);
\draw[red, very thick] (5,2)--(5.5,3);
\draw[red, very thick] (-4,3)--(-5,4);
\draw[red, very thick] (-4,3)--(-3.5,4);
\draw[red, very thick] (-2,3)--(-2,4);
\draw[red, very thick] (0,3)--(-0.5,4);
\draw[red, very thick] (0,3)--(0.5,4);
\draw[red, very thick] (1.5,3)--(1.5,4);
\draw[red, very thick] (2.5,3)--(2,4);
\draw[red, very thick] (3.5,3)--(3,4);
\draw[red, very thick] (3.5,3)--(4.5,4);
\draw[red, very thick] (5.5,3)--(5,4);
\draw[red, very thick] (5.5,3)--(6,4);
\draw[red, very thick] (-5,4)--(-6,5);
\draw[red, very thick] (-5,4)--(-5,5);
\draw[red, very thick] (-3.5,4)--(-4,5);
\draw[red, very thick] (-2,4)--(-3,5);
\draw[red, very thick] (-2,4)--(-2,5);
\draw[red, very thick] (-0.5,4)--(-1.5,5);
\draw[red, very thick] (-0.5,4)--(-0.5,5);
\draw[red, very thick] (0.5,4)--(0.5,5);
\draw[red, very thick] (1.5,4)--(1.5,5);
\draw[red, very thick] (2,4)--(2,5);
\draw[red, very thick] (2,4)--(3,5);
\draw[red, very thick] (3,4)--(3.5,5);
\draw[red, very thick] (4.5,4)--(4.5,5);
\draw[red, very thick] (5,4)--(5,5);
\draw[red, very thick] (5,4)--(6,5);
\draw[red, very thick] (6,4)--(7,5);
\draw[red, very thick] (-6,5)--(-7,5.7);
\draw[red, very thick] (-5,5)--(-6,5.7);
\draw[red, very thick] (-5,5)--(-5,5.7);
\draw[red, very thick] (-4,5)--(-4.5,5.7);
\draw[red, very thick] (-4,5)--(-4,5.7);
\draw[red, very thick] (-3,5)--(-3.5,5.7);
\draw[red, very thick] (-2,5)--(-2.5,5.7);
\draw[red, very thick] (-1.5,5)--(-2,5.7);
\draw[red, very thick] (-0.5,5)--(-1,5.7);
\draw[red, very thick] (0.5,5)--(0,5.7);
\draw[red, very thick] (1.5,5)--(0.5,5.7);
\draw[red, very thick] (1.5,5)--(1.5,5.7);
\draw[red, very thick] (2,5)--(2,5.7);
\draw[red, very thick] (3,5)--(2.5,5.7);
\draw[red, very thick] (3,5)--(3,5.7);
\draw[red, very thick] (3.5,5)--(3.5,5.7);
\draw[red, very thick] (4.5,5)--(4,5.7);
\draw[red, very thick] (4.5,5)--(5,5.7);
\draw[red, very thick] (5,5)--(5.5,5.7);
\draw[red, very thick] (6,5)--(6.5,5.7);
\draw[red, very thick] (7,5)--(7,5.7);
\draw[red, very thick] (7,5)--(8,5.7);

\draw(0,0)--(1,1);
\draw(1,1)--(3,1);
\draw(1,2)--(1,1);
\draw(1,1)--(-1,2);
\draw(-1,2)--(-3,2);
\draw(-1,2)--(-2,4);
\draw(-1,2)--(-0.5,4);
\draw(-2,4)--(-0.5,4);
\draw(-2,4)--(-1.5,5);
\draw(-2,5)--(-2.2,5.7);
\draw(-3,2)--(-2,4);
\draw(-3,2)--(-3,5);
\draw(-3,5)--(-3.5,4);
\draw(-3,5)--(-4,5);
\draw(-3.8,5.7)--(-4,5);
\draw(-4.3,4.5)--(-5,4);
\draw(-4.3,4.5)--(-3.5,4);
\draw(-4.3,4.5)--(-4,5);
\draw(-4.3,4.5)--(-4.8,5.7);
\draw(-5,5)--(-6,5);
\draw(2,2.5)--(1,2);
\draw(2,2.5)--(3,2);
\draw(2,2.5)--(1.5,3);
\draw(2,2.5)--(2,4);
\draw(2,4)--(1.5,4);
\draw(1.5,4)--(2,5);
\draw(1.5,5)--(1.8,5.7);
\draw(2.5,3)--(3,4);
\draw(3,4)--(3,5);
\draw(3,5)--(3.5,5);
\draw(2,5)--(3,5);
\draw(2,5)--(2.3,5.7);

\draw(0,0)node[petitrouge]{};
\draw(-3,1)node[petitrouge]{};
\draw(3,1)node[petitrouge]{};
\draw(-3,2)node[petitrouge]{};
\draw(1,2)node[petitrouge]{};
\draw(3,2)node[petitrouge]{};
\draw(5,2)node[petitrouge]{};
\draw(-4,3)node[petitrouge]{};
\draw(-2,3)node[petitrouge]{};
\draw(0,3)node[petitrouge]{};
\draw(1.5,3)node[petitrouge]{};
\draw(2.5,3)node[petitrouge]{};
\draw(3.5,3)node[petitrouge]{};
\draw(5.5,3)node[petitrouge]{};
\draw(-5,4)node[petitrouge]{};
\draw(-3.5,4)node[petitrouge]{};
\draw(-2,4)node[petitrouge]{};
\draw(-0.5,4)node[petitrouge]{};
\draw(0.5,4)node[petitrouge]{};
\draw(1.5,4)node[petitrouge]{};
\draw(2,4)node[petitrouge]{};
\draw(3,4)node[petitrouge]{};
\draw(4.5,4)node[petitrouge]{};
\draw(5,4)node[petitrouge]{};
\draw(6,4)node[petitrouge]{};
\draw(-6,5)node[petitrouge]{};
\draw(-5,5)node[petitrouge]{};
\draw(-4,5)node[petitrouge]{};
\draw(-3,5)node[petitrouge]{};
\draw(-2,5)node[petitrouge]{};
\draw(-1.5,5)node[petitrouge]{};
\draw(-0.5,5)node[petitrouge]{};
\draw(0.5,5)node[petitrouge]{};
\draw(1.5,5)node[petitrouge]{};
\draw(2,5)node[petitrouge]{};
\draw(3,5)node[petitrouge]{};
\draw(3.5,5)node[petitrouge]{};
\draw(4.5,5)node[petitrouge]{};
\draw(5,5)node[petitrouge]{};
\draw(6,5)node[petitrouge]{};
\draw(7,5)node[petitrouge]{};

\draw(1,1)node[small]{};
\draw(-1,2)node[small]{};
\draw(-4.3,4.5)node[small]{};
\draw(2,2.5)node[small]{};

\draw[dashed, very thick, blue](-1,1)--(-2,2.5);
\draw[dashed, very thick, blue](-1,1)--(1,0.5);
\draw[dashed, very thick, blue](1,0.5)to[bend right](1.6,1.3);
\draw[dashed, very thick, blue](-2,2.5)to[bend left](-2.2,3.2);
\draw[dashed, very thick, blue](-2.2,3.2)--(-2.6,3.6);
\draw[dashed, very thick, blue](-2.6,3.6)--(-3.3,3.5);
\draw[dashed, very thick, blue](-2.6,3.6)--(-2.5,5);
\draw[dashed, very thick, blue](-3.3,3.5)--(-4.2,3.7);
\draw[dashed, very thick, blue](-4.2,3.7)--(-4.7,4.5);
\draw[dashed, very thick, blue](-4.2,3.7)--(-4,4.5);
\draw[dashed, very thick, blue](-4,4.5)--(-4.3,5);
\draw[dashed, very thick, blue](-4.7,4.5)--(-5.3,4.7);
\draw[dashed, very thick, blue](-5.3,4.7)--(-6,5.3);
\draw[dashed, very thick, blue](-6,5.3)--(-6.5,5.7);
\draw[dashed, very thick, blue](-6,5.3)--(-5.3,5.5);
\draw[dashed, very thick, blue](-4.3,5)--(-4.2,5.7);
\draw[dashed, very thick, blue](-2.5,5)--(-3,5.7);
\draw[dashed, very thick, blue](1.6,1.3)--(2.2,1.8);
\draw[dashed, very thick, blue](2.2,1.8)--(1.5,2.5);
\draw[dashed, very thick, blue](2.2,1.8)--(2.4,2.6);
\draw[dashed, very thick, blue](1.5,2.5)--(0.8,2.8);
\draw[dashed, very thick, blue](2.4,2.6)--(3,3);
\draw[dashed, very thick, blue](3,3)--(2.5,4);
\draw[dashed, very thick, blue](3,3)--(3.7,3.8);
\draw[dashed, very thick, blue](2.5,4)--(2.3,4.7);
\draw[dashed, very thick, blue](2.3,4.7)--(2.5,5.3);
\draw[dashed, very thick, blue](2.5,5.3)--(2.8,5.7);
\draw[dashed, very thick, blue](2.5,5.3)--(2.1,5.7);

\draw(-1,1)node[dual]{};
\draw(-2,2.5)node[dual]{};
\draw(1,0.5)node[dual]{};
\draw(1.6,1.3)node[dual]{};
\draw(-2.2,3.2)node[dual]{};
\draw(-2.6,3.6)node[dual]{};
\draw(-3.3,3.5)node[dual]{};
\draw(-2.5,5)node[dual]{};
\draw(-4.2,3.7)node[dual]{};
\draw(-4.7,4.5)node[dual]{};
\draw(-4,4.5)node[dual]{};
\draw(-4.3,5)node[dual]{};
\draw(-5.3,4.7)node[dual]{};
\draw(-6,5.3)node[dual]{};
\draw(-5.3,5.5)node[dual]{};
\draw(2.2,1.8)node[dual]{};
\draw(1.5,2.5)node[dual]{};
\draw(2.4,2.6)node[dual]{};
\draw(0.8,2.8)node[dual]{};
\draw(3,3)node[dual]{};
\draw(2.5,4)node[dual]{};
\draw(3.7,3.8)node[dual]{};
\draw(2.3,4.7)node[dual]{};
\draw(2.5,5.3)node[dual]{};

\draw[red](0,-0.3)node[texte]{$\rho$};
\end{tikzpicture}
\end{center}
\caption{Construction of $\sli_{\bdd}^*$ (in blue). The tree $\mathbf{T}$ is in red. The hatched strips do not belong to $T_{\str}$. The green strips are the strips that are not good, and the yellow ones are the descendants of the green ones, so they are not in $T'_{\str}$. For the sake of clarity, we have not drawn the interiors of the strips that do not belong to $T'_{\str}$.}\label{fig_construction_sbdd_general}
\end{figure}
\end{proof}

\begin{proof}[Proof of point 2 of Theorem \ref{thm_2_bis}]
We mimic the beginning of the proof of Theorem \ref{thm_2_Poisson}, but we use the first point of Theorem \ref{thm_2_bis} instead of Proposition \ref{slice_strong_transience}. The proof of the first two points is robust, and shows that almost surely, the simple random walk $X$ on $\m \left( \mathbf{T}, (S_i)_{i \geq 0}\right)$ converges a.s. to a point $X_{\infty}$ of $\partial \mathbf{T}$, and that the distribution of $X_{\infty}$ has a.s.~full support. In particular, for every $x \in \mathbf{T}$, let $\widehat{\partial} \mathbf{T}[y]$ be the set of the classes $\widehat{\gamma}$, where $\gamma$ is a ray passing through $y$. Let $y_1, y_2$ be two vertices such that $\widehat{\partial} \mathbf{T}[y_1] \cap \widehat{\partial} \mathbf{T}[y_2]=\emptyset$. We define the function $h$ on $\m$ by
\[h(x)=P_{\m,x} \left( X_{\infty} \in \partial \mathbf{T}[y_1] \right). \]
Then $h$ is harmonic and bounded on $\m$. Moreover, by the same argument as in the beginning of the proof of Theorem \ref{thm_2_Poisson}, there is a sequence $(x_n)$ of vertices in $\sli[y_1]$ such that $h(x_n) \to 1$. On the other hand, by the first point of Theorem \ref{thm_2_bis}, there is a positive probability that $X$ stays in $\sli[y_2]$ eventually, so $h(x)<1$ for every $x$, so $h$ is non-constant. It follows that $\m$ is non-Liouville.
\end{proof}

\begin{proof}[Proof of point 3 of Theorem \ref{thm_2_bis}]
We know from the proof of point 2 of Theorem \ref{thm_2_bis} that the simple random walk $X$ on $\m \left( \mathbf{T}, (S_i)\right)$ converges a.s.~to a point $X_{\infty}$ on $\widehat{\partial} \mathbf{T}$. As in the proof of Theorem \ref{thm_2_Poisson}, it is enough to prove that the law of $X_{\infty}$ is a.s.~non-atomic, i.e. that if $X$ and $Y$ are two independent simple random walks on the same instance of $\m \left( \mathbf{T}, (S_i)\right)$, then $X_{\infty} \ne Y_{\infty}$ a.s..

To prove this, we rely on a variant of Lemma \ref{lem_separation_XY}. For $h \geq 0$, we denote by $\tau_h^X$ the smallest $n$ for which $X_n$ is a point of $\mathbf{T}$ of height at least $h$. We define $\tau_h^Y$ similarly. The recurrence of the strips guarantees that the times $\tau_h^X$ and $\tau_h^Y$ are all a.s.~finite. However, the vertices $X_{\tau_h^X}$ and $Y_{\tau_h^Y}$ may have height larger than $h$, which prevents us to re-use Lemma \ref{lem_separation_XY} as such. For $h \geq 1$, let $\f$ be the $\sigma$-algebra generated by the finite tree $B_h(\mathbf{T})$, the family of the strips whose root is at height at most $h-1$ and the paths $(X_n)_{0 \leq n \leq \tau_h^X}$ and $(Y_n)_{0 \leq n \leq \tau_h^Y}$. Note that this makes sense since up to time $\tau_h^X$, the walk $X$ may only visit strips whose root as height at most $h-1$.

Let also $A'_h$ be the following event:

$\Big\{$There are four distinct vertices $(x_i)_{1 \leq i \leq 4}$ of $T$ at height $\geq h$, neither of which is an ancestor of another, such that:
\begin{itemize}
\item
the trees $\mathbf{T}[x_1]$, $\mathbf{T}[x_2]$, $\mathbf{T}[x_3]$ and $\mathbf{T}[x_4]$ lie in this cyclic order,
\item
for every $n \geq \tau_h^X+2$, we have $X_n \in \sli \left( \mathbf{T}, (S_i) \right)[x_1]$ 
\item
for every $n \geq \tau_h^Y+2$, we have $Y_n \in \sli \left( \mathbf{T}, (S_i) \right)[x_3]. \Big\}$.
\end{itemize}
As in the proof of Theorem \ref{thm_2_Poisson}, it is enough to prove that there is $\delta>0$ such that almost surely, for $h$ large enough, we have
\begin{equation}\label{eqn_lem_XY_bis}
\P \left( A'_h | \f_h \right) \geq \delta.
\end{equation}
As in the proof of Lemma \ref{lem_separation_XY}, there are several cases to treat separately according to the relative positions of $X_{\tau_h^X}$ and $Y_{\tau_h^Y}$. Since this is the most different case from the proof of Lemma \ref{lem_separation_XY}, let us treat in details the case where $Y_{\tau_h^Y}$ is a strict descendant of $X_{\tau_h^X}$.

If this is the case, then right before $\tau_h^Y$ the walk $Y$ lies in a strip $S^Y$ intersecting $B_{h-1}(\mathbf{T})$. Without loss of generality, we assume that $S^Y$ is on the right of $X_{\tau_h^X}$. As on Figure \ref{fig_lem_XY_bis}, the point $Y_{\tau_h^Y}$ must be the rightmost descendant of $X_{\tau_h^X}$ at its height. Note also that the conditioning on $\f_h$ does not give any information about the numbers of children of the vertices of $\mathbf{T}$ between $X_{\tau_h^X}$ and $Y_{\tau_h^Y}$.

\begin{figure}
\begin{center}
\begin{tikzpicture}
\fill[yellow!40] (2,5)to[bend right=15](2,2)to[bend right=15](4,5);
\fill[yellow!40] (-3.5,6)to[bend right=15](-2,3)to[bend right=15](-2.5,6);
\fill[yellow!40] (-1.5,7)to[bend right=15](-1,5)to[bend right=15](-0.5,7);
\fill[yellow!40] (0.5,7)to[bend right=15](1,5)to[bend right=15](1.5,7);

\draw[red](0,0)--(-2,1);
\draw[red](0,0)--(2,1);
\draw[red](-2,1)--(-2,2);
\draw[red](2,1)--(2,2);
\draw[red](2,2)to[bend left=15](2,5);
\draw[red](2,2)to[bend right=15](4,5);
\draw[red](-2,2)--(-4,3);
\draw[red](-2,2)--(-2,3);
\draw[red](-2,2)--(0,3);
\draw[red](-4,3)--(-4.5,3.5);
\draw[red](-2,3)to[bend right=15](-2.5,6);
\draw[red](-2,3)to[bend left=15](-3.5,6);
\draw[red](0,3)--(0,4);
\draw[red](0,4)--(-1,5);
\draw[red](0,4)--(1,5);
\draw[red](-1,5)to[bend left=15](-1.5,7);
\draw[red](-1,5)to[bend right=15](-0.5,7);
\draw[red](1,5)to[bend left=15](0.5,7);
\draw[red](1,5)to[bend right=15](1.5,7);

\draw[dashed](-4,2)--(4,2);

\draw[blue, thick](0,0)to[out=180,in=270](-3,1)to[out=90, in=225](-2,2);
\draw[blue, thick](-2,2) to[bend left] (-2,3);
\draw[blue, thick](-2,3) to[out=105,in=270](-2.5,4)to[out=90,in=90](-2,4)to[out=270, in=270] (-3,6);

\draw[olive, thick](0,0)to[bend left=15](2,1);
\draw[olive, thick](2,1)to[out=105,in=90] (0.5,2.5) to[out=270,in=180](1,2.2)to[out=0,in=0](0,4);
\draw[olive, thick](0,4)to[bend left=15](1,5);
\draw[olive, thick](1,5)to[out=90,in=270](1.2,7);

\draw(0,0)node[petitrouge]{};
\draw(-2,1)node[petitrouge]{};
\draw(2,1)node[petitrouge]{};
\draw(-2,2)node[petitrouge]{};
\draw(2,2)node[petitrouge]{};
\draw(-4,3)node[petitrouge]{};
\draw(-2,3)node[petitrouge]{};
\draw(0,3)node[petitrouge]{};
\draw(0,4)node[petitrouge]{};
\draw(-1,5)node[petitrouge]{};
\draw(1,5)node[petitrouge]{};

\draw[red](0,-0.3)node[texte]{$\rho$};
\draw(-4.3,2)node[texte]{$h$};
\draw[orange](-2.8,6.3)node[texte]{$\sli[x_1]$};
\draw[orange](-1,7.3)node[texte]{$\sli[x_2]$};
\draw[orange](1,7.3)node[texte]{$\sli[x_3]$};
\draw[orange](3,5.3)node[texte]{$\sli[x_4]$};
\draw[blue](-3.3,1)node[texte]{$X$};
\draw[olive](1.5,1.5)node[texte]{$Y$};
\draw(-1.6,3)node[texte]{$x_1$};
\draw(-1,4.7)node[texte]{$x_2$};
\draw(1.1,4.7)node[texte]{$x_3$};
\draw(2.3,1.7)node[texte]{$x_4$};
\draw(-1.5,1.6)node[texte]{$X_{\tau_h^X}$};
\draw(-0.4,3.9)node[texte]{$Y_{\tau_h^Y}$};

\end{tikzpicture}
\end{center}
\caption{Sketch of the proof of \eqref{eqn_lem_XY_bis} in the case where $Y_{\tau_h^Y}$ is a descendant of $X_{\tau_h^X}$. The tree $\mathbf{T}$ is in red. The trajectories of $X$ and $Y$ are in blue and green respectively. For the sake of clarity, the interiors of the strips have not been drawn.}\label{fig_lem_XY_bis}
\end{figure}
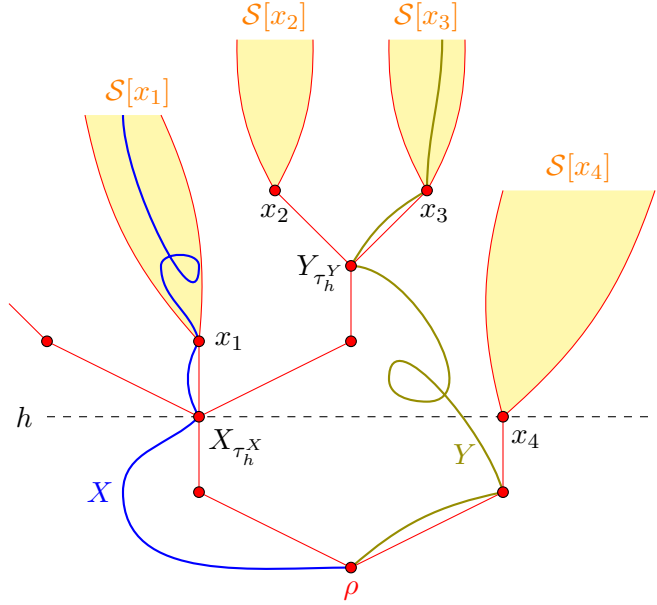

Let $K$ be a bound on the degrees in the strips. With conditional probability at least $(1-\mu(1)) \times \frac{1}{3K}$, the vertex $X_{\tau_h^X}$ is a branching point in $\mathbf{T}$ and $X_{\tau_h^X+1}$ is a child of $X_{\tau_h^X}$ which is not the rightmost one. If this occurs, let $x_1=X_{\tau_h^X+1}$. Then the slice $\sli \left( \mathbf{T}, (S_i) \right)[x_1]$ has the same law as $\sli \left( \mathbf{T}, (S_i) \right)$, so by the first point of Theorem \ref{thm_2_bis}, the walk $X$ has a probability bounded away from $0$ to stay in $\sli \left( \mathbf{T}, (S_i) \right) [x_1]$ aver after $\tau_h^X+1$. Similarly, independently of the behaviour of $X$, there is a probability at least $\frac{1-\mu(1)}{3K}$ that $x_3=Y_{\tau_h^Y+1}$ is a child of $Y_{\tau_h^Y}$ which is not the leftmost one, and that $Y$ stays in $\sli \left( \mathbf{T}, (S_i) \right) [x_3]$ ever after $\tau_h^Y$. To see that $A'_h$ occurs if this is the case, we just need to take as $x_2$ the leftmost child of $Y_{\tau_h^Y}$, and as $x_4$ a vertex of $\mathbf{T}$ which is different from $X_{\tau_h^X}$ (which a.s.~exists if $h$ is large enough).

This proves \eqref{eqn_lem_XY_bis} in the case where $Y_{\tau_h^Y}$ is a strict descendant of $X_{\tau_h^X}$. As in the proof of Theorem \ref{thm_2_Poisson}, the other cases can be treated in a very similar way. This finishes the proof of Theorem \ref{thm_2_bis}.
\end{proof}

\section{Positive speed of the simple random walk}\label{causal_sec_speed}

\subsection{Sketch of the proof and definition of the half-plane model \texorpdfstring{$\hcau$}{hcau}}\label{causal_subsec_halfplane}

The goal of this section is to prove Theorem \ref{thm_3_positive}. In all this section, we assume $\mu(0)=0$. We first give a quick sketch of the proof. Our main task will be to prove positive speed in a half-planar model $\hcau$, constructed from an infinite forest of supercritical Galton--Watson trees, and where we have more vertical stationarity than in $\cau$. The results of Section \ref{causal_sec_poisson} will allow us to pass from $\hcau$ to $\cau$. Note that Theorem \ref{thm_3_positive} is very easy in the case where $\mu(0)=\mu(1)=0$, since then the height of the simple random walk dominates a random walk on $\Z$ with positive drift. Therefore, we need to make sure that the vertices with only one child do not slow down the walk too much. Our proof in $\hcau$ relies on two ingredients:
\begin{itemize}
\item
An exploration method of $\hcau$ will allow us to prove that the walk cannot be too far away from a point with at least two children. This will guarantee that the walk spends some time at vertices with at least two children. At these vertices, the height of the walk accumulates a positive drift. This will give a "quasi-positive speed" result: the height at time $n$ is $n^{1-o(1)}$.
\item
Thanks to the stationarity properties of $\hcau$, we can study regeneration times, i.e. times at which the walk reaches some height for the first time, and stays above this height ever after. The estimates obtained in the first point are sufficient to prove that the first regeneration time has finite expectation. As in many other models (like random walks in random environments, see \cite{SZ99}), this is enough to ensure positive speed.
\end{itemize}

We now define our half-plane model. Let $(T_i)_{i \in \Z}$ be a family of i.i.d. Galton--Watson trees with offspring distribution $\mu$. We draw the trees $T_i$ with their roots on a horizontal line, and for every $h \geq 0$, we add horizontal connections between successive vertices of height $h$. Finally, for every vertex $v$ of height $0$, we add a parent of $v$ at height $-1$, which is linked only to $v$. We root the obtained map at the root of $T_0$ (which has height $0$), and denote it by $\hcau$ (see Figure \ref{figure_halfplane}). As for $\cau$, if $v$ is a vertex of $\hcau$, the height $h(v)$ of $v$ in $\hcau$ is defined as its height in the forest $(T_i)$. We denote by $\partial \hcau$ the set of vertices of $\hcau$ at height $-1$.

\begin{figure}
\begin{center}
\begin{tikzpicture}[scale=0.87]
\draw (-3,0)--(-3.5,1);
\draw (-3,0)--(-2.5,1);
\draw (-3.5,1)--(-3.5,2);
\draw (-2.5,1)--(-2.5,2);
\draw (-3.5,2)--(-3.5,3);
\draw (-2.5,2)--(-2.5,3);
\draw (-1,0)--(-1,1);
\draw (-1,1)--(-1.5,2);
\draw (-1,1)--(-0.5,2);
\draw (-1.5,2)--(-2,3);
\draw (-1.5,2)--(-1,3);
\draw (-0.5,2)--(-0.5,3);
\draw (0,0)--(0,1);
\draw (0,1)--(0,2);
\draw (0,2)--(0,3);
\draw (1.5,0)--(1.5,1);
\draw (1.5,1)--(1,2);
\draw (1.5,1)--(2,2);
\draw (1,2)--(0.5,3);
\draw (1,2)--(1.5,3);
\draw (2,2)--(2,3);
\draw (3,0)--(3,1);
\draw (3,1)--(3,2);
\draw (3,2)--(2.5,3);
\draw (3,2)--(3.5,3);

\draw (-3.5,3)--(-3.7,3.5);
\draw (-3.5,3)--(-3.3,3.5);
\draw (-2.5,3)--(-2.5,3.5);
\draw (-2,3)--(-2,3.5);
\draw (-1,3)--(-1,3.5);
\draw (-0.5,3)--(-0.5,3.5);
\draw (0,3)--(-0.2,3.5);
\draw (0,3)--(0.2,3.5);
\draw (0.5,3)--(0.5,3.5);
\draw (1.5,3)--(1.3,3.5);
\draw (1.5,3)--(1.7,3.5);
\draw (2,3)--(2,3.5);
\draw (2.5,3)--(2.5,3.5);
\draw (3.5,3)--(3.3,3.5);
\draw (3.5,3)--(3.7,3.5);

\draw (-3,0) node{};
\draw (-3.5,1) node{};
\draw (-2.5,1) node{};
\draw (-3.5,2) node{};
\draw (-2.5,2) node{};
\draw (-3.5,3) node{};
\draw (-2.5,3) node{};
\draw (-1,0) node{};
\draw (-1,1) node{};
\draw (-1.5,2) node{};
\draw (-0.5,2) node{};
\draw (-2,3) node{};
\draw (-1,3) node{};
\draw (-0.5,3) node{};
\draw (0,0) node[petitrouge]{};
\draw (0,1) node{};
\draw (0,2) node{};
\draw (0,3) node{};
\draw (1.5,0) node{};
\draw (1.5,1) node{};
\draw (1,2) node{};
\draw (2,2) node{};
\draw (0.5,3) node{};
\draw (1.5,3) node{};
\draw (2,3) node{};
\draw (3,0) node{};
\draw (3,1) node{};
\draw (3,2) node{};
\draw (2.5,3) node{};
\draw (3.5,3) node{};

\draw (-3,-1) node[texte]{$T_{-2}$};
\draw (-1,-1) node[texte]{$T_{-1}$};
\draw (0,-1) node[texte]{$T_0$};
\draw (1.5,-1) node[texte]{$T_1$};
\draw (3,-1) node[texte]{$T_2$};

\draw (-4,1) node[texte]{$\dots$};
\draw (4,1) node[texte]{$\dots$};

\begin{scope}[shift={(10,0)}]
\draw (-3,0)--(-3.5,1);
\draw (-3,0)--(-2.5,1);
\draw (-3.5,1)--(-3.5,2);
\draw (-2.5,1)--(-2.5,2);
\draw (-3.5,2)--(-3.5,3);
\draw (-2.5,2)--(-2.5,3);
\draw (-1,0)--(-1,1);
\draw (-1,1)--(-1.5,2);
\draw (-1,1)--(-0.5,2);
\draw (-1.5,2)--(-2,3);
\draw (-1.5,2)--(-1,3);
\draw (-0.5,2)--(-0.5,3);
\draw (0,0)--(0,1);
\draw (0,1)--(0,2);
\draw (0,2)--(0,3);
\draw (1.5,0)--(1.5,1);
\draw (1.5,1)--(1,2);
\draw (1.5,1)--(2,2);
\draw (1,2)--(0.5,3);
\draw (1,2)--(1.5,3);
\draw (2,2)--(2,3);
\draw (3,0)--(3,1);
\draw (3,1)--(3,2);
\draw (3,2)--(2.5,3);
\draw (3,2)--(3.5,3);

\draw (-3.5,3)--(-3.7,3.5);
\draw (-3.5,3)--(-3.3,3.5);
\draw (-2.5,3)--(-2.5,3.5);
\draw (-2,3)--(-2,3.5);
\draw (-1,3)--(-1,3.5);
\draw (-0.5,3)--(-0.5,3.5);
\draw (0,3)--(-0.2,3.5);
\draw (0,3)--(0.2,3.5);
\draw (0.5,3)--(0.5,3.5);
\draw (1.5,3)--(1.3,3.5);
\draw (1.5,3)--(1.7,3.5);
\draw (2,3)--(2,3.5);
\draw (2.5,3)--(2.5,3.5);
\draw (3.5,3)--(3.3,3.5);
\draw (3.5,3)--(3.7,3.5);

\draw (-4,0)--(4,0);
\draw (-4,1)--(4,1);
\draw (-4,2)--(4,2);
\draw (-4,3)--(4,3);

\draw (-3,-1)--(-3,0);
\draw (-1,-1)--(-1,0);
\draw (0,-1)--(0,0);
\draw (1.5,-1)--(1.5,0);
\draw (3,-1)--(3,0);

\draw (-3,0) node{};
\draw (-3.5,1) node{};
\draw (-2.5,1) node{};
\draw (-3.5,2) node{};
\draw (-2.5,2) node{};
\draw (-3.5,3) node{};
\draw (-2.5,3) node{};
\draw (-1,0) node{};
\draw (-1,1) node{};
\draw (-1.5,2) node{};
\draw (-0.5,2) node{};
\draw (-2,3) node{};
\draw (-1,3) node{};
\draw (-0.5,3) node{};
\draw (0,0) node[petitrouge]{};
\draw (0,1) node{};
\draw (0,2) node{};
\draw (0,3) node{};
\draw (1.5,0) node{};
\draw (1.5,1) node{};
\draw (1,2) node{};
\draw (2,2) node{};
\draw (0.5,3) node{};
\draw (1.5,3) node{};
\draw (2,3) node{};
\draw (3,0) node{};
\draw (3,1) node{};
\draw (3,2) node{};
\draw (2.5,3) node{};
\draw (3.5,3) node{};

\draw (-3,-1) node{};
\draw (-1,-1) node{};
\draw (0,-1) node{};
\draw (1.5,-1) node{};
\draw (3,-1) node{};

\draw[red] (0.3,-0.3) node[texte]{$\rho$};
\end{scope}
\end{tikzpicture}
\end{center}
\caption{On the left, an infinite forest $(T_i)_{i \in \Z}$. On the right, the half-planar map $\hcau$ obtained from this forest. The root vertex is in red.}\label{figure_halfplane}
\end{figure}
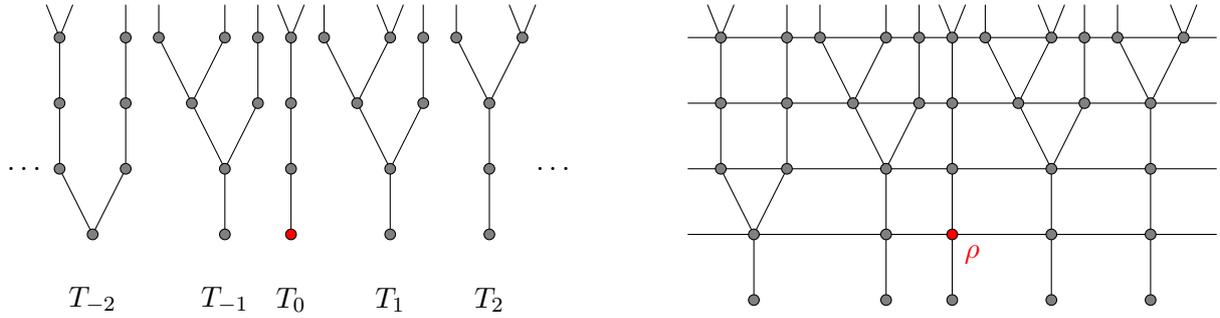

We first explain why it is enough to prove the result in $\hcau$ instead of $\cau$. We denote by $X^G$ the simple random walk on a graph $G$.

\begin{lem}\label{hcau_to_cau}
If in $\hcau$ we have $\frac{1}{n}h(X_n^{\hcau}) \to v_{\mu}>0$ a.s.~as $n \to +\infty$, then Theorem \ref{thm_3_positive} is true.
\end{lem}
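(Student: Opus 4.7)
The plan is to prove $d_{\cau(T)}(\rho,X_n^\cau)/n \to v_\mu$ by separately establishing the matching lower and upper bounds. For the lower bound, since each edge of $\cau(T)$ changes the height by at most $1$, we have the trivial inequality $d_{\cau(T)}(\rho,v)\geq h(v)$ for every vertex $v$, so it suffices to prove $h(X_n^\cau)/n \to v_\mu$ almost surely. For the matching upper bound, I will use the convergence $X_n^\cau \to X_\infty \in \widehat{\partial}T$ from Theorem \ref{thm_2_Poisson} to show that the horizontal displacement of the walk from its eventual direction is $o(n)$.

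To transfer the height-speed from $\hcau$ to $\cau(T)$, fix a large height $h_0$ and condition on $B_{h_0}(T)$. The generation-$h_0$ vertices $v_1,\dots,v_N$, with $N=Z_{h_0}$, are cyclically arranged and, by the branching property, their subtrees $T[v_i]$ are i.i.d.\ copies of $T$. The portion of $\cau(T)$ lying at height $\geq h_0$ is obtained by gluing the two ends of a linear $\hcau$-type strip built from $(T[v_i])_{1\leq i\leq N}$ into a cycle of length $N$. By Theorem \ref{thm_2_Poisson}, the walk almost surely converges to a point of $\widehat{\partial}T$, and hence $h(X_n^\cau)\to+\infty$; in particular, it visits $\partial B_{h_0}(T)$ only finitely many times, and after some random time $\tau$ it stays above height $h_0$. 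On the complement of the event that the walk subsequently completes a full horizontal loop around the cycle at some height $\geq h_0$, the coupled height process of $(X_n^\cau)_{n\geq\tau}$ coincides exactly with a height process of a walk on $\hcau$. Since $\mu(0)=0$ and $\mu(1)<1$, $Z_{h_0}$ grows exponentially, so by a Borel--Cantelli argument over a sequence of heights $h_0\to+\infty$ the loop event occurs only finitely many times almost surely. The hypothesis then yields $h(X_n^\cau)/n \to v_\mu$.

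For the upper bound, from the Poisson boundary convergence and the construction of the topology in Section \ref{causal_subsec_construct_poisson}, for every $k$ the walk is eventually trapped between two slices $\sli[u_k]$ and $\sli[w_k]$ whose roots lie at height $k$ on either side of a ray $\gamma$ representing $X_\infty$. Hence the horizontal distance from $X_n^\cau$ to $\gamma(h(X_n^\cau))$ along level $h(X_n^\cau)$ is $o(h(X_n^\cau))=o(n)$, so the path following $\gamma$ up to height $h(X_n^\cau)$ and then moving horizontally has length $h(X_n^\cau)+o(n)$, proving $d_{\cau(T)}(\rho,X_n^\cau)/n\to v_\mu$. The main obstacle is the control of the horizontal loop event in the second step, and of the horizontal displacement in the third: both require quantitative versions of the qualitative statement that convergence of $X_n^\cau$ to $\widehat{\partial}T$ forces the walk into progressively narrower corridors, which is plausible from the topology but must be combined carefully with the exponential growth of $Z_h$ to produce an $o(n)$ error term uniformly in $n$.
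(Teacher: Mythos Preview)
Your proposal has two genuine issues.

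\textbf{First, the upper bound is trivial.} You write $d_{\cau(T)}(\rho,v)\geq h(v)$ and then devote your third paragraph to a delicate matching upper bound via horizontal displacement. But in fact $d_{\cau(T)}(\rho,v)=h(v)$ exactly: the path in $T$ from $v$ down to $\rho$ through successive parents has length $h(v)$ and lies in $\cau(T)$. So the entire problem reduces to proving $h(X_n^{\cau})/n\to v_\mu$, and your third paragraph is unnecessary.

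\textbf{Second, your coupling in the second paragraph is not well posed.} The portion of $\cau(T)$ above height $h_0$ is a cylinder built from $N=Z_{h_0}$ trees, not from infinitely many i.i.d.\ trees, so it is not an instance of $\hcau$. Even on the event of ``no full loop'' (which you do not define precisely), the walk lives on a finite or periodic strip, and you never explain how to compare it to the walk on the genuine $\hcau$ for which the hypothesis is stated. The Borel--Cantelli step over $h_0$ is also not fleshed out. You yourself flag both of these as obstacles in your final sentence; they are real gaps, not mere details.

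The paper's argument is shorter and avoids both problems. By Theorem~\ref{thm_2_Poisson}, the limit $X_\infty$ is almost surely not the single point of $\widehat{\partial}T$ corresponding to the leftmost and rightmost rays of $T$ (this is where nonatomicity is used). Hence the walk on $\cau(T)$ hits $\partial\sli(T)$ only finitely many times. Now observe that every vertex in the \emph{interior} of $\sli(T)$ has the same set of neighbours in $\cau(T)$, in $\sli(T)$, and in $\hcau$ (taking $T_0=T$ and fresh i.i.d.\ copies for the other $T_i$). Therefore, after the last visit to $\partial\sli(T)$, the walk on $\cau(T)$ is exactly a simple random walk on $\hcau$ started from an interior vertex of $\sli(T_0)$ and conditioned never to hit $\partial\sli(T_0)$. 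Since height-speed is a tail event and the hypothesis gives height-speed $v_\mu$ for the unconditioned walk (hence from any starting point by irreducibility), it persists under this positive-probability conditioning. Combined with $d_{\cau(T)}(\rho,\cdot)=h(\cdot)$, this gives the theorem in two lines.

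In short: rather than trying to unroll the cylinder and control loops, cut $\cau(T)$ once along $\partial\sli(T)$ and use Theorem~\ref{thm_2_Poisson} to guarantee the walk eventually respects that cut.
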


\begin{proof}
For every $i \in \Z$, let $\sli(T_i)$ be the slice associated to the tree $T_i$. Then almost surely, if the walk $(X_n)$ stays in $\sli(T_i)$ eventually, the distance between $X_n$ and the root of $T_i$ is equivalent to $v_{\mu}n$. Since $\sli(T_i)$ has the same distribution as $\sli$, the simple random walk on $\sli$ has a.s.~speed $v_{\mu}$ on the event that it does not hit $\partial \sli$.

Back in $\cau$, by Theorem \ref{thm_2_Poisson}, we know that $X^{\cau}$ converges to a point $X_{\infty} \in \widehat{\partial} T$ and that $X_{\infty}$ is a.s.~not the point of $\widehat{\partial} T$ corresponding to the leftmost and rightmost rays of $T$. Therefore, almost surely, the walk $X^{\cau}$ only hits the boundary of $\sli(T)$ finitely many times, so it has a.s.~speed $v_{\mu}>0$.
\end{proof}

\subsection{An exploration method of \texorpdfstring{$\hcau$}{hcau}}\label{causal_subsec_exploration}

Let $k>0$ and let $x$ be a vertex of $\hcau$ at height $h \geq 0$. We write $x_0=x$. Let also $x_{-1}, \dots, x_{-k}$ be the $k$ first neighbours at height $h$ on the left of $x$, and let $x_{1}, \dots, x_{k}$ be the $k$ first neighbours of $x$ on its right.
\begin{defn}\label{def_bad}
Let $x$ be a vertex of $\hcau$ at height $h \geq 0$. We say that $x$ is \emph{$k$-bad} if all the descendants of $x_{-k}, x_{-(k-1)}, \dots, x_k$ at heights $h, h+1, \dots, h+k$ have only one child (cf. Figure \ref{figure_bad_point}).
\end{defn}

\begin{figure}
\begin{center}
\begin{tikzpicture}

\draw(-2.5,-1)--(2.5,-1);
\draw(-2,-1)--(-2.5,-0.5);
\draw(-2,-1)--(-2,0);
\draw(-2,-1)--(-1,0);
\draw(0.5,-1)--(0,0);
\draw(0.5,-1)--(1,0);
\draw(2,-1)--(2,0);
\draw(-2.5,0)--(2.5,0);
\draw(-2.5,1)--(2.5,1);
\draw(-2.5,2)--(2.5,2);
\draw(-2.5,3)--(2.5,3);
\draw(-2,0)--(-2,3);
\draw(-1,0)--(-1,3);
\draw(0,0)--(0,3);
\draw(1,0)--(1,3);
\draw(2,0)--(2,3);
\draw(-2,3)--(-2.3,3.5);
\draw(-2,3)--(-1.7,3.5);
\draw(-1,3)--(-1,3.5);
\draw(0,3)--(0,3.5);
\draw(1,3)--(0.7,3.5);
\draw(1,3)--(1.3,3.5);
\draw(2,3)--(1.7,3.5);
\draw(2,3)--(2.3,3.5);

\draw(-2,-1) node{};
\draw(0.5,-1) node{};
\draw(2,-1) node{};
\draw(-2,0) node{};
\draw(-1,0) node{};
\draw(-0,0) node{};
\draw(1,0) node{};
\draw(2,0) node{};
\draw(-2,1) node{};
\draw(-1,1) node{};
\draw(-0,1) node{};
\draw(1,1) node{};
\draw(2,1) node{};
\draw(-2,2) node{};
\draw(-1,2) node{};
\draw(-0,2) node{};
\draw(1,2) node{};
\draw(2,2) node{};
\draw(-2,3) node{};
\draw(-1,3) node{};
\draw(-0,3) node{};
\draw(1,3) node{};
\draw(2,3) node{};

\draw(-2.3,-0.3) node[texte]{$x_{-2}$};
\draw(-0.9,-0.3) node[texte]{$x_{-1}$};
\draw(-0.1,-0.3) node[texte]{$x_0$};
\draw(1.1,-0.3) node[texte]{$x_1$};
\draw(2.3,-0.3) node[texte]{$x_2$};
\end{tikzpicture}
\end{center}
\caption{An example for Definition \ref{def_bad}: the point $x=x_0$ is $2$-bad, but not $3$-bad.}\label{figure_bad_point}
\end{figure}
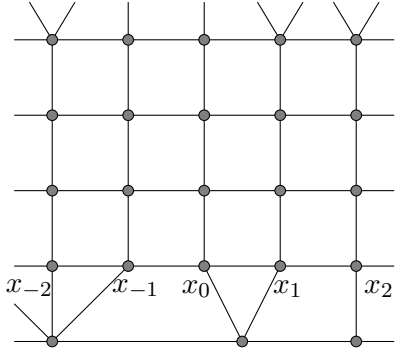

The goal of this section is to show that the probability for the walk to visit a "very bad" point in its first $n$ steps is very small. More precisely, we will prove the following result.

\begin{lem}\label{lem_bad_points}
There is a constant $c>0$ such that for every $k,n>0$, we have
\[\P \left( \mbox{one of the points $X_0, X_1, \dots, X_n$ is $k$-bad} \right) \leq ck(n+1)^2 \mu(1)^{k^2}.\]
\end{lem}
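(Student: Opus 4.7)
The plan is a union bound over times and walker positions, combined with a Galton--Watson estimate for a single vertex. For any deterministic vertex $v$ at height $h \geq 0$, the event $\{v \text{ is } k\text{-bad}\}$ requires the $(2k+1)(k+1)$ specific vertices in the ``bad box'' of $v$ (namely the $2k+1$ vertices $x_{-k}, \ldots, x_k$ at height $h$ together with their first $k$ descendants along the stick above them) to each have exactly one child. Since these offspring counts are i.i.d.\ of law $\mu$,
\[\P(v \text{ is } k\text{-bad}) = \mu(1)^{(2k+1)(k+1)} \leq \mu(1)^{k^2},\]
using $(2k+1)(k+1) \geq k^2$ and $\mu(1) \leq 1$.

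I then decouple this event from $\{X_m = v\}$ by a resampling argument: conditioning on $v$ being $k$-bad fixes only the offspring counts in the bad box of $v$ and leaves the rest of the Galton--Watson environment unchanged, which gives
\[\P(X_m = v, v \text{ is } k\text{-bad}) = \mu(1)^{(2k+1)(k+1)} \cdot \P\bigl(X_m = v \bigm| v \text{ is } k\text{-bad}\bigr) \leq \mu(1)^{k^2}\]
for every fixed $m$ and $v$. A union bound over $m \in \{0, \ldots, n\}$ and over the possible positions of $X_m$ should then finish the proof: the walker at time $m \leq n$ has height at most $n$ and horizontal displacement at most $n$, so it lies in a deterministic set of $O((n+1)^2)$ candidate vertices, and summing the previous estimate yields
\[\P\bigl(\exists m \leq n : X_m \text{ is } k\text{-bad}\bigr) \leq ck(n+1)^2 \mu(1)^{k^2},\]
where the extra factor $k$ accounts for the $2k+1$ horizontal neighbors appearing in the bad box of any candidate center.

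The main obstacle is the decoupling step, since the walker's trajectory can in principle enter the bad box of $v$ before first visiting $v$, creating correlations between $\{X_m = v\}$ and $\{v \text{ is } k\text{-bad}\}$. I would handle this by exploring the environment along the walker's trajectory while revealing only the offspring counts needed to take the next step, so that the offspring counts in the still-unexplored part of the bad box remain distributed as fresh i.i.d.\ variables of law $\mu$ and the conditional probability of $\{v \text{ is } k\text{-bad}\}$ stays controlled by (a large power of) $\mu(1)$. The main point here is that, since the walker has taken at most $n$ steps, at most $O(n)$ offspring counts inside any bad box could have been revealed prematurely, and the accumulated slack can be absorbed into the prefactor $ck(n+1)^2$.
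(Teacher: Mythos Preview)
Your proposal has a genuine gap, and it is precisely the one you flag but then mishandle in the last paragraph.

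First, the union bound over ``candidate positions'' is not well-posed: in $\hcau$ there is no deterministic set of $O((n+1)^2)$ vertices containing $X_0,\dots,X_n$. The ball of radius $n$ around the root is random (it depends on the Galton--Watson offspring numbers), so there is no fixed index set of vertices to sum over. What you can say is only that the walk visits at most $n+1$ vertices, which sends you back to bounding $\P(X_m \text{ is }k\text{-bad})$ for each $m$.

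Second, and more seriously, your proposed fix for the decoupling fails quantitatively. If by time $m$ the walk has already revealed the offspring numbers of some vertices in the bad box of $X_m$, the conditional probability that $X_m$ is $k$-bad is only at most $\mu(1)$ to the power of the number of \emph{unrevealed} counts in that box. You observe that at most $O(n)$ counts can have been revealed, but this costs a factor $\mu(1)^{-O(n)}$, not a polynomial in $n$, and in the intended application one takes $k \asymp \sqrt{\log n}$, so $k^2 \ll n$ and the bound becomes vacuous. The ``slack'' cannot be absorbed into $ck(n+1)^2$.

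The paper's proof addresses exactly this difficulty, and the mechanism is the whole content of the lemma. One runs an explicit exploration of $\hcau$ interleaved with the walk, designed so that the explored region stays \emph{$k$-flat}: whenever a narrow ``pit'' of width at most $2k$ appears, the algorithm pauses the walk and fills the pit before continuing. The payoff is that every newly explored vertex $v_i$ is provably \emph{$k$-free} on at least one side, meaning there are $k$ entire columns of height $k$ above $v_i$ whose offspring counts are still fresh i.i.d.\ $\mu$-variables; this gives the clean conditional bound $\mu(1)^{k^2}$ at every exploration step. The cost is that filling pits introduces extra exploration steps, and a step-counting argument shows there are at most $ck(n+1)^2$ of them in total; this is where the prefactor actually comes from, not from counting candidate positions of the walk.
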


The presence of the factor $\mu(1)^{k^2}$ is not surprising. For example, the probability for the root vertex $X_0$ to be $k$-bad is exactly $\mu(1)^{(k+1)(2k+1)}$. The idea of the proof is to explore $\hcau$ at the same time as the random walk moves, in such a way that every time we discover a new vertex, either its $k$ neighbours on the right or its $k$ neighbours on the left have all their descendants undiscovered. If this is the case, the probability for the discovered vertex to be $k$-bad is at most $\mu(1)^{k^2}$. The factor $ck(n+1)^2$ means that our exploration method needs to explore at most $ck(n+1)^2$ vertices to discover $\{X_0, X_1, \dots, X_n\}$.

The rest of Section \ref{causal_subsec_exploration} is devoted to the proof of Lemma \ref{lem_bad_points}. We first define our exploration method. We then state precisely the properties that we need this exploration to satisfy (Lemmas \ref{lem_expl_markov}, \ref{lem_expl_phi} and \ref{lem_expl_k_free}), and explain how to conclude the proof from here. Finally, we prove these properties.

\paragraph{Exploration methods.}
By an \emph{exploration method}, we mean a nondecreasing sequence $(E_i)_{i \geq 0}$ of finite sets of vertices of $\hcau$. For every $i \geq 0$, the part of $\hcau$ discovered at time $i$ is the finite map formed by:
\begin{itemize}
\item[$\bullet$]
the vertices of $E_i$,
\item[$\bullet$]
the edges of $\hcau$ whose two endpoints belong to $E_i$, 
\item[$\bullet$]
for every edge of $\hcau$ with exactly one endpoint in $E_i$, the half-edge adjacent to $E_i$.
\end{itemize}
We denote this map by $\expl_i$ (see Figure \ref{fig_exploration_method}). In particular, when we explore a vertex, we know how many children it has, even if these children are yet undiscovered. Moreover, for every $i$, the map $\expl_i$ will be equipped with an additional marked oriented edge or half-edge $e_i$ describing the current position of the simple random walk. In all the explorations we will consider, we can pass from $\expl_{i-1}$ to $\expl_{i}$ by either adding one vertex to the explored set $E_{i-1}$, or moving the marked edge to a neighbour edge or half-edge. In the first case, we call $i$ an \emph{exploration step} and, in the second, we call $i$ a \emph{walk step}. In particular, the time of the exploration is not the same as the time of the random walk.

We say that a vertex $v \in E_i$ is \emph{$k$-free on the right} in $E_i$ if none of the $k$ first neighbours on the right of the rightmost child of $E_i$ belongs to $E_i$.  We define similarly a \emph{$k$-free on the left} vertex. We would like to build an exploration of $\hcau$ such that at every exploration step $i$, the unique vertex of $E_{i+1} \backslash E_i$ is either $k$-free on the left or on the right in $E_{i+1}$. Note that it is not always possible by looking at $\expl_i$ to decide whether a vertex $v \in E_i$ is $k$-free or not (cf. vertex $v_2$ on Figure \ref{fig_exploration_method}). However, as we will see later (proof of Lemma \ref{lem_expl_k_free}), it is sometimes possible to be sure that a vertex is $k$-free.

\begin{figure}
\begin{center}
\begin{tikzpicture}

\draw(-1,-1)--(-1,0);
\draw(5,-1)--(5,0);
\draw(-1.4,0)--(5.4,0);
\draw(-1,0)--(-1,1);
\draw(1,0)--(1.5,1);
\draw(2.5,0)--(2,1);
\draw(2.5,0)--(3,1);
\draw(4,0)--(4.5,1);
\draw(5,0)--(5,1);
\draw(-1.4,1)--(5.4,1);
\draw(-1,1)--(-1.5,2);
\draw(-1,1)--(-1,2);
\draw(0,1)--(-0.5,2);
\draw(1,1)--(1.4,2);
\draw(1.5,1)--(1.8,2);
\draw(2,1)--(2.2,2);
\draw(3,1)--(2.6,2);
\draw(3,1)--(3,2);
\draw(4.5,1)--(4,2);
\draw(4.5,1)--(4.5,2);
\draw(5,1)--(5,2);
\draw(-1.9,2)--(5.4,2);
\draw(-1.5,2)--(-2,3);
\draw(-1,2)--(-1.5,3);
\draw(-0.5,2)--(-1,3);
\draw(-0.5,2)--(-0.5,3);
\draw(0,2)--(0,3);
\draw(1,2)--(0.5,3);
\draw(1,2)--(1,3);
\draw(1.4,2)--(1.3,3);
\draw(1.8,2)--(1.6,3);
\draw(1.8,2)--(1.9,3);
\draw(2.2,2)--(2.2,3);
\draw(2.6,2)--(2.5,3);
\draw(3,2)--(2.9,3);
\draw(3.5,2)--(3.3,3);
\draw(3.5,2)--(3.7,3);
\draw(4,2)--(4.1,3);
\draw(4.5,2)--(4.5,3);
\draw(5,2)--(4.9,3);
\draw(5,2)--(5.3,3);
\draw(-2.3,3)--(5.7,3);

\draw(-2,3)--(-2.2,3.3);
\draw(-2,3)--(-1.8,3.3);
\draw(-1.5,3)--(-1.5,3.3);
\draw(-1,3)--(-1,3.3);
\draw(-0.5,3)--(-0.5,3.3);
\draw(0,3)--(-0.2,3.3);
\draw(0,3)--(0.2,3.3);
\draw(0.5,3)--(0.5,3.3);
\draw(1,3)--(1,3.3);
\draw(1.3,3)--(1.3,3.3);
\draw(1.6,3)--(1.6,3.3);
\draw(1.9,3)--(1.9,3.3);
\draw(2.2,3)--(2.1,3.3);
\draw(2.2,3)--(2.3,3.3);
\draw(2.5,3)--(2.6,3.3);
\draw(2.9,3)--(2.9,3.3);
\draw(3.3,3)--(3.2,3.3);
\draw(3.3,3)--(3.4,3.3);
\draw(3.7,3)--(3.7,3.3);
\draw(4.1,3)--(4,3.3);
\draw(4.1,3)--(4.2,3.3);
\draw(4.5,3)--(4.5,3.3);
\draw(4.9,3)--(4.8,3.3);
\draw(5.3,3)--(5.1,3.3);
\draw(5.3,3)--(5.5,3.3);

\draw[very thick, red] (0,-1)--(0,0);
\draw[very thick, red] (1,-1)--(1,0);
\draw[very thick, red] (2.5,-1)--(2.5,0);
\draw[very thick, red] (4,-1)--(4,0);
\draw[very thick, red] (-0.4,0)--(4.4,0);
\draw[very thick, red] (0,0)--(0,1);
\draw[very thick, red] (1,0)--(1,1);
\draw[very thick, orange, ->] (1,0)--(1.25,0.5);
\draw[very thick, red] (2.5,0)--(2.25,0.5);
\draw[very thick, red] (2.5,0)--(2.75,0.5);
\draw[very thick, red] (4,0)--(3.5,1);
\draw[very thick, red] (4,0)--(4.25,0.5);
\draw[very thick, red] (-0.4,1)--(1.25,1);
\draw[very thick, red] (3.25,1)--(3.9,1);
\draw[very thick, red] (0,1)--(-0.25,1.5);
\draw[very thick, red] (0,1)--(0,2);
\draw[very thick, red] (1,1)--(1,2);
\draw[very thick, red] (1,1)--(1.2,1.5);
\draw[very thick, red] (3.5,1)--(3.5,2);
\draw[very thick, red] (-0.25,2)--(1.2,2);
\draw[very thick, red] (3.25,2)--(3.75,2);
\draw[very thick, red] (0,2)--(0,2.5);
\draw[very thick, red] (1,2)--(0.75,2.5);
\draw[very thick, red] (1,2)--(1,2.5);
\draw[very thick, red] (3.5,2)--(3.4,2.5);
\draw[very thick, red] (3.5,2)--(3.6,2.5);

\draw(-1,-1)node{};
\draw(5,-1)node{};
\draw(-1,0)node{};
\draw(5,0)node{};
\draw(-1,1)node{};
\draw(1.5,1)node{};
\draw(2,1)node{};
\draw(3,1)node{};
\draw(4.5,1)node{};
\draw(5,1)node{};
\draw(-1.5,2)node{};
\draw(-1,2)node{};
\draw(-0.5,2)node{};
\draw(1.4,2)node{};
\draw(1.8,2)node{};
\draw(2.2,2)node{};
\draw(2.6,2)node{};
\draw(3,2)node{};
\draw(4,2)node{};
\draw(4.5,2)node{};
\draw(5,2)node{};
\draw(-2,3)node{};
\draw(-1.5,3)node{};
\draw(-1,3)node{};
\draw(-0.5,3)node{};
\draw(0,3)node{};
\draw(0.5,3)node{};
\draw(1,3)node{};
\draw(1.3,3)node{};
\draw(1.6,3)node{};
\draw(1.9,3)node{};
\draw(2.2,3)node{};
\draw(2.5,3)node{};
\draw(2.9,3)node{};
\draw(3.3,3)node{};
\draw(3.7,3)node{};
\draw(4.1,3)node{};
\draw(4.5,3)node{};
\draw(4.9,3)node{};
\draw(5.3,3)node{};

\draw (0,-1)node[moyenrouge]{};
\draw (1,-1)node[moyenrouge]{};
\draw (2.5,-1)node[moyenrouge]{};
\draw (4,-1)node[moyenrouge]{};
\draw (0,0)node[moyenrouge]{};
\draw (1,0)node[moyenrouge]{};
\draw (2.5,0)node[moyenrouge]{};
\draw (4,0)node[moyenrouge]{};
\draw (0,1)node[moyenrouge]{};
\draw (1,1)node[moyenrouge]{};
\draw (3.5,1)node[moyenrouge]{};
\draw (0,2)node[moyenrouge]{};
\draw (1,2)node[moyenrouge]{};
\draw (3.5,2)node[moyenrouge]{};

\draw (0.7,0.3)node[texte]{$x_1$};
\draw (3.8,1.3)node[texte]{$x_2$};
\draw (6,0.5)node[texte]{$\longrightarrow$};

\begin{scope}[shift={(7,0)}]
\draw (0,-1)--(0,0);
\draw (1,-1)--(1,0);
\draw (2.5,-1)--(2.5,0);
\draw (4,-1)--(4,0);
\draw (-0.4,0)--(4.4,0);
\draw (0,0)--(0,1);
\draw (1,0)--(1,1);
\draw[very thick, orange, ->] (1,0)--(1.25,0.5);
\draw (2.5,0)--(2.25,0.5);
\draw (2.5,0)--(2.75,0.5);
\draw (4,0)--(3.5,1);
\draw (4,0)--(4.25,0.5);
\draw (-0.4,1)--(1.25,1);
\draw (3.25,1)--(3.9,1);
\draw (0,1)--(-0.25,1.5);
\draw (0,1)--(0,2);
\draw (1,1)--(1,2);
\draw (1,1)--(1.2,1.5);
\draw (3.5,1)--(3.5,2);
\draw (-0.25,2)--(1.2,2);
\draw (3.25,2)--(3.75,2);
\draw (0,2)--(0,2.5);
\draw (1,2)--(0.75,2.5);
\draw (1,2)--(1,2.5);
\draw (3.5,2)--(3.4,2.5);
\draw (3.5,2)--(3.6,2.5);

\draw (0,-1)node{};
\draw (1,-1)node{};
\draw (2.5,-1)node{};
\draw (4,-1)node{};
\draw (0,0)node{};
\draw (1,0)node{};
\draw (2.5,0)node{};
\draw (4,0)node{};
\draw (0,1)node{};
\draw (1,1)node{};
\draw (3.5,1)node{};
\draw (0,2)node{};
\draw (1,2)node{};
\draw (3.5,2)node{};

\draw (0.7,0.3)node[texte]{$x_1$};
\draw (3.8,1.3)node[texte]{$x_2$};
\end{scope}

\end{tikzpicture}
\end{center}
\caption{On the left, the map $\hcau$ and the set of vertices $E_i$ (in red). On the right, the explored map $\expl_i$. The edge in orange means that at the current time, the random walk is leaving the vertex $x_1$ towards its right child. The vertex $x_1$ is $2$-free on the right, but not $3$-free. The vertex $x_2$ is $k$-free on the right for every $k \geq 0$. It is also $5$-free on the left, but not $6$-free. By only looking at $\expl_i$, we can be sure that $x_1$ is $2$-free on the right and $x_2$ is $4$-free on the left, but not that $x_2$ is $5$-free on the left.}\label{fig_exploration_method}
\end{figure}
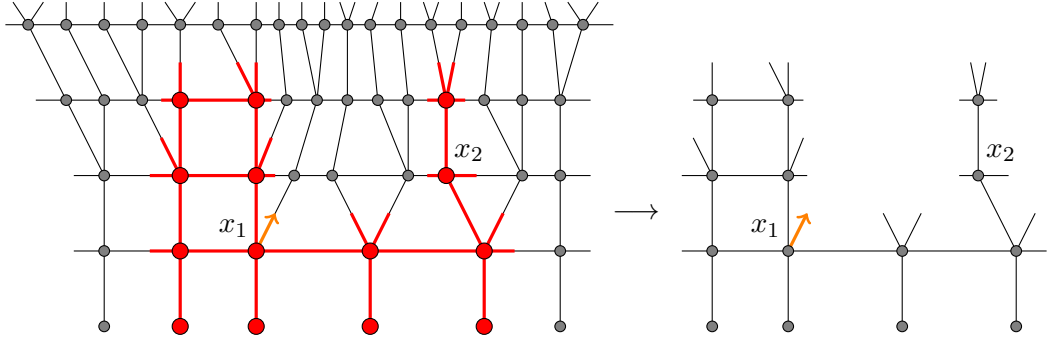

\paragraph{Choice of the exploration method.}
The simplest exploration method coming into mind is to explore a vertex when it is hit for the first time by the walk $(X_n)$. However, this is not suitable for our purpose. If for example we discover some descendants of a vertex $v$ and explore $v$ afterwards, then we have some partial information about the descendance of $v$, so we cannot control the probability for $v$ to be $k$-bad. For this reason, we never want to discover a vertex before discovering all its ancestors. We will therefore require that the sets $E_i$ are \emph{stable}, which means that for every vertex $v \in E_i$, all the ancestors of $v$ lie in $E_i$ as well.

A second natural exploration method is now the following: everytime the walk $(X_n)$ hits a vertex $v$ for the first time, we discover all the ancestors of $v$ that are yet undiscovered (including $v$), from the lowest to the highest. Although more convenient than the first one, this method is not sufficient either. Indeed, assume that at some point we explore a vertex $v$ such that the $k$ first neighbours on the left and on the right of $v$ have already been discovered, as well as many of their descendants. Then we have accumulated some partial information about the descendances of all the neighbours of $v$, so we cannot control the probability for $v$ to be $k$-bad.

More generally, this problem occurs if we allow the formation of "narrow pits" in the explored part of $\hcau$. Therefore, the idea of our exploration is the following: everytime the walk $(X_n)$ hits a vertex $v$ for the first time, we explore all the ancestors of $v$ that are yet undiscovered (including $v$), from the lowest to the highest. If by doing so we create a pit of width at most $2k$, then we explore completely the bottom of this pit, until it has width greater than $2k$.

To define this exploration more precisely, we need to define precisely a pit. We assume that the set $E_i$ is stable, which implies that $\expl_i$ is simply connected. Then there is a unique way to move around the map $\expl_i$ from left to right by crossing all the half-edges exactly once, as on Figure \ref{figure_pit}. A \emph{pit} is a sequence of consecutive half-edges $p_0, p_1, \dots, p_{j+1}$ such that:
\begin{itemize}
\item
the half-edges $p_1, \dots, p_j$ point upwards, and start from the same height $h$,
\item
the half-edge $p_0$ points to the right and lies at height $h+1$,
\item
the half-edge $p_{j+1}$ points to the left and lies at height $h+1$.
\end{itemize}
Note that all the pits are half-edge-disjoint. We call $j$ and $h$ the \emph{width} and the \emph{height} of the pit. Finally, we say that $\expl_i$ is \emph{$k$-flat} if it has no pit of width $j \leq 2k$ (see Figure \ref{figure_pit}).

\begin{figure}
\begin{center}
\begin{tikzpicture}[scale=1.2]
\draw (0,-1)--(0,0);
\draw (1,-1)--(1,0);
\draw (2,-1)--(2,0);
\draw (3,-1)--(3,0);
\draw (-0.5,0)--(3.5,0);
\draw (0,0)--(0,1);
\draw (1,0)--(1,1);
\draw (1,0)--(1.4,0.5);
\draw (2,0)--(1.8,0.5);
\draw (2,0)--(2.2,0.5);
\draw (3,0)--(2.5,1);
\draw (3,0)--(3.2,0.5);
\draw (-0.5,1)--(1.5,1);
\draw (2,1)--(3,1);
\draw (0,1)--(-0.5,1.5);
\draw (0,1)--(0,2);
\draw (1,1)--(1,2);
\draw (1,1)--(1.4,1.5);
\draw (2.5,1)--(2.5,2);
\draw (-0.5,2)--(1.5,2);
\draw (2,2)--(3,2);
\draw (0,2)--(0,2.5);
\draw (1,2)--(0.8,2.5);
\draw (1,2)--(1.2,2.5);
\draw (2.5,2)--(2.3,2.5);
\draw (2.5,2)--(2.7,2.5);

\draw[very thick, olive, dashed](-0.3,-0.5)--(-0.3,1) to[out=90, in=270] (-0.2,1.5)--(-0.2,2) to[out=90, in=180] (0,2.3)--(1,2.3) to[out=0, in=90] (1.3,2)--(1.3,0.5) to[out=270, in=180] (1.5,0.2)--(2.3,0.2) to[out=0, in=270] (2.2,1)--(2.2,2) to[out=90, in=90] (2.8,2)--(2.8,1) to[out=270, in=90] (3.3,0)--(3.3,-0.5);

\draw (0,-1)node{};
\draw (1,-1)node{};
\draw (2,-1)node{};
\draw (3,-1)node{};
\draw (0,0)node{};
\draw (1,0)node{};
\draw (2,0)node{};
\draw (3,0)node{};
\draw (0,1)node{};
\draw (1,1)node{};
\draw (2.5,1)node{};
\draw (0,2)node{};
\draw (1,2)node{};
\draw (2.5,2)node{};

\begin{scope}[shift={(6,0)}]
\draw (0,-1)--(0,0);
\draw (1,-1)--(1,0);
\draw (2,-1)--(2,0);
\draw (3,-1)--(3,0);
\draw (-0.5,0)--(3.5,0);
\draw (0,0)--(0,1);
\draw (1,0)--(1,1);
\draw (1,0)--(1.4,0.5);
\draw (2,0)--(1.8,0.5);
\draw (2,0)--(2.2,0.5);
\draw (3,0)--(2.5,1);
\draw (3,0)--(3.2,0.5);
\draw (-0.5,1)--(1.5,1);
\draw (2,1)--(3,1);
\draw (0,1)--(-0.5,1.5);
\draw (0,1)--(0,2);
\draw (1,1)--(1,2);
\draw (1,1)--(1.4,1.5);
\draw (2.5,1)--(2.5,2);
\draw (-0.5,2)--(1.5,2);
\draw (2,2)--(3,2);
\draw (0,2)--(0,2.5);
\draw (1,2)--(0.8,2.5);
\draw (1,2)--(1.2,2.5);
\draw (2.5,2)--(2.3,2.5);
\draw (2.5,2)--(2.7,2.5);

\draw[very thick, blue, dashed](1.3,1)--(1.3,0.5) to[out=270, in=180] (1.5,0.2)--(2.3,0.2) to[out=0, in=270] (2.2,1);

\draw (0,-1)node{};
\draw (1,-1)node{};
\draw (2,-1)node{};
\draw (3,-1)node{};
\draw (0,0)node{};
\draw (1,0)node{};
\draw (2,0)node{};
\draw (3,0)node{};
\draw (0,1)node{};
\draw (1,1)node{};
\draw (2.5,1)node{};
\draw (0,2)node{};
\draw (1,2)node{};
\draw (2.5,2)node{};

\draw[blue] (1.35,1.15)node[texte]{$p_0$};
\draw[blue] (1.15,0.5)node[texte]{$p_1$};
\draw[blue] (1.7,0.65)node[texte]{$p_2$};
\draw[blue] (2.1,0.65)node[texte]{$p_3$};
\draw[blue] (2.2,1.15)node[texte]{$p_4$};

\end{scope}

\end{tikzpicture}
\end{center}
\caption{On the left, we move around $\expl_i$ by crossing all the half-edges. On the right, the unique pit of $\expl_i$. It has width $3$ and height $0$. In particular, the map $\expl_i$ is $1$-flat, but not $2$-flat.}\label{figure_pit}
\end{figure}
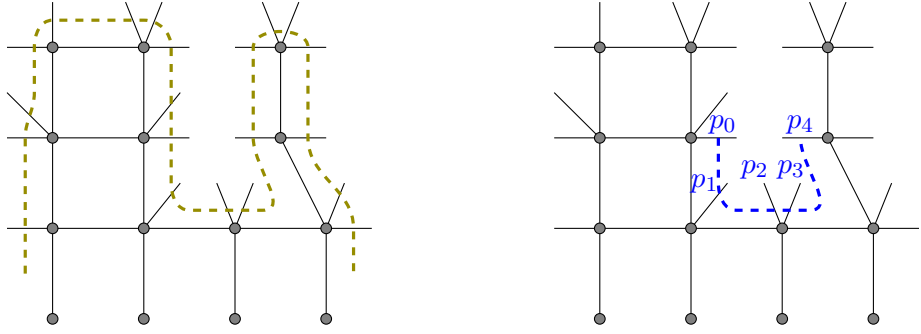

If $e$ is an oriented edge or half-edge, we will denote by $e^-$ its starting point and $e^+$ its endpoint. We can now describe our exploration algorithm precisely. We take for $E_0$ the set formed by the root vertex $\rho$ of $\hcau$ and its parent at height $-1$, and pick $e_0$ uniformly among all the edges and half-edges started from $\rho$. For every $i \geq 0$, we recall that $e_i$ is the oriented edge or half-edge of $\expl_i$ marking the position of the simple random walk. For every $i \geq 1$, given $(\expl_{i-1}, e_{i-1})$, we construct $(\expl_i, e_i)$ as follows.
\begin{enumerate}
\item[(i)]
If the marked edge $e_{i-1}$ is a full edge and $\expl_{i-1}$ is $k$-flat, we perform a \emph{walk step}: we set $E_{i}=E_{i-1}$ and pick $e_{i}$ uniformly among all the edges and half-edges whose starting point is $e_{i-1}^+$.
\item[(ii)]
If $e_{i-1}$ is a half-edge, we perform an \emph{exploration step}: we denote by $v_{i}$ the lowest ancestor of $e_{i-1}^+$ that does not belong to $E_{i-1}$. If $v_i$ lies at height $-1$, then $E_{i}$ is the union of $E_{i-1}$, the vertex $v_i$ and its child at height $0$ (this is the only case where we explore two vertices at once, to make sure that $\expl_i$ remains connected). If not, then $E_{i}=E_{i-1} \cup \{v_i\}$. Note that if $v_i=e_{i-1}^+$, then the marked half-edge becomes a full edge.
\item[(iii)]
If $e_{i-1}$ is a full edge but $\expl_{i-1}$ is not $k$-flat, we also perform an \emph{exploration step}: let $(p_0, p_1, \dots, p_{j+1})$ be the leftmost pit of width at most $2k$. We then take $E_{i}=E_{i-1} \cup \{p_1^+\}$. This means that we explore the endpoint of the leftmost vertical half-edge of the pit.
\end{enumerate}
It is easy to check that the sets $E_i$ we just defined are all stable.

\paragraph{The exploration is Markovian.}
An important feature of our exploration that we need to check is that it is Markovian. More precisely, for every $i \geq 0$, let $\partial E_i$ be the set of vertices consisting of:
\begin{itemize}
\item
the endpoints of the half-edges of $\expl_i$ pointing upwards,
\item
the vertices of $\hcau$ of height $0$ that do not belong to $E_i$.
\end{itemize}

\begin{lem}\label{lem_expl_markov}
Conditionally on $\left( \expl_j, e_j \right)_{0 \leq j \leq i}$, the trees of descendants of the vertices of $\partial E_i$ are independent Galton--Watson trees with offspring distribution $\mu$.
\end{lem}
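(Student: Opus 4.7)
The proof proceeds by induction on $i$. For the base case $i=0$, the explored set $E_0$ consists only of $\rho$ and its parent at height $-1$. The choice of $e_0$ uses no information about the offspring of any vertex beyond the number of children of $\rho$. The boundary $\partial E_0$ therefore consists of the children of $\rho$ together with the roots of all trees $T_j$ with $j \neq 0$. By construction of $\hcau$ from an i.i.d. forest of GW trees with offspring distribution $\mu$, and by the branching property applied at the root of $T_0$, conditionally on $(\expl_0, e_0)$ the descendants of these vertices form an independent family of GW$(\mu)$ trees.

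For the induction step, assume the property holds at step $i$ and analyze the three possible transitions in our algorithm. In case (i) (a walk step), we have $E_{i+1}=E_i$, so $\partial E_{i+1}=\partial E_i$; moreover the new marked edge $e_{i+1}$ is drawn uniformly among edges and half-edges at $e_i^+$, a choice that depends only on local data already present in $\expl_i$. The conclusion therefore follows directly from the induction hypothesis. In case (ii), the newly explored vertex $v_{i+1}$ is, by construction, the lowest undiscovered ancestor of $e_i^+$; either $v_{i+1}$ already belongs to $\partial E_i$ (endpoint of an upward half-edge), or $v_{i+1}$ has height $-1$, in which case we simultaneously add its unique child, which lies in $\partial E_i$ as an unexplored vertex at height $0$. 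Either way, the vertex whose subtree is being opened was a root of an independent GW$(\mu)$ tree by the induction hypothesis. Revealing its offspring count and applying the branching property gives that its children carry independent GW$(\mu)$ subtrees, independent of every other subtree at $\partial E_{i+1}$.

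Case (iii) works identically: the vertex $p_1^+$ being explored is, by definition of a pit, the endpoint of an upward half-edge of $\expl_i$, hence lies in $\partial E_i$. The selection rule (leftmost pit of width $\leq 2k$) is a deterministic function of $\expl_i$ and $e_i$, and in particular does not depend on the unexplored subtrees. By the induction hypothesis the subtree rooted at $p_1^+$ is an independent GW$(\mu)$ tree, and once more the branching property upgrades this to an independent collection of GW$(\mu)$ subtrees at the children of $p_1^+$. The only point requiring care in the bookkeeping is to check that our rule never uses any information about unexplored descendants, which is manifest in all three cases because the selection of which vertex (if any) to explore depends exclusively on $(\expl_j,e_j)_{0\le j\le i}$. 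This completes the induction.
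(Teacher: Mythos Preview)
Your inductive framework is correct and matches the paper's approach: the Markov property is preserved step by step because at each exploration step the vertex revealed is an element of $\partial E_i$, so the branching property applies. Your treatment of cases (i) and (iii), and of case (ii) when $e_i$ is vertical, is fine.

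There is one point you assert as ``manifest'' which is precisely where the paper does all the work. In case (ii) the algorithm is \emph{defined} via the ambient map $\hcau$: it asks for the lowest ancestor of $e_i^+$ not in $E_i$. When $e_i$ is a horizontal half-edge, the vertex $e_i^+$ lies outside $\expl_i$, and it is not a priori clear that its lowest undiscovered ancestor is determined by $(\expl_i,e_i)$ alone rather than by the hidden tree structure. If the selection were to depend on the unexplored subtrees, then conditioning on which $v\in\partial E_i$ was opened would bias the remaining subtrees, and the branching-property step would fail. The paper closes this gap by walking around the boundary of $\expl_i$ from $e_i$ and showing that the first vertical half-edge encountered (or, failing that, the next unexplored root at height $0$) is exactly the ancestor in question; this identification uses only $\expl_i$ and the position of $e_i$. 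You should supply this argument (or an equivalent planarity/stability argument) rather than declare the measurability manifest.
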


\begin{proof}
Given the Markovian structure of supercritical Galton--Watson tree, it is enough to check that at every step, our exploration is independent of the part of $\hcau$ that has not yet been discovered. This is easy in the case (i): conditionally on $\expl_i$, the choice of $e_i$ is independent of the rest of $\hcau$. We claim that in the other two cases, the discovered vertex is either the endpoint of a vertical half-edge which is a deterministic function of $(\expl_i,e_i)$, or the root of the first infinite tree on the left or on the right of $\expl_i$.
This claim is obvious in the case (iii).

In the case (ii), the half-edge $e_i$ points either to the top, the left or the right.
If it points to the top, then all the ancestors of $e_i^+$ have been discovered, so the explored vertex is $e_i^+$.
If $e_i$ does not point to the top, we assume without loss of generality that it points to the right. We then start from the half-edge $e_i$ and move around $\expl_i$ towards the right. We first cross horizontal half-edges pointing to the right at decreasing heights, until either we reach height $0$, or we cross a vertical half-edge pointing to the top.
If we cross a first vertical half-edge $e$, then $e$ must be an ancestor of $e_i^+$ (indeed, $e$ has descendants at the same height as $e_i^+$, and there is no other half-edge pointing to the top between $e_i^-$ and $e_*^-$, see Figure \ref{figure_first_ancestor}). Therefore, the explored vertex must be $e_*^+$.
Finally, if we reach the bottom boundary, then the explored vertex is the root of the first tree on the right of $\expl_i$ (and its parent at height $-1$).

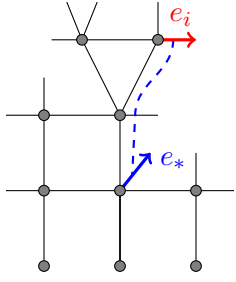
\begin{figure}
\begin{center}
\begin{tikzpicture}

\draw (0,-1)--(0,0);
\draw (1,-1)--(1,0);
\draw (2,-1)--(2,0);
\draw (0,0)--(0,1);
\draw (1,0)--(1,1);
\draw (2,0)--(2,0.5);
\draw (1,1)--(0.5,2);
\draw (1,1)--(1.5,2);
\draw (1,-1)--(1,0);
\draw (1,-1)--(1,0);
\draw (0,1)--(0,1.5);
\draw (0.5,2)--(0.3,2.5);
\draw (0.5,2)--(0.7,2.5);
\draw (1.5,2)--(1.5,2.5);
\draw (-0.5,0)--(2.5,0);
\draw (-0.5,1)--(1.5,1);
\draw (0.1,2)--(1.9,2);

\draw[dashed, blue, thick] (1.7,2) to[out=270, in=90] (1.2,1) -- (1.16,0.2);
\draw[->, red, very thick] (1.5,2)--(2,2);
\draw[->, blue, very thick] (1,0)--(1.4, 0.5);

\draw(0,-1) node{};
\draw(1,-1) node{};
\draw(2,-1) node{};
\draw(0,0) node{};
\draw(1,0) node{};
\draw(2,0) node{};
\draw(0,1) node{};
\draw(1,1) node{};
\draw(0.5,2) node{};
\draw(1.5,2) node{};

\draw[red] (1.8,2.3) node[texte]{$e_i$};
\draw[blue] (1.7,0.4) node[texte]{$e_*$};

\end{tikzpicture}
\end{center}
\caption{The second case of our exploration algorithm. We move around the boundary of $\expl_i$ towards the right: we encounter one horizontal half-edge, and then a vertical half-edge $e_*$. This $e_*$ has descendents at the same height as $e_i$, and the leftmost such descendant is $e_i^+$.}\label{figure_first_ancestor}
\end{figure}
\end{proof}

We denote by $\varphi(n)$ the $n$-th walk step of our exploration, with $\varphi(0)=0$. Note that the marked edge or half-edge from the time $\varphi(n)$ to the time $\varphi(n+1)-1$ corresponds to the edge in $\hcau$ from $X_n$ to $X_{n+1}$. Therefore, at time $\varphi(n)$, the explored part covers $\{X_0, X_1, \dots, X_n\}$. As explained in the beginning of this subsection, it is important to control $\varphi$. We can now state the two important properties that our exploration satisfies.

\begin{lem}\label{lem_expl_phi}
There is a deterministic constant $c$ such that for every $n \geq 0$, we have
\[\varphi(n+1)-\varphi(n) \leq ck(n+1).\]
\end{lem}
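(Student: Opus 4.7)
The plan is to split $\varphi(n+1) - \varphi(n) = 1 + N_{\mathrm{ii}} + N_{\mathrm{iii}}$, corresponding to the walk step plus the numbers of type (ii) and type (iii) exploration steps occurring between walk step $n$ and walk step $n+1$, and to bound each piece separately. For $N_{\mathrm{ii}}$, the bound is essentially for free: after walk step $n$ the marked edge $e_{\varphi(n)}$ starts from $X_n$ and points to one of its neighbours in $\hcau$, so its endpoint lies at height at most $h(X_n) + 1 \leq n+1$. Type (ii) steps discover the lowest undiscovered ancestor of this endpoint one at a time, and they cease as soon as the endpoint itself is discovered (since the marked half-edge then becomes a full edge). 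The number of ancestors at heights $\geq -1$ is at most $n+3$, so $N_{\mathrm{ii}} \leq n+3$.

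The bound on $N_{\mathrm{iii}}$ is the real work. The crucial point is that $\expl_{\varphi(n)}$ is $k$-flat (since $k$-flatness is required for walk step $n$ to occur in the first place), so every pit of width at most $2k$ that gets filled between $\varphi(n)$ and $\varphi(n+1)$ must have been \emph{created} during this period. I would proceed in two stages. First, the type (ii) steps in the current period add a chain of at most $n+3$ new vertices at consecutive heights along the ancestors of $X_{n+1}$. Each such new vertex has at most two horizontal neighbours at its own height that are already explored, so it creates at most two new pits of width $\leq 2k$. Second, filling a pit of width $j \leq 2k$ at height $h$ by type (iii) steps adds up to $2k$ new vertices at height $h+1$, which in turn form a horizontal "cap" of width at most $2k$; this cap may spawn further pits at height $h+1$, but by induction on height those pits also have width at most $2k$ and are localized above the previous pit. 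This gives at most $O(k)$ type (iii) explorations per chain vertex, and hence $N_{\mathrm{iii}} \leq Ck(n+1)$.

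The main obstacle is making this cascade-by-height induction rigorous: one has to show that once a row of at most $2k$ vertices has been added at height $h+1$, the pits it creates at height $h+1$ really are of width at most $2k$ and stay within a column of horizontal extent $O(k)$ above the chain, rather than spreading sideways or widening uncontrollably. The $k$-flatness of $\expl_{\varphi(n)}$ is what prevents this: the pre-existing boundary cannot contribute any pit of width $\leq 2k$, so every small pit the cascade encounters is one it created itself, and its walls are precisely previously-discovered walls whose horizontal position is controlled by the induction. Once this geometric confinement is established, summing over the chain gives $N_{\mathrm{iii}} \leq Ck(n+1)$, and combining with $N_{\mathrm{ii}} \leq n+3$ yields $\varphi(n+1)-\varphi(n) \leq 1 + (n+3) + Ck(n+1) \leq ck(n+1)$, which is exactly the claim.
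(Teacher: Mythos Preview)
Your decomposition $\varphi(n+1)-\varphi(n)=1+N_{\mathrm{ii}}+N_{\mathrm{iii}}$ and your bound $N_{\mathrm{ii}}\le n+3$ are fine and match the paper. The gap is entirely in the $N_{\mathrm{iii}}$ argument.

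You claim that the chain of type (ii) vertices produces up to $2(n+3)$ new narrow pits (two per chain vertex), and then that a pit-filling cascade costs ``$O(k)$ type (iii) explorations per chain vertex''. Neither half of this is correct as stated. First, the chain does \emph{not} create $O(n)$ narrow pits: the paper shows that after all type (ii) steps there are at most two new narrow pits, and they lie at a \emph{single} height (either $e_{\varphi(n)}$ is vertical and splits one pit into two, or $e_{\varphi(n)}$ is horizontal and shrinks one existing pit by one). Second, and more seriously, your ``$O(k)$ per chain vertex'' is not justified: a single initial narrow pit can cascade upward through many height levels, each costing up to $2k$ type (iii) steps. Your ``localization in a column of width $O(k)$'' is true but controls only the horizontal extent, not the \emph{number of levels} the cascade climbs. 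Taken literally, your accounting (up to $O(n)$ initial pits, each cascading through possibly many levels) gives at best $O(kn^2)$, not $O(kn)$.

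The paper closes this gap with two observations you are missing. In the vertical case, the two sub-pits sit inside a pit that originally had width $>2k$; once both are filled, the resulting pit one level up has width at least that of the original (here the no-leaf assumption is used), hence $>2k$, and the cascade stops immediately: total cost $\le 4k$. In the horizontal case there is a genuine cascade, but its height is bounded by the maximal height of $\expl_{\varphi(n)}$, which is at most $n$ since that height only increases at walk steps; hence at most $n$ levels at $\le 2k$ each, giving $\le 2kn$. Combining with $N_{\mathrm{ii}}\le n+2$ yields $\varphi(n+1)-\varphi(n)\le \max(4k,2kn)+(n+2)+1\le 7k(n+1)$. Your sketch has the right shape, but to make it a proof you must replace the vague ``$O(k)$ per chain vertex'' by one of these two mechanisms: either show the cascade terminates after one layer (vertical case), or bound the number of layers by the maximal height of the explored region (horizontal case).
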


\begin{lem}\label{lem_expl_k_free}
For every exploration step $i$, the unique vertex $v_i$ of $E_i \backslash E_{i-1}$ that does not lie at height $-1$ is either $k$-free on the left or $k$-free on the right.
\end{lem}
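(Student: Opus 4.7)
The plan is to prove Lemma \ref{lem_expl_k_free} by case analysis on which branch of the exploration algorithm is triggered at step $i$. Case (i) is a walk step that adds no vertex, so the claim is vacuous. In Cases (ii) and (iii) a single new vertex $v_i$ at height $\geq 0$ is added, and one must show that $v_i$ is $k$-free on either the left or the right.

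For Case (ii), the direction of $e_{i-1}$ determines the free side. If $e_{i-1}$ is a horizontal half-edge pointing to the right (resp. left), then $v_i$ is the lowest ancestor of $e_{i-1}^+$ missing from $E_{i-1}$; inducting along the ancestry chain from $e_{i-1}^+$ down to $v_i$ and using the stability of $(E_j)$, one verifies that $v_i$ sits on the right (resp. left) boundary of $E_{i-1}$ with its right-$k$-wing (resp. left-$k$-wing) untouched by the exploration. When $e_{i-1}$ is vertical pointing upward, $v_i = e_{i-1}^+$ is added above the column of $e_{i-1}^-$: here the preceding walk step was taken from a $k$-flat configuration, and this together with stability forces at least one of the $k$-wings of $v_i$ at its height to be clear of $E_{i-1}$ vertices.

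Case (iii) is the delicate one and the main obstacle. Here $v_i = p_1^+$ fills the leftmost vertical half-edge of the leftmost pit $(p_0, \dots, p_{j+1})$ of width $j \leq 2k$. For wide pits $j \geq k+1$, the $j-1 \geq k$ unexplored positions $p_2^+, \ldots, p_j^+$ directly to the right of $v_i$ at height $h+1$ lie outside $E_i$, giving $k$-freeness on the right immediately. The hard sub-case is the narrow pit $j \leq k$, where $v_i$ has explored rim-vertices on both sides at height $h+1$. The key is leftmost-minimality: no narrower pit exists to the left, so the current pit must trace back to the most recent Case (ii) exploration step that forced the narrowing. Tracking which exploration step produced this pit pins down which of $p_0^-$, $p_{j+1}^-$ was discovered later, and hence on which side the descendants of the rim-vertex have not yet been explored. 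One then argues that on the appropriate side the $k$-wing of $v_i$ above height $h+1$ contains no vertex of $E_{i-1}$, yielding $k$-freeness on that side.

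The hardest technical piece is this narrow-pit analysis. It requires maintaining throughout the algorithm the invariant that every narrow pit present in $\expl_{i-1}$ can be causally traced back to a recent Case (ii) step (since each Case (i) step preserves $k$-flatness and each Case (iii) step systematically fills the leftmost narrow pit), and then turning this causal identification into a geometric control of which descendants of $p_0^-$ and $p_{j+1}^-$ have been explored. This bookkeeping is what prevents the proof from being a one-line consequence of the exploration algorithm.
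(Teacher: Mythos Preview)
Your case analysis by algorithm branch matches the paper, and your wide-pit argument in Case (iii) is fine. The gap is in the narrow-pit sub-case $j \leq k$, which (since the current width drops by one at each filling step) is where most of Case (iii) actually lives. The paper does not organize Case (iii) by the \emph{current} pit width; it organizes it by the direction of $e_{i-1}$, which records how the narrow pit arose between the last walk step $\varphi(n)$ and now (a wide pit was split by a vertical column, or a width-$(2k+1)$ pit was shrunk to $2k$ by a horizontal column). In either scenario the half-edge $e_*$ being explored belongs to a pit that, at or just after time $\varphi(n)$, had width at least $2k$, and the paper reads off the index $\ell_0$ of $e_*$ within that \emph{original} pit: either $\ell_0 > k$ (so $\geq k$ vertices immediately left of $v_i$ at height $h(v_i)$ were explored during the current filling phase and have unexplored children, hence $k$-free on the left --- this is precisely why $k$-free is defined via neighbours of the \emph{children} of $v$), or $\ell_0 \leq k$ ($k$-free on the right). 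Your heuristic ``which of $p_0^-$, $p_{j+1}^-$ was discovered later'' points in the right direction but does not close the argument: knowing one rim was discovered recently gives you unexplored children of that one vertex; to get $k$ of them you need a block of $\geq k$ recently-explored-or-unexplored vertices on that side, and the only reason such a block exists is that the original pit had width $\geq 2k$. Your sketch never invokes that width.

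Your Case (ii) sketch is missing the same ingredient. When $e_{i-1}$ is horizontal, ``$v_i$'s $k$-wing is untouched'' does not follow from stability alone --- there can be an explored tower on the far side of $v_i$. The paper's actual argument is that the first pit separating $v_i$ from any such tower had width $\geq 2k+1$ at time $\varphi(n)$ and has been shrunk by at most one during the ancestor-exploration phase, so it still has $\geq 2k$ vertical half-edges between $v_i$ and the tower. In every subcase the proof runs on this pit-width bookkeeping against the $k$-flat configuration at the last walk step.
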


Note that our exploration method was precisely designed to satisfy Lemma \ref{lem_expl_k_free}. Both these lemmas are completely deterministic: they hold if we replace $\hcau$ by any infinite causal map with no leaf, and $(X_n)$ by any infinite path. Before proving them, we explain how to conclude the proof of Lemma \ref{lem_bad_points} given these two results.

\begin{proof}[Proof of Lemma \ref{lem_bad_points} given Lemmas \ref{lem_expl_phi} and \ref{lem_expl_k_free}]
For every $i \geq 0$, let $\mathcal{F}_i$ be the $\sigma$-algebra generated by $\left( \expl_j, e_j \right)_{0 \leq j \leq i}$. We first show that, for every exploration step $i \geq 0$, we have
\begin{equation}\label{equa_bad_point}
\P \left( \mbox{$v_i$ is $k$-bad} | \mathcal{F}_i \right) \leq \mu(1)^{k^2}.
\end{equation}
Let $i \geq 0$ be an exploration step. By Lemma \ref{lem_expl_k_free}, without loss of generality, we may assume that $v_i$ is $k$-free on the right. Let $v_i^1, \dots, v_i^k$ be the $k$ first neighbours on the right of the rightmost child of $v_i$. Since $v_i$ is $k$-free on the right, these vertices do not belong to $E_i$. By Lemma \ref{lem_expl_markov}, conditionally on $\mathcal{F}_i$, their trees of descendants are i.i.d. Galton--Watson trees with offspring distribution $\mu$.
If the vertex $v_i$ is $k$-bad, each of the vertices $v_i^1, \dots, v_i^k$ has only one descendant at height $h+k+1$, so we have
\begin{eqnarray*}
\P \left( \mbox{$v_i$ is $k$-bad} | \mathcal{F}_i \right) & \leq & \P \left( \mbox{$v_i^1, \dots, v_i^k$ have one descendant each at height $h+k+1$} | \mathcal{F}_i \right)\\
&=& \prod_{j=1}^k \P \left( \mbox{$v_i^j$ has exactly one descendant at height $h+k+1$} | \mathcal{F}_i \right)\\
&=& \left( \mu(1)^k \right)^k,
\end{eqnarray*}
and we obtain \eqref{equa_bad_point}, which implies
\[ \P \left( \mbox{$i$ is an exploration step and $v_i$ is $k$-bad} \right) \leq \mu(1)^{k^2}\]
for every $i \geq 0$.
By summing Lemma \ref{lem_expl_phi}, we obtain $\varphi(n+1) \leq ck(n+1)^2$. Therefore, the vertices $X_0, X_1, \dots, X_n$ all lie in $\expl_{ck(n+1)^2}$. If one of these points is $k$-bad, it cannot lie at height $-1$ by definition of a bad point, so it is equal to $v_i$ for some $0 \leq i \leq ck(n+1)^2$. Therefore, we have
\begin{eqnarray*}
\P \left( \mbox{one of the points $X_0, X_1, \dots, X_n$ is $k$-bad} \right) & \leq & \sum_{i=0}^{ck(n+1)^2} \P \left( \mbox{$v_i$ is $k$-bad} \right)\\
& \leq & ck(n+1)^2 \mu(1)^{k^2},
\end{eqnarray*}
which ends the proof.
\end{proof}

\begin{proof}[Proof of Lemma \ref{lem_expl_phi}]
To bound $\varphi(n+1)-\varphi(n)$, we describe precisely what happens between the times $\varphi(n)$ and $\varphi(n+1)$. Note that $\expl_{\varphi(n)}=\expl_{\varphi(n)-1}$ is $k$-flat (if it was not, $\varphi(n)$ would have to be an exploration step). Hence, if $e_{\varphi(n)}$ is a full edge, then $\varphi(n)+1$ is a walk step and we have $\varphi(n+1)=\varphi(n)+1$, so it is only necessary to treat the case where $e_{\varphi(n)}$ is a half-edge.

In this case, the first thing our algorithm does is to explore the vertex $e_{\varphi(n)}^+$ and all its undiscovered ancestors. We know that $e_{\varphi(n)}^+=X_{n+1}$, so in particular its height is at most $n+1$. Therefore, exploring all its ancestors takes at most $n+2$ steps.

We can now perform the $(n+1)$-th walk step, except if exploring $e_{\varphi(n)}^+$ and its ancestors has created a new pit of width at most $2k$. This can happen in two different ways, as on Figure \ref{fig_creating_pits}:
\begin{itemize}
\item
if $e_{\varphi(n)}$ is vertical, exploring $e_{\varphi(n)}^+$ may split an existing pit in two,
\item
if $e_{\varphi(n)}$ is horizontal (say it points to the right), exploring its ancestors may decrease the width of an existing pit on the right of $e_{\varphi(n)}$.
\end{itemize}

\begin{figure}
\begin{center}
\begin{tikzpicture}
\draw (0,-1)--(0,0);
\draw (1,-1)--(1,0);
\draw (2.5,-1)--(2.5,0);
\draw (4,-1)--(4,0);
\draw (-0.5,0)--(4.5,0);
\draw (0,0)--(0,1);
\draw (1,0)--(1,1);
\draw (1,0)--(1.4,0.5);
\draw (2.5,0)--(2.3,0.5);
\draw (2.5,0)--(2.7,0.5);
\draw (4,0)--(3.5,1);
\draw (4,0)--(4.2,0.5);
\draw (-0.5,1)--(1.5,1);
\draw (3,1)--(4,1);
\draw (0,1)--(-0.5,1.5);
\draw (0,1)--(0,2);
\draw (1,1)--(1,2);
\draw (3.5,1)--(3.5,2);
\draw (-0.5,2)--(1.5,2);
\draw (3,2)--(4,2);
\draw (0,2)--(0,2.5);
\draw (1,2)--(0.8,2.5);
\draw (1,2)--(1.2,2.5);
\draw (3.5,2)--(3.3,2.5);
\draw (3.5,2)--(3.7,2.5);

\draw[->, red, very thick] (1,2)--(1.5,2);

\draw (0,-1)node{};
\draw (1,-1)node{};
\draw (2.5,-1)node{};
\draw (4,-1)node{};
\draw (0,0)node{};
\draw (1,0)node{};
\draw (2.5,0)node{};
\draw (4,0)node{};
\draw (0,1)node{};
\draw (1,1)node{};
\draw (3.5,1)node{};
\draw (0,2)node{};
\draw (1,2)node{};
\draw (3.5,2)node{};

\draw[red] (1.5,1.7)node[texte]{$e_{\varphi(n)}$};
\draw (5.5,0.5)node[texte]{$\longrightarrow$};
\draw (2,-1.5)node[texte]{$\expl_{\varphi(n)}$};

\begin{scope}[shift={(7,0)}]
\draw (0,-1)--(0,0);
\draw (1,-1)--(1,0);
\draw (2.5,-1)--(2.5,0);
\draw (4,-1)--(4,0);
\draw (-0.5,0)--(4.5,0);
\draw (0,0)--(0,1);
\draw (1,0)--(1,1);
\draw (1,0)--(1.8,1);
\draw (2.5,0)--(2.3,0.5);
\draw (2.5,0)--(2.7,0.5);
\draw (4,0)--(3.5,1);
\draw (4,0)--(4.2,0.5);
\draw (-0.5,1)--(2.2,1);
\draw (3,1)--(4,1);
\draw (0,1)--(-0.5,1.5);
\draw (0,1)--(0,2);
\draw (1,1)--(1,2);
\draw (3.5,1)--(3.5,2);
\draw (-0.5,2)--(1.5,2);
\draw (3,2)--(4,2);
\draw (0,2)--(0,2.5);
\draw (1,2)--(0.8,2.5);
\draw (1,2)--(1.2,2.5);
\draw (3.5,2)--(3.3,2.5);
\draw (3.5,2)--(3.7,2.5);
\draw (1.8,1)--(1.8,2.5);
\draw (1.8,2)--(2.2,2);

\draw[->, red, very thick] (1,2)--(1.5,2);
\draw[red, very thick] (1,2)--(1.8,2);

\draw[dashed, blue, very thick] (2,1) to[out=270, in=180] (2.4,0.25)--(2.6,0.25) to[out=0, in=270] (3.25,1);

\draw (0,-1)node{};
\draw (1,-1)node{};
\draw (2.5,-1)node{};
\draw (4,-1)node{};
\draw (0,0)node{};
\draw (1,0)node{};
\draw (2.5,0)node{};
\draw (4,0)node{};
\draw (0,1)node{};
\draw (1,1)node{};
\draw (3.5,1)node{};
\draw (0,2)node{};
\draw (1,2)node{};
\draw (3.5,2)node{};
\draw (1.8,1)node{};
\draw (1.8,2)node{};

\draw[red] (1.6,1.6)node[texte]{$e_{\varphi(n)+2}$};
\draw (2,-1.5)node[texte]{$\expl_{\varphi(n)+2}$};
\end{scope}

\begin{scope}[shift={(0,5)}]
\draw (0,-1)--(0,0);
\draw (1,-1)--(1,0);
\draw (2.5,-1)--(2.5,0);
\draw (4,-1)--(4,0);
\draw (-0.5,0)--(4.5,0);
\draw (0,0)--(0,1);
\draw (1,0)--(1,1);
\draw (1,0)--(1.4,0.5);
\draw (2.5,0)--(2.3,0.5);
\draw (2.5,0)--(2.7,0.5);
\draw (4,0)--(3.5,1);
\draw (4,0)--(4.2,0.5);
\draw (-0.5,1)--(1.5,1);
\draw (3,1)--(4,1);
\draw (0,1)--(-0.5,1.5);
\draw (0,1)--(0,2);
\draw (1,1)--(1,2);
\draw (3.5,1)--(3.5,2);
\draw (-0.5,2)--(1.5,2);
\draw (3,2)--(4,2);
\draw (0,2)--(0,2.5);
\draw (1,2)--(0.8,2.5);
\draw (1,2)--(1.2,2.5);
\draw (3.5,2)--(3.3,2.5);
\draw (3.5,2)--(3.7,2.5);

\draw[->, red, very thick] (2.5,0)--(2.3,0.5);

\draw (0,-1)node{};
\draw (1,-1)node{};
\draw (2.5,-1)node{};
\draw (4,-1)node{};
\draw (0,0)node{};
\draw (1,0)node{};
\draw (2.5,0)node{};
\draw (4,0)node{};
\draw (0,1)node{};
\draw (1,1)node{};
\draw (3.5,1)node{};
\draw (0,2)node{};
\draw (1,2)node{};
\draw (3.5,2)node{};

\draw[red] (1.9,0.3)node[texte]{$e_{\varphi(n)}$};
\draw (5.5,0.5)node[texte]{$\longrightarrow$};
\draw (2,-1.5)node[texte]{$\expl_{\varphi(n)}$};
\end{scope}

\begin{scope}[shift={(7,5)}]
\draw (0,-1)--(0,0);
\draw (1,-1)--(1,0);
\draw (2.5,-1)--(2.5,0);
\draw (4,-1)--(4,0);
\draw (-0.5,0)--(4.5,0);
\draw (0,0)--(0,1);
\draw (1,0)--(1,1);
\draw (1,0)--(1.4,0.5);
\draw (2.5,0)--(2.3,0.5);
\draw (2.5,0)--(2.7,0.5);
\draw (4,0)--(3.5,1);
\draw (4,0)--(4.2,0.5);
\draw (-0.5,1)--(1.5,1);
\draw (3,1)--(4,1);
\draw (0,1)--(-0.5,1.5);
\draw (0,1)--(0,2);
\draw (1,1)--(1,2);
\draw (3.5,1)--(3.5,2);
\draw (-0.5,2)--(1.5,2);
\draw (3,2)--(4,2);
\draw (0,2)--(0,2.5);
\draw (1,2)--(0.8,2.5);
\draw (1,2)--(1.2,2.5);
\draw (3.5,2)--(3.3,2.5);
\draw (3.5,2)--(3.7,2.5);
\draw (2.1,1)--(1.9,1.5);
\draw (2.1,1)--(2.3,1.5);
\draw (1.8,1)--(2.6,1);

\draw[->, red, very thick] (2.5,0)--(2.26,0.6);
\draw[red, very thick] (2.5,0)--(2.1,1);

\draw[dashed, very thick, blue] (1.2,1) to[out=270, in=135] (1.2,0.25) to[out=315, in=270] (1.9,1);
\draw[dashed, very thick, blue] (2.4,1) to[out=270, in=135] (2.6,0.25) to[out=315, in=270] (3.2,1);

\draw (0,-1)node{};
\draw (1,-1)node{};
\draw (2.5,-1)node{};
\draw (4,-1)node{};
\draw (0,0)node{};
\draw (1,0)node{};
\draw (2.5,0)node{};
\draw (4,0)node{};
\draw (0,1)node{};
\draw (1,1)node{};
\draw (3.5,1)node{};
\draw (0,2)node{};
\draw (1,2)node{};
\draw (3.5,2)node{};
\draw (2.1,1)node{};

\draw[red] (1.8,-0.3)node[texte]{$e_{\varphi(n)+1}$};
\draw (2,-1.5)node[texte]{$\expl_{\varphi(n)+1}$};
\end{scope}

\end{tikzpicture}
\end{center}
\caption{The two ways our exploration can create new pits between two walk steps. On the top, a pit of width $3$ is split into two pits of width $1$. On the bottom, the width of a pit decreases from $3$ to $2$. The new pits of width at most $2$ are indicated in blue.}\label{fig_creating_pits}
\end{figure}
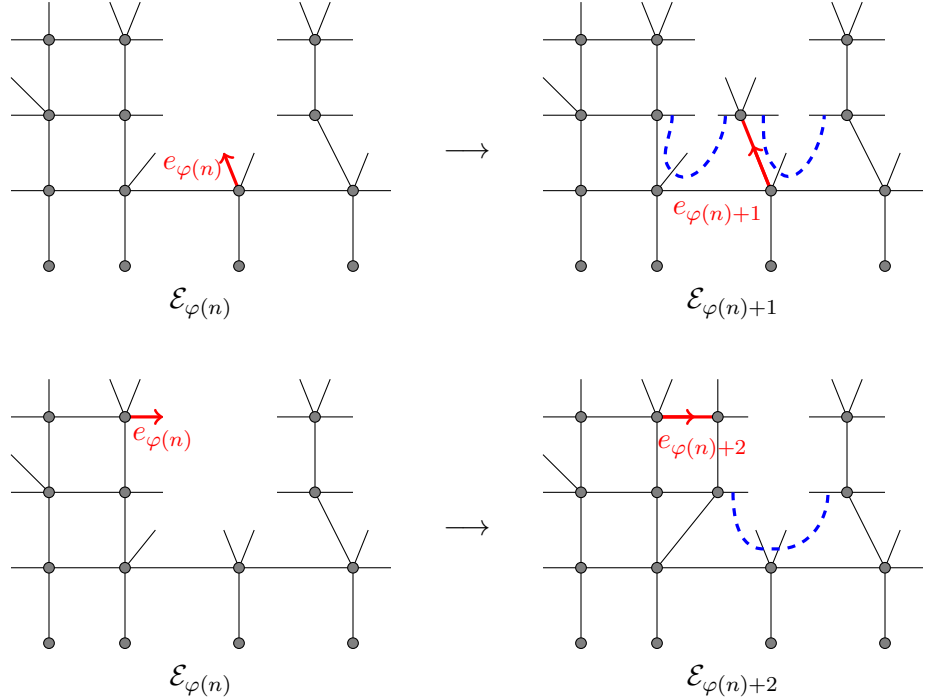

Note that in the first case, we can create at most two new pits, whereas in the second, we can shrink only one (cf. Figure \ref{fig_creating_pits}). Hence, we will create at most $2$ narrow pits, at the same height $h$.

In the first case (top of Figure \ref{fig_creating_pits}), our algorithm will then fill the pit on the left if it has width at most $2k$, and then the pit on the right. The number of steps this takes is at most $2 \times 2k=4k$. By doing so, we may create a new pit at height $h+1$. However, since the trees we work with have no leaf, this pit is at least as wide as the pit of $\expl_{\varphi(n)}$ in which $e_{\varphi(n)}$ lies. Hence, the new pit at height $h+1$ has width greater than $2k$, and does not need to be filled. Therefore, in the first case, the number of exploration steps needed to fill all the narrow pits is at most $4k$.

In the second case (bottom of Figure \ref{fig_creating_pits}), if the pit has width at most $2k$, our algorithm will explore all the vertical half-edges of this pit, from left to right. This takes at most $2k$ steps. Once again, this creates a new pit at height $h+1$, but this time this pit may have width $2k$ or less. If this is the case, our algorithm will explore all its half-edges and perhaps create a pit at height $h+2$, and so on. Note that the maximal height of $\expl_{\varphi(n)}$ is at most $n$. Indeed, the only times at which this maximal height increases is when the random walk reaches some height for the first time, so the maximal height of $\expl_{\varphi(n)}$ cannot be larger than the maximal height of the random walk during its first $n$ steps. Therefore, we will need to fill at most $n$ pits (at heights between $0$ and $n-1$), each of which taking at most $2k$ steps. Hence, filling the narrow pits takes at most $2kn$ steps.

Therefore, in both cases, the number of steps needed to obtain a $k$-flat map and perform a new walk step is bounded by $\max(4k, 2kn)$. If we add the number of steps needed to explore the ancestors of $e_{\varphi(n)}^+$ and the walk step $\varphi(n+1)$, we obtain
\[ \varphi(n+1)-\varphi(n) \leq \max(4k,2kn)+ (n+2)+1 \leq ck(n+1),\]
with e.g. $c=7$.
\end{proof}

We finally prove Lemma \ref{lem_expl_k_free}. The proof will make use of the "step by step" description of our exploration that we also used in the last proof. We recall that for every exploration step $i$, we call $v_i$ the unique vertex of nonnegative height in $E_i \backslash E_{i-1}$, and $e_i$ the oriented edge or half-edge marking the current position of the random walk.

\begin{proof}[Proof of Lemma \ref{lem_expl_k_free}]
We fix an exploration step $i \geq 0$. Note that the vertex $v_i$ is always the endpoint of some half-edge of $\expl_{i-1}$, that we denote by $e_*$.

Before moving on to the details of the proof, we explain how it is possible, by only looking at the map $\expl_{i-1}$, to be sure that the vertex $v_i$ is $k$-free on the right in $E_i$. We move along the boundary of $\expl_{i-1}$ from $e_*$ towards the right, and stop when we encounter a vertex of height $h(v_i)+1$. If this never occurs, it means that in $\expl_{i-1}$ and $\expl_i$, there is no vertex at height $h(v_i)+1$ on the right of $v_i$, so $v_i$ is $k$-free on the right. If this occurs, assume that by moving so, we cross at least $k$ vertical half-edges. Since the trees we consider have no leaf, all these vertical half-edges have descendants at height $h(v_i)+1$, which lie on the right of all the children of $v_i$. Moreover, none of these descendants belongs to $\expl_{i}$. Therefore, $v_i$ must be $k$-free on the right in $\expl_i$. Of course, this is also true for $k$-free on the left vertices (see the end of the caption of Figure \ref{fig_exploration_method} for an example). This remark will be implicitly used in all the cases below.

Let $n$ be the integer such that $\varphi(n)<i<\varphi(n+1)$. We distinguish two cases, corresponding to the two "phases" of exploration between $\varphi(n)$ and $\varphi(n+1)$ that we described in the proof of Lemma \ref{lem_expl_phi}. Both of these cases will be separated in a few subcases.

\begin{itemize}
\item
We first treat the case where $e_{i-1}$ is a half-edge, so the explored vertex $v_i$ is an ancestor of $e_{i-1}^+$.
\begin{itemize}
\item
We start with the subcase where $e_{i-1}$ is vertical, and lies in a pit $p$. Since $e_{i-1}$ is vertical, exploring the ancestors of $e_{i-1}^+$ takes only one step, so $i=\varphi(n)+1$ and $v_i=e_{i-1}^+$. Since $i-1$ is a walk step, the pit $p$ has width at least $2k+1$. Without loss of generality, we may assume that at least $k$ of the vertical half-edges of $p$ are on the right of $e_{i-1}$, so $v_i$ is $k$-free on the right.
\item
If $e_{i-1}$ is vertical but is not in a pit, as in the previous case, we have $i=\varphi(n)+1$ and $v_i$ is the endpoint of $e_{i-1}$. Moreover, there is a direction (left or right) such that when we start from $e_{i-1}$ and move along the boundary of $\expl_{i-1}$ in this direction, the height decreases before increasing for the first time (if not, $e_{i-1}$ would be in a pit). Without loss of generality, this direction is the right. Let $p$ be the first pit that we encounter on the right of $e_{i-1}$. If $p$ does not exist, it means that the height never increases again, so there is no vertex on the right of $e_{i-1}$ in $\expl_{i-1}$ that is higher than $e_{i-1}^-$. Therefore, the vertex $v_i=e_{i-1}^+$ is $k$-free on the right in $\expl_i$. If $p$ exists, it has width at least $2k+1$, so $v_i$ is $(2k+1)$-free on the right, and in particular $k$-free.
\item
If $e_{i-1}$ is horizontal, without loss of generality it points to the right. As explained earlier (see Figure \ref{figure_first_ancestor}), the edge $e_*$ is in this case the first vertical half-edge we meet when we move around $\expl_{i-1}$ from $e_{i-1}$ towards the right (except if no such vertical half-edge exists, in which case $v_i$ is the root of a new tree on the right of $\expl_{i-1}$, and $v_i$ is obviously $k$-free on the right). By the same argument as in the previous case, if there is no pit on the right of $e_*$, then $v_i$ is $k$-free on the right (and even $\infty$-free). If there is one and $p$ is the first such pit, note that between the times $\varphi(n)$ and $i$, the pit $p$ has either been untouched, or has been shrunk by $1$. Therefore, at time $i$ it has width at least $2k$, so $v_i$ is $k$-free in $E_i$.
\end{itemize}
\item
We now consider the second "phase", i.e. the case where $\expl_{i-1}$ has a pit of width at most $2k$, and the goal of the exploration step $i$ is to fill it.
\begin{itemize}
\item
If $e_{i-1}$ points to the top, then the pit has been created at time $\varphi(n)+1$ as in the top part of Figure \ref{fig_creating_pits}. Hence, the half-edge $e_*$ belonged at time $\varphi(n)$ to a pit $(p_0, p_1, \dots, p_{j+1})$ of height $h$ and width $j>2k$. Therefore, either $k$ of the vertical half-edges $p_1, \dots, p_j$ lie on the left of $e_*$, or $k$ of them lie on its right (the two cases are not symmetric since the pit is filled from left to right). If $k$ of these half-edges lie on the left of $e_*$, then their $k$ endpoints (of height $h+1$) have been explored before $v_i$, but none of the descendants of these endpoints has been discovered. Therefore, the map $\expl_i$ contains at least $k$ vertical half-edges at height $h+1$ on the left of $v_i$, so $v_i$ is $k$-free on the left. This case is the reason why, in the definition of a $k$-free vertex, we asked the neighbours of the \emph{children} of $v$ to be undiscovered, and not simply the neighbours of $v$.
If $k$ of the half-edges of $p$ lie on the right of $e_*$, the argument is similar (it is actually simpler since the half-edges on the right of $e_*$ have not yet been explored).
\item
If $e_{i-1}$ points to the right, then we are in the bottom case of Figure \ref{fig_creating_pits}: a pit $p$ of width $2k+1$ has been shrunk to width $2k$ during the first phase, resulting in a pit $(p_0, p_1, \dots, p_{2k+1})$ of width $2k$ at some height $h$. Let also $h' \geq h$ be the height of $e_*$. Since the pit is filled layer by layer from the bottom, the half-edge $e_*$ must be a descendant of a half-edge $p_{\ell_0}$ with $1 \leq \ell_0 \leq 2k$. Moreover, our algorithm fills the layers from left to right. Therefore, at time $i$, for every $1 \leq \ell \leq 2k$, we have already explored the descendants of $e_{\ell}$ up to height $h'+1$ if $\ell \leq \ell_0$ and up to height $h'$ if $\ell>\ell_0$. But the vertex $v_i$ lies at height $h'+1$. Therefore, if $\ell_0 \geq k+1$, then $v_i$ has $k$ vertical half-edges on its left so it is $k$-free on the left. On the other hand, if $\ell_0 \leq k$, then $v_i$ has $k$ vertical half-edges on its right at height $h$, so it is $k$-free on the right. This concludes the proof.
\end{itemize}
\end{itemize}

\end{proof}

\subsection{Quasi-positive speed in \texorpdfstring{$\hcau$}{hcau}}\label{causal_subsec_quasi}

The goal of this subsection is to use Lemma \ref{lem_bad_points} to prove that the walk has a speed $n^{1-o(1)}$, which is slightly weaker than positive speed. We will need to "bootstrap" this result in Section \ref{causal_subsec_renewal} to obtain positive speed. We denote by $H_n$ the height of $X_n$. We also write
\[D_n= \max \{ H_k-H_{\ell} |0 \leq k < \ell \leq n \}\]
for the greatest "descent" of $X$ before time $n$.

\begin{prop}\label{prop_quasi_positive}
Let $0<\delta<1$ and $\beta>0$. Then we have
\[\P \left( H_n \leq n^{1-\delta} \right) = o \left( n^{-\beta} \right) \quad \mbox{ and } \quad \P \left( D_n \geq n^{\delta} \right) = o \left( n^{-\beta} \right)\]
as $n \to +\infty$.
\end{prop}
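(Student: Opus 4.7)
The plan is to combine Lemma \ref{lem_bad_points} with a drift analysis: the absence of $k$-bad vertices along the trajectory forces branching points to stay close to $X_m$, which in turn creates positive upward drift of $H_m$.

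\textbf{Setup and drift at branching vertices.} Fix $k = k(n) = \lceil C_\beta \sqrt{\log n}\,\rceil$ with $C_\beta$ large enough that Lemma \ref{lem_bad_points} gives $\P(G_n^c) = o(n^{-\beta})$, where $G_n$ is the event that no vertex $X_m$ ($0 \leq m \leq n$) is $k$-bad. At any interior vertex $v \in \hcau$ with $c \geq 1$ children the one-step conditional drift of $H$ equals $(c-1)/(c+3)$: it vanishes when $c = 1$ and is at least $1/5$ when $c \geq 2$. On $G_n$, every visited vertex $X_m$ sees a branching vertex (one with $c \geq 2$ children) inside the $(2k+1) \times (k+1)$ window above and around it, hence at graph distance $O(k)$.

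\textbf{Branching visits in every block.} Set $T = T(n) = k^5$. On $G_n$, at the start of each block $[mT, (m+1)T]$ there is a branching vertex $v^\star$ within graph distance $O(k)$ of $X_{mT}$. Viewing the walk restricted to a suitable finite window as a SRW on a subgraph with $O(k^2)$ vertices, the commute-time identity yields $\E[\tau_{v^\star}] = O(k^3)$ and a geometric-restart argument gives $\P(\tau_{v^\star} > T) \leq \exp(-c\,T/\E[\tau_{v^\star}]) = \exp(-c' k^2) = o(n^{-\beta - 2})$ once $C_\beta$ is chosen large enough. Thus, outside of an event of total probability $o(n^{-\beta})$, every block contains a time at which $X$ sits at a branching vertex, contributing at least $1/5$ to the cumulative one-step drift.

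\textbf{Concentration and descents.} Write $H_m = M_m + A_m$ with $M_m$ a martingale with increments bounded by $2$. The previous paragraph shows that, up to the above $o(n^{-\beta})$ bad event, $A_n \geq n/(10T)$, and Azuma--Hoeffding yields
\[
\P\!\left(H_n \leq \tfrac{n}{20T}\right) \;\leq\; \P(G_n^c) + 2\exp\!\left(-\tfrac{n}{3200\,T^2}\right) \;=\; o(n^{-\beta}),
\]
and $n/(20T) = \Omega\!\bigl(n/(\log n)^{5/2}\bigr) \geq n^{1-\delta}$ for large $n$, giving the first bound. For $D_n$, union-bound over the $O(n^2)$ pairs $a < b$: for $b - a \leq 2T$ one has $|H_b - H_a| \leq 2T < n^\delta$ and nothing to prove, while for $b - a > 2T$ the same decomposition gives $A_b - A_a \geq (b-a)/(10T)$ and Azuma on $M_b - M_a$ yields $\P(H_b - H_a \leq -n^\delta) \leq \exp\!\bigl(-n^\delta/(40 T)\bigr)$, which easily absorbs the $n^2$ union bound.

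\textbf{Main obstacle.} The delicate step is the hitting-time bound in Step 2, which must be uniform in the (random) geometry near $X_{mT}$. The commute-time identity provides a polynomial-in-$k$ bound, but one must verify that the number of edges and the effective resistance in the relevant window remain polynomially controlled even when $\mu$ has heavy tails. High-degree vertices inside the window are themselves branching vertices, so they only help the drift, and the no-leaf assumption $\mu(0) = 0$ prevents dead-ends that could inflate hitting times; making this precise likely requires a mild truncation of the degree or a restriction to a sub-window where degrees are bounded by a polynomial in $k$.
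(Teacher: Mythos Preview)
Your overall architecture matches the paper's: set $k \asymp \sqrt{\log n}$, use Lemma \ref{lem_bad_points} to control the event that some $X_m$ is $k$-bad, argue that on its complement every sufficiently long block of the trajectory contains a visit to a branching vertex, and then apply Azuma to the Doob martingale of $H$. The treatment of $D_n$ is likewise the same in both arguments.

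The gap is in your Step~2, and it is the one you flag yourself. The commute-time identity bounds hitting times for the walk \emph{on the finite window}, not for the walk on $\hcau$; the latter can exit the window downward or sideways and wander indefinitely before returning. Moreover, the window has $O(k^2)$ vertices only when every vertex inside it has exactly one child; as soon as a branching vertex appears its subtree may be arbitrarily large, and with heavy-tailed $\mu$ the edge count is uncontrolled. Your proposed patches---truncating degrees or restricting to a sub-window---change the walk and do not bound the hitting time on the original graph.

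The paper sidesteps this with a one-line observation you are missing: if $Y$ is the \emph{closest} good vertex to $X_m$, then every internal vertex of a geodesic from $X_m$ to $Y$ is strictly closer to $X_m$ than $Y$, hence not good, hence has exactly one child and therefore degree exactly $4$ in $\hcau$. Thus the walk follows this geodesic (of length $\le 2k$) in its next $2k$ steps with probability at least $(1/4)^{2k}$, regardless of what lies outside the geodesic. Iterating over $\lfloor n^{\delta/2}/(2k)\rfloor$ consecutive attempts gives
\[
\P(A_1 \backslash A_2)\;\le\; n\,\exp\!\left(-\frac{n^{\delta/2}}{2k\cdot 4^{2k}}\right),
\]
which is superpolynomially small since $4^{2k}=\exp\bigl(O(\sqrt{\log n})\bigr)$. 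This replaces your commute-time step entirely and requires no control on the geometry away from the geodesic.
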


\begin{proof}
We start with the proof of the first estimate, the proof of the second will follow the same lines. We call a vertex of $\hcau$ \emph{good} if it has height $-1$ or if it has at least two children. The idea of the proof is the following: by Lemma \ref{lem_bad_points}, with high probability, the walk does not visit any $k$-bad point before time $n$ for some $k$. Hence, it is never too far from a good point. Therefore, the walk always has a reasonable probability to reach a good point in a near future. It follows that $X$ will visit many good points, so $(H_n)$ will accumulate a large positive drift.

More precisely, let $a>0$ (we will take $a$ large later). We define two events formalizing the ideas we just explained:
\begin{eqnarray*}
A_1 &=& \{ \mbox{none of the vertices $X_0, X_1, \dots, X_n$ is $a \sqrt{\log n}$-bad} \},\\
A_2 &=& \{ \mbox{for every $0 \leq m \leq n-n^{\delta/2}$, one of the points $X_m, X_{m+1}, \dots, X_{m+n^{\delta/2}}$ is good} \}.
\end{eqnarray*}
Then we have
\begin{equation}\label{quasi_positive_speed_decomposition}
\P \left( H_n \leq n^{1-\delta} \right) \leq \P(A_1^c) + \P (A_1 \backslash A_2) + \P \left( A_2 \cap \{ H_n \leq n^{1-\delta} \} \right).
\end{equation}
We start with the first term. By Lemma \ref{lem_bad_points}, we have
\[\P (A_1^c) \leq ca \sqrt{\log n} \, (n+1)^2\, \mu(1)^{a^2 \log n}.\]
Hence, if we choose $a$ large enough (i.e. $a^2>\frac{\beta+2}{-\log \mu(1)}$), we have $\P(A_1^c)=o ( n^{-\beta} )$.

We now bound the second term of \eqref{quasi_positive_speed_decomposition}. For every $0 \leq m \leq n$, let $\mathcal{F}_m$ be the $\sigma$-algebra generated by $\hcau$ and $(X_0, X_1, \dots, X_m)$. If $X_m$ is not $a \sqrt{\log n}$-bad (which is an $\mathcal{F}_m$-measurable event), let $Y$ be the closest good vertex from $X_m$ (we may have $Y=X_m$). We have $d_{\hcau}(X_m, Y) \leq 2a \sqrt{\log n}$, so there is a path from $X_m$ to $Y$ of length at most $2a \sqrt{\log n}$, and visiting only vertices of degree $4$ (except of course $Y$). Therefore, we have
\[ \P \left( \mbox{$X$ visits the vertex $Y$ between time $m$ and time $m+2a \sqrt{\log n}$} | \mathcal{F}_m \right) \geq \left( \frac{1}{4} \right)^{2a \sqrt{\log n}}\]
if $X_m$ is not $a \sqrt{\log n}$-bad. By induction on $i$, we easily obtain, for every $i \geq 0$,
\begin{eqnarray*}
\P \left( \mbox{$X_m, X_{m+1}, \dots, X_{m+2i a\sqrt{\log n}}$ are neither good nor $a\sqrt{\log n}$-bad} \right) & \leq & \left( 1- \frac{1}{4^{2a\sqrt{\log n}}}\right)^i\\
& \leq & \exp \left( -\frac{i}{4^{2a \sqrt{\log n}}} \right).
\end{eqnarray*}
In particular, by taking $i=\frac{n^{\delta/2}}{2a \sqrt{\log n}}$, we obtain, for every $m$:
\[ \P \left( \mbox{$X_m, X_{m+1}, \dots, X_{m+n^{\delta/2}}$ are neither good nor $a\sqrt{\log n}$-bad} \right) \leq \exp \left( -\frac{n^{\delta/2}}{2a \sqrt{\log n} \, 4^{2a \sqrt{\log n}}} \right).\]
If the event $A_1 \backslash A_2$ occurs, then there is an $m$ with $0 \leq m \leq n-n^{\delta/2}$ such that the above event occurs. Therefore, by summing the last equation over $0 \leq m \leq n-n^{\delta/2}$, we obtain
\[
\P (A_1 \backslash A_2)  \leq n \exp \left( -\frac{n^{\delta/2}}{2a \sqrt{\log n} \, 4^{2a \sqrt{\log n}}} \right)\\
= o (n^{-\beta}).
\]
Finally, we bound the third term of \eqref{quasi_positive_speed_decomposition} by the Azuma inequality. For every $n \geq 0$, let
\[M_n=H_n-\sum_{i=0}^{n-1} E_{\hcau} \left[ H_{i+1}-H_i | X_0, X_1, \dots, X_i \right]. \]
It is clear that $M$ is a martingale with $|M_{n+1}-M_n| \leq 2$ for every $n$, and $M_0=0$. Moreover, we have
\[ E_{\hcau} \left[ H_{i+1}-H_i | X_0, X_1, \dots, X_i \right] = \frac{c(X_i)-1}{c(X_i)+3} \mathbbm{1}_{X_i \notin \partial \hcau} +  \mathbbm{1}_{X_i \in \partial \hcau},\]
where we recall that $c(v)$ is the number of children of a vertex $v$. In particular, we have $E_{\hcau} \left[ H_{i+1}-H_i | X_0, X_1, \dots, X_i \right] \geq 0$, and $E_{\hcau} \left[ H_{i+1}-H_i | X_0, X_1, \dots, X_i \right] \geq \frac{1}{5}$ if $X_i$ is a good vertex. If $A_2$ occurs, the walk $X$ must visit at least $n^{1-\delta/2}$ good vertices before time $n$, so we have
\[ \sum_{i=0}^{n-1} E_{\hcau} \left[ H_{i+1}-H_i | X_0, X_1, \dots, X_i \right] \geq \frac{1}{5} n^{1-\delta/2}. \]
Therefore, if the event in the third term of \eqref{quasi_positive_speed_decomposition} occurs, we have
\[M_n \leq n^{1-\delta}-\frac{1}{5} n^{1-\delta/2}<0.\]
On the other hand, the Azuma inequality applied to $M$ gives
\[ P_{\hcau, \rho} \left(  M_n \leq n^{1-\delta}-\frac{1}{5}n^{1-\delta/2} \right) \leq \exp \left( -\frac{1}{8n}  \left( \frac{1}{5} n^{1-\delta/2} - n^{1-\delta} \right)^2 \right),\]
so
\[ \P \left(  M_n \leq n^{1-\delta}-\frac{1}{5}n^{1-\delta/2} \right)=o(n^{-\beta}),\]
which bounds the third term of \eqref{quasi_positive_speed_decomposition}, and proves the first part of Proposition \ref{prop_quasi_positive}.

To prove the second part, we decompose the event $\{D_n \geq n^{\delta} \}$ in the same way as in \eqref{quasi_positive_speed_decomposition}. By the definition of $D_n$, it is enough to show
\begin{equation}\label{final_equation}
\max_{0 \leq k \leq \ell \leq n} \P \left( A_2 \cap \{ H_{\ell} - H_k \leq -n^{\delta} \} \right)=o(n^{-(\beta+2)}),
\end{equation}
and then to sum over $k$ and $\ell$. To prove \eqref{final_equation}, note that if $\ell < k+n^{\delta}$, then $H_{\ell} -H_k > -n^{\delta}$ deterministically. If $\ell \geq k+n^{\delta}$, we use the same argument based on the Azuma inequality as for the first part. Let $0 \leq k < k+n^{\delta} \leq \ell \leq n$. If $A_2$ occurs, then $X$ visits at least $\frac{\ell-k}{n^{\delta/2}}$ good vertices between times $k$ and $\ell$, so
\[ \sum_{i=k}^{\ell-1} E_{\hcau} [H_{i+1}-H_i | X_0, \dots, X_i] \geq \frac{1}{5} \frac{\ell-k}{n^{\delta/2}}.\]
Hence, if the event of \eqref{final_equation} occurs for $k$ and $\ell$, we have
\[ M_{\ell}-M_{k} \leq H_{\ell}-H_k - \frac{1}{5} \frac{\ell-k}{n^{\delta/2}} \leq -n^{\delta}-\frac{1}{5} \frac{\ell-k}{n^{\delta/2}} \leq -\frac{2}{\sqrt{5}} (\ell-k)^{1/2} n^{\delta/4}. \]
But the Azuma inequality gives
\begin{eqnarray*}
\P \left( M_{\ell}-M_k \leq  -\frac{2}{\sqrt{5}} (\ell-k)^{1/2}n^{\delta/4} \right) & \leq & \exp \left( -\frac{( 2 (\ell-k)^{1/2}n^{\delta/4} )^2}{8 \times 5(\ell-k)} \right)\\
& = & \exp \left( -\frac{n^{\delta/2}}{10} \right)\\
&=& o(n^{-(\beta+2)}),
\end{eqnarray*}
which proves \eqref{final_equation} and the second point of Proposition \ref{prop_quasi_positive}.
\end{proof}

\subsection{Positive speed in \texorpdfstring{$\hcau$}{hcau} via regeneration times}\label{causal_subsec_renewal}

For every $0 \leq h < h' \leq +\infty$, we denote by $\bcau_{h,h'}$ the map formed by the vertices of $\hcau$ with height in $\{h, h+1, \dots, h'\}$, in which for every vertex $v$ at height $h$, we have added a vertex below $v$ that is linked only to $v$. We root $\bcau_{h,h'}$ at the vertex $\rho_h$ corresponding to the leftmost descendant of $\rho$ at generation $h$. The \emph{height} of a vertex in $\bcau_{h,h'}$ is its height in $\hcau$, minus $h$, and the height of the additional vertices is $-1$. We denote by $\partial \bcau_{h,h'}$ the set of these additional vertices. Note that for any $h \geq 0$, the rooted map $\left( \bcau_{h, \infty}, \rho_h \right)$ is independent of $\bcau_{0,h}$ and has the same distribution as $\left( \hcau, \rho \right)$. Since this distribution is invariant by horizontal root translation, this is still true for any choice of the root vertex of $\bcau_{h,h'}$ at height $0$, as long as the choice of the root is independent of $\bcau_{h, \infty}$.

\begin{defn}
We say that $n>0$ is a \emph{regeneration time} if $H_i<H_n$ for every $i<n$, and $H_i \geq H_n$ for every $i \geq n$.
We denote by $\tau^1<\tau^2<\dots$ the list of regeneration times in increasing order.
\end{defn}

We also denote by $T_{\partial}$ the first time at which the simple random walk $X$ on $\hcau$ hits $\partial \hcau$. The key of the proof of Theorem \ref{thm_3_positive} will be to combine the two following results.

\begin{prop}\label{regeneration_expectation}
We have $\E \left[ \tau^1 \right]<+\infty$. In particular, $\tau^1<+\infty$ a.s..
\end{prop}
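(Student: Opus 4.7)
My plan combines a uniform escape probability at each first-hitting time with polynomial moment bounds from Proposition~\ref{prop_quasi_positive}.

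First (uniform escape), I would prove: there exists a deterministic $\delta>0$ such that for every $h\ge 0$,
\[
\P\bigl(A_h\mid\mathcal{F}^*_{\sigma_h}\bigr)\ge\delta\qquad\text{a.s.\ on }\{\sigma_h<\infty\},
\]
where $A_h=\{H_n\ge h\text{ for all }n\ge\sigma_h\}$ and $\mathcal{F}^*_{\sigma_h}$ is the $\sigma$-algebra generated by $(X_n)_{n\le\sigma_h}$ together with the restriction of $\hcau$ to heights $\le h$. The point is that, by the i.i.d.\ Galton--Watson structure of $\hcau$, the subslice $\sli[X_{\sigma_h}]$ of descendants of $X_{\sigma_h}$ is, conditionally on $\mathcal{F}^*_{\sigma_h}$, a fresh Galton--Watson slice. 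Proposition~\ref{slice_strong_transience} applied to this fresh slice gives a uniform positive probability that the walk from $X_{\sigma_h}$ moves to a child and then stays inside the subslice forever, which forces $A_h$; horizontal shift-invariance of $\hcau$ ensures $\delta$ does not depend on $h$.

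Second, I would iterate this via strong Markov to derive a bound of the form $\P(\tau^1>\sigma_h)\le Ch(1-\delta)^h$. The clean way I envision this is by introducing the $\mathcal{F}^*_{\sigma_h}$-measurable index
\[
j_*^{(h)}=\min\bigl\{j\in\{1,\ldots,h\}\colon\text{the walk does not visit heights }<j\text{ in }[\sigma_j,\sigma_h]\bigr\}
\]
and noting that $\{\tau^1>\sigma_h\}=\bigcap_{j=1}^h\neg A_j$ coincides with the event that the fresh walk after $\sigma_h$ eventually reaches a height strictly below $j_*^{(h)}$. By iterated strong Markov applied to successive first-hitting times of heights $h-1,h-2,\ldots$ (each giving a factor at most $1-\delta$ by the uniform escape estimate), this conditional probability is at most $(1-\delta)^{h-j_*^{(h)}+1}$. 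A symmetric iteration applied to the past---using the fresh Galton--Watson subslice at each $X_{\sigma_j}$---shows $\P(j_*^{(h)}>k)\le(1-\delta)^k$, and taking expectation over $j_*^{(h)}$ combines both estimates to yield $\P(\tau^1>\sigma_h)\le Ch(1-\delta)^h$.

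Third, Proposition~\ref{prop_quasi_positive} with $\beta>2$ gives $\P(\sigma_h>t)\le\P(H_t<h)=o(t^{-\beta})$ for $t\ge h^{1/(1-\delta_0)}$ (any $\delta_0\in(0,1)$), hence $\E[\sigma_h^2]\le Ch^{2/(1-\delta_0)}$. Writing $N=\min\{h\ge 1:A_h\text{ holds}\}$ so that $\tau^1=\sigma_N$, Cauchy--Schwarz then yields
\[
\E[\tau^1]=\sum_{h\ge 1}\E\bigl[\sigma_h\mathbbm{1}_{N=h}\bigr]\le\sum_{h\ge 1}\E[\sigma_h^2]^{1/2}\,\P(N\ge h)^{1/2}<+\infty,
\]
since the geometric factor from Step~2 dominates the polynomial growth of $\E[\sigma_h^2]^{1/2}$. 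The main obstacle is the iteration in Step~2: the events $\neg A_j$ at different heights are not jointly $\mathcal{F}^*_{\sigma_h}$-measurable (since each $A_j$ is an infinite-horizon event), so a naive product bound fails; the decomposition above handles this by reducing the problem to a long descent probability amenable to iterated strong Markov on the fresh Galton--Watson subslices accessed along the way.
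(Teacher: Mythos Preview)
Your Step 1 reaches the right conclusion---by the vertical stationarity of $\hcau$, one has $\P(A_h\mid\mathcal{F}^*_{\sigma_h})=\P(T_\partial=+\infty)=:\delta>0$, exactly as the paper uses---though Proposition~\ref{slice_strong_transience} is not quite the right citation: it is a quenched existence statement about \emph{some} vertex in a fixed slice, not a uniform annealed bound from the root. The correct justification is simply that $H_n\to+\infty$ a.s.\ (Proposition~\ref{prop_quasi_positive}) forces $\delta>0$.

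The real gap is Step 2. Your descent bound after $\sigma_h$ via ``iterated strong Markov at heights $h-1,h-2,\ldots$'' does not go through: at the first time after $\sigma_h$ that the walk returns to a level $\ell<h$, the environment above $\ell$ is \emph{not} fresh---the band between $\ell$ and $h$ lies in $\mathcal{F}^*_{\sigma_h}$, and the layers above $h$ have been partially explored by the walk. Your escape estimate from Step 1 requires a fresh Galton--Watson forest above the current level and therefore cannot be invoked here to produce a factor $1-\delta$. Concretely, if the revealed band between $\ell$ and $h$ happens to be locally a square lattice (every vertex with one child), the conditional probability of descending one more level can be arbitrarily close to $1$. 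The same objection breaks the ``symmetric iteration to the past'': the event $\{j_*^{(h)}>k\}$ does not encode $k$ independent failures, since a single failed attempt at level $1$ whose excursion reaches height $k$ before returning already forces $j_*^{(h)}>k$, and nothing in your argument controls that excursion height.

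The paper handles exactly this difficulty by a different route. It works with the ladder times $\tau_j$ (at which the environment above \emph{is} fresh, so the successes are genuinely i.i.d.\ Bernoulli$(\delta)$, giving a geometric $J$), and controls the height increment between consecutive attempts not by a descent iteration but through the global maximal descent $D_n$, using the second half of Proposition~\ref{prop_quasi_positive}. The first half of Proposition~\ref{prop_quasi_positive} then bounds the residual probability that the walk later drops below $H_{\tau_{J_n}}$. This avoids ever needing an escape estimate in a non-fresh environment.
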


\begin{lem}\label{independence_regeneration}
\begin{enumerate}
\item
Almost surely, $\tau^j <+\infty$ for every $j \geq 1$.
\item
The path-decorated maps $\left( \bcau_{H_{\tau^{j}}, H_{\tau^{j+1}}}, (X_{\tau^j+i})_{0 \leq i \leq \tau^{j+1}-\tau^j} \right)$ for $j \geq 1$ are i.i.d. and have the same distribution as $\left( \bcau_{0, H_{\tau^{1}}}, (X_i)_{0 \leq i \leq \tau^{1}} \right)$ conditioned on $\{ T_{\partial}=+\infty \}$.
\item
In particular, the pairs $\left( \tau^{j+1}-\tau^j, H_{\tau^{j+1}}-H_{\tau^j} \right)$ for $j \geq 1$ are i.i.d. and have the same distribution as $(\tau^1, H_{\tau^1})$ conditioned on $\{ T_{\partial}=+\infty \}$.
\end{enumerate}
\end{lem}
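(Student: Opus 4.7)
The plan is to exploit the vertical self-similarity of $\hcau$: for each $h \geq 0$, conditionally on the finite substructure $\bcau_{0,h}$, the subtrees of descendants in $\hcau$ of the vertices at height $h$ are i.i.d.\ Galton--Watson trees with offspring distribution $\mu$. Combined with horizontal translation invariance of the law of $\hcau$, this gives that for any vertex $v$ of height $h$ chosen in an $\bcau_{0,h}$-measurable way, the rooted map $(\bcau_{h,\infty}, v)$ has the same law as $(\hcau, \rho)$ and is independent of $\bcau_{0,h}$. Together with the strong Markov property of the SRW at the stopping time $\sigma_h := \min\{n \geq 0 : H_n = h\}$ (a.s.\ finite by Proposition~\ref{prop_quasi_positive}), this yields the \emph{Markov identity}: conditionally on $\mathcal{F}_{\sigma_h} := \sigma\bigl(\bcau_{0,h}, (X_n)_{0 \leq n \leq \sigma_h}\bigr)$, the path-decorated map $(\bcau_{h,\infty}, X_{\sigma_h}, (X_{\sigma_h + n})_{n \geq 0})$ has the law of $(\hcau, \rho, (X_n)_{n \geq 0})$. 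Under this identification, the event $B_h := \{H_n \geq h \text{ for all } n \geq \sigma_h\}$ corresponds exactly to $\{T_\partial = +\infty\}$.

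The main obstacle is that $\tau^1$ is \emph{not} a stopping time---whether an index $n$ is a regeneration depends on the entire future---so the Markov identity cannot be applied at $\tau^1$ directly. The workaround is to write $\tau^1 = \sigma_{h^*}$ with $h^* := \min\{h \geq 0 : B_h\}$. The key observation is: on $B_h$, the walk stays at heights $\geq h$ after $\sigma_h$, so for each $h' < h$, the event $B_{h'}^c$ reduces to ``the walk dipped below $h'$ at some time in $[\sigma_{h'}, \sigma_h]$'', which is $\mathcal{F}_{\sigma_h}$-measurable. Consequently $\{h^* = h\} \cap B_h = C_h \cap B_h$ for some event $C_h \in \mathcal{F}_{\sigma_h}$, restoring a stopping-time structure.

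Combining the two, for any bounded measurable functional $F$ of a path-decorated rooted map,
\begin{align*}
\E\bigl[F(\bcau_{H_{\tau^1},\infty}, X_{\tau^1}, (X_{\tau^1+n})_{n \geq 0})\mathbbm{1}_{h^*=h}\bigr]
&= \E\bigl[\mathbbm{1}_{C_h}\, \E[F(\bcau_{h,\infty}, X_{\sigma_h}, (X_{\sigma_h+n})_{n \geq 0}) \mathbbm{1}_{B_h} \mid \mathcal{F}_{\sigma_h}]\bigr]\\
&= \P(C_h)\cdot \E\bigl[F(\hcau,\rho,X)\mathbbm{1}_{T_\partial=+\infty}\bigr].
\end{align*}
Summing over $h$ and using $\sum_h \P(C_h)\,\P(T_\partial=+\infty) = \sum_h \P(h^*=h) = \P(\tau^1<+\infty) = 1$ (by Proposition~\ref{regeneration_expectation}, which in particular forces $\P(T_\partial=+\infty) > 0$), one obtains
\[
\E\bigl[F(\bcau_{H_{\tau^1},\infty}, X_{\tau^1}, (X_{\tau^1+n})_{n \geq 0})\bigr]
= \E\bigl[F(\hcau,\rho,X) \,\big|\, T_\partial = +\infty\bigr].
\]

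Specialising $F$ to depend only on the excursion $(\bcau_{H_{\tau^1}, H_{\tau^2}}, (X_{\tau^1+i})_{0 \leq i \leq \tau^2-\tau^1})$ proves point~2 for $j=1$. Since the post-$\tau^1$ path-decorated map has the law of $(\hcau, \rho, X)$ conditioned on $\{T_\partial=+\infty\}$, applying the same argument to this fresh copy---whose first regeneration is (the image of) $\tau^2-\tau^1$---and iterating yields the i.i.d.\ structure for all $j$ together with a.s.\ finiteness of every $\tau^j$ (point~1); point~3 is then immediate by taking $F$ to depend only on $(\tau^{j+1}-\tau^j, H_{\tau^{j+1}}-H_{\tau^j})$.
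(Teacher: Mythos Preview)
Your approach is essentially the same as the paper's: decompose over the level $h=H_{\tau^1}$, write $\{h^*=h\}=C_h\cap B_h$ with $C_h\in\mathcal{F}_{\sigma_h}$ and $B_h$ depending only on the post-$\sigma_h$ structure, and use the Markov identity at $\sigma_h$. The paper does exactly this (with notation $T_h,T'_h$ in place of your $\sigma_h,B_h^c$), factoring it through an intermediate Lemma~\ref{before_after_tau1}, and then iterates via a shift operator $\theta$.

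There is one genuine omission in your write-up, though. Your displayed computation only establishes the \emph{marginal} law of the post-$\tau^1$ structure; it does not establish its \emph{independence} from $\bigl(\bcau_{0,H_{\tau^1}},(X_i)_{0\le i\le\tau^1}\bigr)$. Without that independence, the phrase ``applying the same argument to this fresh copy'' does not yield the joint i.i.d.\ claim: knowing that the sequence of blocks $(B_2,B_3,\dots)$ has the same law as $(B_1,B_2,\dots)$ gives stationarity, not independence. The paper handles this by computing the \emph{joint} probability $\P\bigl(\text{pre-}\tau^1\in A,\ \text{post-}\tau^1\in B\bigr)$ and showing it factorises; it then uses this independence explicitly to transfer the result to the conditional law $\P(\,\cdot\mid T_\partial=+\infty)$ (noting that $\{T_\partial=+\infty\}$ is a function of the pre-$\tau^1$ block alone), which is what makes the iteration legitimate.

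The fix is immediate in your framework: insert an arbitrary bounded $\mathcal{F}_{\sigma_h}$-measurable functional $G$ alongside $\mathbbm{1}_{C_h}$ in your computation. Since $\E[F(\text{post-}\sigma_h)\mathbbm{1}_{B_h}\mid\mathcal{F}_{\sigma_h}]$ is the deterministic constant $\E[F(\hcau,\rho,X)\mathbbm{1}_{T_\partial=+\infty}]$, you get
\[
\E\bigl[G\cdot F(\text{post-}\tau^1)\,\mathbbm{1}_{h^*=h}\bigr]
=\E[G\,\mathbbm{1}_{C_h}]\cdot\E\bigl[F(\hcau,\rho,X)\mathbbm{1}_{T_\partial=+\infty}\bigr],
\]
and summing over $h$ gives the required factorisation. (Minor indexing point: since $H_{\tau^1}\ge 1$ by the strict inequality in the definition of a regeneration time, your $h^*$ should be $\min\{h\ge 1:B_h\}$ rather than $h\ge 0$; the paper's sum has the same harmless slip.)
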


Proposition \ref{regeneration_expectation} will be deduced from the results of Sections \ref{causal_subsec_exploration} and \ref{causal_subsec_quasi}. On the other hand, Lemma \ref{independence_regeneration} is the reason why regeneration times have been used to prove positive speed for many other models. The same property has already been observed and used in various contexts such as random walks in random environments \cite{SZ99}, or biased random walks on Galton--Watson trees \cite{LPP96}. Although the proof is basically the same for our model, we write it formally in Appendix \ref{causal_appendix_renewal}.

Finally, we note that the finiteness of the times $\tau^i$ could be deduced directly from the results of Section \ref{causal_sec_poisson}, even in the case $\mu(0)>0$. However, this is not sufficient to ensure positive speed.

We now explain how to conclude the proof of Theorem \ref{thm_3_positive} from the last two results.

\begin{proof}[Proof of Theorem \ref{thm_3_positive} given Proposition \ref{regeneration_expectation} and Lemma \ref{independence_regeneration}]
By Lemma \ref{hcau_to_cau}, it is enough to prove the result on $\hcau$. By item 3 of Lemma \ref{independence_regeneration} and Proposition \ref{regeneration_expectation}, we have
\[ \E \left[ \tau^2-\tau^1 \right]=\frac{\E [\tau^1 \mathbbm{1}_{\forall n \geq 0, \,  H_n \geq 0}]}{\P (\forall n \geq 0, \, H_n \geq 0)} <+\infty.\]
Moreover, $H_{\tau^2}-H_{\tau^1} \leq \tau^2-\tau^1$, so $\E \left[ H_{\tau^2}-H_{\tau^1} \right]<+\infty$ as well. By Lemma \ref{independence_regeneration} and the law of large numbers, we have
\[ \frac{\tau^j}{j} \xrightarrow[j \to +\infty]{a.s.} \E \left[ \tau^2-\tau^1 \right] \hspace{1cm} \mbox{ and } \hspace{1cm} \frac{H_{\tau^j}}{j} \xrightarrow[j \to +\infty]{a.s.} \E \left[ H_{\tau^2}-H_{\tau^1} \right].\]
For every $n>\tau^1$, let $j(n)$ be the index such that $\tau^{j(n)} \leq n < \tau^{j(n)+1}$. Then we have $\frac{j(n)}{n} \to \E [\tau^2-\tau^1]^{-1}$ a.s.. Moreover, we have $H_{\tau^{j(n)}} \leq H_{n} \leq H_{\tau^{j(n)+1}}$ by the definition of regeneration times, so $\frac{H_n}{j(n)} \to \E[H_{\tau^2}-H_{\tau^1}]$ a.s.. The result follows, with
\[v_{\mu}=\frac{\E \left[ H_{\tau^2}-H_{\tau^1} \right]}{\E \left[ \tau^2-\tau^1 \right]}>0.\]
\end{proof}

\begin{proof}[Proof of Proposition \ref{regeneration_expectation}]
We will actually show that $\tau^1$ has a subpolynomial tail, i.e. for every $\beta>0$, we have
\[\P \left( \tau^1 > n \right) = o ( n^{-\beta} ).\]
We first need to introduce a few notation. We define by induction stopping times $\tau_j$ and $\tau'_j$ for every $j \geq 1$:
\begin{itemize}
\item
$\tau_1=\inf \{n | H_n>0\}$,
\item
$\tau'_j=\inf \{n \geq \tau_j | H_n < H_{\tau_j} \}$ for every $j \geq 1$,
\item
$\tau_{j+1}=\inf \left\{ n>\tau'_j | H_n > \max \left( H_0, H_1, \dots, H_{\tau'_j} \right) \right\}$.
\end{itemize}
Let also $J$ be the largest index such that $\tau_J<+\infty$. We claim that $J$ is a geometric variable.
Indeed, on the one hand, we know that $H_n \to + \infty$ when $n \to +\infty$, so almost surely, if $\tau'_j<+\infty$, then $\tau_{j+1}<+\infty$. On the other hand, if $\tau_j<+\infty$, let $\mathcal{F}_{\tau_j}$ be the $\sigma$-algebra generated by $\bcau_{0, H_{\tau_j}}$ and $(X_0, X_1, \dots, X_{\tau_j})$.
Then the variable
\[\left( \bcau_{ H_{\tau_j}, \infty}, ( X_{\tau_j+i} )_{0 \leq i \leq \tau'_j-\tau_j} \right)\]
is independent of $\mathcal{F}_{\tau_j}$ and has the same distribution as $\left( \bcau_{ 1, \infty}, ( X_{\tau_1+i} )_{0 \leq i \leq \tau'_1-\tau_1} \right)$.
In particular, if $\tau_j<+\infty$, we have
\[\P \left( \tau'_j<+\infty | \mathcal{F}_{\tau_j} \right)=\P \left( \tau'_1<+\infty \right)=\P \left( T_{\partial}=+\infty \right),\]
so $\P \left( \tau_{j+1}<+\infty | \tau_j<+\infty \right)$ does not depend on $j$. This shows that $J$ is a.s.~finite and geometric. Note that $\tau^1=\tau_J$. For any $n>0$, we also denote by $J_n$ the largest index $j$ such that $\tau_j \leq n$. Finally, we recall that $D_n$ is the greatest "descent" of $X$ before time $n$.

In order to estimate the tail of $\tau^1$, we partition the event $\{\tau^1>n\}$ into several "bad" events. Let $\delta>0$ be small (we will actually only need $\delta<1/3$). We have
\begin{align} \label{decomposition_tail_tau}
\P \left( \tau^1>n \right) &= \P \left( \tau^1 \ne \tau_{J_n} \right) \nonumber \\
& \leq  \P \left( J_n \geq n^{\delta}\right) + \P \left( D_n \geq n^{\delta} \right) + \P \left( J_n<n^{\delta}, D_n<n^{\delta}, \tau^1 \ne \tau_{J_n} \right).
\end{align}
We now bound these terms one by one. First, we know that $J_n \leq J$, which is a geometric variable. Hence, the first term is at most $\exp(-cn^{\delta})$ for some constant $c$, so it is $o(n^{-\beta})$ for any $\beta>0$. Moreover, the second part of Proposition \ref{prop_quasi_positive} shows that the second term is $o(n^{-\beta})$ as well.

Finally, we study the third term of \eqref{decomposition_tail_tau}. We first show that if $D_n<n^{\delta}$ and $J_n<n^{\delta}$, then $H_{\tau_{J_n}}<n^{2\delta}$ (this is a deterministic statement). 
If $D_n<n^{\delta}$, let $1 \leq j < J_n$. We have $\tau_{j+1} \leq n$, so $\tau'_j \leq n$ and $H_{\tau'_j}=H_{\tau_j}-1$ by the definition of $\tau'_j$. By the definitions of $\tau_{j+1}$ and of $D_n$, we have
\[H_{\tau_{j+1}}-H_{\tau_j} = 1+\max_{[0, \tau'_j]} H - (H_{\tau'_j}+1) \leq D_n <n^{\delta}.\]
By summing over $j$ (and remembering $H_{\tau_1}=1$), we obtain
\[H_{\tau_{J_n}} \leq 1+n^{\delta}(J_n-1)<1+n^{\delta} (n^{\delta}-1)<n^{2 \delta}.\]
Therefore, if the event in the third term of \eqref{decomposition_tail_tau} occurs, we have $H_{\tau_{J_n}}<n^{2\delta}$ but $\tau'_{J_n}<+\infty$, so there is $k>n$ such that $H_k<n^{2\delta}$. On the other hand, if $\delta<1/3$, we have
\[ \P \left( \exists k > n, H_k \leq n^{2\delta} \right) \leq  \P \left( \exists k > n, H_k \leq k^{1-\delta} \right) \leq \sum_{k > n} \P \left( H_k \leq k^{1-\delta} \right)=\sum_{k>n} o \left( k^{-(\beta+1)} \right)\]
by the first point of Proposition \ref{prop_quasi_positive}. This proves that the third term of \eqref{decomposition_tail_tau} decays superpolynomially, which concludes the proof.
\end{proof}

\section{Counterexamples and open questions}\label{causal_sec_counter}

We finally discuss the necessity of the various assumptions made in the results of this paper, and we state a few conjectures. See Figure \ref{summary_conjectures} for a quick summary.

\paragraph{Liouville property.}
We first note that if we do not require the strips $(S_i)$ to be i.i.d., then Theorem \ref{thm_2_bis} fails. Indeed, we start from $\cau(\mathbf{T})$ and choose a ray $\gamma_0$ of $\mathbf{T}$. We then duplicate many times the horizontal edges to add a very strong lateral drift towards $\gamma_0$. If we also duplicate the edges of $\gamma_0$ enough times, we can make sure that the simple random walk eventually stays on the path $\gamma_0$. This yields a map of the form $\m \left( \mathbf{T}, (S_i) \right)$ which has the intersection property, so it is Liouville.

\paragraph{Poisson boundary.}
The description of the Poisson boundary given by Theorem \ref{thm_2_Poisson} cannot be true for any map of the form $\m \left( \mathbf{T}, (S_i) \right)$, even if the strips $(S_i)$ are i.i.d.. Indeed, it is possible to choose $S_i$ such that the walk $(X_n)$ has a positive probability to stay in $S_i$ forever, and such that $S_i$ itself has a non-trivial Poisson boundary. In this case, the Poisson boundary of $\m \left( \mathbf{T}, (S_i) \right)$ is larger than $\widehat{\partial} \mathbf{T}$, and nonatomicity in Theorem \ref{thm_2_Poisson} is false. On the other hand, we conjecture that if we furthermore assume that all the slices $S_i$ are recurrent graphs (and i.i.d.), then $\widehat{\partial} \mathbf{T}$ is a realization of the Poisson boundary. As explained in Remark \ref{rem_poisson_general_setting}, our arguments cannot handle this general setting.

\paragraph{Positive speed.}
The positive speed is also false in general maps of the form $\m \left( \mathbf{T}, (S_i) \right)$ if the strips $S_i$ are too large and do not add vertical drift. For example, if they are equal to the half-planar regular triangular lattice, then the random walk will spend long periods in the same strip, where it has speed zero. On the other hand, we conjecture that the assumption $\mu(0)=0$ is not necessary in Theorem \ref{thm_3_positive}.

As for Galton--Watson trees, another process of interest on the maps $\cau(T)$ is the $\lambda$-biased random walk $X^{\lambda}$. If a vertex $x$ has $c(x)$ children and $X_n^{\lambda}=x$, then $X_{n+1}^{\lambda}$ is equal to $y$ with probability $\frac{1}{c(x)+3\lambda}$ for every child $y$ of $x$, and to $z$ with probability $\frac{\lambda}{c(x)+3\lambda}$ if $z$ is the parent or one of the two neighbours of $x$.

If $\lambda>1$, we expect that, whether $\mu(0)=0$ or not, the process behaves in the same way as on trees \cite{LPP96}: the walk is recurrent for $\lambda>\lambda_c$ (as easily shown by the Nash--Williams criterion) and should have positive speed for $\lambda<\lambda_c$, where $\lambda_c= \sum i \mu(i)$. If $\lambda<1$ and $\mu(0)=0$, it is easy to see that the speed is positive on $\cau(T)$ since the drift at every vertex is positive. For $\lambda<1$ and $\mu(0)>0$, the $\lambda$-biased walk on $T$ has speed zero for $\lambda$ small enough ($\lambda \leq f'(q)$, where $q$ is the extinction probability of $T$ and $f$ the generating function of $\mu$). We believe that this regime disappears on causal maps, and that the $\lambda$-biased walk on $\cau(T)$ has positive speed for every $\lambda<1$.

\paragraph{Other properties of the simple random walk (for $\mu(0)=0$).}
As shown by Theorem \ref{thm_2_Poisson}, the harmonic measure of $\cau(T)$ on $\widehat{\partial} T$ is a.s.~nonatomic and has full support. It would be interesting to investigate finer properties of this measure,  as it has been done for Galton--Watson trees \cite{LPP95, Lin17}. We believe that as for Galton--Watson trees, the harmonic measure is not absolutely continuous with respect to the mass measure, and should satisfy a dimension drop.

Another quantity of interest related to the simple random walk is the heat kernel decay, i.e. the probability of returning to the root at time $n$. Perhaps surprisingly, the annealed and quenched heat kernels might have different behaviours: if $\mu(1)>0$, the possibility that $T$ does not branch during the first $n^{1/3}$ steps gives an annealed lower bound of order $e^{-n^{1/3}}$. On the other hand, the worst possible traps after the first branching points seem to be large portions of square lattice, which yield a quenched lower bound of order $e^{-n^{1/2}}$. Our argument for quasi-positive speed could be adapted to prove that the heat kernel decays quicker than any polynomial, which seems far from optimal. On the other hand, a natural first step to show that the lower bounds are tight would be to prove anchored expansion for $\cau(T)$. However, this property does not seem well suited to the study of causal maps since connected subsets of $\cau(T)$ can be quite nasty.

\paragraph{Other random processes.}
Finally, other random processes such as percolation on $\cau(T)$ might be investigated. We expect that we should have $p_c<p_u$, i.e. there is a regime where infinitely many infinite components coexist, as it is generally conjectured for graphs with a hyperbolic behaviour (like for example nonamenable transitive graphs \cite{BS96}). We note that oriented percolation is studied in a work in progress of David Marchand.

More generally, for unimodular, planar graphs, other notions of hyperbolicity (including $p_c<p_u$) have been studied in \cite{AHNR16} and proved to be equivalent to each other. It might be interesting to study the relation with our setting: if it is true that any hyperbolic (in the sense of \cite{AHNR16}) unimodular map contains a supercritical Galton--Watson tree, then our results of Section \ref{causal_sec_metric} apply. On the other hand, it is clear that every unimodular planar map containing a supercritical Galton--Watson tree is hyperbolic in the sense of \cite{AHNR16}.

\appendix

\section{The regeneration structure}\label{causal_appendix_renewal}

The goal of this appendix is to prove Lemma \ref{independence_regeneration}. We recall that $\partial \hcau$ is the set of vertices at height $-1$, and that $T_{\partial}$ is the first time at which $X$ hits $\partial \hcau$. We will first prove the following intermediate result.

\begin{lem}\label{before_after_tau1}
\begin{enumerate}
\item
We have $\tau^1<+\infty$ a.s..
\item
The path-decorated map $\left( \bcau_{H_{\tau^{1}}, \infty}, (X_{\tau^1+i})_{i \geq 0} \right)$ is independent of $\left( \bcau_{0, H_{\tau^1}}, (X_i)_{0 \leq i \leq \tau^1} \right)$ and has the same distribution as $\left( \hcau, (X_i)_{i \geq 0} \right)$ conditioned on the event $\{ T_{\partial}=+\infty \}$.
\end{enumerate}
\end{lem}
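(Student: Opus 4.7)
The plan is to reduce the claim to the strong Markov property of the walk combined with the spatial Markov structure of $\hcau$. For each height $h\ge 1$, I would introduce the hitting time $\sigma_h=\inf\{n\ge 0:H_n=h\}$ and the event $R_h=\{H_n\ge h\ \text{for all}\ n\ge\sigma_h\}$, and set $h^*=\min\{h\ge 1:R_h\}$. A short check shows that $\tau^1=\sigma_{h^*}$ whenever $h^*<+\infty$: any regeneration time $n$ must satisfy $n=\sigma_{H_n}$ with $R_{H_n}$ holding, so $H_n\ge h^*$, and minimality of $h^*$ forces $H_n=h^*$. Part (1) then follows from Proposition \ref{regeneration_expectation}, whose proof already constructs a geometric variable $J$ with $\tau^1=\tau_J<+\infty$ almost surely.

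For part (2), I would evaluate, for bounded measurable functionals $F,G$,
\[\E\bigl[F(\bcau_{0,H_{\tau^1}},X_{[0,\tau^1]})\,G(\bcau_{H_{\tau^1},\infty},X_{[\tau^1,\infty)})\bigr]=\sum_{h\ge 1}\E\bigl[\mathbbm{1}_{\{h^*=h\}}F(\bcau_{0,h},X_{[0,\sigma_h]})\,G(\bcau_{h,\infty},X_{[\sigma_h,\infty)})\bigr],\]
using the decomposition $\{h^*=h\}=A_h\cap R_h$ on $\{\sigma_h<+\infty\}$, where $A_h=\{\forall h'<h,\ \exists n\in[\sigma_{h'},\sigma_h):H_n<h'\}$ is measurable with respect to the pre-$\sigma_h$ data $(\bcau_{0,h},X_{[0,\sigma_h]})$. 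The strong Markov property at $\sigma_h$, combined with the spatial Markov identity recorded in Section \ref{causal_subsec_halfplane} (namely, that $(\bcau_{h,\infty},\rho_h)$ is independent of $\bcau_{0,h}$ with law $(\hcau,\rho)$, and this persists under rerooting at any height-$0$ vertex chosen independently of $\bcau_{h,\infty}$), yields
\[\E\bigl[\mathbbm{1}_{R_h}\,G(\bcau_{h,\infty},X_{[\sigma_h,\infty)})\bigm|\bcau_{0,h},X_{[0,\sigma_h]}\bigr]=\E_{\hcau,\rho}\bigl[G(\hcau,X)\,\mathbbm{1}_{\{T_\partial=+\infty\}}\bigr]\]
on $\{\sigma_h<+\infty\}$. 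The constant factor $\E_{\hcau,\rho}[G\,\mathbbm{1}_{T_\partial=+\infty}]$ comes out of each summand, and reinserting the indicator $\mathbbm{1}_{R_h}$ (which is conditionally independent of $A_h\cap\{\sigma_h<+\infty\}$ with probability $\P(T_\partial=+\infty)$) recognises the remaining sum as $\E[F(\bcau_{0,H_{\tau^1}},X_{[0,\tau^1]})]/\P(T_\partial=+\infty)$. The two factors of $\P(T_\partial=+\infty)$ combine, yielding the claimed product form with the second marginal equal to the law of $(\hcau,X)$ conditioned on $\{T_\partial=+\infty\}$.

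The main delicate point will be the justification that $(\bcau_{h,\infty},X_{\sigma_h})$ has the same law as $(\hcau,\rho)$ even though its root is selected by the walk rather than chosen canonically. This follows from horizontal translation invariance of the distribution of $\hcau$ (inherited from the i.i.d.~structure of the forest $(T_i)_{i\in\Z}$): the vertex $X_{\sigma_h}$ is a deterministic function of the pre-$\sigma_h$ data and is therefore independent of the unrooted planar map $\bcau_{h,\infty}$, so rerooting at $X_{\sigma_h}$ does not change the distribution of the rooted map. Once this is in hand, the rest is routine bookkeeping, and Lemma \ref{independence_regeneration} follows by iterating Lemma \ref{before_after_tau1} along the successive regeneration times.
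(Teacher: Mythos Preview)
Your proof is correct and follows essentially the same route as the paper: decompose over the regeneration height $h$, use the spatial Markov property of $\hcau$ together with the strong Markov property at the hitting time of level $h$ (your $\sigma_h$ is the paper's $T_h$, your $R_h$ is the paper's $\{T'_h=+\infty\}$, and your $A_h$ is the paper's $\{\forall i<h,\ T'_i\le T_h\}$), then factor and identify the marginals by specialising one of the test functions. Your explicit discussion of why rerooting $\bcau_{h,\infty}$ at $X_{\sigma_h}$ preserves the law is a welcome clarification of a point the paper invokes from Section~\ref{causal_subsec_halfplane} without further comment.
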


Note that the first point follows from Proposition \ref{regeneration_expectation}, so we only need to focus on the second point.

\begin{proof}[Proof of Lemma \ref{before_after_tau1}]
We first note that, by Proposition \ref{prop_quasi_positive}, we have $H_n \to +\infty$ a.s., so the conditioning on $\{ T_{\partial}=+\infty \}$ is non-degenerate.

For every $h \geq 0$, let $T_h=\min \{n \geq 0 | H_n=h\}$, and let $T'_h=\min \{n \geq T_h | H_n<h\}$. By Proposition \ref{prop_quasi_positive}, we have $T_h<+\infty$ a.s.. We also know that the rooted map $\left( \bcau_{h, \infty}, X_{T_h} \right)$ is independent of $\left( \bcau_{0,h}, (X_i)_{0 \leq i \leq T_h}\right)$ and has the same distribution as $\hcau$. Therefore, the path-decorated map $\left( \bcau_{h, \infty}, (X_{T_h+i})_{0 \leq i \leq T'_h-T_h} \right)$ is independent of $\left( \bcau_{0,h}, (X_i)_{0 \leq i \leq T_h}\right)$ and has the same distribution as $\left( \hcau, (X_i)_{0 \leq i \leq T_{\partial}} \right)$.

It follows that, for any two measurable sets $A$ and $B$ of path-decorated maps, we have
\begin{align*}
& \P \left( \left( \bcau_{0, H_{\tau^1}}, (X_i)_{0 \leq i \leq \tau^1} \right) \in A \mbox{ and } \left( \bcau_{H_{\tau^1}, \infty}, (X_{\tau^1+i})_{i \geq 0} \right) \in B \right)\\
&= \sum_{h \geq 0} \P \left( \left( \bcau_{0, h}, (X_i)_{0 \leq i \leq T_h} \right) \in A \mbox{ and } \left( \bcau_{h, \infty}, (X_{T_h+i})_{i \geq 0} \right) \in B \mbox{ and } \tau^1=h \right)\\
&= \sum_{h \geq 0} \P \, \bigl( \left( \bcau_{0, h}, (X_i)_{0 \leq i \leq T_h} \right) \in A \mbox{ and } \forall i<h, T'_i \leq T_h\\[-10pt]
& \hphantom{= \sum_{h \geq 0} \P \, \bigl( \left( \bcau_{0, h}, (X_i)_{0 \leq i \leq T_h} \right) \in A} \,\, {} \mbox{ and } ( \bcau_{h, \infty}, (X_{T_h+i})_{i \geq 0} ) \in B \mbox{ and } T'_h=+\infty \bigr),
\end{align*}
by noting that $H_{\tau^1}$ is the smallest height $i$ such that $T'_i=+\infty$. Note that the event $\{ \forall i<h, T'_i \leq T_h \}$ is a measurable function of $\left( \bcau_{0, h}, (X_i)_{0 \leq i \leq T_h} \right)$, and the event $\{ T'_h=+\infty \}$ is a measurable function of $\left( \bcau_{h, \infty}, (X_{T_h+i})_{i \geq 0} \right)$. Hence, by the independence and the distribution of $\left( \bcau_{h, \infty}, (X_{T_h+i})_{i \geq 0} \right)$ found above, we have
\begin{align*}
& \P \left( \left( \bcau_{0, H_{\tau^1}}, (X_i)_{0 \leq i \leq \tau^1} \right) \in A \mbox{ and } \left( \bcau_{H_{\tau^1}, \infty}, (X_{\tau^1+i})_{i \geq 0} \right) \in B \right)\\
&= \sum_{h \geq 0} \P \left( \left( \bcau_{0, h}, (X_i)_{0 \leq i \leq T_h} \right) \in A \mbox{ and } \forall i<h, T'_i \leq T_h \right) \P \left( \left( \bcau_{0, \infty}, (X_i)_{i \geq 0} \right) \in B \mbox{ and } T_{\partial}=+\infty \right)\\
&= \P \left( \left( \bcau_{0, \infty}, (X_i)_{i \geq 0} \right) \in B \big| T_{\partial}=+\infty \right) f(A),
\end{align*}
where $f(A)$ is a function of $A$. Therefore, the path-decorated maps $\left( \bcau_{0, H_{\tau^1}}, (X_i)_{0 \leq i \leq \tau^1} \right)$ and $\left( \bcau_{H_{\tau^1}, \infty}, (X_{\tau^1+i})_{i \geq 0} \right)$ are independent and, by taking $A=\Omega$, we obtain that the distribution of the second is a multiple of the distribution of $\left( \hcau, (X_i)_{i \geq 0} \right)$ conditioned on the event $\{ T_{\partial}=+\infty\}$. Since both are probability measures, they coincide.
\end{proof}

\begin{proof}[Proof of Lemma \ref{independence_regeneration}]
We define the shift operator $\theta$ as follows:
\[ \left( \hcau, (X_i)_{i \geq 0} \right) \circ \theta=\left( B_{H_{\tau^1}, \infty} (\hcau), (X_{\tau^1+i})_{i \geq 0} \right).\]

We first notice that Lemma \ref{before_after_tau1} remains true if we consider $(\hcau, X)$ under the measure $\P \left( \cdot | T_{\partial}=+\infty \right)$ instead of $\P$. Indeed, conditioning on an event of positive probability does not change the fact that $\tau^1<+\infty$ a.s.. Moreover, the event $\{ T_{\partial}=+\infty \}$ only depends on $\left( \bcau_{0, H_{\tau^1}}, (X_i)_{0 \leq i \leq \tau^1} \right)$ and not on $\left( \bcau_{H_{\tau^1}, \infty}, (X_{\tau^1+i})_{i \geq 0} \right)$, so conditioning on this event affects neither the independence of these two path-decorated maps, nor the distribution of the second.

But by Lemma \ref{before_after_tau1}, the map $\left( \hcau, (X_i)_{i \geq 0} \right) \circ \theta$ has the same distribution as $\left( \hcau, (X_i)_{i \geq 0} \right)$ under $\P \left( \cdot | T_{\partial}=+\infty \right)$, so Lemma \ref{before_after_tau1} applies after composition by $\theta$. In particular, we have $\tau^1 \circ \theta <+\infty$ a.s., i.e. $\tau^2<+\infty$ a.s.. Moreover, the two following path-decorated maps are independent:
\begin{itemize}
\item
$\left( \bcau_{0, H_{\tau^1}}, (X_i)_{0 \leq i \leq \tau^1} \right) \circ \theta = \left( \bcau_{H_{\tau^1}, H_{\tau^2}}, (X_{\tau^1+i})_{0 \leq i \leq \tau^2-\tau^1} \right)$,
\item
$\left( \bcau_{H_{\tau^1}, \infty}, (X_{\tau^1+i})_{i \geq 0} \right) \circ \theta = \left( \bcau_{H_{\tau^2}, \infty}, (X_{\tau^2+i})_{i \geq 0} \right)$,
\end{itemize}
and the second one has the same distribution as $\left( \hcau, (X_i)_{i \geq 0} \right)$ under $\P \left( \cdot | T_{\partial}=+\infty \right)$. From here, an easy induction on $j$ shows that for any $j \geq 1$, we have $\tau^j<+\infty$ and the path-decorated map $\left( \bcau_{H_{\tau^j}, H_{\tau^{j+1}}}, (X_{\tau^j+i})_{0 \leq i \leq \tau^{j+1}-\tau^j} \right)$ has indeed the right distribution and is independent of $\left( \bcau_{H_{\tau^{j+1}}, \infty}, (X_{\tau^{j+1}+i})_{i \geq 0} \right)$. This proves Lemma \ref{independence_regeneration} (the third item is a direct consequence of the first two).
\end{proof}

\bibliographystyle{abbrv}
\bibliography{bibli}

\end{document}